 \theoremstyle{theorem}  
 \newtheorem{thm}{Theorem}[section]
\newtheorem{cor}[thm]{Corollary}
\newtheorem{lem}[thm]{Lemma}
 \theoremstyle{theorem}
 \newtheorem{proposition}[thm]{Proposition}
 \theoremstyle{definition}
 \newtheorem{definition}[thm]{Definition}
 \newtheorem{ex}[thm]{Example}
\theoremstyle{definition}
\newtheorem{rem}[thm]{Remark}
\newcommand{\ba}{\backslash}
\newcommand{\h}{\vdash_{\scriptscriptstyle H_{\supset}}}
\newcommand{\ta}{\rightarrow}
\newcommand{\da}{\leftrightarrow}
\newcommand{\s}{\supset}
\newcommand{\cc}{\thicksim}
\renewcommand{\mathfrak}{\mathbf}
\newcommand{\tet}{\vartheta} 
\newcommand{\hi}{\vdash_{\scriptscriptstyle H}}
\newcommand{\lb}{\mathcal{LB}} 
\newcommand{\lbs}{\mathcal{LB}_\supset} 
\newcommand{\leibniz}{\mathbf{\Omega}} 
\newcommand{\Fi}{\mathcal{F}i}
\newcommand{\tarski}{\mathbf{\widetilde{\Omega}}}
\newcommand{\Al}[1][A]{\ensuremath{\mathbf{#1}} }
\newcommand{\false}{\mathsf{f}}
\newcommand{\true}{\mathsf{t}}
\newcommand{\Tr}{\mathsf{Tr}}
\newcommand{\FF}{\mathcal{FF}}
\newcommand{\LB}{\mathcal{LB}}
\newcommand{\Log}[1][LB]{\ensuremath{\vDash_{\mathcal{#1}}} }
\newcommand{\eqLog}{\mathrel{=\joinrel\mathrel\vert\mkern0mu\mathrel\vert\joinrel=_{\mathcal{LB}}}}
\newcommand{\Con}{\mathrm{Con}}
\newcommand{\Reg}{\mathrm{Reg}}
\newcommand{\reg}{\mathrm{reg}}
\newcommand{\PBL}{\mathsf{PreBiLat}}
\newcommand{\DPBL}{\mathsf{DPreBiLat}}
\newcommand{\IPBL}{\mathsf{IntPreBiLat}}
\newcommand{\BL}{\mathsf{BiLat}}
\newcommand{\DBL}{\mathsf{DBiLat}}
\newcommand{\IBL}{\mathsf{IntBiLat}}
\newcommand{\DBLC}{\mathsf{DBiLatCon}}
\newcommand{\IBLC}{\mathsf{IntBiLatCon}}
\newcommand{\IL}{\mathsf{InvLat}}
\newcommand{\DML}{\mathsf{DMLat}}
\newcommand{\KL}{\mathsf{KLat}}
\newcommand{\BOL}{\mathsf{BLat}}
\newcommand{\KB}{\mathsf{KBiLatCon}}
\newcommand{\CB}{\mathsf{CBiLatCon}}
\newcommand{\lat}{\mathsf{Lat}}
\newcommand{\dl}{\mathsf{DLat}}
\newcommand{\cl}{\mathsf{CILat}}
\newcommand{\ib}{\mathsf{ImpBiLat}}
\newcommand{\rd}{\mathsf{RDMLat}}
\newcommand{\ia}{\mathsf{IAlg}}
\newcommand{\Obj}{\textrm{Obj}}
\newcommand{\Mor}{\textrm{Mor}}
\newcommand{\co}{\mathop{\relbar}\nolimits}
\newcommand{\la}{\langle}
\newcommand{\ra}{\rangle}
\renewcommand{\phi}{\varphi}
\renewcommand{\mathfrak}{\mathbf}
\newcommand{\four}{\mathcal{FOUR}}
\newcommand{\fours}{\mathcal{FOUR}_{\supset}}
\newcommand{\five}{\mathcal{FIVE}}
\newcommand{\nine}{\mathcal{NINE}}
\newcommand{\default}{\mathcal{DEFAULT}}
\newcommand{\5}{\mathcal{FIVE}}
\newcommand{\7}{\mathcal{SEVEN}}
\newcommand{\seq}{\rhd}
\newcommand{\var}{\mathcal{V}ar}
\newcommand{\leib}[1][A]{\ensuremath{\boldsymbol{\Omega_{\Al[#1]}}} }
\newcommand{\alg}[1][]{\ensuremath{\mathbf{Alg#1}} }
\newcommand{\algstar}[1][]{\ensuremath{\mathbf{Alg^{\ast}#1}} }
\newcommand{\clb}{\ensuremath{\mathbf{C}_{\mathcal{LB}}}}
\newcommand{\RB}{\mathcal{R}eg}
\newcommand{\Gensinl}{ \thicksim  \joinrel  \mid } 
\newcommand{\Gensin}{\mid \joinrel \thicksim _{\mathcal{G_{LB}}} }
\newcommand{\Gens}{\mid \joinrel \thicksim _{\mathcal{G}} }
\newcommand{\g}{\mathcal{G}}
\begin{document}
\pagestyle{plain}
\pagenumbering{roman}


{\pagestyle{empty}
\newcommand{\printtitle}{%

{\Huge\bf An Algebraic Study \\[0.8cm]  of Bilattice-based Logics 
}}    

\begin{titlepage}
\par\vskip 2cm
\begin{center}
\printtitle

\par\vspace {4cm}
\illclogo{8cm}
\par\vspace {4cm}

\vfill
{\LARGE\bf Umberto Rivieccio}                           
\vskip 2cm
\end{center}
\end{titlepage}
%
%
\mbox{}\newpage
\setcounter{page}{1}




\clearpage
\par\vskip 2cm
\begin{center}
{\large
Tesi discussa per il conseguimento del titolo di \\
Dottore di ricerca in Filosofia \\
svolta presso la \\ Scuola di Dottorato in Scienze Umane \\
dell'Università degli Studi di Genova \\
\par\vspace {2cm}}

\printtitle
\par\vspace {4cm}
{\Large Umberto Rivieccio}                        
\par\vspace {4cm} 
{\Large Relatori: \\[0.4cm] 
Maria Luisa Montecucco \\ (Universit\`a di Genova) \\[0.8cm]
Ramon Jansana i Ferrer \\[0.2cm] (Universitat de Barcelona)} 
\end{center}

\clearpage
\par\vskip 2cm
\begin{center}
{\large
Departament de L\`ogica, Hist\`oria i Filosofia de la Ci\`encia \\
Facultat de Filosofia\\
Universitat de Barcelona \\
Programa de Doctorado: Ci\`encia Cognitiva i Llenguatge\\
\par\vspace {2cm}} 

\printtitle
\par\vspace {4cm}
{\Large Umberto Rivieccio}                        
\par\vspace {4cm} 
{\Large Directores: \\[0.4cm] Dra.\ Maria Luisa Montecucco \\ (Universit\`a di Genova) \\[0.8cm]
Dr.\ Ramon Jansana i Ferrer \\[0.2cm] (Universitat de Barcelona)} 
\end{center}

\clearpage
} 

\thispagestyle{plain}
\mbox{}
\vspace{2.5in}
\begin{center}
{\em Of all escapes from reality, mathematics is the most successful ever.}\\[1cm]
\end{center}
\begin{flushright}
       Giancarlo Rota
\end{flushright}

\tableofcontents
\acknowledgments

First of all, I would like to express my gratitude to my supervisors, Luisa Montecucco and Ramon Jansana, who helped and advised me in several ways during the three years of my PhD.

I also want to thank my professors at the University of Genoa, in particular: Dario Palladino, who followed my work and was very helpful to me since the days of my Laurea thesis; Carlo Penco, who helped me in several occasions, especially in organizing my stay in Barcelona; Angelo Campodonico, for his help as a coordinator of the Doctorate in Philosophy.

 I am grateful to many scholars I met during my stay in Barcelona: all the Logos people, in particular Manuel Garc\'ia-Carpintero, who invited me to Barcelona, and Jos\'e Mart\'inez, who had the patience to read some  paper of mine and introduced me to many interesting reading groups; Josep Maria Font, who gave me precious hints, among them the first idea of what was eventually to become this thesis; Joan Gispert and Antoni Torrens, who taught me logic, algebra and some Catalan; Llu\'is Godo and Francesc Esteva, for their extreme kindess; Enrico Marchioni, who gave me useful information on logic in Spain. I am particularly indebted to F\'elix Bou, my sensei and co-author of a long-expected paper: without his help this work would not have been possible. 

Finally, I want to mention some friends that helped me in various ways that they may not suspect. 

Those I met at the university: Luz Garc\'ia \'Avila, who taught me how to speak like a Mexican; Miguel \'Anguel Mota, who showed me how to drink like a Mexican; Daniel Palacín and his family (of sets); Sergi Oms, who introduced me to the Catalan literature; Chiara Panizza, who showed me how to survive any accident; Marco Cerami, who taught me how to change a tyre;  and Mirja P\'erez de Calleja, who taught me everything else. 

\ldots and outside the university: Umberto Marcacci and the elves, for countless hours of time lost; Caroline Bavay, for her welcome; Cristina Cervilla, for a scarf and a tie; Eva L\'opez, for being illogic and irrational; Silvia Izzi, for the sushi picnic by the lake; Beatriz Lara, for a coffee and a supper; Federica Q., for a surprise Roman holiday; Fiorella Aric\`o, just for being there always; and last but not least, Cimi, Pigi \& Gidio, for being my animal family.  

My last thanks go to my family, for whom no words would suffice\ldots \ and to all the people I forget.





\cleardoublepage
\pagestyle{headings}
\pagenumbering{arabic}





\chapter{Introduction and preliminaries} 
\label{ch:intro}

\section{Introduction and motivation}
\label{sec:intro}


The aim of this work is to develop a study from the perspective of Abstract Algebraic Logic of some bilattice-based logical systems introduced in the nineties by Ofer Arieli and Arnon Avron. The motivation for such an investigation has two main roots. 

On the one hand there is an interest in bilattices as an elegant formalism that gave rise in the last two decades to a variety of applications, especially in the field of Theoretical Computer Science and Artificial Intelligence. In this respect,  the present study aims to be a contribution to a better understanding 
of the mathematical and logical framework that underlie these applications.

On the other hand, our interest in bilattice-based logics comes from 
  Abstract Algebraic Logic. In very general terms, algebraic logic can be described as the study of the connections between algebra and logic. One of the main reasons that motivate this study is the possibility to treat logical problems with algebraic methods and viceversa: this is accomplished by associating to a logical system a class of algebraic models that can be regarded as the algebraic counterpart of that logic. Starting from the work of Tarski and his collaborators, the method of algebraizing logics has been increasingly developed and generalized. In the last two decades, algebraic logicians have focused their attention on the process of algebraization itself: this kind of investigation forms now a subfield of algebraic logic  known as Abstract Algebraic Logic (which we abbreviate AAL).

An important issue in AAL is the possibility to apply the methods of the general theory of the algebraization of logics to an increasingly wider range of logical systems. In this respect, some bilattice-based logics are particularly interesting as natural examples of so-called \emph{non protoalgebraic logics}, a class that includes the logical systems that are most difficult to treat with algebraic tools. 

Until recent years, relatively few non protoalgebraic logics had been studied. Possibly also because of this lack of examples, the general results available on this class of logics are still not comparable in number and depth with those that have been proved  for the logical systems that are, so to speak, well-behaved from the algebraic point of view, called \emph{protoalgebraic logics}. In this respect, the present work intends to be a contribution to the long-term goal of extending the general theory of the algebraization of logics beyond its present borders.  


Let us now introduce informally the main ideas that underlie the bilattice formalism and mention some of  their applications. 

Bilattices are algebraic structures proposed by  Matthew Ginsberg \cite{Gi88}  as a uniform framework for inference in Artificial Intelligence, in particular within the context of default and non-monotonic reasoning. In the last two decades the bilattice formalism has found interesting applications in  many fields, sometimes quite different from the original one, of which we shall cite just a few.

As observed by Ginsberg  \cite{Gi88},  many inference systems that are used in Artificial Intelligence can be unified within a many-valued framework whose space of truth values is a set endowed with a double lattice structure. The idea that truth values should be ordered is very common, indeed almost standard in many-valued logics: for instance, in fuzzy logics the values are (usually totally) ordered according to their ``degree of truth''. In this respect, Ginsberg's seminal idea was that, besides the order associated with the degree of truth, there is another ordering that is also natural to consider. This relation, which he called the  ``knowledge order'', is intended to reflect the degree of knowlegde or information associated with a sentence: for instance, in the context of automated reasoning, one can label a sentence as ``unknown'' when the epistemic agent has no information at all about the truth or falsity of that sentence. This idea, noted Ginsberg, was already present in the work of Belnap \cite{Be76}, \cite{Be77}, who proposed a similar interpretation for the well-known Belnap-Dunn four-valued logic. From a mathematical point of view, Ginsberg's main contribution was to develop a generalized framework that allows to handle arbitrary doubly ordered sets  of truth values.  

According to the notation introduced by Ginsberg, within the bilattice framework the two order relations are usually denoted by $\leq_t$ (where the $t$  is for ``truth'') and $\leq_k$ ($k$ for ``knowledge''). Concerning the usage of the term ``knowledge'', let us quote a remark due to Melvin Fitting ~\cite{Fi06b}: 

\begin{quotation}
The ordering $\leq_k$ should be thought of as ranking ``degree of
information''. Thus if $x \leq_k y$, $y$ gives us at least as much
information as $x$ (and possibly more). I suppose this really should be
written as $\leq_i$, using $i$ for information instead of $k$ for
knowledge. In some papers in the literature $i$ is used, but I have
always written $\leq_k$, and now I'm stuck with it.
\end{quotation}

We agree with Fitting's observation that using $\leq_i$ would be a better choice but, like himself, in the present work we will write  $\leq_k$, following a notation that has by now become standard.

After Ginsberg's initial work (besides \cite{Gi88},  see also \cite{Gi90a} 
and \cite{Gi95a}), bilattices were extensively investigated by Fitting, who considered applications to Logic Programming (\cite{Fi90}, \cite{Fi91}; on this topic see also \cite{Ko01a} 
 and \cite{LoSt04}
 ),
 to philosophical problems such as the theory of truth (\cite{Fi89}, \cite{Fi06b}) and studied their relationship with a family of many-valued systems generalizing Kleene's three-valued logics (\cite{Fi91b}, \cite{Fi94}). Other interesting applications include the analysis of entailment, implicature and presupposition in natural language \cite{Sc96}, the semantics of natural language questions \cite{NeFr02} 
and epistemic logic \cite{Si94}. 
  
In the nineties, bilattices were also investigated in depth by Arieli and Avron, both from an algebraic (\cite{Av95}, \cite{Av96a}) and from a logical point of view (
\cite{ArAv94}, \cite{ArAv98}). In order to deal with paraconsistency and non-monotonic reasoning in Artificial Intelligence, Arieli and Avron \cite{ArAv96} developed the first bilattices-based logical systems in the traditional sense. The simplest of these logics, which  we shall 
call $\mathcal{LB}$, is defined semantically from a class of matrices called \emph{logical bilattices}, and  is an expansion of the  aforementioned Belnap--Dunn  four--valued logic to the standard language of bilattices. In \cite{ArAv96} a Gentzen-style calculus is presented as a syntactic counterpart of $\mathcal{LB}$, and completeness and cut elimination are proved. In the same work, Arieli and Avron considered also an expansion of $\lb$, obtained by adding to it  two (interdefinable) implication connectives. This 
logic,  which we shall denote  by $\lbs$, is also introduced semantically using the notion of logical bilattice. In \cite{ArAv96} both a Gentzen- and a Hilbert-style presentation of $\lbs$ are given, and completeness and cut elimination  for the Gentzen calculus are proved.



 Our main concern in the present work  will be to investigate these two logical systems from the point of view of Abstract Algebraic Logic. This investigation will lead to interesting insights on both  logical and algebraic aspects of bilattices. 
 
The material is organized  as follows. The next section (\ref{sec:aal}) contains some notions of Abstract Algebraic Logic that will be needed in order to develop our approach to bilattice-based logics. In the following one (\ref{sec:bil})  we present the essential definitions and some known results on bilattices.

Chapter  \ref{ch:int} 
presents some new algebraic results 
that will be used to develop our treatment of  bilattice-based logics from the perspective of AAL: a generalization of the Represetation Theorem for bounded interlaced pre-bilattices and bilattices to the unbounded case (Sections \ref{sec:reppre} 
and \ref{sec:repbil}
), the study of filters and ideals in (pre-)bilattices (Section \ref{sec:bif}
) and 
a characterization of the variety of distributive bilattices (Section \ref{sec:dis}
).

In Chapter 
\ref{ch:log}
we study the (implicationless) logic of logical bilattices $\lb$, defined in Section 
\ref{sec:sem} both semantically and through the Gentzen-style presentation due to Arieli and Avron. 
In Section 
\ref{sec:hil} we introduce a Hilbert-style presentation for $\lb$ and prove completeness via a normal form theorem. In the following section (\ref{sec:tar}) we prove that $\lb$ has no consistent extensions and characterize this logic in terms of some metalogical properties of its associated consequence relation. Our Hilbert-style calculus is then used (Section \ref{sec:lb}) in order to study $\lb$ from the perspective of AAL, characterizing its algebraic models. In the last section of the chapter (\ref{sec:gentz}) we prove that the  Gentzen calculus introduced by Arieli and Avron is algebraizable in the sense of Rebagliato and Verd\'{u} \cite{ReV93b} and characterize its equivalent algebraic semantics.

In Chapter \ref{ch:add} 
we consider an expansion of $\lb$, also due to Arieli and Avron, obtained by adding two interdefinable implication connectives to the basic bilattice language. Section  \ref{sec:add} 
contains Arieli and Avron's original presentations, a semanical and a Hilbert-style one, of this logic, which we call $\lbs$.
In Section \ref{sec:lbs} 
we prove some properties of the Hilbert-style calculus of  Arieli and Avron that will be used  to show that the logic $\lbs$ is algebraizable. In the following section (\ref{sec:algstar}) we determine the equivalent algebraic semantics of $\lbs$.
We also show that this class of algebras, that we call ``implicative bilattices'', is a variety and provide an equational presentation for it.

Chapter \ref{ch:imp} 
is devoted to an algebraic study of the variety of implicative bilattices. In Section \ref{sec:repimp} 
we prove a representation theorem for implicative bilattices, analogous to the one proved in Chapter \ref{ch:int} 
for bilattices, stating that any implicative bilattice is isomoprhic to a certain product of two lattices satisfying some additional properties,
which we call \emph{classical implicative lattices}. Section \ref{sec:imp} 
 contains several results about the variety of implicative bilattices from the point of view of Universal Algebra. Section \ref{sec:dua} 
 is devoted to the study of the relationship between classical implicative lattices and another class of lattices that arose as (product bilattice) factors of the algebraic 
 models of $\lb$. The following two sections (\ref{sec:sub} and \ref{sec:other}) 
contain a description of some subreducts of implicative bilattices that seem to us to be particularly significant from a logical point of view. In particular, we introduce and characterize an interesting class of De Morgan lattices endowed with two additional operations forming a residuated pair. In the last section (\ref{sec:cat}) we consider most of the classes of bilattices studied in the literature from the point of view of category theory: in particular, we prove some equivalences between various categories of interlaced bilattices and the corresponding lattices arising from our representation theorems.

\section{Abstract Algebraic Logic} 
\label{sec:aal}

In this section we recall some definitions and results of Abstract Algebraic Logic that will be needed in order to understand our study of bilattices and bilattice-based logics. All the references and  proofs of the results can be found in  \cite{Cz01}  and \cite{FJa09}.

Let us start by giving the definion of what we mean by \emph{a logic} in the context of AAL.

A \emph{sentential logic} is a pair $\mathcal{L} = \la \Al[Fm], \mathbf{C}_{\mathcal{L}} \ra $ where $\Al[Fm]$ is the formula algebra of some similarity type and $ \mathbf{C}_{\mathcal{L}}$ is a structural (i.e.\ substitution-invariant) closure operator on  $\Al[Fm]$. In the present work, since we will not deal with first- or higher-order logic, normally we shall just say  \emph{a logic}, meaning a sentential logic. 

To any closure operator $ \mathbf{C}_{\mathcal{L}}$ of this kind we may associate a consequence relation, denoted by $ \vdash_{\mathcal{L}}$ or $ \vDash_{\mathcal{L}}$, defined as follows: for all $\Gamma \cup \{ \phi \} \subseteq Fm$, we set $\Gamma \vdash_{\mathcal{L}} \phi $ if and only if  $\phi \in  \mathbf{C}_{\mathcal{L}} (\Gamma)$. We will generally reserve the symbol $ \vdash_{\mathcal{L}}$ to consequence relations defined in a sintactical way, while $ \vDash_{\mathcal{L}}$ shall be used for semantically defined relations.

Recalling that instead of closure operators one can equivalently speak of closure systems, we note that another way to define a logic is as a pair $\la \Al[Fm], \mathcal{T}h\mathcal{L}\ra $,  where $\Al[Fm]$ is the formula algebra and $ \mathcal{T}h\mathcal{L} \subseteq P(Fm)$ is a family closed under inverse substitutions, i.e.\ such that for any endomorphism $\sigma : \Al[Fm] \rightarrow \Al[Fm]$ and for any $T \in \mathcal{T}h\mathcal{L}$, we have $\sigma^{-1}(T) \in  \mathcal{T}h\mathcal{L}$. As the notation suggests, $\mathcal{T}h\mathcal{L}$ is the closure system given by the family of all theories of the logic $\mathcal{L}$.

One of the main topics in Algebraic Logic  is the study of logical matrices, i.e.\, roughly speaking, algebraic models of sentential logics. Formally, a \emph{logical matrix}  is a pair $\la \Al, D \ra $ where $ \Al$ is an algebra and $D \subseteq A$ is a set of \emph{designated elements}. 

To each matrix $\la \Al, D \ra $  we can associate a set of congruences of $\Al$ which have a special logical interest, called \emph{matrix congruences}, and defined as follows: $\theta \in \textrm{Con}(\Al)$ is a matrix congruence of $\la \Al, D \ra $  when it is compatible with the set $D$, i.e.\ when, for all $a,b \in A $,  if $a \in D$ and $\la a, b \ra \in \theta$, then $b \in D$. 

It is known that, for any  $\la \Al, D \ra $,  the set of matrix congruences, ordered by inclusion, has always a maximum element: this is called the \emph{Leibniz congruence} of the matrix  $\la \Al, D \ra $, and is denoted by $\leib D$ or $\leib[] \la  \Al, D \ra$. We say that a matrix is \emph{reduced} when its Leibniz congruence is the identity.

In a matrix $\la \Al, D \ra $, the algebra with its operations can be thought of as a kind of generalized truth table, while the designated elements may be regarded as those values which are treated like \emph{true} in classical logic. We may then use any matrix as a truth table in order to define a logic, as follows. We define  $\Gamma \vDash_{\la \Al, D \ra} \phi $ if and only if, for any homomorphism $h: \Al[Fm] \rightarrow  \Al$,   $h[\gamma] \subseteq D$ implies $h(\phi) \in D$.

 A matrix $\la \Al, D \ra $  is said to be a \emph{model} of a logic  $ \mathcal{L}$ when $\Gamma \vdash_{\mathcal{L}} \phi $ implies $\Gamma \vDash_{\la \Al, D \ra} \phi $. In this case the set $D$ is called a\emph{ filter of the logic} $ \mathcal{L}$ or an $ \mathcal{L}$-\emph{filter} on $\Al$.
The set of all filters of a logic $ \mathcal{L}$ on a given algebra $\Al$ will be denoted by $\Fi_{\mathcal{L}} \Al$.

For any algebra $\Al$, the Leibniz congruence naturally determines a map, called the \emph{Leibniz operator}, from the power set of $A$ to the set of all congruences of $\Al$, for which we use the same symbol as for the Leibniz congruence: $\leib: P(A) \rightarrow \textrm{Con}(\Al)$. 

Recalling that the sets  $P(A)$ and $\textrm{Con}(\Al)$ are both lattices, one sees that it makes sense to consider properties of the Leibniz operator such as injectivity, surjectivity, but also monotonicity, etc. The study of these  properties is  very important in Abstract Algebraic Logic and it allowed to build a hierarchy of logics (called the \emph{Leibniz hierarchy}) which presents a classification of all logics (in the sense defined above) according to their algebraic behaviour. 

There are, for instance, logics that have a very close relationship with their associated classes of algebras, so that most or all of the interesting properties of the logic can be formulated and proved as properties of the associated class of algebras and viceversa. These logics, known as \emph{algebraizable logics}, appear at the top of the hierarchy: among them are classical logic, intuitionistic logic, many fuzzy logics, etc. The logic $\lbs$, that we will study in Chapter \ref{ch:add}, is also an example of algebraizable logic. 

At the other end of the Leibniz hierarchy is the class of  \emph{protoalgebraic logics}, which has a special interest for our work. It is the broader class that includes all logics that are, so to speak, reasonably ``well-behaved'' from an algebraic point of view. 
Both classes, that of algebraizable and of protoalgebraic logics, can be characterized in terms of the behaviour of the Leibniz operator: the protoalgebraic, for instance, are the logics for which the the Leibniz operator is monotone on the set of all filters of the logic.

 The general theory of Abstract Algebraic Logic provides a method to associate with any logic $ \mathcal{L}$ a canonical class of algebraic models, sometimes called the \emph{algebraic counterpart of} $ \mathcal{L}$, defined as the class of algebraic reducts of all reduced matrices of $ \mathcal{L}$, and denoted by $\algstar \mathcal{L}$. This method works very well for  protoalgebraic logics, but there are examples of non-protoalgebraic logics in which we do not get a satisfactory result, in the sense that the class of algebras we obtain does not coincide with the one that seems most natural for a given logic. 
 
One way of overcoming this difficulty is to work not with matrices but with generalized matrices. By \emph{generalized matrix} or \emph{g-matrix} we mean a pair $\la \Al, \mathcal{C} \ra $, where $\Al $ is an algebra and $ \mathcal{C}$ is a closure system on the set $A$. From this perspective, a logic $\mathcal{L}$ can be seen as a particular case of generalized matrix of the form $\la \Al[Fm], \mathcal{T}h\mathcal{L}\ra $. 

Instead of g-matrices, it is sometimes more convenient to work with the equivalent notion of \emph{abstract logic}, by this meaning a structure  $\la \Al, \mathbf{C} \ra $ where $\Al$ is an algebra and  $\mathbf{C}$ a closure operator on $A$.

A semantics of g-matrices may be developed as a natural generalization of the semantics of matrices sketched before. To a given g-matrix $\la \Al, \mathcal{C} \ra $ we may associate a logic by defining   $\Gamma \vDash_{\la \Al, \mathcal{C} \ra} \phi $ if and only if, for any homomorphism $h: \Al[Fm] \rightarrow  \Al$ we have 
$h(\phi) \subseteq \mathbf{C} (h[\Gamma)] )$, where $\mathbf{C}$ is the closure operator corresponding to $\mathcal{C}$. Similarly, we say that a g-matrix $\la \Al, \mathcal{C} \ra $ is a g-model of a  logic $\mathcal{L} $ when $ \mathcal{C} \subseteq  \Fi_{\mathcal{L}} \Al$.

The role of the Leibniz congruence is played in this context by the \emph{Tarski congruence} of a g-matrix $\la \Al, \mathcal{C} \ra $, usually denoted by $\tarski_{\Al} \mathcal{C}$, and defined as the greatest congruence compatible with all $F \in \mathcal{C}$. The Tarski congruence can be characterized in terms of the Leibniz congruence, as follows:
$$\tarski_{\Al} \mathcal{C} = \bigcap_{F \in \mathcal{C}} \leibniz_{\Al}F.$$


The Tarski congruence can be equivalently defined as the greatest congruence below the interderivability relation, which in AAL contexts is usually called the \emph{Frege relation}. For a given closure operator $\mathbf{C}$ on a set $A$,  the Frege relation $\mathbf{\Lambda C} $ is defined as follows: 
$$\mathbf{\Lambda C} = \{ \la a, b  \ra \in A \times A : \mathbf{C}(a) = \mathbf{C}(b)  \}. $$

It is obvious that, if $\mathbf{C}$ is the closure operator associated with some logical consequence relation $ \vdash_{\mathcal{L}}$, then the Frege relation corresponds to the interderivability relation, which we usually denote $ \dashv \vdash_{\mathcal{L}}$.

An alternative definition of the Tarski congruence of a g-matrix $\la \Al, \mathcal{C} \ra$ is thus the following:
$$\tarski_{\Al} \mathcal{C} =  \max \{ \theta \in \textrm{Con} \Al : \theta \subseteq \mathbf{\Lambda C}  \} .$$

We say that a g-matrix is \emph{reduced} when its Tarski congruence is the identity. We may then associate to a logic $\mathcal{L}$ another class of algebras, which we denote by $\alg \mathcal{L}$, defined as the class of algebraic reducts of all reduced g-matrices of  $ \mathcal{L}$.

A central notion is also that of \emph{bilogical morphism} between two g-matrices $\la \Al, \mathcal{C} \ra$ and $\la \Al', \mathcal{C}' \ra$: by this we mean an epimorphism $h: \Al \rightarrow \Al' $ such that $\mathcal{C} = \{ h^{-1}[T] : T \in \mathcal{C}' \}$. In terms of closure operators, the previous condition may be expressed as follows: $a \in \mathbf{C}(X)$ if and only if $h(a) \in \mathbf{C'}(h[X])$ for all $a \in A$ and all $X \subseteq A$.

Using the notion of bilogical morphism it is possible to isolate an interesting subclass of the g-models of a logic $\mathcal{L}$: the class of full models of $\mathcal{L}$. A g-matrix $\la \Al, \mathcal{C} \ra$  is a \emph{full model} of a logic $\mathcal{L}$ when there is a bilogical morphism between $\la \Al, \mathcal{C} \ra$ and a g-matrix of the form $\la \Al', \Fi_{\mathcal{L}} \Al' \ra$. These special models are particularly significant because they inherit some interesting metalogical properties from the corresponding logic, something which does not hold for all models (we shall see an example of this in Chapter 
\ref{ch:log}). It is also worth noting that $\alg \mathcal{L}$ can be alternatively defined as the class of algebraic reducts of reduced full models.

The theory of g-matrices allows to obtain results that can be legitimately considered generalizations of those relative to matrices. For our purposes, it is useful to recall that, for any logic $ \mathcal{L}$, we have $ \algstar \mathcal{L} \subseteq \alg \mathcal{L}$. More precisely, we have that $ \alg \mathcal{L} = \mathbb{P}_{SD}\algstar \mathcal{L}$, where $ \mathbb{P}_{SD}$ denotes the subdirect product operator.

For most logics the two classes are indeed identical: in particular, it is a well-known result that for protoalgebraic logics they must coincide. It is interesting to note that, in the known  cases where they do not coincide, it is the class  $ \alg \mathcal{L}$ that seems to be the more naturally associated with the logic $\mathcal{L}$: examples of this include the $\{ \land, \lor\}$-fragment of classical propositional logic, the Belnap-Dunn logic and, as we shall see in Chapter 
\ref{ch:log}, also the logic $\lb$.

It is interesting to observe that in many cases, including those we have just mentioned, the class of algebras naturally associated with a logical system can be obtained also through another process of algebraization, which can be seen as a generalization of the one introduced by Blok and Pigozzi. This is achieved by shifting our attention from logics conceived as deductive systems (semantically defined, or through Hilbert-style calculi) to logics conceived as Gentzen systems. This study, developed in \cite{ReV93b} and \cite{ReV95-p}, led to the definition of a notion of algebraizability for Gentzen systems parallel to the standard one for sentential logics. It turns out that some logical systems, especially logics without implication, although not algebraizable (or not even protoalgebraic), have an associated Gentzen system that is algebraizable. This is true, as we shall see, also of the logic $\lb$.


\section{Pre-bilattices and bilattices}
\label{sec:bil}

In this section we collect the basic definitions and some known results on bilattices that will be used thoughout our work. First of all, let us note that the terminology concerning bilattices is not
uniform\footnote{
This was already pointed out in \cite[p.~111]{MoPiSlVo00}.
}, not even as far as the basic definitions are concerned. In this work we shall  reserve the name
``bilattice'' to the algebraic structures that sometimes are called ``bilattices with negation'': this terminology seems to us to be the most perspicuous, 
and is becoming more or less standard in recent
papers about bilattices.

\begin{definition} \label{def:pbl}

A \emph{pre-bilattice} is an algebra $\Al[B] = \left\langle B, \land,
\lor, \otimes, \oplus \right\rangle$  such that $\langle B, \land, \lor
\rangle$ and $\langle B, \otimes, \oplus \rangle$ are both lattices. 
\end{definition} 

The
order associated with the lattice $\langle B, \land, \lor \rangle$, which we shall sometimes call the \emph{truth lattice} or \emph{t-lattice}, is
denoted by $\leq_t$ and is called the \emph{truth order}, while the order
$\leq_k$ associated with $\langle B, \otimes, \oplus \rangle$, sometimes called the \emph{knowledge lattice} or \emph{k-lattice},  is the
\emph{knowledge order}. 

As it happens with lattices, a pre-bilattice can be also viewed as a (doubly) partially ordered set. When focusing our attention on this aspect, we will denote a pre-bilattice by $ \left\langle B, \leq_{t}, \leq_{k} \right\rangle$ instead of $ \left\langle B, \land,
\lor, \otimes, \oplus \right\rangle$.

Usually in the literature it is required that
the lattices be complete or at least bounded, but here none of these
assumptions is made. The minimum and maximum of the truth lattice, in
case they exist, will be denoted by $\false$ and $\true$; similarly,
$\bot$ and $\top$ will refer to the minimum and maximum of the knowledge
lattice. 

Of course the interest on pre-bilattices increases when there is some
connection between the two orders. At least two ways of establishing such a connection have been investigated in the literature. The first one is to impose certain monotonicity 
properties to the connectives of the two orders, as in the following definition, due to Fitting \cite{Fi90}. 

\begin{definition} \label{def:intpbl}
A pre-bilattice  $\Al[B] = \left\langle B, \land,
\lor, \otimes, \oplus \right\rangle$  is
\emph{interlaced} whenever each one of the four lattice operations $\land,
\lor, \otimes$ and $\oplus$ is monotonic with respect to both partial
orders $\leq_t$ and $\leq_k$. That is, when the following quasi-equations hold:
\begin{equation*}
  \label{eq:interlaced}
\begin{split}
 x \leq_t y \: & \Rightarrow \: x \otimes z \leq_t y \otimes z  \qquad
 \qquad x \leq_t y \: \Rightarrow \: x \oplus z \leq_t y \oplus z \\
  x \leq_k y \: & \Rightarrow \: x \land z \leq_k y \land z  
  \qquad
 \qquad x \leq_k y \: \Rightarrow \: x \lor z \leq_k y \lor z.  
\end{split}
\end{equation*}
(Here, of course, the inequality $x \leq_t y$ is an abbreviation for the identity $ x \land y \approx x$, and similarly $x \leq_k y$ stands for $ x \otimes y \approx x$.)
\end{definition} 

A weaker notion, called \emph{regularity}, has been considered by Pynko \cite{Py00}: a pre-bilattice is \emph{regular} if it satisfies the last two quasi-equations of Definition \ref{def:intpbl}, i.e.\ if the truth lattice operations are monotonic w.r.t. the knowledge order. In the present work we shall not deal with this weaker notion, but it may be worth noting that from Pynko's results it follows that, for bounded pre-bilattices, being regular is equivalent to being interlaced.

On the other hand, the interlacing conditions may be strengthened through the following definition due to Ginsberg \cite{Gi88}:

\begin{definition} \label{def:distr}
A pre-bilattice is \emph{distributive} when
all twelve distributive laws concerning the four lattice operations, i.e.\ any identity of the following form, hold:
\begin{equation*}
  \label{eq:dist}
\begin{split}
 x \circ ( y \bullet z )  \approx (x \circ y) \bullet (x
 \circ z) \mbox{\qquad for every } \circ, \bullet \in \{ \land, \lor,
 \otimes, \oplus \} \mbox{ with } \circ \neq \bullet.
\end{split}
\end{equation*}
\end{definition}

We will denote, respectively, the classes of
pre-bilattices, of interlaced pre-bilattices and of distributive
pre-bilattices by $\PBL$, $\IPBL$ and $\DPBL$. 

Obviously $\PBL$ is an equational class, axiomatized by the lattice identities for the two lattices, and so is  $\DPBL$, which can be axiomatized by adding the twelve distributive laws to the lattice identities (this axiomatization is of course not minimal, since not all distributive laws are independent from each other). It is known that $\IPBL$ is also a variety\footnote{
A proof of this fact can be found in \cite{Av96a}: even if Avron assumes that pre-bilattices are always bounded in both orders, it is easy to check that his proofs do not use such an  assumption.
}, axiomatized by the identities for pre-bilattices, plus the following ones: 
\begin{equation*}
  \label{eq:inteq}
\begin{split}
(x \land y) \otimes z & \leq_t y \otimes z \qquad
 \qquad (x \land y) \oplus z \leq_t y \oplus z \\
(x \otimes y) \land z & \leq_k y \land z \qquad
 \qquad (x \otimes y) \lor z \leq_k y  \lor z.
\end{split}
\end{equation*}

It is also known, and easily checked, that being distributive implies being interlaced: hence we have that $\DPBL \subseteq \IPBL \subseteq \PBL$, and all of these inclusions are strict, as we shall see later examining some examples of bilattices.

From an algebraic point of view, $\IPBL $ is perhaps the most interesting subclass of pre-bilattices: its interest lies mainly in the fact that any interlaced pre-bilattice can be represented as a special kind of product of two lattices. This result is well known for bounded pre-bilattices, but in the present work we will generalize it to the unbounded case. 

Focusing on the bounded case, we may list some basic properties of interlaced pre-bilattices (all proofs can be found in \cite{Av96a}). 

\begin{proposition} \label{prop:intpbl}

Let  $\Al[B] =\left\langle B, \land, \lor,
\otimes, \oplus, \false, \true, \bot, \top \right\rangle$ be a bounded interlaced pre-bilattice. Then the following equations are satisfied: 

\begin{equation}
  \label{eq:intbo1}
\begin{split}
\false \otimes  \true & \approx \bot
 \qquad \qquad 
\false \oplus \true \approx \top  
 \\  
 \bot \land \top & \approx \false
 \qquad \qquad 
\bot \lor \top \approx \true
 \end{split}
\end{equation} 
\begin{equation}
  \label{eq:intbo2}
\begin{split}
x \land \bot \approx x \otimes \false \qquad \qquad
x \land \top \approx x \oplus \false \\
x \lor \bot \approx x \otimes \true \qquad \qquad
x \lor \top \approx x \oplus \true
 \end{split}
\end{equation} 
\begin{equation}
 \label{eq:intbo3}
\begin{split}
x & \approx (x \land \bot ) \oplus  (x \lor \bot )   \approx (x \otimes \false ) \oplus  (x \otimes \true ) \\ 
x & \approx (x \land \top ) \otimes  (x \lor \top )   \approx (x \oplus \false ) \oplus  (x \oplus \true )  \\ 
 x & \approx (x \otimes \false ) \lor  (x \oplus \false )  \approx (x  \land \bot ) \lor  (x \land \top )  \\ 
x & \approx (x \otimes \true ) \land  (x \oplus \true )   \approx (x  \lor \bot ) \land  (x \lor \top ).
 \end{split}
\end{equation} 

\begin{equation}
 \label{eq:intbo4}
\begin{split}
x \wedge y & \approx (x \otimes \mathsf{f}) \oplus (y \otimes \mathsf{f}) \oplus (x \otimes y \otimes \mathsf{t}) \\ 
x \vee y & \approx (x \otimes \mathsf{t}) \oplus (y \otimes \mathsf{t}) \oplus (x \otimes y \otimes \mathsf{f}) \\
x \otimes y & \approx  (x \wedge \bot) \vee (y \wedge \bot) \vee (x \wedge y \wedge \top)  \\
x \oplus y & \approx   (x \wedge \top) \vee (y \wedge \top) \vee (x \wedge y \wedge \bot).
 \end{split}
\end{equation} 

\end{proposition} 

The last four equations (\ref{eq:intbo4}) show that in the bounded case we can explicitely define the lattice operations of one of the lattice orders using the operations of the other order. Indeed, a stronger and interesting result, due to Avron \cite{Av96a}, can be stated.

 Given a lattice $\Al[L] =\left\langle L, \otimes, \oplus \right\rangle$, we say that an element $a \in L $ is \emph{distributive} when each equation of the form $x \circ ( y \bullet z )  \approx (x \circ y) \bullet (x  \circ z)$, where  $\circ, \bullet \in \{  \otimes, \oplus \} $, holds in case $a = x$ or $a = y $ or $a = z$. Now we have the following:
 
 \begin{proposition} \label{prop:poleq}
 Let $\Al[B] =\left\langle B, \otimes, \oplus,  \bot, \top \right\rangle$ be a bounded lattice, with minimum $ \bot$ and maximum $\top$, such that there are distributive elements $\false, \true \in B$ which are complements of each other, i.e.\ satisfying   that $\false \otimes  \true = \bot
$ and $\false \oplus \true = \top  $. Then the structure $\Al[B] =\left\langle B, \land, \lor,
\otimes, \oplus, \false, \true, \bot, \top \right\rangle$, where the operations $\land$ and $\lor$ are defined as in Proposition \ref{prop:intpbl} (\ref{eq:intbo4}), is a bounded interlaced pre-bilattice. 
  \end{proposition} 
It is clear, by duality, that a similar result can be proved starting from the bounded lattice $\Al[B] =\left\langle B, \land, \lor,  \false, \true \right\rangle$.

Notice that none of the conditions we have considered so far precludes the possibility that a pre-bilattice be degenerated, in the sense that the two orders may coincide, or that one may be the dual of the other (we will come back to this observation when we deal with product pre-bilattices). These  somehow less interesting cases are ruled out when we come to the second way of connecting the two lattice orders, which consists in expanding the algebraic language with a unary operator. This is the method Ginsberg  originally
used to introduce bilattices.  

\begin{definition} \label{def:bil}
A \emph{bilattice} is an algebra $\Al[B] =\left\langle B, \land, \lor,
\otimes, \oplus, \neg \right\rangle$ such that the reduct
$\left\langle B, \land, \lor,
\otimes, \oplus \right\rangle$ is a pre-bilattice and the
\emph{negation} $\neg$ is a unary operation satisfying that for every $a,b \in B$,
\begin{enumerate}[]
 \item \quad \textbf{(neg1)} \qquad if $a \leq_{t} b$,  then  $\neg b \leq_{t} \neg a$
 \item \quad \textbf{(neg2)} \qquad  if $a \leq_{k} b$,  then  $\neg a \leq_{k} \neg b$ 
 \item \quad \textbf{(neg3)} \qquad  $ a = \neg \neg a  $.
\end{enumerate}
\end{definition}

The interlacing and distributivity properties extend to bilattices in the obvious way: we say that a bilattice is interlaced (distributive) when its pre-bilattice reduct is interlaced (distributive). 

Figure  \ref{fig:hasse1} shows the double Hasse diagram of some of the most important pre-bilattices. The diagrams should be read as follows: $a \leq_t b$ if there is a path from $a$ to $b$ which goes uniformly from left to right, while  $a \leq_k b$ if  there is a path from $a$ to $b$ which goes uniformly from the bottom to the top\footnote{ It is worth pointing out that, unlike lattices, not all finite bilattices can be represented in this way: for more on this, see the notions introduced by Avron \cite{Av95} of ``graphically
  representable'' and ``precisely representable'' pre-bilattice.}. The four lattice operations are thus uniquely determined by the diagram, while  negation, if there is one, corresponds to reflection along the vertical axis connecting $\bot$ and $\top$. 
  
  It is then clear that all the pre-bilattices shown in Figure  \ref{fig:hasse1} can be endowed with a negation in a unique way, and so turned into bilattices. When no confusion is likely to arise, we shall use the same name to denote a particular pre-bilattice and its associated bilattice: the names used in the diagrams are by now more or less standard in the literature ($\7 $ is sometimes called $\default$, which is the name originally used by Ginsberg \cite{Gi88}, since this bilattice was introduced with applications to default logic in mind).

\begin{center}
\begin{figure}[t]
\vspace{15pt}
\begin{center}
\begin{tabular}{cccc}
\vspace{5pt}

\begin{minipage}{2cm}
\setlength{\unitlength}{1.2cm}
\begin{center}
\begin{picture}(2,2)(0.15,0)
\put(1,0){\makebox(0,0)[l]{ $\bot$}} 
\put(0,1){\makebox(0,0)[r]{$\false$ }} 
\put(2,1){\makebox(0,0)[l]{ $\true$}} 
\put(1,2){\makebox(0,0)[r]{$\top$ }} 

\put(1,0){\circle*{0.2}} 
\put(0,1){\circle*{0.2}} 
\put(2,1){\circle*{0.2}}
\put(1,2){\circle*{0.2}}

\put(1,0){\line(1,1){1}} 
\put(1,0){\line(-1,1){1}} 
\put(0,1){\line(1,1){1}} 
\put(2,1){\line(-1,1){1}}

\end{picture}
\end{center}
\end{minipage}

&

\begin{minipage}{3cm}
\setlength{\unitlength}{0.8cm}
\begin{center}
\begin{picture}(2,2)(0,-0.25)
\put(1,-1){\makebox(0,0)[l]{ $\bot$}} 
\put(1,0){\makebox(0,0)[l]{ $a$}} 
\put(0,1){\makebox(0,0)[r]{$\false$ }} 
\put(2,1){\makebox(0,0)[l]{ $\true$}} 
\put(1,2){\makebox(0,0)[r]{$\top$ }} 

\put(1,-1){\circle*{0.2}} 
\put(1,0){\circle*{0.2}} 
\put(0,1){\circle*{0.2}} 
\put(2,1){\circle*{0.2}}
\put(1,2){\circle*{0.2}}

\put(1,0){\line(1,1){1}} 
\put(1,0){\line(-1,1){1}} 
\put(0,1){\line(1,1){1}} 
\put(2,1){\line(-1,1){1}} 

\put(1,-1){\line(0,1){1}} 
\put(1,-1){\line(-1,2){1}} 
\put(1,-1){\line(1,2){1}} 

\end{picture}
\end{center}
\end{minipage}

&

\begin{minipage}{3cm}
\setlength{\unitlength}{0.7cm}
\begin{center}
\begin{picture}(3,4)(-0.25,0)
\multiput(1,0)(1,1){3}{\circle*{0.3}}
\multiput(0,1)(1,1){3}{\circle*{0.3}}
\multiput(-1,2)(1,1){3}{\circle*{0.3}}

\put(1,0){\line(1,1){2}} 
\put(0,1){\line(1,1){2}} 
\put(-1,2){\line(1,1){2}} 
\put(1,0){\line(-1,1){2}} 
\put(2,1){\line(-1,1){2}} 
\put(3,2){\line(-1,1){2}} 

\put(1,0){\makebox(0,0)[l]{ $\bot$}} 
\put(-1,2){\makebox(0,0)[r]{$\false$ }} 
\put(3,2){\makebox(0,0)[l]{ $\true$}} 
\put(1,4){\makebox(0,0)[r]{$\top$ }} 

\end{picture}
\end{center}
\end{minipage}

&

\begin{minipage}{3cm}
\setlength{\unitlength}{0.9cm}
\begin{center}
\begin{picture}(2,2)(0,-0.25)
\put(1,-1){\makebox(0,0)[l]{ $\bot$}} 
\put(0.5,-0.5){\makebox(0,0)[r]{$a$ }} 
\put(1.5,-0.5){\makebox(0,0)[l]{ $b$}} 
\put(1,0){\makebox(0,0)[l]{ $c$}} 
\put(0,1){\makebox(0,0)[r]{$\false$ }} 
\put(2,1){\makebox(0,0)[l]{ $\true$}} 
\put(1,2){\makebox(0,0)[r]{$\top$ }} 

\put(1,-1){\circle*{0.2}} 
\put(0.5,-0.5){\circle*{0.2}}
\put(1.5,-0.5){\circle*{0.2}}
\put(1,0){\circle*{0.2}} 
\put(0,1){\circle*{0.2}} 
\put(2,1){\circle*{0.2}}
\put(1,2){\circle*{0.2}}

\put(1,0){\line(1,1){1}} 
\put(1,0){\line(-1,1){1}} 
\put(0,1){\line(1,1){1}} 
\put(2,1){\line(-1,1){1}} 

\put(1,-1){\line(-1,1){0.5}} 
\put(1,-1){\line(1,1){0.5}} 

\put(1,0){\line(1,-1){0.5}} 
\put(1,0){\line(-1,-1){0.5}} 

\put(0,1){\line(1,-3){0.5}} 
\put(2,1){\line(-1,-3){0.5}} 

\end{picture}
\end{center}
\end{minipage}
\\ \\

$\four$ & $\five$ & $\nine$ & $\7$
\end{tabular}

\caption{Some examples of (pre-)bilattices} \label{fig:hasse1}
\end{center}
\end{figure}
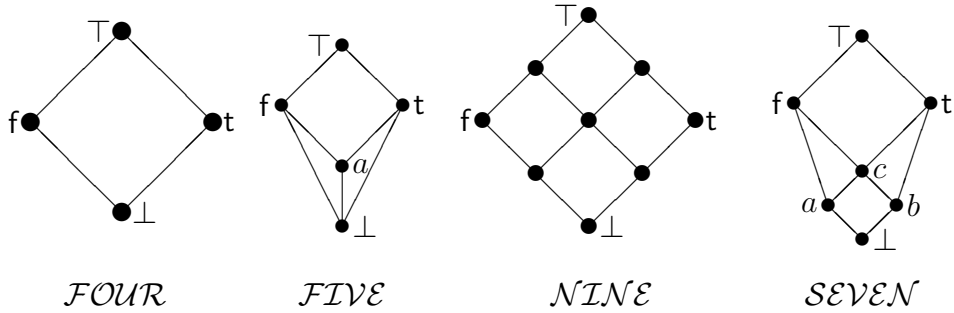
\end{center}

The smallest non-trivial bilattice is $\four$. This algebra has a key role among bilattices, both from an algebraic and from a logical point of view, as we shall see. 

$\four$ is distributive and, as a bilattice, it is a simple algebra. In fact it is, up to isomorphism, the only subdirectly irreducible bounded distributive bilattice (this is proved,  for instance, in \cite{MoPiSlVo00}). 

Let us also note that the $\{ \wedge, \vee, \neg \}$-reduct of $\four$ coincides with the four-element De Morgan algebra that was used by Belnap \cite{Be76} to define the Belnap-Dunn four-valued logic. In fact, we shall see that the logic of distributive bilattices (both with and without implication) turns out to be a conservative expansion  of the Belnap-Dunn logic.

\begin{proposition}[De Morgan laws]
\label{prop:dm}

The following equations  hold in any bilattice: 
\begin{align*}
 \label{eq:dm}
\neg (x \land y) & \approx \neg x \lor \neg y     &&& \neg (x \lor y)  & \approx \neg x \land \neg y \\
\neg (x \otimes y) & \approx \neg x \otimes \neg y &&&   \neg (x \oplus y) & \approx \neg x \oplus \neg y.
\end{align*} 
Moreover, if the bilattice is bounded, then $\neg \top = \top$, $\neg \bot = \bot$, $\neg \true = \false$ and
$\neg \false = \true$.
\end{proposition}

So, if a bilattice $\Al[B] =\left\langle B, \land, \lor, \otimes, \oplus, \neg \right\rangle$ is distributive, or at least the truth lattice of $\Al[B]$ is distributive, then the reduct $\la B, \wedge, \vee, \neg\ra$ is a De Morgan lattice. It is also easy to check that the four De Morgan laws imply that the negation operator satisfies \textbf{(neg1)} and \textbf{(neg2)}.  Then, it is obvious that the class of
bilattices, denoted by $\BL$, is equationally axiomatizable. Analogously
to what we did in the case of pre-bilattices, we will
denote by $\IBL$ and $\DBL$ the classes of \emph{interlaced bilattices} and
\emph{distributive bilattices}, which are also equationally
axiomatizable. It is obvious that $\DBL \subseteq \IBL \subseteq \BL$, and these inclusions are all strict, as we shall see presently.

Further expansions of the similarity type $\{ \land, \lor,
\otimes, \oplus, \neg \}$, which may be considered the standard bilattice language, have also been considered in the literature. Fitting  \cite{Fi94}, for instance, introduced a kind of dual negation operator, which he called \emph{conflation}, and an implication-like connective called \emph{guard}, while Arieli and Avron \cite{ArAv96} investigated different choices for a bilattice implication. However, throughout this work we will always deal only with the basic language $\{ \land, \lor, \otimes, \oplus, \neg \}$, except for the last two chapters, where we will consider the expansion obtained by adding one of Arieli and Avron's implication connectives. 

An interesting class of (pre-)bilattices can be constructed as a kind of product of two lattices. We shall see that this construction, due to Fitting\footnote{The essential of the definition are already in \cite{Gi88}, but Ginsberg considered only a special case of the construction, what he called ``world-based bilattices''. } \cite{Fi90} has a natural intuitive interpretation, and gives rise to a class of structures that enjoys nice algebraic properties.

\begin{definition} \label{defn:prodpbl}
Let $\Al[L_1] = \langle L_1,  \sqcap_1, \sqcup_1 \rangle$ and
$\Al[L_2] = \langle L_2,  \sqcap_2, \sqcup_2 \rangle$ be two
lattices with associated orders $\leq_1$ and $\leq_2$. Then the
\emph{product pre-bilattice}  $\Al[L_1] \odot \Al[L_2] = \langle L_{1}
\times L_{2}, \land, \lor, \otimes, \oplus \rangle$ is defined as
follows. For all $\left\langle a_{1}, a_{2} \right\rangle, \left\langle
b_{1}, b_{2} \right\rangle \in L_{1} \times L_{2}$, 
\begin{align*}
\left\langle a_{1}, a_{2} \right\rangle \wedge \left\langle b_{1}, b_{2} \right\rangle &= \left\langle a_{1} \sqcap_{1} b_{1} ,\: a_{2} \sqcup_{2} b_{2} \right\rangle  \\ 
\left\langle a_{1}, a_{2} \right\rangle \vee \left\langle b_{1}, b_{2} \right\rangle &= \left\langle a_{1} \sqcup_{1} b_{1} ,\: a_{2} \sqcap_{2} b_{2} \right\rangle \\ 
\left\langle a_{1}, a_{2} \right\rangle \otimes \left\langle b_{1}, b_{2} \right\rangle &= \left\langle a_{1} \sqcap_{1} b_{1} ,\: a_{2} \sqcap_{2} b_{2} \right\rangle \\ 
\left\langle a_{1}, a_{2} \right\rangle \oplus \left\langle b_{1}, b_{2}
\right\rangle &= \left\langle a_{1} \sqcup_{1} b_{1} ,\: a_{2}
\sqcup_{2} b_{2} \right\rangle.
\end{align*} 
\end{definition}

It easy to check that the structure $\Al[L_1] \odot \Al[L_2]$ is always
an interlaced pre-bilattice, and it is distributive if and only if both  $\Al[L_1]$ and $\Al[L_2]$ are distributive. From the definition it is also obvious that 
\begin{center}
  $\la a_1,a_2 \ra \leq_k \la b_1, b_1 \ra $ \qquad iff \qquad $a_1 \leq_1 b_1$ and
  $a_2 \leq_2 b_2$
\end{center}
and
\begin{center}
  $ \la a_1,a_2 \ra \leq_t  \la b_1, b_1 \ra$ \qquad iff \qquad $a_1 \leq_1 b_1$ and
  $a_2 \geq_2 b_2$.
\end{center}

The construction, as we have said, has a natural interpretation: we can think of the first component of each element of the form $\la a_1,a_2 \ra$ as representing evidence \emph{for} the truth of some sentence, while the second component can be thought of as representing the evidence \emph{against}  the truth (or for the falsity) of that sentence. 

It is not difficult to convince oneself that the truth lattice operations $\land$ and $ \lor$  act on each component according to our intuitions, as generalizations of classical conjunction and disjunction: for instance $\land$ takes the infimum of the ``truth component'' and the supremum of the ``falsity component''. More unusual, perhaps, are the two knowledge lattice connectives. As Fitting \cite{Fi91b} puts it: 

\begin{quotation}
If we think of $\leq_k$ as being an ordering by knowledge, then $\otimes$ is a consensus operator: $p \otimes q$ is the most that $p$ and $q$ can agree on. Likewise $\oplus$ is a `gullability' operator: $p \oplus q$  accepts and combines the knowledge of $p$ with that of $q$, whether or not there is a conflict. Loosely, it believes whatever it is told.
\end{quotation}

If the two lattices $\Al[L_1]$ and $\Al[L_2]$ are isomorphic (so we may assume that they coincide, and denote both lattices just by $\Al[L]$), then it is possible to define a negation in $\Al[L] \odot \Al[L]$, so we speak of \emph{product bilattice} instead of  product pre-bilattice. Negation is defined as 
$$\neg \la a_1,a_2 \ra =  \la a_2, a_1 \ra.$$

 Once again, it is easy to see that the behaviour of this operation is consistent with the intuitive interpretation we have proposed.
 
Using the construction we have described, we are now able to settle the question of whether the inclusions between the subvarieties of (pre-)bilattices mentioned above are strict. It is easy to see that $\5$ and $\7$ are not interlaced, hence we have $\IPBL \varsubsetneq \PBL$. To see that $\DPBL \varsubsetneq \IPBL $  it is enough to consider a product pre-bilattice $\Al[L] \odot \Al[L]$ where $\Al[L] $ is a non-distributive lattice. Since all the examples of pre-bilattices considered can be turned into bilattices, as an immediate consequence we also have  $\DBL \varsubsetneq \IBL \varsubsetneq \BL$. 
 
Before proceeding, let us note that there is an important difference between the two variants of the construction described; this fact, although easily seen, has not  received much attention  in the literature on bilattices so far. The difference is that the product pre-bilattice construction can be regarded as a particular case of a direct product, while this is not the case for the product bilattice.
 
As anticipated above, all
lattices $\Al[L] = \langle L, \sqcap, \sqcup \rangle$ can be seen as
degenerated pre-bilattices in at least four different ways. We can
consider the following four algebras:
\begin{align*}
\Al[L^{++}] &  = \langle L, \sqcap, \sqcup, \sqcap, \sqcup \rangle \\
\Al[L^{+-}]  &  = \langle L, \sqcap, \sqcup, \sqcup, \sqcap \rangle \\
\Al[L^{-+}] & = \langle L, \sqcup, \sqcap, \sqcap, \sqcup \rangle  \\
\Al[L^{--}] &  = \langle L, \sqcup, \sqcap, \sqcup, \sqcap \rangle.
\end{align*}

The first superscript, $+$ or $-$, says whether we are taking as
truth order the same order than in the original lattice or the dual one;
and the second superscript refers to the same for the knowledge order.
Using this notation, it is easy to see that the product pre-bilattice
$\Al[L_1] \odot \Al[L_2]$ coincides with the direct product
$\Al[L_1^{++}] \times \Al[L_2^{-+}]$. Notice also  that 
$\Al[L_1^{++}] = \langle L_1, \sqcap_1, \sqcup_1, \sqcap_1, \sqcup_1 \rangle$ and
$\Al[L_2^{-+}] = \langle L_2, \sqcup_2, \sqcap_2, \sqcap_2,  \sqcup_2 \rangle$.

In the next chapter we will come back to this construction, relating it to the representation theorem for unbounded pre-bilattices; for now it suffices to note that, of course, the product bilattice is not a direct product, because in general the factor lattice need not have a negation.

We close this section stating the known representation theorem in its two versions: for bounded interlaced pre-bilattices and for bounded interlaced bilattices. This theorem has been stated and proved in several works, several versions, and different degrees of generality\footnote{
For a brief review of these versions, see \cite{MoPiSlVo00}.}. The last and perhaps deeper work on it, and in general on interlaced bounded (pre-)bilattices, is Avron's \cite{Av96a}.

\begin{thm}[Representation, 1]
  \label{thm:reprpre}
  Let $\Al[B]$ be a bounded pre-bilattice. The following statements are
  equivalent.
  \begin{enumerate}[(i)]
    \item $\Al[B]$ is an interlaced pre-bilattice.
    \item There are two bounded lattices $\Al[L_1]$ and $\Al[L_2]$ such that
      $\Al[B]$ is isomorphic to $\Al[L_1] \odot \Al[L_2]$.
  \end{enumerate}
\end{thm}

Although, as we have pointed out, many versions of the theorem are to be found in the literature, all of them use essentially the same proof strategy, of which we present here a sketch in order to help understand why this kind of proof does not work in the unbounded case. 

Of course, that (ii) implies (i) is immediate. To prove the other implication we need to construct $\Al[L_1]$ and $\Al[L_2]$. This can be done by considering principal upsets and/or downsets  of some of the bounds, 
together with the lattice operations inherited from the pre-bilattice. For this, having just one of the bounds is sufficient; of course, if we use $\bot$ or $\top$, then we have to consider upsets and downsets relative to the truth order, and similarly with $\true$ or $\false$ we need to use the knowledge order. 

Let us take, for instance, $\bot$ and the order $\leq_t$. Then we have 
\begin{align*}
\Al[L_1] & =  \la \{  a \in B : a \geq_t \bot \}, \otimes, \oplus, \bot, \true  \ra = \la \{  a \in B : a \geq_t \bot \}, \land, \lor, \bot, \true  \ra \\
\Al[L_2] & =  \la \{  a \in B : a \leq_t \bot \}, \otimes, \oplus, \bot, \false  \ra = \la \{  a \in B : a \leq_t \bot \}, \lor, \land, \bot, \false  \ra.
\end{align*}

Taking a look at the Hasse diagrams in Figure \ref{fig:hasse1}, one may observe that, from a geometrical point of view, we are making a kind of projection of each point of the pre-bilattice on the two axes connecting $\bot$ to $\true$ and $\bot$ to $\false$, fixing  $\bot$ as the origin. 

The isomorphism $h: B  \rightarrow  L_1 \times L_2 $ is in this case defined as, for all $a \in B$,  $$h(a) = \la a \lor \bot, a \land \bot \ra.$$ Its inverse $h^{-1}:  L_1 \times L_2  \rightarrow B  $ is defined as $$h^{-1}(\la a_1, a_2 \ra) = a_1 \oplus a_2. $$ Injectivity of these maps is easily proved using Proposition \ref{prop:intpbl} (\ref{eq:intbo2}) and (\ref{eq:intbo3}), which can be also used to give altenative decompositions, using the other bounds of the pre-bilattice. We stress that the key point here is that there is at least one bound (geometrically, a point which can be taken to be the origin of the axes on which we are making the projections).

The representation theorem for bilattices is just a special case of the former:

\begin{thm}[Representation, 2]
  \label{thm:reprbil}
  Let $\Al[B]$ be a bounded bilattice. The following statements are
  equivalent.
  \begin{enumerate}[(i)]
    \item $\Al[B]$ is an interlaced bilattice.
    \item There is a bounded lattice $\Al[L]$  such that
      $\Al[B]$ is isomorphic to $\Al[L] \odot \Al[L]$.
  \end{enumerate}
\end{thm}

Everything works as in the case of pre-bilattices, but now we have that $\Al[L_1]$ and $\Al[L_2]$ are isomorphic via the map given by the negation operator.  


As a corollary of the representation theorem, we get a characterization of subdirectly irreducible bounded interlaced (pre-)bilattices (see for instance \cite{MoPiSlVo00}). We have that a bounded pre-bilattice $\Al[L_1] \odot \Al[L_2]$ is subdirectly irreducible if and only if  $\Al[L_1] $ is a  subdirectly irreducible lattice and $\Al[L_2] $ is trivial or viceversa,  $\Al[L_2] $ is a  subdirectly irreducible lattice and $\Al[L_1] $ is trivial. For bilattices, we have that $\Al[L] \odot \Al[L]$ is subdirectly irreducible if and only if $\Al[L]$ is a subdirectly irreducible lattice.

\include{Ch_2_Int_(Pre)bilattices}





\chapter{Logical bilattices:  the logic $\lb$} 
\label{ch:log}

\section{Semantical and Gentzen-style presentations}
\label{sec:sem}

In this chapter we will study the logic $\lb$, introduced by Arieli and Avron \cite{ArAv96}, from the standpoint of Abstract Algebraic Logic. We start by giving a semantical presentation of $\lb$, and then consider a sequent calculus that is complete with respect to this semantics. 

Our semantical presentation of $\lb$ differs from Arieli and Avron's original one in that they use a whole class of matrices (called ``logical bilattices'') to define their logic, while we will consider only $\four$. However, as we shall see, the two definitions have been proved to be equivalent. 

Recall that $\four$ is the smallest non-trivial bilattice and its  $\{ \land, \lor, \neg \}$-reduct is  a four-element De Morgan algebra which is known to generate the variety of De Morgan lattices. Indeed, as we have anticipated, the Belnap-Dunn four-valued
logic is the logic defined by the logical matrix $\langle \Al[M_{4}], \Tr
\rangle$ where $\Al[M_{4}]$ is this four-element De Morgan algebra and
$\Tr$ is the set $\{ \top, \true \}$ (see~\cite[Proposition~2.3]{F97}).

According to the interpretation proposed by Belnap and Dunn,
the elements of $\four$ may be thought of as: only \emph{true} $(\true)$,
only \emph{false} $(\false)$, both \emph{true} and \emph{false}
$(\top)$, and neither \emph{true} nor \emph{false} $(\bot)$. Thus, taking  $\Tr= \{ \top,
\true \}$ as the set of designated elements corresponds to the intuitive idea of preferring those values which are at
least true (but possibly also false).  Arieli and Avron followed the same intuition when they introduced the
logic $\lb$. Let us give the formal definition:

\begin{definition}
   Let $\LB = \langle \Al[Fm], \Log \rangle$ be the logic defined by the  matrix
  $\langle \four, \Tr \rangle$.
\end{definition}

\begin{table}[ht!]
\fbox{\parbox{\textwidth}{
\vspace{0,15cm}
\begin{enumerate}[ ]
  \item \quad \quad \quad \textbf{Axiom:} \qquad $(Ax) \quad \Gamma, \varphi \rhd \varphi,
    \Delta$.
    \item
  \item \quad \quad \quad \textbf{Rules:} \qquad \;  Cut Rule plus the following logical rules:
\end{enumerate}
\begin{center} {
  \begin{align*}
&(\land \rhd) \quad \frac{\Gamma, \varphi, \psi \rhd \Delta}{\Gamma, \varphi \land \psi \rhd \Delta} 
& & (\rhd \land) \quad \frac{\Gamma \rhd \Delta, \varphi \quad \Gamma \rhd \Delta, \psi }{\Gamma \rhd \Delta, \varphi \land \psi} \\ \\
& (\neg \land \rhd) \quad \frac{\Gamma, \neg \varphi \rhd \Delta \quad \Gamma, \neg \psi \rhd \Delta }{\Gamma, \neg (\varphi \land \psi) \rhd \Delta}  
& &  (\rhd \neg \land ) \quad \frac{\Gamma \rhd \Delta, \neg \varphi,\neg \psi}{\Gamma \rhd \Delta, \neg (\varphi \land \psi)} \\ \\
& (\lor \rhd) \quad \frac{\Gamma, \varphi \rhd \Delta \quad \Gamma, \psi \rhd \Delta }{\Gamma, \varphi \lor \psi \rhd \Delta}  & &  (\rhd \lor ) \quad \frac{\Gamma \rhd \Delta, \varphi, \psi}{\Gamma \rhd \Delta, \varphi \lor \psi} \\ \\
&(\neg \lor \rhd) \quad \frac{\Gamma, \neg \varphi, \neg \psi \rhd \Delta}{\Gamma, \neg (\varphi \lor \psi) \rhd \Delta} & & (\rhd \neg \lor) \quad \frac{\Gamma \rhd \Delta, \neg \varphi \quad \Gamma \rhd \Delta, \neg \psi }{\Gamma \rhd \Delta, \neg (\varphi \lor \psi)} \\ \\
&(\otimes \rhd) \quad \frac{\Gamma, \varphi, \psi \rhd \Delta}{\Gamma, \varphi \otimes \psi \rhd \Delta} 
& & (\rhd \otimes) \quad \frac{\Gamma \rhd \Delta, \varphi \quad \Gamma \rhd \Delta, \psi }{\Gamma \rhd \Delta, \varphi \otimes \psi} \\ \\
& (\neg \otimes \rhd) \quad \frac{\Gamma, \neg \varphi, \neg \psi \rhd \Delta}{\Gamma, \neg (\varphi \otimes \psi) \rhd \Delta}  
& & (\rhd \neg \otimes) \quad \frac{\Gamma \rhd \Delta, \neg \varphi \quad \Gamma \rhd \Delta, \neg \psi }{\Gamma \rhd \Delta, \neg (\varphi \otimes \psi)}  \\ \\
& (\oplus \rhd) \quad \frac{\Gamma, \varphi \rhd \Delta \quad \Gamma, \psi \rhd \Delta }{\Gamma, \varphi \oplus \psi \rhd \Delta}  & &  (\rhd \oplus ) \quad \frac{\Gamma \rhd \Delta, \varphi, \psi}{\Gamma \rhd \Delta, \varphi \oplus \psi} \\ \\
&(\neg \oplus \rhd) \quad \frac{\Gamma, \neg \varphi \rhd \Delta \quad
\Gamma, \neg \psi \rhd \Delta }{\Gamma, \neg (\varphi \oplus \psi) \rhd
\Delta} & & (\rhd \neg \oplus ) \quad \frac{\Gamma \rhd \Delta, \neg
\varphi,\neg \psi}{\Gamma \rhd \Delta, \neg (\varphi \oplus \psi)} \\ \\
& (\neg \neg \rhd) \quad \frac{\Gamma, \varphi \rhd \Delta }{\Gamma, \neg \neg \varphi \rhd \Delta}  & &  (\rhd \neg \neg) \quad \frac{\Gamma \rhd \Delta, \varphi}{\Gamma \rhd \Delta, \neg \neg \varphi}
\end{align*}
}
\end{center}
}}

\caption{A complete sequent calculus for the logic
$\mathcal{LB}$} \label{tab:GentRules_DBL}
\end{table}


As usual, the algebra $\Al[Fm]$ of formulas is the free algebra generated by a countable set $\var$ of variables
using the algebraic language $\{ \land, \lor, \otimes, \oplus,
\neg \}$. 
Note that we do not include constants in the language.  
By definition, for every set $\Gamma \cup \{ \varphi
\}$ of formulas it holds that $\: \Gamma \Log \varphi \:$ if and only if, for every valuation $h \in \textrm{Hom}(\Al[Fm], \four)$, if
    $h[\Gamma] \subseteq \Tr$ then $h(\varphi) \in \Tr$.

We will now remind two important results obtained in
\cite{ArAv96}. The first is
the introduction of a 
complete axiomatization of $\lb$ by means of a sequent calculus\footnote{An alternative sequent calculus, also complete w.r.t.\ the semantics of $\lb$, was  introduced in \cite{Ko01b}.}.
Here by  \emph{sequent} we mean a pair $\langle\Gamma,\Delta\rangle$ where
$\Gamma$ and $\Delta$ are both finite non-empty \emph{sets} of formulas;
to denote the sequent $\langle\Gamma,\Delta\rangle$ we will usually write $\Gamma\seq \Delta$ in order to avoid
misunderstandings with other symbols that are sometimes used as
sequent separator, such as $\vdash\,,\to$ or $\Rightarrow$. The Gentzen
system defined by the axiom and rules given in
Table~\ref{tab:GentRules_DBL}, that we call $\mathcal{G_{LB}}$, is the one introduced  in \cite{ArAv96} by Arieli and Avron\footnote{
Note that, unlike Arieli and Avron's, our presentation requires that both sides of sequents be
non-empty.
However, it is straightforward to see that the two presentations generate
essentially the same consequence relation.}.
We will denote by $\Gensin$ the consequence relation determined on the set of sequents by this calculus, so  $$\{ \Gamma_{i} \seq \Delta_{i} : i \in I \} \Gensin \Gamma \seq \Delta$$ means that the sequent $\Gamma \seq \Delta$ is derivable from the sequents $\{ \Gamma_{i} \seq \Delta_{i} : i \in I \}$. By this we mean that there is a finite sequence $\Sigma = S_{1}, \ldots S_{n}$ of sequents such that $S_{n} = \Gamma \seq \Delta$ and, for each $S_{m} \in \Sigma$, either $S_{m}$ is an instance of $(Ax)$ or $S_{m} \in \{ \Gamma_{i} \seq \Delta_{i} : i \in I \}$ or there are $S_{j}, S_{k} \in \Sigma$ such that $j, k < m$ and $S_{m}$ has been obtained from $S_{j}$ and $S_{k}$ by the application of a rule of $\mathcal{G_{LB}}$.

Since both the left- and right-hand side of our sequents
are (finite) sets of formulas, rather than multisets or sequences, it
is not necessary to include the structural rules of contraction and
exchange; they are, so to speak, built-in in the formalism. Note also that, using (Ax), Cut, $(\land \rhd)$ and $(\rhd \lor)$, it is easy to prove that the sequent $\Gamma \rhd \Delta$ is equivalent to $\bigwedge \Gamma \rhd \bigvee \Delta$. Taking this into account, we may obtain formal proofs of the rules of left weakening $(W \rhd)$ and
right weakening $(\rhd W)$, as follows:


\begin{center}

\begin{tabular}{c@{\qquad}c}
  {\small
  \AxiomC{$(Ax)$}
  \UnaryInfC{$\bigwedge \Gamma, \varphi \rhd \bigwedge \Gamma$}
  \AxiomC{$\Gamma \rhd \Delta$}
  \UnaryInfC{$\bigwedge \Gamma \rhd \bigvee \Delta$}
  \LeftLabel{$(Cut) \ $}
  \BinaryInfC{$\bigwedge \Gamma, \varphi \rhd \bigvee \Delta$}
  \UnaryInfC{$\Gamma, \varphi \rhd \Delta$}
  \DisplayProof}
 & 
  {\small
  \AxiomC{$\Gamma \rhd \Delta$}
  \UnaryInfC{$\bigwedge \Gamma \rhd \bigvee \Delta$}
  \AxiomC{$(Ax)$}
  \UnaryInfC{$\bigvee \Delta \rhd \bigvee \Delta, \varphi$}
  \RightLabel{$\ (Cut)$}
  \BinaryInfC{$\bigwedge \Gamma \rhd \bigvee \Delta, \varphi$}
  \UnaryInfC{$\Gamma \rhd \Delta, \varphi$}
  \DisplayProof}
\end{tabular}

\end{center}


Hence,  $\mathcal{G_{LB}}$ has all the structural rules. In
\cite{ArAv96} it is proved that this calculus admits Cut Elimination
(i.e., the Cut Rule is admissible) and is complete with respect to the semantics of $\lb$, in the following sense:

\begin{thm}
  \label{thm:GentzenComplet}
  The sequent calculus $\mathcal{G_{LB}}$ is 
  complete with respect to $\Log$. That is, for any $\Gamma \cup \{ \phi\} \subseteq Fm$, we have
$$\Gamma \Log \varphi \quad \textrm{iff} \quad \emptyset \Gensin \Gamma \rhd \varphi.$$
\end{thm}
The previous result can also be expressed saying that the Gentzen system  $\mathcal{G_{LB}}$ is \emph{adequate} for the logic $\lb$.   

The second important result we want to cite
from~\cite{ArAv96}, which justifies why $\LB$ is called
\emph{the logic of logical bilattices}, shows that the consequence relation $\Log$ may be defined using many other logical matrices instead of $\langle \four, \Tr \rangle$.  In order to state it, we need the following:

\begin{definition} \label{def:logbil}
A \emph{logical bilattice} is a pair $\langle \Al[B], F \rangle$ where $\Al[B]$ is a
bilattice and $F$ is a prime bifilter of $\Al[B]$.
\end{definition}

It is obvious that
logical bilattices are also matrices in the sense of AAL: so
each logical bilattice determines a logic. Note also that, since $\four$ has (only) one proper bifilter,
 $\langle \four, \Tr \rangle$ is a logical bilattice, namely the one we used to introduce $\lb$. A key result of ~\cite{ArAv96} is then that all logical bilattices define the same consequence relation (i.e.\ $\Log $):

\begin{thm}
  \label{thm:complet_logicalbilattice}
  If $\langle \Al[B], F \rangle$ is a logical bilattice then
  the logic determined by the matrix $\langle \Al[B], F \rangle$
  coincides with $\lb$. That
  is, for every set $\Gamma \cup \{ \varphi \}$ of formulas it holds
  that
$$
    \Gamma \Log \varphi \qquad \textrm{iff} \qquad \Gamma \models_{\langle
    \Al[B], F \rangle} \varphi.
  $$
\end{thm}

This last theorem is indeed a straightforward consequence of
the following lemma ({see~\cite[Theorem~2.17]{ArAv96}}).

\begin{lem}
  \label{l:four_logical_bilat}
  Let $\Al[B]$ be a bilattice and let $F \subsetneq B$. Then the following
  statements are equivalent:
  \begin{enumerate}[(i)]
    \item $F$ is a prime bifilter of $\Al[B],$
    \item there is a unique epimorphism $\pi_F: \Al[B] \longrightarrow \four$ such
      that $F= \pi_F^{-1} [\Tr],$
    \item there is an epimorphism $\pi_F: \Al[B] \longrightarrow \four$ such
      that $F= \pi_F^{-1} [\Tr]$.
  \end{enumerate}
\end{lem} 

We stress that the
epimorhism $\pi_F$ is the map defined, for all $a \in B$, by
\[
\pi_F (a) := \left\{ 
\begin{array}{ll}
  \top & \textrm{ if } a \in F \textrm{ and } \neg a \in F \\
  \true & \textrm{ if } a \in F \textrm{ and } \neg a \not \in F \\
  \false & \textrm{ if } a \not \in F \textrm{ and } \neg a \in F \\
  \bot & \textrm{ if } a \not \in F \textrm{ and } \neg a \not \in F \\
\end{array}
\right.
\]

Theorem \ref{thm:complet_logicalbilattice} justifies the claim that the logic of logical bilattices is indeed the logic of the matrix $\langle \four, \Tr \rangle$. In Section \ref{sec:lb} we will see that, from an algebraic point of view, the logic $\lb$ may be also considered in some sense as \emph{the logic of distributive bilattices}.

\section{Hilbert-style presentation}
\label{sec:hil}

In the literature a Hilbert-style presentation for
the logic $\LB$ has not yet been given.  The aim of this section is to fill
this gap, introducing a strongly complete Hilbert-style  calculus for this logic. 

It is well known that, from a proof theoretic point of view, sequent
calculi (especially those enjoying cut elimination and the
subformula property) are better suited for searching  proofs than
Hilbert-style calculi. However, from the point of view of AAL, having a Hilbert-style presentation provides a lot of
benefits, since it allows to characterize on any algebra the filters of the logic (i.e.\ those sets of elements of the algebra that are closed under the rules of the logic). This kind of considerations, besides its intrinsic interest, motivated the introduction of our calculus.

From the semantical definition of $\LB$, is it obvious that this logic is a
conservative expansion of the Belnap-Dunn four-valued logic. This observation suggests that, in order
to find a Hilbert-style presentation for $\LB$, we can just expand any axiomatization of the Belnap-Dunn logic. We shall  consider the one given by Font in \cite{F97}, which consists of the first fifteen rules of Table~\ref{tab:Rules_DBL}.

\begin{table}[ht!]
\fbox{\parbox{\textwidth}{
\begin{center} {
  \hspace{-0.7cm}
  \begin{tabular}{ccc}
    \AxiomC{$p \land q$}
    \LeftLabel{\,(R1)}
    \UnaryInfC{$p$}
    \DisplayProof &
    \AxiomC{$p \land q$}
    \LeftLabel{\,(R2)}
    \UnaryInfC{$q$}
    \DisplayProof &
    \AxiomC{$p \!$}
    \AxiomC{$\! q$}
    \LeftLabel{\,(R3)}
    \BinaryInfC{$p \land q$}
    \DisplayProof \\ \\ 

    \AxiomC{$p$}
    \LeftLabel{\,(R4)}
    \UnaryInfC{$p \lor q$}
    \DisplayProof &
    \AxiomC{$p \lor q$}
    \LeftLabel{\,(R5)}
    \UnaryInfC{$q \lor p$}
    \DisplayProof &
    \AxiomC{$p \lor p$}
    \LeftLabel{\,(R6)}
    \UnaryInfC{$p$}
    \DisplayProof \\ \\ 
 
    \AxiomC{$p \lor (q \lor r)$}
    \LeftLabel{\,(R7)}
    \UnaryInfC{$(p \lor q) \lor r$}
    \DisplayProof &
    \AxiomC{$p \lor (q \land r)$}
    \LeftLabel{\,(R8)}
    \UnaryInfC{$(p \lor q) \land (p \lor r)$}
    \DisplayProof &
    \AxiomC{$(p \lor q) \land (p \lor r)$}
    \LeftLabel{\,(R9)}
    \UnaryInfC{$p \lor (q \land r)$}
    \DisplayProof \\ \\ 
 
    \AxiomC{$p \lor r$}
    \LeftLabel{\,(R10)}
    \UnaryInfC{$\neg \neg p \lor r$}
    \DisplayProof &
    \AxiomC{$\neg \neg p \lor r$}
    \LeftLabel{\,(R11)}
    \UnaryInfC{$p \lor r$}
    \DisplayProof &
    \AxiomC{$\neg (p \lor q) \lor r$}
    \LeftLabel{\,(R12)}
    \UnaryInfC{$ (\neg p \land \neg q) \lor r$}
    \DisplayProof \\ \\ 
 
    \AxiomC{$(\neg p \land \neg q) \lor r$}
    \LeftLabel{\,(R13)}
    \UnaryInfC{$\neg (p \lor q) \lor r$}
    \DisplayProof &
    \AxiomC{$\neg (p \land q) \lor r$}
    \LeftLabel{\,(R14)}
    \UnaryInfC{$(\neg p \lor \neg q) \lor r$}
    \DisplayProof &
    \AxiomC{$(\neg p \lor \neg q) \lor r$}
    \LeftLabel{\,(R15)}
    \UnaryInfC{$\neg (p \land q) \lor r$}
    \DisplayProof \\ \\ \\ 
 
    \AxiomC{$( p \otimes q) \lor r$}
    \LeftLabel{\,(R16)}
    \UnaryInfC{$(p \land q) \lor r$}
    \DisplayProof &
    \AxiomC{$(p \land q) \lor r$}
    \LeftLabel{\,(R17)}
    \UnaryInfC{$(p \otimes q) \lor r$}
    \DisplayProof &
    \AxiomC{$(p \oplus q) \lor r$}
    \LeftLabel{\,(R18)}
    \UnaryInfC{$(p \lor q) \lor r$}
    \DisplayProof \\ \\ 
 
    \AxiomC{$( p \lor q) \lor r$}
    \LeftLabel{\,(R19)}
    \UnaryInfC{$(p \oplus q) \lor r$}
    \DisplayProof &
    \AxiomC{$(\neg p \otimes \neg q) \lor r$}
    \LeftLabel{\,(R20)}
    \UnaryInfC{$\neg (p \otimes q) \lor r$}
    \DisplayProof &
    \AxiomC{$\neg (p \otimes q) \lor r$}
    \LeftLabel{\,(R21)}
    \UnaryInfC{$(\neg p \otimes \neg q) \lor r$}
    \DisplayProof \\ \\ 
 
    \AxiomC{$(\neg p \oplus \neg q) \lor r$}
    \LeftLabel{\,(R22)}
    \UnaryInfC{$\neg (p \oplus q) \lor r$}
    \DisplayProof &
    \AxiomC{$\neg (p \oplus q) \lor r$}
    \LeftLabel{\,(R23)}
    \UnaryInfC{$(\neg p \oplus \neg q) \lor r$}
    \DisplayProof & \\
  \end{tabular} } 
\end{center} 
}} \vspace{0,4cm}

\caption{A complete Hilbert-style calculus for the logic $\LB$}
\label{tab:Rules_DBL}
\end{table}

Note that, like Font's, our calculus has no axioms: this is due to the fact that $\lb$ has no theorems, just like the Belnap-Dunn logic. To see this, it is sufficient to observe that $\{ \bot \}$ is a subalgebra of $\four$ and $\bot$ is not a designated element in the matrix $\langle \four, \Tr \rangle$. Let us stress that here it is crucial that we do not have any of the constants $\{ \top,\true,\false \}$ in the language. 

Hence, all Hilbert-style
presentations for $\LB$ must be free of axioms and consist only
of (proper) rules. Of course, as noted by Font \cite{F97}, and contrary to what is claimed in
\cite[p.~37]{ArAv96}, this absence of theorems does not mean  that there may not be Hilbert-style
presentations for $\LB$.

Let us introduce formally the consequence relation determined by our rules:

\begin{definition} \label{definition:rules}
The logic $\vdash_{\scriptscriptstyle H}$ is the consequence relation
defined through the rules of
Table~\ref{tab:Rules_DBL}. The closure operator associated with
$\vdash_{\scriptscriptstyle H}$ will be denoted $\mathbf{C}_H$.
\end{definition}

We shall devote the rest of the section to prove that this calculus is strongly
complete with respect to the semantics of $\LB$. The strategy of our proof is very similar to the
one used in \cite{F97} for the Belnap-Dunn logic, and is based on a normal
form representation of formulas.

First of all, let us verify that $\vdash_{\scriptscriptstyle H}$ is sound:

\begin{proposition}[Soundness]
\label{proposition:soundness}
Given a set of formulas $\Gamma \subseteq Fm$ and a formula $\phi \in
Fm,$ if $\Gamma \vdash_{\scriptscriptstyle H} \phi $, then $\Gamma
\Log \phi$.
\end{proposition}
\begin{proof}
It is sufficient to check that in $\four$ the set $\Tr$  is closed w.r.t.\ all
rules given in Table~\ref{tab:Rules_DBL}.
\end{proof}

In the following propositions (from \ref{proposition:ab} to \ref{proposition:lemma3}) we state some lemmas that will be needed to prove our normal form theorem (Theorem \ref{thm:nf}).

\begin{proposition} \label{proposition:ab}
The following rules follow from  \textrm{(R1)} to \textrm{(R23)}:
\begin{enumerate}[(i)]
  \item  The rule
    \AxiomC{$\varphi$}
    \LeftLabel{\,(Ri$^+$)}
    \UnaryInfC{$\psi$}
    \DisplayProof for each one of the rules 
    \AxiomC{$\varphi \lor r$}
    \LeftLabel{\,(Ri)}
    \UnaryInfC{$\psi \lor r$}
    \DisplayProof, \\ where $i \in \{10, \ldots, 23 \}$.

  \item The rule
    \AxiomC{$\varphi \land r$}
    \UnaryInfC{$\psi \land r$}
    \DisplayProof in the same cases.
\end{enumerate}
\end{proposition}

\begin{proof}
(i) From $\phi$ by (R4) we obtain $\phi \vee \psi$. Then we apply (R\emph{i}) to obtain $\psi \vee \psi$ and by (R6) we have $\psi$.

(ii) From $\phi \wedge r$ by (R1) we obtain $\phi$. Now using (i) we obtain $\psi$. Also from $\phi \wedge r$, by (R2), follows $r$. Thus applying (R3) we obtain $\psi \wedge r$.
\end{proof}

The following properties are also easily proved (we omit the proof):

\begin{proposition}
From \textrm{(R1), \ldots, (R9)} and \textrm{(R}$16^{+}), \ldots,$ \textrm{(R}$19^{+})$ we can derive the following rules:
\begin{align*}
&\textrm{\textrm{(R1')}} \quad \frac{p \otimes q }{p} & &\textrm{\textrm{(R2')}} \quad \frac{p \otimes q }{q} & &\textrm{\textrm{(R3')}} \quad \frac{p \quad q }{p \otimes q}\\ \\
&\textrm{\textrm{(R4')}} \quad \frac{p}{p \oplus q} & &\textrm{\textrm{(R5')}} \quad \frac{p \oplus q }{q \oplus p} & &\textrm{\textrm{(R6')}} \quad \frac{p \oplus p }{p} \\ \\
&\textrm{\textrm{(R7')}} \quad \frac{p \oplus \left(q \oplus r\right)}{\left(p \oplus q\right) \oplus r} & & \textrm{\textrm{(R8')}} \quad \frac{p \oplus \left(q \otimes r \right) } {\left(p \oplus q \right) \otimes \left( p \oplus r \right)} & & \textrm{\textrm{(R9')}} \quad \frac{\left(p \oplus q \right) \otimes \left( p \oplus r \right) }{p \oplus \left(q \otimes r \right)}
\end{align*}
\end{proposition}


\vspace{6pt}

\begin{proposition} \label{proposition:congr}
The interderivability relation $\dashv \vdash_{\scriptscriptstyle H} $ is a congruence w.r.t.\ the operations $\wedge$ and $\vee$.
\end{proposition}
\begin{proof}
It is sufficient to show that  the following two rules
\begin{center}
  \begin{tabular}{c@{\qquad}c}
    \AxiomC{$p \land r$}
    \AxiomC{$q \land r$}
    \BinaryInfC{$(p \land q) \land r$}
    \DisplayProof & 
    \AxiomC{$p \lor r$}
    \AxiomC{$q \lor r$}
    \BinaryInfC{$(p \land q) \lor r$}
    \DisplayProof 
  \end{tabular}
\end{center}
together with the rules
\begin{center}
    \AxiomC{$\varphi \lor r$}
    \UnaryInfC{$\psi \lor r$}
    \DisplayProof 
\quad and \quad
    \AxiomC{$\varphi \land r$}
    \UnaryInfC{$\psi \land r$}
    \DisplayProof 
\end{center}
(for each rule 
    \AxiomC{$\varphi$}
    \UnaryInfC{$\psi$}
    \DisplayProof 
in Table~\ref{tab:Rules_DBL})
are all derivable in $\vdash_{\scriptscriptstyle H}$.
For the rules in Table~\ref{tab:Rules_DBL} that belong to the $\{
\wedge, \vee\}$-fragment, it is known that they 
follow just from rules (R1) to (R9). And for (R10) to (R23) the conjunction
case is shown by Proposition~\ref{proposition:ab} (ii), while the
disjunction case can be easily shown by using the associativity of
$\vee$.

Then we know that $\phi \hi \psi $ implies  $\phi \land \gamma \hi \psi \land \gamma$ and $\phi \lor \gamma \hi \psi \lor \gamma$ for any $\gamma \in Fm$. So, assuming $\phi_{1} \hi \psi_{1} $ and $\phi_{2} \hi \psi_{2} $, from the former we obtain $\phi_{1} \land \phi_{2}  \hi \psi_{1} \land \phi_{2} $ and from the latter $\psi_{1} \land \phi_{2} \hi \psi_{1} \land \psi_{2}$. Hence  $ \phi_{1} \land \phi_{2} \hi \psi_{1} \land \psi_{2}$. By symmetry, we may conclude that  $\phi_{1} \dashv \hi \psi_{1} $ and $\phi_{2} \dashv \hi \psi_{2} $ imply $\phi_{1} \land \phi_{2} \dashv \hi \psi_{1} \land \psi_{2}$. A similar reasoning shows that $\dashv \vdash_{\scriptscriptstyle H} $ is also a congruence w.r.t.\ $ \vee$.
\end{proof}
  

\begin{definition}
$\mathcal{L}it = \var \cup \left\{\neg p : p \in \var \right\}$ is the set of \emph{literals}. $\mathcal{C}l$, the set of \emph{clauses}, is the least set containing $\mathcal{L}it$ and closed under $\vee$. For any $\phi \in Fm$, the set $var\left(\phi\right)$ of \emph{variables} of $\phi$ is defined in the usual way. For $\Gamma \subseteq Fm$, we set
$$var\left(\Gamma\right)=\bigcup_{\phi \in \Gamma} var\left(\phi\right). $$ For any $\phi \in \mathcal{C}l$, the set $lit\left(\phi\right)$ of \emph{literals} of $\phi$ is defined inductively by $lit\left(\phi\right)= \left\{\phi\right\}$ if $\phi \in \mathcal{L}it$ and $lit\left(\phi \vee \psi \right)= lit\left(\phi \right) \cup lit\left( \psi \right)$. For $\Gamma \subseteq \mathcal{C}l$, we set $$lit\left(\Gamma\right)=\bigcup_{\phi \in \Gamma} lit\left(\phi\right). $$
\end{definition}

\begin{proposition} \label{proposition:lemma}
For all $\phi \in Fm$, there is a finite $\Gamma_{\phi} \subseteq \mathcal{C}l$ such that $var\left(\phi\right)=var\left(\Gamma\right)$ and for every $\psi \in Fm$, $$\mathbf{C}_{H} \left(\phi \vee \psi\right)= \mathbf{C}_{H} \left(\left\{\gamma \vee \psi: \gamma \in \Gamma\right\} \right).$$
\end{proposition} 

\begin{proof}
By induction on the length of $\phi$.

\begin{enumerate}[]
  \item
If $\phi=p \in \var $, then $\Gamma_{\phi} =\left\{p\right\}$.

  \item
If $\phi=\phi_{1} \wedge \phi_{2}$ and by inductive hypothesis $\Gamma_{\phi_1}, \Gamma_{\phi_2}$ correspond respectively to $\phi_{1}$ and $\phi_{2}$, then we may take $\Gamma_{\phi} =\Gamma_{\phi_1} \cup \Gamma_{\phi_2}$ and we have $var\left(\phi\right)=var\left(\Gamma_{\phi} \right)$. We also have
\begin{eqnarray*}
\mathbf{C}_{H} \left(\phi \vee \psi\right) & = & \mathbf{C}_{H} \left(\left(\phi_{1} \wedge \phi_{2}\right) \vee \psi\right)  \\
& = & \mathbf{C}_{H}\left(\left(\phi_{1} \vee \psi \right) \wedge \left(\phi_{2} \vee \psi\right) \right)  \\
& = & \mathbf{C}_{H} \left(\phi_{1} \vee \psi , \phi_{2} \vee \psi\right)  \\
& = & \textrm{by (R1), (R2), (R3)} \\
& = & \mathbf{C}_{H} \left( \mathbf{C}_{H}\left( \phi_{1} \vee \psi\right) \cup \mathbf{C}_{H} \left(\phi_{2} \vee \psi\right)\right)  \\
& = & \mathbf{C}_{H} \left( \mathbf{C}_{H} \left(\left\{\gamma_{1} \vee \psi: \gamma_{1} \in \Gamma_{\phi_1}\right\} \right) \cup \mathbf{C}_{H}\left(\left\{\gamma_{2} \vee \psi: \gamma_{2} \in \Gamma_{\phi_2}\right\} \right)\right) \\
& = & \mathbf{C}_{H} \left(\left\{\gamma \vee \psi: \gamma \in \Gamma_{\phi} \right\} \right).
\end{eqnarray*}

  \item
If $\phi=\phi_{1} \vee \phi_{2}$ and $\Gamma_{\phi_1}, \Gamma_{\phi_2}$ correspond respectively to $\phi_{1}$ and $\phi_{2}$, then we take 
$$\Gamma_{\phi} = \left\{\gamma_{1} \vee \gamma_{2} : \gamma_{1} \in \Gamma_{\phi_1}, \gamma_{2} \in \Gamma_{\phi_2}\right\}$$ 
and we have $var\left(\phi\right)=var\left(\Gamma_{\phi} \right)$. We also have:
\begin{eqnarray*}
\mathbf{C}_{H} \left(\phi \vee \psi\right) & = & \mathbf{C}_{H} \left(\left(\phi_{1} \vee \phi_{2}\right) \vee \psi\right)  \\
& = & \mathbf{C}_{H} \left(\phi_{1} \vee \left(\phi_{2} \vee \psi\right) \right)  \\
& = &   \textrm{(by inductive hypothesis)} \\
& = & \mathbf{C}_{H} \left(\left\{\gamma_{1} \vee \left(\phi_{2} \vee \psi\right) : \gamma_{1} \in \Gamma_{\phi_1}\right\} \right)  \\
& = & \mathbf{C}_{H} \left(\left\{\phi_{2} \vee \left(\gamma_{1} \vee \psi\right) : \gamma_{1} \in \Gamma_{\phi_1}\right\} \right) \\
& = & \mathbf{C}_{H} \left(\left\{\gamma_{2} \vee \left(\gamma_{1} \vee \psi\right) : \gamma_{1} \in \Gamma_{\phi_1}, \gamma_{2} \in \Gamma_{\phi_2} \right\} \right) \\
& = &\mathbf{C}_{H} \left(\left\{\left(\gamma_{1} \vee \gamma_{2}\right) \vee \psi : \gamma_{1} \in \Gamma_{\phi_1}, \gamma_{2} \in \Gamma_{\phi_2} \right\} \right).
\end{eqnarray*}

  \item
If $\phi=\phi_{1} \otimes \phi_{2}$, then $\mathbf{C}_{H} \left(\phi \vee \psi\right)=\mathbf{C}_{H} \left( \left(\phi_{1} \otimes \phi_{2}\right) \vee \psi\right)$. By (R16) and (R17) we have 
$$\mathbf{C}_{H} \left( \left(\phi_{1} \otimes \phi_{2}\right) \vee \psi\right)=\mathbf{C}_{H} \left( \left(\phi_{1} \wedge \phi_{2}\right) \vee \psi\right).$$
So we may apply the procedure for $\phi=\phi_{1} \wedge \phi_{2}$.

  \item
If $\phi=\phi_{1} \oplus \phi_{2}$, then 
$$\mathbf{C}_{H} \left(\phi \vee \psi\right)=\mathbf{C}_{H} \left( \left(\phi_{1} \oplus \phi_{2}\right) \vee \psi\right).$$
By (R18) and (R19) we have 
$$\mathbf{C}_{H} \left( \left(\phi_{1} \oplus \phi_{2}\right) \vee \psi\right)=\mathbf{C}_{H} \left( \left(\phi_{1} \vee \phi_{2}\right) \vee \psi\right).$$
So we may apply the procedure for $\phi=\phi_{1} \vee \phi_{2}$.

  \item
If $\phi=\neg \phi^{\prime}$, then we have to distinguish several cases on
$\varphi'$.
\begin{enumerate}[]
  \item
If $\phi^{\prime}=p \in \var $, then $ \phi \in \mathcal{L}it \subseteq \mathcal{C}l$, so we may take $\Gamma_{\phi} =\left\{\phi\right\}$.

  \item 
If $\phi^{\prime}=\neg \phi^{\prime\prime}$, then $\phi=\neg \neg \phi^{\prime\prime}$ and by (R10) and (R11) we have 
$$\mathbf{C}_{H} \left(\phi \vee \psi\right)= \mathbf{C}_{H} \left(\phi^{\prime\prime} \vee \psi\right).$$ 
Now just note that $\phi^{\prime\prime}$ is shorter that $\phi$ and its corresponding set $\Gamma_{\phi}$ also works for $\phi$.

  \item
If $\phi^{\prime}=\phi_{1} \wedge \phi_{2}$, then $\phi= \neg \left(\phi_{1} \wedge \phi_{2}\right)$ and by (R14) and (R15) we have 
$$\mathbf{C}_{H} \left(\phi \vee \psi \right)=\mathbf{C}_{H} \left(\left(\neg \phi_{1} \vee \neg \phi_{2}\right) \vee \psi \right).$$ 
Both $\neg \phi_{1}$ and $\neg \phi_{2}$ are shorter than $\neg \left(\phi_{1} \wedge \phi_{2}\right)$, so the same procedure for the case of $\phi=\phi_{1} \vee \phi_{2}$ works.

  \item
If $\phi^{\prime}=\phi_{1} \vee \phi_{2}$, then $\phi= \neg \left(\phi_{1} \vee \phi_{2}\right)$ and by (R12) and (R13) we have 
$$\mathbf{C}_{H} \left(\phi \vee \psi \right)=\mathbf{C}_{H} \left(\left(\neg \phi_{1} \wedge \neg \phi_{2}\right) \vee \psi \right).$$ Both $\neg \phi_{1}$ and $\neg \phi_{2}$ are shorter than $\neg \left(\phi_{1} \vee \phi_{2}\right)$, so the same procedure for the case of $\phi=\phi_{1} \wedge \phi_{2}$ works.

  \item
If $\phi^{\prime}=\phi_{1} \otimes \phi_{2}$, then $\phi= \neg \left(\phi_{1} \otimes \phi_{2}\right)$ and by (R20) and (R21) we have  
$$\mathbf{C}_{H} \left(\phi \vee \psi\right)=\mathbf{C}_{H} \left( \left( \neg \phi_{1} \otimes \neg \phi_{2}\right) \vee \psi\right).$$
Both $\neg \phi_{1}$ and $\neg \phi_{2}$ are shorter than $\neg \left(\phi_{1} \otimes \phi_{2}\right)$, hence the procedure applied for $\phi=\phi_{1} \otimes \phi_{2}$ works.

  \item
If $\phi^{\prime}=\phi_{1} \oplus \phi_{2}$, then $\phi= \neg \left(\phi_{1} \oplus \phi_{2}\right)$ and by (R22) and (R23) we have  
$$\mathbf{C}_{H} \left(\phi \vee \psi\right)=\mathbf{C}_{H} \left( \left( \neg \phi_{1} \oplus \neg \phi_{2}\right) \vee \psi\right).$$ 
Both $\neg \phi_{1}$ and $\neg \phi_{2}$ are shorter than $\neg \left(\phi_{1} \oplus \phi_{2}\right)$. Once again, the procedure applied for $\phi=\phi_{1} \oplus \phi_{2}$ works.
\end{enumerate}
\end{enumerate}
\end{proof}

\begin{proposition} \label{proposition:lemma3}
For all $\phi \in Fm$ there is a finite $\Gamma_{\phi} \subseteq \mathcal{C}l$ such that $var\left(\phi\right)=var\left(\Gamma_{\phi}\right)$ and $\mathbf{C}_{H} \left(\phi\right)= \mathbf{C}_{H} \left(\Gamma_{\phi}\right).$
\end{proposition}

\begin{proof}
By induction on the length of $\phi$.

\begin{enumerate}[]
  \item 
If $\phi=p \in \var $, then take $\Gamma_{\phi}=\left\{\phi\right\}$.

  \item 
If $\phi=\phi_{1} \wedge \phi_{2}$ by (R1), (R2) and (R3) we have $\mathbf{C}_{H} \left(\phi\right)=\mathbf{C}_{H} \left(\phi_{1}, \phi_{2}\right)$. So we may take $\Gamma_{\phi}= \Gamma_{\phi_{1}} \cup \Gamma_{\phi_{2}}$ and we are done.

  \item 
If $\phi=\phi_{1} \vee \phi_{2}$ then by Proposition \ref{proposition:lemma} and (R5) we have: 
\begin{eqnarray*}
\mathbf{C}_{H} \left(\phi\right) & = & \mathbf{C}_{H}  \left(\left\{\gamma_{1} \vee \phi_{2}: \gamma_{1} \in \Gamma_{\phi_1}\right\}\right)  \\
& = & \mathbf{C}_{H}  \left(\left\{ \phi_{2} \vee \gamma_{1}  : \gamma_{1} \in \Gamma_{\phi_1}\right\}\right)  \\
& = & \mathbf{C}_{H}  \left(\left\{ \gamma_{2} \vee \gamma_{1}  : \gamma_{1} \in \Gamma_{\phi_1}, \gamma_{2} \in \Gamma_{\phi_2}\right\}\right).
\end{eqnarray*}
 Since $\Gamma_{\phi_1}, \Gamma_{\phi_2} \subseteq \mathcal{C}l $ are finite, $\Gamma_{\phi}=  \left\{ \gamma_{1} \vee \gamma_{2}  : \gamma_{1} \in \Gamma_{\phi_1}, \gamma_{2} \in \Gamma_{\phi_2}\right\} \subseteq \mathcal{C}l $ is also finite and we are done.

  \item 
If $\phi=\phi_{1} \otimes \phi_{2}$, by (R$16^{+}$) and (R$17^{+}$) we have $\mathbf{C}_{H} \left(\phi\right)=\mathbf{C}_{H} \left(\phi_{1}, \phi_{2}\right)$, so we may take $\Gamma_{\phi}= \Gamma_{\phi_{1}} \cup \Gamma_{\phi_{2}}$ and we are done.

  \item 
If $\phi=\phi_{1} \oplus \phi_{2}$, since by (R$18^{+}$) and (R$19^{+}$) we have $\mathbf{C}_{H} \left(\phi_{1} \oplus \phi_{2}\right)=\mathbf{C}_{H} \left(\phi_{1} \vee \phi_{2}\right)$, we may apply the procedure for $\phi=\phi_{1} \vee \phi_{2}$.

  \item 
If $\phi=\neg \phi^{\prime}$ we have to distinguish several cases.
\begin{enumerate}[]
  \item 
If $\phi^{\prime}=p \in \var $, then $ \phi \in \mathcal{C}l$, so we may take $\Gamma_{\phi}=\left\{\phi\right\}$.

  \item 
If $\phi^{\prime}=\neg \phi^{\prime\prime}$, then by (R$10^{+}$) and (R$11^{+}$) we have $\mathbf{C}_{H} \left(\phi\right)= \mathbf{C}_{H} \left(\phi^{\prime\prime}\right)$ and since $\phi^{\prime\prime}$ is shorter that $\phi$ we are done.

  \item 
If $\phi^{\prime}=\phi_{1} \wedge \phi_{2}$ then by (R$14^{+}$) and (R$15^{+}$) we have $\mathbf{C}_{H} \left(\phi\right)= \mathbf{C}_{H} \left( \neg \phi_{1} \vee \neg \phi_{2} \right)$, so we may apply the procedure for the $\lor$-disjunction case.

  \item 
If $\phi^{\prime}=\phi_{1} \vee \phi_{2}$ then by (R$12^{+}$) and (R$13^{+}$) we have $\mathbf{C}_{H} \left(\phi\right)= \mathbf{C}_{H} \left( \neg \phi_{1}, \neg \phi_{2} \right)$, so applying the inductive hypothesis we are done.

  \item 
If $\phi^{\prime}=\phi_{1} \otimes \phi_{2}$, then $\phi= \neg \left(\phi_{1} \otimes \phi_{2}\right)$ and by (R$20^{+}$) and (R$21^{+}$) we have  $\mathbf{C}_{H} \left(\phi \right)=\mathbf{C}_{H}  \left( \neg \phi_{1} \otimes \neg \phi_{2}\right)$, so the procedure applied for the $\otimes$-conjunction works.

  \item 
If $\phi^{\prime}=\phi_{1} \oplus \phi_{2}$, then $\phi= \neg \left(\phi_{1} \oplus \phi_{2}\right)$ and by (R$22^{+}$) and (R$23^{+}$) we have  $\mathbf{C}_{H} \left(\phi \right)=\mathbf{C}_{H} \left( \neg \phi_{1} \oplus \neg \phi_{2}\right) $, so the procedure applied for the $\oplus$-disjunction works.
\end{enumerate} 
\end{enumerate} 
\end{proof}

\begin{thm}[Normal Form] \label{thm:nf}
Every formula is equivalent, both through $\dashv
\vdash_{\scriptscriptstyle H}$ and $\eqLog$, to a $\wedge$-conjunction of clauses with the same variables.
\end{thm}
\begin{proof}
By Proposition \ref{proposition:lemma3} we have that $\phi \dashv
\vdash_{\scriptscriptstyle H} \bigwedge \Gamma_{\phi}$, where $\bigwedge
\Gamma_{\phi}$ is any conjunction of all the clauses in $\Gamma_{\phi}$.
Now, by Proposition \ref{proposition:soundness}, this implies also that $
\bigwedge \Gamma_{\phi} \eqLog \phi $.
\end{proof}

The following lemma will allow us to prove the completeness of our Hilbert calculus.

\begin{lem} \label{lem:compl}
For all $\Gamma \subseteq \mathcal{C}l$ and $\phi \in \mathcal{C}l$, the following are equivalent:

\begin{enumerate}[(i)]
	\item  $\Gamma \vdash_{\scriptscriptstyle H} \phi$,
 \item $\Gamma \Log \phi$,
 \item $\exists \gamma \in \Gamma$ such that $lit\left( \gamma \right) \subseteq lit\left( \phi \right)$,
 \item $\exists \gamma \in \Gamma$ such that $\gamma \vdash_{\scriptscriptstyle H} \phi$.
\end{enumerate}
\end{lem}

\begin{proof}
(i) $\Rightarrow$ (ii) follows from Proposition \ref{proposition:soundness}.

(ii) $\Rightarrow$ (iii). For a fixed $\phi \in \mathcal{C}l$, define a homomorphism $h: \Al[Fm] \rightarrow \four$ as follows. For every $p \in \var$:

\[
 	h\left(p\right) = \left\{ \begin{array}{cl}
	\true & \textrm{if $p \notin lit\left(\phi\right)$ and $\neg p \in lit\left(\phi\right)$}\\
	\top & \textrm{if $p, \neg p \notin lit\left(\phi\right)$} \\
  \mathbf{\bot} & \textrm{if $p, \neg p \in lit\left(\phi\right)$} \\
	\false & \textrm{if $p \in lit\left(\phi\right)$ and $\neg p \notin lit\left(\phi\right)$}
	\end{array} \right.
\]

\vspace{7pt}
If $p \in lit\left( \phi \right)$,  then $h\left(p\right) \in
\left\{\false,
\bot \right\}$ and also  $h\left( \neg p\right) \in \left\{\false, \bot
\right\}$ 	when $\neg p \in lit\left( \phi \right)$. Since $\false
\leq_{t} \bot$, we have $h\left(\phi\right) \in \left\{\false, \bot
\right\}$. Suppose (iii) fails: then for any $\gamma \in \Gamma$ there
would be $\psi_{\gamma} \in lit\left( \gamma \right)$ such that
$\psi_{\gamma} \notin lit\left( \phi \right)$. Then we would have
$h\left(\psi_{\gamma}\right) \in \left\{\true, \top \right\}$ and as a
consequence $h\left(\gamma\right) \in \left\{\true, \top \right\}$. Thus
we would have, against (ii), $h\left[\Gamma\right] \subseteq
\left\{\true, \top \right\}$ while $h\left(\phi\right) \notin
\left\{\true, \top \right\}$.

(iii) $\Rightarrow$ (iv).	If $lit\left( \gamma \right) \subseteq lit\left( \phi \right)$ and $\gamma, \phi \in \mathcal{C}l$, then $\phi$ is a disjunction of the same literals appearing in $\gamma$ plus other ones, modulo some associations, permutations etc. Therefore, applying rules (R4) to (R7) and repeatedly using Proposition \ref{proposition:congr}, we obtain $\gamma \vdash_{\scriptscriptstyle H} \phi$.

(iv) $\Rightarrow$ (i). Immediate.
\end{proof}

\begin{thm}[Completeness]
For all $\Gamma \subseteq Fm$ and $\phi \in Fm,$ it holds that $\Gamma
\Log \phi$ iff $\Gamma  \vdash_{\scriptscriptstyle H} \phi$.
\end{thm}
\begin{proof}
By Lemma \ref{lem:compl} and Theorem \ref{thm:nf}.
\end{proof}

\section{Tarski-style characterizations}
\label{sec:tar}

With the help of the Hilbert calculus introduced in the previous section, we will now investigate our logic from the point of view of Abstract Algebraic Logic. In particular, we study the algebraic models and g-models of $\lb$, characterize the classes $\mathbf{Alg}\mathcal{LB}$ and $\mathbf{Alg^*}\mathcal{LB}$ and compare them with the class of algebraic reducts of logical bilattices, which we will denote by $\mathsf{LoBiLat}$. We will also prove that the Gentzen calculus introduced in Section \ref{sec:sem} %
is algebraizable and individuate its equivalent algebraic semantics.



Let us start by checking that $\LB$ has no consistent extensions. We shall need the following:

\begin{lem} \label{l:submat}
Let $\la \Al[B], F \ra$ be a matrix such that $\Al[B]$ is a distributive bilattice and $F$ is a proper and non-empty bifilter of $\Al[B]$, i.e.\ $\emptyset \neq F \varsubsetneq B$. Then the logic defined by $\la \Al[B], F \ra$ is weaker than $\LB$.
\end{lem}

\begin{proof}
Reasoning by contraposition, we will prove that $\Gamma \nvDash_{\LB} \phi$ implies $\Gamma \nvDash_{\la \Al[B], F \ra} \phi$ for all $\Gamma \cup \{ \phi \} \subseteq Fm$. In order to do this, it will be enough to show that $\la \four, \Tr \ra$ is a submatrix of any matrix of the form $\la \Al[B], F \ra$. By assumption  $F$ is proper and non-empty, so there are $a, b \in B$ such that $a \notin F$ and $b \in F$. Let us denote by $\bot(a,b)$ the element $a \otimes b \otimes \neg a \otimes \neg b$. Similarly, let $\top(a,b) = a \oplus b \oplus \neg a \oplus \neg b $, $\true(a,b) = \bot(a,b) \lor \top(a,b)$  and $\false(a,b) = \bot(a,b) \land \top(a,b)$. 
Since $F$ is a bifilter, from the assumptions it follows that  $\top(a,b), \true(a,b) \in F$ and  $\bot(a,b), \false(a,b) \notin F$. 
It is easy to check that $\four$ is embeddable into $\Al[B]$ via the map $f$ defined as $f(x) = x(a,b)$ for all $x \in \{\bot, \top, \true, \false \}$. Moreover, $\Tr = f^{-1}[F]$. So if $h: \Al[Fm] \longrightarrow \four$ is a homomorphism such that $h[\Gamma] \subseteq \Tr $ but $h (\phi) \notin \Tr$, then also $f [h[\Gamma]] \subseteq F $ but $f (h(\phi)) \notin F $. Recalling that $\LB$ is the logic defined by the matrix $\la \four, \Tr \ra$, we may then conclude that  $\Gamma \nvDash_{\LB} \phi$ implies $\Gamma \nvDash_{\la \Al[B], F \ra} \phi$.
\end{proof}

Let us say that a logic $\mathcal{L} = \la \Al[Fm], \vdash_{\mathcal{L}} \ra$ is \emph{consistent} if there exist $ \phi, \psi \in Fm$ such that $\phi \nvdash_{\mathcal{L}} \psi$. Then the previous lemma allows to obtain the following: 

\begin{proposition} \label{p:extens}
If a logic $\mathcal{L} = \la \Al[Fm], \vdash_{\mathcal{L}} \ra$ is a consistent extension of $\LB$, then ${\vdash_{\mathcal{L}}} = {\vDash_{\mathcal{LB}}} $. 
\end{proposition}

\begin{proof}
By \cite[Proposition 2.27]{FJa09}, we know that any reduced matrix for $\mathcal{L}$ is of the form $\la \Al[B], F \ra$, where $\Al[B]$ is a distributive bilattice and $F$ is a bifilter. By the assumption of consistency, we may assume that there is at least one reduced matrix for $\mathcal{L}$ such that  $F$ is proper and non-empty. By Lemma \ref{l:submat}, we know that the logic defined by such a matrix is weaker than $\LB$; this implies that the class of all reduced matrices for  $\mathcal{L}$ defines a weaker logic than $\LB$. Since  any logic is complete with respect to the class of its reduced matrices (see \cite{W88}), we may conclude that  $\mathcal{L}$ itself is weaker than $\LB$, so they must be equal.
\end{proof}

The two completeness results stated in the previous section allow us to give a characterization of $\lb$ in terms of some metalogical properties which are sometimes called \emph{Tarski-style conditions}. In particular, we shall consider the following: the \emph{Property of Conjunction} (PC) w.r.t.\ both conjunctions $\land$ and $\otimes$, the \emph{Property of Disjunction} (PD) w.r.t.\ both disjunctions $\lor$ and $\oplus$, the \emph{Property of Double Negation} (PDN) and the \emph{Properties of De Morgan} (PDM).

Let us denote the closure operator associated with our logic by $\clb$. Then we may state the following: 

\begin{proposition} \label{p:metalog}
 The logic $\lb = \la \Al[Fm], \clb \ra$ satisfies the following properties: for all $\Gamma \cup \{ \phi, \psi \} \subseteq Fm$,
 
\begin{tabular}[l]{clllll}
\\
\emph{(PC)}  &  $ \clb(\phi \land \psi ) =  \clb(\phi \otimes \psi )  =  \clb(\phi, \psi ) $ \\  \\
\emph{(PDI)}   &  $ \clb(\Gamma, \phi \lor \psi ) =  \clb(\Gamma, \phi \oplus \psi )  =  \clb(\Gamma, \phi)  \cap \clb(\Gamma, \psi) $ \\ \\
\emph{(PDN)}  &  $ \clb(\phi ) =  \clb(\neg \neg \phi )$ \\ \\
\emph{(PDM)}  &  $ \clb(\neg (\phi \land \psi)) =  \clb(\neg \phi \lor \neg \psi)) $ \\
  &  $ \clb(\neg (\phi \lor \psi)) =  \clb(\neg \phi \land \neg \psi)) $ \\
     &  $ \clb(\neg (\phi \otimes \psi)) =  \clb(\neg \phi \otimes \neg \psi)) $ \\
        &  $ \clb(\neg (\phi \oplus \psi)) =  \clb(\neg \phi \oplus \neg \psi)) $. 
\\ \\
\end{tabular}

Moreover, $\lb$ is the only consistent logic satisfying them.
\end{proposition}
\begin{proof}
In \cite[Theorem~4.1]{Py95a} it is proved that the Belnap-Dunn logic is the least logic satisfying all the above properties except those involving $\otimes$ and $\oplus$. Since our logic is a conservative expansion of the Belnap-Dunn, we need only to check that $\lb$ satisfies the conditions where $\otimes$ or $\oplus$ appears. (PC) is easily proved using the derivable rules (R$16^+$) and (R$17^+$) of our Hilbert calculus (see the first item of Proposition \ref{proposition:ab}). Recalling that $\lb$ is finitary, to prove (PDI) we may use (PC), (R$18^+$) and (R$19^+$). Finally, the last two equalities of (PDM) are easily proved using rules from (R$20^+$) to (R$23^+$).

Hence $\lb$ satisfies all the above properties. Moreover, it is the weakest one that satisfies them. In fact, any logic $\mathcal{L} = \la \Al[Fm], \vdash_{\mathcal{L}} \ra$ satisfying the same properties will be closed under the rules of the Gentzen calculus $\mathcal{G_{LB}}$, which is complete w.r.t.\ the semantics of $\lb$. So any derivation in $\mathcal{G_{LB}}$ will produce only sequents which are derivable in   $\mathcal{L}$. Hence, by completeness,
if $\Gamma \Log \phi$, then $\Gamma \vdash_{\mathcal{L}} \phi.$ 
Now, applying Lemma \ref{l:submat}, we may conclude that $\vdash_{\mathcal{L}} \ = \  \Log $.
\end{proof}
%



Another interesting feature of $\lb$ is the \emph{variable sharing property} (VSP), that can be formulated as follows: if $\phi \Log \psi$, then  $var(\phi) \cap var(\psi) \neq \emptyset$. Note that any logic $\mathcal{L} = \la \Al[Fm], \vdash_{\mathcal{L}} \ra$ satisfying the (VSP) will be consistent, for it will hold that $p \nvdash_{\mathcal{L}} q$ for any two distinct propositional variables $p$ and $q$. So from the previous result it also follows that $\lb $ is the only logic satisfying (PC), (PDI), (PDN), (PDM) and (VSP).

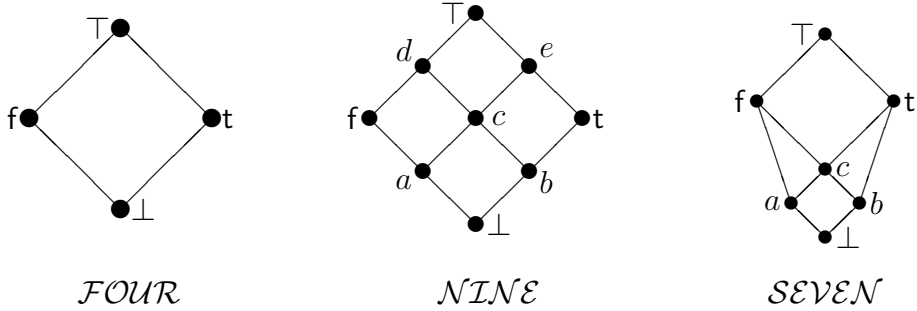
\begin{figure}[t]
\vspace{15pt}
\begin{center}
\begin{tabular}{cccc}
\vspace{5pt}

\begin{minipage}{2,5cm}
\setlength{\unitlength}{1.2cm}
\begin{center}
\begin{picture}(2,2)(0.15,0)
\put(1,0){\makebox(0,0)[l]{ $\bot$}} 
\put(0,1){\makebox(0,0)[r]{$\false$ }} 
\put(2,1){\makebox(0,0)[l]{ $\true$}} 
\put(1,2){\makebox(0,0)[r]{$\top$ }} 

\put(1,0){\circle*{0.2}} 
\put(0,1){\circle*{0.2}} 
\put(2,1){\circle*{0.2}}
\put(1,2){\circle*{0.2}}

\put(1,0){\line(1,1){1}} 
\put(1,0){\line(-1,1){1}} 
\put(0,1){\line(1,1){1}} 
\put(2,1){\line(-1,1){1}}

\end{picture}
\end{center}
\end{minipage}

&



%




\begin{minipage}{6cm}
\setlength{\unitlength}{0.7cm}
\begin{center}
\begin{picture}(3,4)(-0.25,0)
\multiput(1,0)(1,1){3}{\circle*{0.3}}
\multiput(0,1)(1,1){3}{\circle*{0.3}}
\multiput(-1,2)(1,1){3}{\circle*{0.3}}

\put(1,0){\line(1,1){2}} 
\put(0,1){\line(1,1){2}} 
\put(-1,2){\line(1,1){2}} 
\put(1,0){\line(-1,1){2}} 
\put(2,1){\line(-1,1){2}} 
\put(3,2){\line(-1,1){2}} 

\put(1,0){\makebox(0,0)[l]{ $\bot$}} 
\put(-1,2){\makebox(0,0)[r]{$\false$ }} 
\put(3,2){\makebox(0,0)[l]{ $\true$}} 
\put(1,4){\makebox(0,0)[r]{$\top$ }}

\put(-0.2,0.8){\makebox(0,0)[r]{ $a$}} 
\put(2,0.8){\makebox(0,0)[l]{ $b$}} 
\put(1.1,2){\makebox(0,0)[l]{ $c$}} 
\put(-0.2,3.3){\makebox(0,0)[r]{ $d$}} 
\put(2,3.3){\makebox(0,0)[l]{ $e$}} 

\end{picture}
\end{center}
\end{minipage}

&

\begin{minipage}{2cm}
\setlength{\unitlength}{0.9cm}
\begin{center}
\begin{picture}(2,2)(0,-0.25)
\put(1,-1){\makebox(0,0)[l]{ $\bot$}} 
\put(0.5,-0.5){\makebox(0,0)[r]{$a$ }} 
\put(1.5,-0.5){\makebox(0,0)[l]{ $b$}} 
\put(1,0){\makebox(0,0)[l]{ $c$}} 
\put(0,1){\makebox(0,0)[r]{$\false$ }} 
\put(2,1){\makebox(0,0)[l]{ $\true$}} 
\put(1,2){\makebox(0,0)[r]{$\top$ }} 

\put(1,-1){\circle*{0.2}} 
\put(0.5,-0.5){\circle*{0.2}}
\put(1.5,-0.5){\circle*{0.2}}
\put(1,0){\circle*{0.2}} 
\put(0,1){\circle*{0.2}} 
\put(2,1){\circle*{0.2}}
\put(1,2){\circle*{0.2}}

\put(1,0){\line(1,1){1}} 
\put(1,0){\line(-1,1){1}} 
\put(0,1){\line(1,1){1}} 
\put(2,1){\line(-1,1){1}} 

\put(1,-1){\line(-1,1){0.5}} 
\put(1,-1){\line(1,1){0.5}} 

\put(1,0){\line(1,-1){0.5}} 
\put(1,0){\line(-1,-1){0.5}} 

\put(0,1){\line(1,-3){0.5}} 
\put(2,1){\line(-1,-3){0.5}} 

\end{picture}
\end{center}
\end{minipage}

\\ \\

$\four$ 
& $\nine$ & $\7$
\end{tabular}

\caption{Some bilattices} \label{fig:hasse_aal}
\end{center}
\end{figure}

\section{AAL study of $\lb$}
\label{sec:lb}


Let us 
now classify our logic according to some of the criteria of Abstract Algebraic Logic. Recall that, in the context of AAL, a logic $\mathcal{L}$ is said to be \emph{protoalgebraic} if and only if, on any algebra, the Leibniz operator is monotone on the $\mathcal{L}$-filters (this is not the original definition, but a characterization that has by now become standard; see, for instance, \cite{BP86}).
A logic is said to be \emph{selfextensional} when the interderivability relation is a congruence of the formula algebra. The following proposition shows that our logic falls outside of both these categories:

\begin{proposition} \label{p:proto}
The logic $\vDash_{\LB}$ is non--protoalgebraic and non--selfextensional.
\end{proposition}
\begin{proof}

Consider the bilattice $\mathcal{NINE}$, repeated in Figure \ref{fig:hasse_aal}. The only proper and non--empty $\mathcal{LB}$--filters on $\mathcal{NINE}$ are $ F_{1}=\left\{e, \top,  \true \right\} \subseteq \left\{ b, c, d, e, \top, \true \right\}=F_{2}$. It is easy to check that $ \left\langle \true, e \right\rangle \in \leib[] \left\langle \mathcal{NINE}, F_{1}\right\rangle$ but, because of negation, we have $ \left\langle \true, e \right\rangle \notin \leib[]\left\langle  \mathcal{NINE}, F_{2}\right\rangle$. Hence, the Leibniz operator is not monotone on $\mathcal{LB}$-filters. 


As to the second claim, note that for any $p, q \in Fm$ we have $p \oplus q \eqLog p \vee q$, but we can easily check that we do not have $\neg \left(p \oplus q\right) \eqLog \neg \left(p \vee q\right)$. For instance in $\mathcal{FOUR}$ we have $\neg \left( t  \oplus \top \right)= \top \in \left\{t, \top\right\}$ but $\neg \left( t  \vee \top \right)= f \notin \left\{t, \top\right\}$.
\end{proof}

The fact that $\mathcal{LB}$ is not selfextensional constitutes one of the main difficulties of the AAL approach to it. As we have seen, this is due to the behaviour of the negation operator, and it is possible to see that this exception to selfextensionality is essentially the only one. We need the following lemmas.

\begin{lem} \label{l:lem}
Let $\varphi, \psi \in Fm$ be two formulas. The following statements are equivalent:
\begin{enumerate}[(i)]
\item $\mathcal{FOUR} \vDash \varphi \land (\varphi \otimes \psi) \approx \varphi$

\item $\varphi \vdash_{\scriptscriptstyle H} \psi$.
\end{enumerate}
\end{lem} 

\begin{proof}
(i) $\Rightarrow$ (ii). Let $h: \mathbf{Fm} \rightarrow \mathcal{FOUR}$ be a homomorphism. If $h(\varphi) = \mathsf{t}$, then $ \mathsf{t} \otimes h(\psi) = \mathsf{t}$, i.e. \ $h(\psi) \geq_k \mathsf{t}$, therefore $h(\psi) \in \{ \top, \mathsf{t} \}$. If $h(\varphi) = \top$, then  $\top \land h(\psi) = \top$, i.e. \ $h(\psi) \geq_t \top$, hence $h(\psi) \in \{ \top, \mathsf{t} \}$.

(ii) $\Rightarrow$ (i). Let $h: \mathbf{Fm} \rightarrow \mathcal{FOUR}$ be a homomorphism and assume that $\varphi \vdash_{\scriptscriptstyle H} \psi$. 

If $h(\varphi) = \mathsf{t}$, then $h(\psi) \in \{ \top, \mathsf{t} \}$, so $h(\psi) \geq_k h(\varphi)$. Hence $h(\varphi) \land (h(\varphi) \otimes h(\psi)) = h(\varphi) \land h(\varphi) = h(\varphi)$.


If $h(\varphi) = \top$, then $h(\psi) \in \{ \top, \mathsf{t} \}$, so $h(\psi) \geq_{t}  h(\varphi) $ and obviously $h(\varphi) \geq_{k} h(\psi)$. Hence we have $h(\phi) \land   (h(\phi) \otimes h(\psi)) = h(\phi) \land  h(\psi) =  h(\phi) $.


If $h(\varphi) = \bot$, then $h(\varphi) \land (h(\varphi) \otimes h(\psi)) = \bot \land \bot = \bot = h(\varphi)$. Finally, the case where $h(\varphi) = \mathsf{f}$ is immediate.
\end{proof}

As an immediate consequence of the preceding result, we have the following:

\begin{lem} \label{l:tfsae}
Let $\varphi, \psi \in Fm$ be two formulas. The following statements are equivalent:
\begin{enumerate}[(i)]
\item $\mathcal{FOUR} \vDash \varphi \approx \psi$,

\item $\varphi \dashv \vdash_{\scriptscriptstyle H} \psi$ and $\neg \varphi \dashv \vdash_{\scriptscriptstyle H} \neg \psi$.
\end{enumerate}
\end{lem} 

\begin{proof}
The only non-trivial implication is (ii)$\Rightarrow$(i). By Lemma \ref{l:lem}, (ii) implies that in $\mathcal{FOUR}$ the following equations hold:
\begin{align}
 \varphi    & \approx \varphi \land (\varphi \otimes \psi) \label{e:eq1} \\ 
\psi & \approx \psi \land (\varphi \otimes \psi) \label{e:eq2}   \\
 \neg \psi  & \approx \neg \psi \land (\neg \varphi \otimes \neg \psi)  \label{e:eq3} \\
 \neg \varphi & \approx  \neg \varphi \land (\neg \varphi \otimes \neg \psi). \label{e:eq4} 
\end{align}


Negating both sides of \ref{e:eq3} and using De Morgan's laws, we obtain 
\begin{align*}
\neg \neg \psi \approx \psi & \approx  \neg (\neg \psi \land (\neg \varphi \otimes \neg \psi)) \\
& \approx  \neg \neg \psi \lor \neg (\neg \varphi \otimes \neg \psi) \\
& \approx  \psi \lor (\neg \neg \varphi \otimes \neg \neg \psi) \\
& \approx  \psi \lor (\varphi \otimes \psi).
\end{align*}
From this and 
\ref{e:eq2}  it follows  that $\psi \approx \varphi \otimes \psi  $. A similar reasoning shows that 
\ref{e:eq1} and \ref{e:eq4}  
imply $\varphi \approx \varphi \otimes \psi $. Hence $\varphi \approx \psi$. 
\end{proof}

The preceding result enables us to characterize the Tarski congruence associated with $\mathcal{LB}$ as the relation defined by the  equations valid in $\mathcal{FOUR}$:

\begin{thm} \label{t:tarski}
The Tarski congruence associated with $\lb = \la \Al[Fm], \Log \ra$ is 
$$\widetilde{\leib[]}(\mathcal{LB})= \{ \langle \varphi , \psi \rangle: \mathcal{FOUR} \vDash \phi \approx \psi \}.$$
\end{thm}

\begin{proof}
Obviously, the relation $\{ \langle \varphi , \psi \rangle: \mathcal{FOUR} \vDash \phi \approx \psi \}$ is a congruence and, by Lemma \ref{l:tfsae} (ii), it is also clear that it is the maximal congruence below the Frege relation.
\end{proof}

Recalling \cite[Propositions 1.23 and 2.26]{FJa09}, we can conclude that  both $\mathbf{Alg^{*}}\mathcal{LB}$ and $\mathbf{Alg}\mathcal{LB}$ are classes of algebras generating the same variety as $\mathcal{FOUR}$ (which is, as we have seen in Chapter 
\ref{ch:int}, the variety $\DBL$ of distributive bilattices). In fact, we have the following:

\begin{thm} \label{t:alg}
The class $\mathbf{Alg}\mathcal{LB}$ is the variety generated by $\mathcal{FOUR}$, i.e.\ the variety of distributive bilattices.
\end{thm}

\begin{proof}
It is clear that $\mathcal{FOUR} \in \mathbf{Alg^{*}}\mathcal{LB} \subseteq \mathbf{Alg}\mathcal{LB}$. By  \cite[Theorem 2.23]{FJa09} we also have 
$$\mathcal{FOUR} \in \mathbf{Alg}\mathcal{LB} = P_{s}(\mathbf{Alg^{*}}\mathcal{LB}) \subseteq V(\mathcal{FOUR}).$$ 
Recall that $V(\four) = \DBL $ is congruence-distributive. Hence we may apply J\'{o}nsson's Lemma  \cite[Corollary IV.6.10]{BuSa00} to conclude that the subdirectly irreducible members of $V(\four)$ belong to $HS(\four)$, and clearly the only algebras in  $HS(\four)$ are the trivial one and $\four$ itself. Then we may conclude that
$V(\mathcal{FOUR})=P_{s}(\mathcal{FOUR}) \subseteq P_{s}(\mathbf{Alg^{*}}\mathcal{LB}).$ Hence we obtain $P_{s}(\mathbf{Alg^{*}}\mathcal{LB})=\mathbf{Alg}\mathcal{LB}=  V(\mathcal{FOUR}).$
\end{proof}

An immediate corollary of the previous result concerning the algebraic reducts of logical bilattices is that $\mathsf{LoBiLat} \nsubseteq  \mathbf{Alg}\mathcal{LB}$. This is so because $\la \7, \{\top, \true \}\ra$ is a logical bilattice, but $\7 \notin \mathbf{Alg}\mathcal{LB}$, for this bilattice is not distributive (not even interlaced, as one can easily see by cardinality condiderations). We can also verify that $\mathcal{NINE} \in \mathbf{Alg}\mathcal{LB}$, since this bilattice is distributive. Taking into account the results of the previous chapter, this last claim follows from the fact that $\mathcal{NINE} \cong \Al[3] \odot \Al[3]$, where $\Al[3]$ denotes the three-element lattice, which is of course distributive.

Having individuated a class which, according to the general theory of \cite{FJa09}, may be regarded as the algebraic counterpart of the logic $\LB$, we may wonder if this class could also be the algebraic counterpart of some other logic. Thanks to the general results of \cite{BP89}, in some cases one may be able to prove that a certain class of algebras cannot be  the equivalent algebraic semantics of any algebraizable logic (such a result has been obtained, for instance, for the varieties of distributive lattices and of De Morgan lattices: see \cite{FV91} and \cite{F97}). This, however, is not the case with distributive bilattices, for it is possible to define a logic which is algebraizable w.r.t.\  the class $\DBL$. Consider the following:

\begin{ex} \label{ex:reglog}
Let $\RB = \la \Al[Fm], \vdash_{\RB} \ra $ be the logic defined, for all $\Gamma \cup \{ \phi \} \subseteq Fm$, as follows: $\Gamma  \vdash_{\RB} \phi $ iff $ \tau (\Gamma) \vDash_{\DBL} \tau (\phi)$, where $\tau$ is a translation from formulas into equations defined as $\tau(\phi) = \{ \phi \approx \neg \phi \}$ for all $\phi \in Fm$. Note that, by definition, the least $\RB$-filter on any distributive bilattice coincides with the set of regular elements defined in the previous chapter (whence the name we have chosen for the logic). 
It also follows from the definition that $\RB$ satisfies one of the two conditions for being algebraizable w.r.t.\ the variety $\DBL$, hence it will be sufficient to show that it satisfies the other one as well, namely the existence of a translation $\rho$ from equations into formulas s.t.\ $\phi \approx \psi =\joinrel\mid \models_{\DBL}  \tau (\rho (\phi \approx \psi) )$. It is not difficult to check that, defining $$ \rho (\phi \approx \psi) = \{ \neg \phi \otimes \psi, (\phi \oplus \neg \phi) \land ( \psi \oplus \neg \psi) \},$$ the condition is satisfied.  We have to prove that $\phi \approx \psi =\joinrel\mid \models_{\DBL} \{ \neg \phi \otimes \psi \approx \neg (\neg \phi \otimes \psi), (\phi \oplus \neg \phi) \land ( \psi \oplus \neg \psi) \approx \neg ((\phi \oplus \neg \phi) \land ( \psi \oplus \neg \psi)) \} .$ The rightwards direction is immediate; for the other one, note that $\neg \phi \otimes \psi \approx \neg (\neg \phi \otimes \psi)$ is equivalent to $\neg \phi \otimes \psi \approx \phi \otimes \neg \psi$ and $(\phi \oplus \neg \phi) \land ( \psi \oplus \neg \psi) \approx \neg ((\phi \oplus \neg \phi) \land ( \psi \oplus \neg \psi))$ is equivalent to $\phi \oplus \neg \phi \approx \psi \oplus \neg \psi$. Now let  $\Al[B] \in \DBL$ and $a, b \in \Al[B]$ such that $\neg a \otimes b = a \otimes \neg b$ and  $a \oplus \neg a \approx b \oplus \neg b$. Using the absorption and the distributive laws, we obtain 
\begin{align*}
    a & = a \otimes (a \oplus \neg a)  \\
    & =  a \otimes (b \oplus \neg b) \\ 
    & = (a \otimes b) \oplus (a \otimes \neg b) \\ 
    & = (a \otimes b) \oplus (\neg a \otimes b) \\ 
    & = b \otimes (a \oplus \neg a) \\ 
    & = b \otimes (b \oplus \neg b) \\ 
    & = b.
\end{align*}
%
%
\end{ex}  

In order to describe the class of g-models of $\lb$, we shall use the following  characterization of $\lb$-filters:

\begin{proposition} \label{prop:lb_filters} 
Let $\Al[B]$ be a distributive bilattice and $F \subseteq B$. Then $F$ is an $\lb$-filter if and only if $F$ is a bifilter of $\Al[B]$ or $F = \emptyset$.
\end{proposition}

\begin{proof} 
For $F$ empty the proof is trivial, so assume it is not. By rules (R3), (R4), (R3') and (R4') of our Hilbert calculus $\vdash_{\scriptscriptstyle H}$, it is obvious that any $\lb$-filter on $\Al[B]$ is a bifilter. It is also easy to see that, in a distributive bilattice, any bifilter is closed w.r.t.\  all rules of our Hilbert calculus. To see that it  is closed under (R18) and (R19), recall that any interlaced (hence, any distributive) bilattice satisfies that $a \lor b \leq_{k} a \oplus b$ and $a \oplus b \leq_{t} a \lor b$ for all $a,b \in B$. Therefore, since any bifilter $F$ is upward closed w.r.t.\ both lattice orders, we have that $a \lor b \in F$ iff $a \oplus b \in F$. 
\end{proof}

Combining the result of the previous proposition with that of Theorem \ref{t:alg}, we immediately obtain the following:


\begin{proposition} \label{prop:g_models}
If a g-matrix $\la \Al, \mathcal{C} \ra$ is a reduced g-model of $\lb$, then $\Al$ is a distributive bilattice and any non-empty $F \in \mathcal{C} $ is a bifilter. 

\end{proposition}


One may wonder if the result of Proposition \ref{prop:g_models} could be strengthened, proving that if a g-matrix $\la \Al, \mathcal{C} \ra$ is a reduced g-model of $\lb$, then $\Al$ is a distributive bilattice and $\mathcal{C} $ is the family of all bifilters of  $\Al$ (possibly plus the empty set). This is not the case, as we shall see later (Example \ref{e:g_models}). 

On the other hand, in Theorem \ref{t:fullmod} we shall see that the  g-models of $\lb$ that satisfy this requirement (i.e.\ the full models of $\lb$) are exactly the g-models without theorems that inherit the metalogical properties stated in Proposition \ref{p:metalog}.

We will use the following results:

\begin{lem} \label{l:fullmod}
A g-matrix $\la \Al, \mathcal{C} \ra$ is a full model of $\lb$ if and only if there is a bilogical morphism between $\la \Al, \mathcal{C} \ra$ and a g-matrix $\la \Al', \mathcal{C'} \ra$, where  $\Al'$ is a distributive bilattice and  $ \mathcal{C'} = \FF(\Al) \cup \{ \emptyset \} $.
\end{lem}

\begin{proof}
Follows from the general result of \cite[Proposition 2.21]{FJa09} together with our Theorem \ref{t:alg} and Proposition \ref{prop:lb_filters}. 
\end{proof}

\begin{lem} \label{l:cong}
Let $\la \Al, \Al[C] \ra$ be an abstract logic satisfying properties (PC), (PDI), (PDN) and (PDM). Then the relation 
$$ \tarski(\Al[C]) = \{ \la a, b \ra \in A \times A: \: \Al[C](a) = \Al[C](b) \textrm{ and } \Al[C](\neg a) = \Al[C](\neg b) \} $$
is a congruence of $\Al$ and the quotient  algebra $\Al / \tarski(\Al[C])$  is a distributive bilattice.
\end{lem}

\begin{proof}
Clearly $\tarski(\Al[C])$ is an equivalence relation and, using properties (PC) to (PDM), it is not difficult to prove that it is also a congruence of $\Al$. For instance, to see that $\la a_{1}, b_{1} \ra, \la a_{2}, b_{2} \ra \in \tarski(\Al[C])$ implies $\la a_{1} \land a_{2}, b_{1} \land b_{2} \ra \in \tarski(\Al[C])$, note that we have
\begin{align*} 
\Al[C](a_{1} \land a_{2}) & = \Al[C](a_{1}, a_{2}) & \text{ by (PC)} \\ 
& = \Al[C](\Al[C](a_{1}), \Al[C](a_{2}))  \\ 
& = \Al[C](\Al[C](b_{1}), \Al[C](b_{2})) & \text{ by hypothesis}  \\ 
& = \Al[C](b_{1}, b_{2})  \\ 
& = \Al[C](b_{1} \land b_{2}) & \text{ by (PC)} 
\end{align*}
and 
\begin{align*} 
\Al[C](\neg (a_{1} \land a_{2})) & = \Al[C](\neg a_{1} \lor \neg a_{2}) & \text{ by (PDM)} \\
& = \Al[C](\neg a_{1}) \cap \Al[C](\neg a_{2}) & \text{ by (PDI)} \\
& = \Al[C](\neg b_{1}) \cap \Al[C](\neg b_{2}) & \text{ by hypothesis}  \\
& = \Al[C](\neg b_{1} \lor \neg b_{2}) & \text{ by (PDI)} \\
& = \Al[C](\neg (b_{1} \land b_{2})) & \text{ by (PDM).} 
\end{align*}
A similar reasoning shows that $\tarski(\Al[C])$ is compatible with the other bilattice connectives. To show that the quotient $\Al / \tarski(\Al[C])$ is a distributive bilattice, we need to check that, for any equation $\phi \approx \psi$ axiomatizing the variety $\DBL$, we have $\la \phi, \psi \ra \in \tarski(\Al[C])$. This is not difficult, but quite long. Let us check, for instance, just one of the distributive identities. We have
\begin{align*} 
\Al[C](a \land (b \lor c)) & = \Al[C](a, b \lor c) & \text{ by (PC)} \\
& = \Al[C](a, b) \cap \Al[C](a, c) & \text{ by (PDI)} \\
& = \Al[C](a \land b) \cap \Al[C](a \land c) & \text{ by (PC)} \\
& = \Al[C]((a \land b) \lor (a \land c)) & \text{ by (PDI)}
\end{align*}
and
\begin{align*} 
\Al[C](\neg (a \land (b \lor c))) & = \Al[C](\neg a \lor \neg  (b \lor c)) & \text{ by (PDM)} \\
& = \Al[C](\neg a) \cap \Al[C](\neg  (b \lor c)) & \text{ by (PDI)} \\
& = \Al[C](\neg a) \cap \Al[C](\neg  b \land \neg c)) & \text{ by (PDM)} \\
& =  \Al[C](\neg a) \cap \Al[C](\neg  b, \neg c) & \text{ by (PC)} \\
& =  \Al[C](\neg a, \neg a) \cap \Al[C](\neg  a, \neg b)  \cap \Al[C](\neg  a, \neg c)  \cap \Al[C](\neg  b, \neg c) \\
& =  \Al[C](\neg a, \neg a \lor \neg b ) \cap \Al[C](\neg c, \neg  a \lor \neg b )  & \text{ by (PDI)} \\
& =  \Al[C](\neg a \lor \neg c, \neg a \lor \neg b )  & \text{ by (PDI)} \\
& =  \Al[C](\Al[C] (\neg a \lor \neg c), \Al[C](\neg a \lor \neg b) ) \\
& =  \Al[C](\Al[C] (\neg ( a \land c)), \Al[C](\neg (a \land b) ))  & \text{ by (PDM)} \\
& =  \Al[C](\neg ( a \land b), \neg (a \land c) ) \\
& =  \Al[C](\neg ( a \land b) \land \neg (a \land c) ) & \text{ by (PC)} \\
& =  \Al[C](\neg (( a \land b) \lor (a \land c)) ) & \text{ by (PDM)}.
\end{align*}
\end{proof}

\begin{thm} \label{t:fullmod}
An abstract logic $\la \Al, \Al[C] \ra$ is a full model of $\lb$ if and only if it is finitary and satisfies, for all $a, b \in A$ and all $X \subseteq A$, the following properties: 

\begin{tabular}[l]{clllll}
\\
\emph{(E)}  &  $ \Al[C](\emptyset ) =  \emptyset $ \\  \\
\emph{(PC)}  &  $ \Al[C](a \land b ) =  \Al[C](a  \otimes b )  =  \Al[C](a , b ) $ \\  \\
\emph{(PDI)}   &  $ \Al[C](X, a \lor b ) =  \Al[C](X, a \oplus b )  =  \Al[C](X, a)  \cap \Al[C](X, b) $ \\ \\
\emph{(PDN)}  &  $ \Al[C](a ) =  \Al[C](\neg \neg a )$ \\ \\
\emph{(PDM)}  &  $ \Al[C](\neg (a \land b)) =  \Al[C](\neg a \lor \neg b)) $ \\
  &  $ \Al[C](\neg (a \lor b)) =  \Al[C](\neg a \land \neg b)) $ \\
     &  $ \Al[C](\neg (a \otimes b)) =  \Al[C](\neg a \otimes \neg b)) $ \\
        &  $ \Al[C](\neg (a \oplus b)) =  \Al[C](\neg a \oplus \neg b)) $. 
\\
\end{tabular}
\end{thm}

\begin{proof}
$(\Rightarrow)$ Let $\la \Al, \Al[C] \ra$ be a full model of $\lb$ and let $\mathcal{C}$ be the closure system associated with $\Al[C]$. As any full model, $\la \Al, \Al[C] \ra$ will be finitary. By Lemma \ref{l:fullmod}, we know that there is a bilogical morphism $h: A \rightarrow B$ onto an abstract logic of the form $\la \Al[B], \FF \ra$, where $\Al[B] \in \DBL$ and $\FF$ is the operator of bifilter generation (if we set $\FF(\emptyset) = \emptyset $). This last condition implies that $\la \Al, \Al[C] \ra$ satisfies condition $(E)$, for the least closed set $T \in \mathcal{C}$ will be the empty set. To prove (PC), note that, for any $a, b, c \in A$ we have
\begin{align*}
   c \in  \Al[C] (a \land b)  & \:  \textrm{ iff } \: h(c) \in \FF (h (a \land b) )  \\
    &  \: \textrm{ iff } \: h(c) \in \FF (h (a) \land h (b) ) \\
    & \:  \textrm{ iff } \: h(c) \in \FF (h (a), h (b) ) \\
     & \:  \textrm{ iff } \:  c \in  \Al[C] (a, b).
\end{align*}
%
The first two equivalences and the last one hold because $h$ is a bilogical morphism, while the third follows from the definition of bifilter. The same reasoning shows that  $\Al[C](a  \otimes b )  =  \Al[C](a , b ) $. As to (PDI), recall that, by Proposition \ref{p:metalog}, $\lb$ satisfies it: then we may apply \cite[Theorem 2.52]{FJa09} to conclude that any full model of $\lb$ will have the (PDI) as well. Finally, (PDN) and (PDM) are easily proved using the double negation and De Morgan's laws for bilattices together with the fact that $h$ is a bilogical morphism.

$(\Leftarrow)$  Let $\la \Al, \Al[C] \ra$ be a finitary logic that satisfies properties (E) to (PDM). We will prove that there is a bilogical morphism between $\la \Al, \Al[C] \ra$ and  an abstract logic of the form $\la \Al[B], \FF \ra$, where $\Al[B] \in \DBL$ and $\FF$ is the operator of bifilter generation. The morphism is given by the canonical projection associated with the following congruence:
$$ \tarski(\Al[C]) = \{ \la a, b \ra \in A \times A: \: \Al[C](a) = \Al[C](b) \textrm{ and } \Al[C](\neg a) = \Al[C](b) \}. $$
By Lemma  \ref{l:cong} we have that $\tarski(\Al[C])$ is a congruence and that the quotient algebra $\Al / \tarski(\Al[C])$  is a distributive bilattice. By definition, the canonical projection $ \pi : \Al \rightarrow \Al / \tarski(\Al[C])$ is an epimorphism, so we only need to prove that $a \in \Al[C](X) $ iff $\pi(a) \in \FF (\pi[X]) $ for all $X \cup \{ a \} \subseteq A$.

 If $X$ is empty, then it is immediate. Assume then $X \neq \emptyset$ and $a \in \Al[C](X)$. By finitarity, there is a finite set $\{a_{1}, \ldots, a_{n} \} \subseteq X$ such that $a \in \Al[C](\{a_{1}, \ldots, a_{n} \})$. In order to simplify the notation, note that by (PC) and the properties of closure operators we have $\Al[C](\{a_{1}, \ldots, a_{n} \}) =\Al[C]( \{a_{1} \land \ldots \land a_{n} \} ) $, so let $b = a_{1} \land \ldots \land a_{n}$. Then we have to prove that $\pi(a) \in \FF (\pi( b )) $. By Corollary \ref{cor:bifilters}, 
 this happens when $(\pi(a) \otimes \pi(b)) \land \pi(b) = \pi(b)$, i.e. when $\pi ( (a \otimes b) \land b ) = \pi (b ) $. So we have to prove that $\Al[C] ( (a \otimes b) \land b ) = \Al[C] (b ) $  and $\Al[C] ( \neg ((a \otimes b) \land b) ) = \Al[C] ( \neg b ) $. Applying (PC), the first equality becomes $\Al[C] ( a, b ) = \Al[C] (b ) $, which is true since by hypothesis $a \in \Al[C] (b )$. As to the second equality, applying (PC), (PDM) and (PDI) we have
\begin{align*}
\Al[C] ( \neg b )   &  = \Al[C] ( \neg a, \neg b) \cap \Al[C] ( \neg  b)  \\
&  = \Al[C] ( \neg a \otimes \neg b) \cap \Al[C] ( \neg  b)  \\
&  = \Al[C] ( \neg (a \otimes b)) \cap \Al[C] ( \neg  b)  \\
& = \Al[C] ( \neg (a \otimes b) \lor \neg b) ) \\
& = \Al[C] ( \neg ((a \otimes b) \land b) ).
\end{align*}

Conversely, assume  $\pi(a) \in \FF (\pi[X]) $. Again by Corollary \ref{cor:bifilters}, 
we know that this last condition is equivalent to the existence of $a_{1}, \ldots, a_{n} \in X$ such that 
$$(\pi(a) \otimes (\pi(a_1) \land \ldots \land \pi(a_n))) \land   \pi(a_1) \land \ldots \land \pi(a_n) = \pi(a_1) \land \ldots \land \pi(a_n).$$
Letting  $b = a_{1} \land \ldots \land a_{n}$ and using the fact that $\pi$ is a homomorphism, we obtain
$$\pi( (a \otimes b) \land  b) = \pi(b).$$
This implies $ \Al[C] ( (a \otimes b) \land  b) =  \Al[C] (b)$, i.e.\ $ \Al[C] ( a, b) =  \Al[C] (b)$, i.e.\ $a \in \Al[C] (b)$. Hence $a \in \Al[C](X)$.
\end{proof}

Abstract logics can be studied as models of Gentzen systems (see \cite{FJa09}). In this context, we say that an abstract logic $\la \Al, \Al[C] \ra$ is a \emph{model} of a Gentzen system $\g$ when for any family of sequents  $\{ \Gamma_{i} \seq \phi_{i} : i \in I \}$ and for any sequent $\Gamma \seq \phi$ such that $\{ \Gamma_{i} \seq \phi_{i} : i \in I \} \Gens \Gamma \seq \phi$ it holds that for any homomorphism $h:  \Al[Fm] \rightarrow  \Al$ such that $h(\phi_{i}) \in   \Al[C] (h[\Gamma_{i}])$ for all $i \in I$, also $h(\phi) \in   \Al[C] (h[\Gamma])$. Recall also that a Gentzen $\g$ system is said to be \emph{adequate} for a logic $\mathcal{L} = \la \Al[Fm], \vdash_{\mathcal{L}} \ra$ when $\Gamma \vdash_{\mathcal{L} } \varphi$ if and only if $ \emptyset \Gensin \Gamma \rhd \varphi$ for any $\Gamma \cup \{ \phi \} \subseteq Fm$. We say that a Gentzen system $\g$ is \emph{fully adequate} for a logic $\mathcal{L} = \la \Al[Fm], \vdash_{\mathcal{L}} \ra$ when any abstract logic $\la \Al, \Al[C] \ra$ is a full model of $\mathcal{L}$ if and only if it is a finitary model of $\g$ (with theorems if $\mathcal{L}$ has, otherwise  without theorems). 


We have seen in Theorem \ref{thm:GentzenComplet} that $\mathcal{G_{LB}}$ is adequate for the logic $\lb$. Now, using Theorem \ref{t:fullmod}, we immediately obtain the following as a corollary:

 
\begin{thm} \label{t:fullyad}
The Gentzen system $\mathcal{G_{LB}}$ is fully adequate for the logic $\lb$.
\end{thm}



In order to characterize the class of matrix models of $\mathcal{LB}$, we will now turn to the study of the Leibniz congruence of $\mathcal{LB}$.

\begin{proposition} \label{p:leibcong}
Let $\langle \Al, F \rangle$ be a model of the logic $\lb$. Then, for all $a, b \in A$, the
  following are equivalent:
  \begin{enumerate}[(i)]
    \item  $\langle a, b \rangle \in \leibniz_{\Al} (F)$,
    \item $\{ c \in A: a \lor c \in F \} = \{ c
  \in A: b \lor c \in F \}$ and \\ $\{ c \in A:  \neg a  \lor c \in F \} = \{ c
  \in A:  \neg b  \lor c \in F \}$.
  \item $\{ c \in A: a \oplus c \in F \} = \{ c
  \in A: b \oplus c \in F \}$  and \\ $\{ c \in A:  \neg a  \oplus c \in F \} = \{ c
  \in A:  \neg b  \oplus c \in F \}$.
 \end{enumerate}
\end{proposition}

\begin{proof} 
(i) $ \Rightarrow$ (ii). It is easy to see that any congruence $\theta$ compatible with $F$ must satisfy (ii). For instance, if $\langle a, b \rangle \in \theta$, then, for any $c \in A$, we have $\langle a \lor c, b \lor c \rangle \in \theta$ as well. Hence we have that $a \lor c \in F $ if and only if $b \lor c \in F$. A similar argument shows also that (i) implies (iii).

(ii) $ \Rightarrow$ (i). Let $\theta$ be the relation defined by the conditions of (ii), that is, for all $a, b \in A$, we set $\la a, b \ra \in \theta $ if and only if  
$\{ c \in A: a \lor c \in F \} = \{ c
  \in A: b \lor c \in F \}$ and $\{ c \in A:  \neg a  \lor c \in F \} = \{ c
  \in A:  \neg b  \lor c \in F \}.$
%
Clearly, to prove that $\theta \subseteq \leibniz_{\Al} (F)$, it is sufficient to check that $\theta$ is a congruence compatible with $F$. Taking into account the fact that $F $ is an $\lb$-filter, it is not difficult to see that $\theta$ is a congruence. We need to prove, for instance, that $\la a_{1}, b_{1} \ra, \la a_{2}, b_{2} \ra  \in \theta $ implies $\la a_{1} \land a_{2},  b_{1} \land b_{2} \ra \in \theta$. For this, assume $(a_{1} \land a_{2}) \lor c \in F$ for some $c \in A$. This implies
\begin{align*}
c \lor  (a_{1} \land a_{2})             \: & \in F & \text{ by (R5)} \\
(c \lor  a_{1}) \land (c \lor a_{2})  \: & \in F & \text{ by (R8)} \\
(c \lor  a_{1}), (c \lor a_{2})          \: & \in F & \text{ by (R1) and (R2)} \\
(a_{1} \lor c), (a_{2} \lor c)           \: & \in F & \text{ by (R5)} \\
(b_{1} \lor c), (b_{2} \lor c)          \: & \in F & \text{ by definition of } \theta  \\
(c \lor b_{1}), (c \lor b_{2})          \: & \in F & \text{ by (R5)}  \\
(c \lor b_{1})  \land (c \lor b_{2})          \: & \in F & \text{ by (R3)}  \\
c \lor (b_{1} \land b_{2})                \: & \in F & \text{ by (R9)}  \\
(b_{1} \land b_{2})  \lor c              \: & \in F & \text{ by (R5)}.
\end{align*}
Hence the first condition of (ii) is satisfied. A similar argument allows to prove the second one as well, so that we may conclude that $\la a_{1} \land a_{2},  b_{1} \land b_{2} \ra \in \theta$.

To see  that $\theta$ is compatible with $F$, assume $\la a, b \ra \in \theta $ and $a \in F$. We have:
\begin{align*}
a \lor b      	       				\: & \in F & \text{ by (R4)} \\
b \lor b						  \: & \in F & \text{ by definition of } \theta  \\
b 					     \: & \in F & \text{ by (R6)} .
\end{align*}
(ii) $ \Leftrightarrow$ (iii). This is almost immediate, since by (R18) and (R19) we have that $a \lor b \in F$ iff $a \oplus b \in F$ for any $a, b \in A$ and any $\lb$-filter $F$.
\end{proof}

As a consequence of Proposition \ref{p:leibcong}, we obtain the following characterization of the reduced matrix models of $\mathcal{LB}$:

\begin{thm} \label{thm:red_mod}
Let $\Al[A]$  be a non-trivial algebra. Then the following conditions are equivalent:
 \begin{enumerate}[(i)]
 \item  $\la \Al, F \ra$ is a reduced matrix for $\lb$,
 \item  $\Al \in \DBL$ and $F$ is a bifilter s.t., for all $a, b \in A$, if $a <_{t} b$, then there is $c \in A$ s.t. either $a \lor c \notin F$ and $b \lor c \in F$ or $\neg a \lor c \in F$ and $\neg b \lor c \notin F$,
\item $\Al \in \DBL$ and $F$ is a bifilter s.t., for all $a, b \in A$, if $a <_{k} b$, then there is $c \in A$ s.t. either $a \lor c \notin F$ and $b \lor c \in F$ or $\neg a \lor c \notin F$ and $\neg b \lor c \in F$.
\end{enumerate}
\end{thm}

\begin{proof} 
(i) $\Rightarrow$ (ii). Assume $\la \Al, F \ra$ is a reduced matrix for $\lb$. That $\Al \in \DBL$ follows from Theorem \ref{t:alg}, while Proposition \ref{prop:lb_filters} implies that $F$ is a bifilter (the assumption that $\Al$ is not trivial guarantees that $F \neq \emptyset$). Notice that  $a <_{t} b$ implies that $b \in \FF(a)$ and $\neg a \in \FF(\neg b)$; obviously it also implies that $\langle a, b \rangle \notin \leibniz_{\Al} (F)$. By Proposition \ref{p:leibcong}, this means that either $\{ c \in A: a \lor c \in F \} \neq \{ c   \in A: b \lor c \in F \}$ or $\{ c \in A:  \neg a  \lor c \in F \} \neq \{ c \in A:  \neg b  \lor c \in F \}$. If the first is the case, then, for some $c \in A$, either $a \lor c \notin F$ and $b \lor c \in F$ or $a \lor c \in F$ and $b \lor c \notin F$. The latter hypothesis is impossible, for $b \in \FF(a)$ implies $ \FF(b \lor c) = \FF(b) \cap \FF(c) \subseteq \FF(a) \cap \FF(c) = \FF(a \lor c) $. So if $a \lor c \in F$, then $b \lor c \in F$ for any bifilter $F$. Hence the former hypothesis must be true. A similar argument can be applied to the case of $\{ c \in A:  \neg a  \lor c \in F \} \neq \{ c \in A:  \neg b  \lor c \in F \}$. Recalling that  $a <_{k} b$ implies  $b \in \FF(a)$ and $\neg b \in \FF(\neg a)$, it is easy to apply the same reasoning in order to show also  that (i) $\Rightarrow$ (iii).

(ii) $\Rightarrow$ (i). Assume that $\Al \in \DBL$ and $F$ is a bifilter satisfying (ii). Assume also $a \neq b$. Then $a \land b <_{t} a \lor b$, hence we may apply the assumption and Proposition \ref{p:leibcong} to conclude that $\langle a \land b, a \lor b \rangle \notin \leibniz_{\Al} (F)$. Since we are in a lattice, this implies $\langle a, b \rangle \notin \leibniz_{\Al} (F)$. Hence $ \leibniz_{\Al} (F) = Id_{\Al}$. A similar reasoning shows that (iii) $\Rightarrow$ (i).
\end{proof} 

Notice that, using the characterization given by item (iii) instead of (ii) of Proposition \ref{p:leibcong}, 
we could equivalently formulate conditions (ii) and (iii) of Theorem \ref{thm:red_mod} using $\oplus$ instead of $\lor$, thus obtaining the following:
%


\begin{cor} \label{cor:red_mod}
Let $\Al[A]$  be a non-trivial algebra. The following conditions are equivalent:
 \begin{enumerate}[(i)]
 \item  $\la \Al, F \ra$ is a reduced matrix for $\lb$,
 \item  $\Al \in \DBL$ and $F$ is a bifilter s.t., for all $a, b \in A$, if $a <_{t} b$, then there is $c \in A$ s.t. either $a \oplus c \notin F$ and $b \oplus c \in F$ or $\neg a \oplus c \in F$ and $\neg b \oplus c \notin F$,
\item $\Al \in \DBL$ and $F$ is a bifilter s.t., for all $a, b \in A$, if $a <_{k} b$, then there is $c \in A$ s.t. either $a \oplus c \notin F$ and $b \oplus c \in F$ or $\neg a \oplus c \notin F$ and $\neg b \oplus c \in F$.
\end{enumerate}
\end{cor}

We know that all algebras in $ \mathbf{Alg^*}\LB$ are distributive bilattices, hence, by our Representation Theorem \ref{thm:interlaced_bl}, 
isomorphic to a product bilattice. The following lemma enables us to determine which requirements a lattice $\Al[L]$ must satisfy in order to have that $ \Al[L] \odot \Al[L] \in \mathbf{Alg^*}\LB$.


\begin{lem} \label{l:leib}
Let $\Al[B] = \la B,  \land, \lor, \otimes, \oplus, \neg \ra$ be an interlaced bilattice, let $D \subseteq \Reg(\Al[B])$ be a lattice filter of the lattice $ \mathbf{Reg}(\Al[B]) = \la \Reg(\Al[B]), \otimes, \oplus \ra $ and let $\theta \in \Con (\Al[B])$. Then: 
%
\begin{enumerate}[(i)]
  \item $D = \FF(D) \cap \Reg(\Al[B])$,
  \item $\theta$ is compatible with $\FF (D)$ if and only if $\ \theta \:  \cap {\Reg(\Al[B]) \times \Reg(\Al[B])}$ is compatible with $D$,
  \item $\leib[B] (\FF (D)) = Id_{B}$ if and only if  $\leib[]_{\mathbf{Reg}(\Al[B])} (D) = Id_{\Reg(\Al[B])}$.
\end{enumerate}
\end{lem}

\begin{proof} 
(i). Obviously $D \subseteq \FF(D) \cap \Reg(\Al[B])$. To prove the other inclusion, assume $a \in \FF(D) \cap \Reg(\Al[B])$. By Lemma \ref{lem:bifilter}, the assumption implies that there are  $a_{1}, \ldots, a_{n} \in D$ such that $a_{1} \otimes \ldots \otimes a_{n} \leq_{t} a_{1} \otimes \ldots \otimes a_{n} \otimes a$. Let $ b = a_{1} \otimes \ldots \otimes a_{n}$. Since $D$ is a filter,  $b \in D $, so we have that $b \leq_{t} b \otimes a$. Since $b \otimes a \in \Reg(\Al[B])$, this implies that $b = b \otimes a$. Hence $b \leq_{k} a$ and, using again the fact that $D$ is a filter, we conclude that $a \in D$.

(ii). Assume $\theta \in \Con (\Al[B])$ is compatible with $\FF (D)$, $a \in D$ and $\la a, b \ra \in  \theta \:  \cap {\Reg(\Al[B]) \times \Reg(\Al[B])}$. The assumptions imply $a \in \FF (D)$ and $\la a, b \ra \in \theta$, hence $b \in \FF (D)$. Now, using (i), we may conclude that $b \in D$.

Conversely, assume $\theta \:  \cap {\Reg(\Al[B]) \times \Reg(\Al[B])}$ is compatible with $D$,  $\la a, b \ra \in \theta$ and $a \in \FF(D)$. By Proposition \ref{prop:regular} (ii), this last assumption implies $\reg(a) \in \FF(D)$. Hence, using again (i), we have $\reg(a) \in D$. By Proposition \ref{p:blconf_cong} (i), $\la a, b \ra \in \theta$ implies  $\la \reg(a), \reg(b) \ra \in \theta$. Then, applying compatibility, we have $\reg(b) \in D$. Hence $\reg(b) \in \FF(D)$ and, applying again Proposition \ref{prop:regular} (ii), we conclude that $b \in \FF(D)$.

(iii). Recall that $\Al[B] \cong \mathbf{Reg}(\Al[B]) \odot \mathbf{Reg}(\Al[B])$. Hence, by Proposition  \ref{p:interbl_cong}, we have that $\Con (\Al[B] ) \cong \Con (\mathbf{Reg}(\Al[B]) )$. It is easy to see that the isomorphism is given by the map
$h: \Con (\Al[B] ) \ta\Con (\mathbf{Reg}(\Al[B]) )$
defined, for all $\theta \in  \Con (\Al[B])$, as $h(\theta) = \theta \cap \Reg(\Al[B]) \times \Reg(\Al[B])$ (see Proposition \ref{p:blconf_cong}). Then, applying (ii), the result easily follows.
\end{proof}

Now we can easily obtain the following characterization:


\begin{thm} \label{t:alg*}
Let $\Al[A]$ be a non-trivial algebra. Then a matrix $\langle \Al[A], F  \rangle$ is a reduced model of $\mathcal{LB}$ if and only if
$\Al[A]$ is a distributive bilattice isomorphic to $\mathbf{Reg}(\Al[A]) \odot \mathbf{Reg}(\Al[A])$ such that 
the following conditions are satisfied:
\begin{enumerate}[(i)]
	\item $\mathbf{Reg}(\Al[A])  = \langle \Reg(\Al), \otimes, \oplus \rangle$ is a distributive lattice with top element $\top$ satisfying the property that, for all $a, b \in  \Reg(\Al)$ such that $a <_{k} b$, there is $c \in  \Reg(\Al) $ such that $a \oplus c \neq \top$ and $b \oplus c = \top$,
	\item $F =  \FF(\top)$.
\end{enumerate}
\end{thm}

\begin{proof}
We know, by Theorem \ref{thm:red_mod}, that $\Al$ is a distributive bilattice and by assumption   $\Al $ is non-trivial, hence $F \neq \emptyset$.   By Proposition \ref{prop:lb_filters}, $F \subseteq B$ is an $\mathcal{LB}$--filter iff $F$ is a bifilter of $\Al[A]$. Moreover, note  that $F = \FF (F \cap \Reg(\Al))$. It is obvious that $ \FF (F \cap \Reg(\Al)) \subseteq \FF(F) = F $. As to the other inclusion, assume $a \in F$. By Proposition \ref{prop:regular} (ii), this implies $\reg(a) \in F$. Hence we have $$a \in \FF(a) = \FF (\reg(a)) \subseteq  \FF (F \cap \Reg(\Al)). $$
Then we may apply Lemma  \ref{l:leib} to conclude that the matrix $\langle \Al, F \rangle$ is reduced if and only if the matrix $\langle \mathbf{Reg}(\Al[A]), F \cap \Reg(\Al) \rangle$ is reduced. As shown in \cite{FGV91}, this last condition is equivalent to (i)
plus  $F \cap \Reg(\Al) = \{ \top \}$. 
\end{proof}



Theorem \ref{t:alg*}  tells us    that any $\Al[B] \in \mathbf{Alg^*}\mathcal{LB}$ must have a top element w.r.t.\
 the knowledge ordering, i.e.\ $\top$.
 This also implies that $\Al[B]$ has a minimal nonempty bifilter, namely $\FF(\top)  = \{ a \in B : a \geq_t \top  \}$.
 Another interesting consequence of the theorem is that the result of Proposition \ref{prop:g_models} concerning the g-models of $\lb$ cannot be strengthened. That is, it is not true that if a g-matrix $\la \Al, \mathcal{C} \ra$ is a reduced g-model of $\lb$, then $\Al$ is a distributive bilattice and $\mathcal{C} $ is the family of all bifilters of  $\Al$. Consider  the following:

\begin{ex} \label{e:g_models}
Let  $\Al[L]$ be any lattice  that satisfies property (i) of Theorem \ref{t:alg*} (for instance the four-element non-linear distributive lattice), and let us denote its top element by 1. Then we know that the matrix $\la \Al[L], \{ 1 \} \ra$ is reduced. It is easy to see that  the matrix  $\la \Al[L], \{ 1 \} \ra$ is isomorphic to $\la \mathbf{Reg}(\Al[L] \odot \Al[L]), \la 1, 1 \ra \ra$. Note also that $\FF(\la 1, 1 \ra) = \{ 1 \} \times L$. Then, by Lemma \ref{l:leib}, we have that the matrix $\la \Al[L] \odot \Al[L], \{ 1 \} \times L \ra$ is a reduced model of $\lb$. Hence, any g-matrix $\la \Al[L] \odot \Al[L], \mathcal{C} \ra$ such that $\{ 1 \} \times L \in \mathcal{C}$ will be reduced as well. So, if we take for example  $  \mathcal{C} = \{ \{ 1 \} \times L, L \times L  \}$, then $\la \Al[L] \odot \Al[L], \mathcal{C} \ra$ is a reduced g-model of $\lb$, and clearly there may be bifilters of $\Al[L] \odot \Al[L]$ that are not in $ \mathcal{C}$.
\end{ex}


The class of lattices satisfying property (i) of Theorem \ref{t:alg*} seems to have some interest in itself and to deserve further study. Indeed,  algebras satisfying a property in some sense dual to our (i)  have already been considered in the literature, i.e.\ lattices having a minimum element 0 and satisfying that, for all $a, b $ such that $a > b$, there is $c$ such that $a \sqcap c \neq 0$ and $b \sqcap c = 0$. This property has been called \emph{disjunction property}, and the corresponding lattices \emph{disjunctive lattices} (see for instance \cite{Wa38} and \cite{Ci91}). In the same spirit,  we will here adopt the name \emph{dual disjunctive} for those lattices satisfying property (i) of Theorem \ref{t:alg*}.  

As noted in \cite{FGV91}, all Boolean lattices are dual disjunctive lattices in our sense. In fact, this result can be sharpened:

\begin{proposition} \label{p:relgood}
Let $\Al[L] = \langle L, \sqcap, \sqcup \rangle $ be a Boolean lattice whose minimum and maximum element are 0 and 1, and let $F \subseteq L$ be a filter of $\Al[L]$.  Then the sublattice of $\: \Al[L]$ with universe $F$ is a dual disjunctive lattice.
\end{proposition} 

\begin{proof}
Let $a, b \in F$ be such that $a > b$ and let $a'$ be the complement of $a$. Clearly $a' \sqcup b \in F $, and note that $a' \sqcup b < 1 $, because otherwise we would have $$ a \sqcap (a' \sqcup b) = a > b = a \sqcap b = (a \sqcap a') \sqcup (a \sqcap b) = a \sqcap (a' \sqcup b).$$ Moreover, $a \sqcup a' \sqcup b = 1$, but $b \sqcup a' \sqcup b = a' \sqcup b < 1$, and this completes the proof.
\end{proof}

One may wonder whether the converse of Proposition \ref{p:relgood} is also true, i.e.\ if any dual disjunctive lattice can be proved to be isomorphic to a filter of some Boolean lattice. This is not the case, a counterexample being the following: 

\begin{ex} \label{ex:dual}
Let $F$ be a non-principal filter (so, without bottom element) of a Boolean lattice $\Al[L] = \langle L, \sqcap, \sqcup \rangle $ whose maximum element is 1. Define the structure $$\Al[F^*] = \langle F \cup \{ 0 \}, \sqcap, \sqcup, 1 \rangle $$ with universe $F$ augmented with a new element $0 \notin L$, and whose lattice order is the one inherited from $\Al[L]$, except that we have $0 < a$ for all $a \in F$. Clearly $\Al[F^*]$ is a bounded distributive lattice, so if it were the filter of some Boolean lattice, it would itself be a Boolean lattice. But it is not, since for all $a, b \in F$ we have $a \sqcap b \in F$, i.e. \ $a \sqcap b > 0$. Therefore, no element in $F$ has a complement. 

On the other hand, it is easy to see that $\Al[F^*]$ is dual disjunctive. Clearly if $0 < a < b$ the condition is satisfied because $a, b \in F$. If $a = 0$, then let $c \in F$ such that $0 = a < c < b$ (such an element must exist, because $F$ had no bottom element). If we denote by $b'$ the complement of $b$ in $\Al[L]$, then we have  $b' \sqcup c \in F$ and $b \sqcup b' \sqcup c = 1$, but $ 0 \sqcup b' \sqcup c = b' \sqcup c < 1 $. So $\Al[F^*]$ is a dual disjunctive lattice. 
\end{ex}

In Chapter \ref{ch:imp}, 
in connection with the study of the algebraic models of an expansion of $\lb$, we will investigate a bit further the class of dual disjunctive lattices, in particular characterizing those that are indeed isomorphic to filters of Boolean lattices. For now, let us observe that the results just stated allow us to gain some additional information on the class $\mathbf{Alg^*}\mathcal{LB}$.

First of all, we may check that $\mathbf{Alg^*}\mathcal{LB}$ is closed under direct products but not under subalgebras (so it is not a quasivariety). 

The first claims follows from the fact that  $\mathbf{Alg^*}\mathcal{LB}$ is definable by a first-order universal formula. So $P(\mathcal{FOUR}) \subseteq \mathbf{Alg^*}\mathcal{LB}$, and by cardinality reasons we may see that this inclusion is strict, because there are countable algebras in $\mathbf{Alg^*}\mathcal{LB}$: one just needs to consider any bilattice $\Al[B] \cong \Al[L] \odot \Al[L]$ where $\Al[L] $ is a countable Boolean lattice. 

The second claim can be proved by considering the nine--element distributive bilattice $\mathcal{NINE}$. It is easy to see that $\mathcal{NINE}$ is isomomorphic to a subalgebra of $\mathcal{FOUR} \times \mathcal{FOUR}$, but on the other hand, as we have observed, $\mathcal{NINE} \cong  \Al[3] \odot \Al[3]$. Since the three--element lattice $\Al[3]$ is not a dual disjunctive lattice,  we may conclude that $\mathcal{NINE} \notin  \mathbf{Alg^*}\mathcal{LB}$. This in turn implies that $\mathbf{Alg^{*}}\mathcal{LB} \varsubsetneq \mathbf{Alg}\mathcal{LB}$. 

As noted in the first chapter, it is significant that in the case of $\lb$ these two classes do not coincide, as well as the fact that it is $\mathbf{Alg}\mathcal{LB}$, the class of distributive bilattices, the one that seems to be more naturally associated with this logic.

\section{Algebraizability of the Gentzen calculus $\mathcal{G_{LB}}$}
\label{sec:gentz}

As we anticipated, since our logic is not protoalgebraic, hence not algebraizable, there is a particular interest in studying the algebraic properties of sequent calculi associated with $\lb$. We end the section on this issue, stating the algebraizability of the Gentzen calculus $\mathcal{G_{LB}}$ introduced in Section \ref{sec:sem}.

\begin{thm} \label{thm:gentzalg}
The Gentzen calculus  $\mathcal{G_{LB}}$ is algebraizable w.r.t.\ the variety $\DBL$ of distributive bilattices, with the following translations: 
\begin{align*}
\tau (\Gamma \rhd \Delta) & = \{ \bigwedge \Gamma \land (\bigwedge \Gamma \otimes \bigvee \Delta) \approx  \bigwedge \Gamma \}, \\ \\
\rho (\varphi \approx \psi) & = \{ \varphi \rhd \psi, \: \neg \varphi \rhd \neg \psi,  \: \psi \rhd \varphi, \:  \neg \psi \rhd \neg \varphi \}.
\end{align*}

\begin{proof}
We will use the characterization of \cite[Lemma 2.5]{ReV95-p}.

(i). We have to check that $\Gamma \rhd \Delta \Gensinl \Gensin  \rho \tau (\Gamma \rhd \Delta)$, i.e.\ that
\begin{align*}
\Gamma \rhd \Delta \Gensinl \Gensin  \{ & \bigwedge \Gamma \land (\bigwedge \Gamma \otimes \bigvee \Delta) \rhd \bigwedge \Gamma, \;  \neg (\bigwedge \Gamma \land (\bigwedge \Gamma \otimes \bigvee \Delta)) \rhd \neg \bigwedge \Gamma,  \\ 
& \bigwedge \Gamma \rhd \bigwedge \Gamma \land (\bigwedge \Gamma \otimes \bigvee \Delta), \; 
\neg \bigwedge \Gamma \rhd \neg (\bigwedge \Gamma \land (\bigwedge \Gamma \otimes \bigvee \Delta)) \}.
\end{align*}

Let us prove the rightward direction. By (Ax) we have $\bigwedge \Gamma, \bigwedge \Gamma \otimes \bigvee \Delta \rhd \bigwedge \Gamma$. Now by ($\land \rhd$) we have $\bigwedge \Gamma \land (\bigwedge \Gamma \otimes \bigvee \Delta) \rhd \bigwedge \Gamma$.

By (Ax) we have $\neg \bigwedge \Gamma, \neg \bigvee \Delta \rhd \neg \bigwedge \Gamma$, so by $(\neg \otimes \rhd)$ we obtain  $\neg (\bigwedge \Gamma \otimes \bigvee \Delta) \rhd \neg \bigwedge \Gamma$. By (Ax) we have $\neg \bigwedge \Gamma \rhd \neg \bigwedge \Gamma$, so by $(\neg \land \rhd)$ we obtain $\neg (\bigwedge \Gamma \land (\bigwedge \Gamma \otimes \bigvee \Delta)) \rhd \neg \bigwedge \Gamma$.

As we have noted, $\Gamma \rhd \Delta$ is equivalent to $\bigwedge \Gamma \rhd \bigvee \Delta$. So we may assume $\bigwedge \Gamma \rhd \bigvee \Delta$, and by (Ax) we have also $\bigwedge \Gamma \rhd \bigwedge \Gamma$. Now, applying $(\rhd \otimes)$ and $(\rhd \land)$, we obtain $\bigwedge \Gamma \rhd \bigwedge \Gamma \land (\bigwedge \Gamma \otimes \bigvee \Delta)$.

By (Ax) we have $\neg \bigwedge \Gamma \rhd \neg \bigwedge \Gamma, \neg (\bigwedge \Gamma \otimes \bigvee \Delta)$ and by $(\rhd \neg \land)$ we have $\neg \bigwedge \Gamma \rhd \neg (\bigwedge \Gamma \land (\bigwedge \Gamma \otimes \bigvee \Delta))$. 

To prove the leftward direction, note that by (Ax) we have $\bigwedge \Gamma, \bigwedge \Gamma, \bigvee \Delta \rhd  \bigvee \Delta$, so by $(\otimes \rhd)$ we obtain $\bigwedge \Gamma, \bigwedge \Gamma \otimes \bigvee \Delta \rhd  \bigvee \Delta$. Now by $(\land \rhd)$ we have $\bigwedge \Gamma \land (\bigwedge \Gamma \otimes \bigvee \Delta) \rhd  \bigvee \Delta$ and by assumption $\bigwedge \Gamma \rhd \bigwedge \Gamma \land (\bigwedge \Gamma \otimes \bigvee \Delta)$, so using Cut we obtain $\bigwedge \Gamma \rhd \bigvee \Delta$.

(ii). We have to check that $\varphi \approx \psi =\joinrel\mid \models_{\DBL} \tau \rho (\varphi \approx \psi)$, i.e.\ that
\begin{align*}
\varphi \approx \psi =\joinrel\mid \models_{\DBL} \{ & \varphi \land (\varphi \otimes \psi) \approx  \varphi, \: \neg \varphi \land (\neg \varphi \otimes \neg \psi) \approx  \neg  \varphi,  \\ 
& \: \psi \land (\varphi \otimes \psi) \approx  \psi, \neg \psi \land (\neg \varphi \otimes \neg \psi) \approx  \neg \psi \}. 
\end{align*}
The rightward direction is clear. As to the other, note that from $\neg \varphi \land (\neg \varphi \otimes \neg \psi) \approx  \neg  \varphi$ we have $\neg (\neg \varphi \land (\neg \varphi \otimes \neg \psi)) \approx \varphi \lor (\varphi \otimes \psi)) \approx \varphi \approx \neg \neg  \varphi$, so $\varphi \approx \varphi \otimes \psi $. Similarly we obtain $\psi \approx \varphi \otimes \psi $. Hence $\varphi  \approx \psi $. 

(iii). We have to check that, for any distributive bilattice $\Al[B] \in \DBL$, the set $R = \{ \langle X, Y \rangle : \bigwedge X \leq_t \bigwedge X \otimes \bigvee Y \} $ is closed under the rules of our Gentzen calculus, where $X, Y \subseteq B $ are finite and non-empty.

(Ax). Clearly $\langle \bigwedge \Gamma \land \varphi, \varphi \lor \bigvee \Delta \rangle \in R$, since $ \bigwedge \Gamma \land \varphi \leq_t \varphi \lor \bigvee \Delta $, so by the interlacing conditions  $ \bigwedge \Gamma \land \varphi \leq_t (\bigwedge \Gamma \land \varphi) \otimes (\varphi \lor \bigvee \Delta) $.

The proof for rules $(\land \rhd)$, $(\lor \rhd)$, $(\neg \neg \rhd)$ and $(\rhd \neg \neg )$ is immediate.

$(\rhd \land)$. Assume $ \langle \bigwedge \Gamma, \bigvee \Delta \lor \varphi \rangle \in R$ and $ \langle \bigwedge \Gamma, \bigvee \Delta \lor \psi\rangle \in R$, i.e. \ $ \bigwedge \Gamma \leq_t  \bigwedge \Gamma \otimes (\bigvee \Delta \lor \varphi)$ and $ \bigwedge \Gamma \leq_t  \bigwedge \Gamma \otimes (\bigvee \Delta \lor \psi)$. Using the interlacing conditions and distributivity we have $ \bigwedge \Gamma \leq_t  (\bigwedge \Gamma \otimes (\bigvee \Delta \lor \varphi)) \land (\bigwedge \Gamma \otimes (\bigvee \Delta \lor \psi)) = \bigwedge \Gamma \otimes ((\bigvee \Delta \lor \varphi) \land (\bigvee \Delta \lor \psi)) =  \bigwedge \Gamma \otimes (\bigvee \Delta \lor (\varphi \land \psi)) $.

$(\neg \land \rhd)$. Assume $ \bigwedge \Gamma \land \neg \varphi \leq_t (\bigwedge \Gamma \land \neg \varphi) \otimes \bigvee \Delta$ and $ \bigwedge \Gamma \land \neg \psi \leq_t  (\bigwedge \Gamma \land \neg \psi) \otimes \bigvee \Delta$. Then, using distributivity and De Morgan's laws, we have 
\begin{align*}
(\bigwedge \Gamma \land \neg \varphi) \lor (\bigwedge \Gamma \land \neg \psi) & = \bigwedge \Gamma \land  (\neg \varphi \lor  \neg \psi) \\ & =  \bigwedge \Gamma \land \neg (\varphi \land  \psi) \\ & \leq_t (\bigwedge \Gamma \land \neg (\varphi \land  \psi) ) \otimes \bigvee \Delta \\ & = (\bigwedge \Gamma \land (\neg \varphi \lor  \neg \psi) ) \otimes \bigvee \Delta \\ & = ((\bigwedge \Gamma \land \neg \psi) \lor (\bigwedge \Gamma \land \neg \varphi)) \otimes \bigvee \Delta \\ & = ((\bigwedge \Gamma \land \neg \psi) \otimes \bigvee \Delta) \lor ((\bigwedge \Gamma \land \neg \varphi) \otimes \bigvee \Delta).
\end{align*}
$(\rhd \neg \land )$. Assume $\bigwedge \Gamma \leq_t \bigwedge \Gamma \otimes (\bigvee \Delta \lor \neg \varphi \lor \neg \psi)$. Then, applying De Morgan's laws, we immediately obtain $\bigwedge \Gamma \leq_t \bigwedge \Gamma \otimes (\bigvee \Delta \lor \neg (\varphi \land \psi) $.

$(\lor \rhd)$. Assume $ \bigwedge \Gamma \land \varphi \leq_t (\bigwedge \Gamma \land \varphi) \otimes \bigvee \Delta$ and $ \bigwedge \Gamma \land \psi \leq_t (\bigwedge \Gamma \land \psi)  \otimes \bigvee \Delta$. Then, by distributivity 
\begin{align*}
(\bigwedge \Gamma \land \varphi) \lor (\bigwedge \Gamma \land \psi) & = \bigwedge \Gamma \land (\varphi \lor \psi) 
\\ & \leq_t ((\bigwedge \Gamma \land \varphi) \otimes \bigvee \Delta) \lor ((\bigwedge \Gamma \land \psi)  \otimes \bigvee \Delta) 
\\ & = ((\bigwedge \Gamma \land \varphi) \lor (\bigwedge \Gamma \land \psi)) \otimes \bigvee \Delta
\\ & =  (\bigwedge \Gamma \land (\varphi \lor \psi )) \otimes \bigvee \Delta.
\end{align*}

$(\neg \lor \rhd)$. Assume $\bigwedge \Gamma \land \neg \varphi \land \neg \psi \leq_t (\bigwedge \Gamma \land \neg \varphi \land \neg \psi) \otimes \bigvee \Delta$. Then by De Morgan's laws we immediately obtain  $\bigwedge \Gamma \land \neg (\varphi \lor \psi) \leq_t (\bigwedge \Gamma \land \neg (\varphi \lor \psi)) \otimes \bigvee \Delta$.

$(\rhd \neg \lor )$. Assume $\bigwedge \Gamma \leq_t \bigwedge \Gamma \otimes (\bigvee \Delta \lor \neg \varphi)$ and $\bigwedge \Gamma \leq_t \bigwedge \Gamma \otimes (\bigvee \Delta \lor \neg \psi)$. Then, using distributivity and De Morgan's laws, we obtain
\begin{align*}
\bigwedge \Gamma & \leq_t (\bigwedge \Gamma \otimes (\bigvee \Delta \lor \neg \varphi)) \land (\bigwedge \Gamma \otimes (\bigvee \Delta \lor \neg \psi))
\\ & = \bigwedge \Gamma \otimes ((\bigvee \Delta \lor \neg \varphi) \land (\bigvee \Delta \lor \neg \psi))
\\ & = \bigwedge \Gamma \otimes ( \bigvee \Delta \lor ( \neg \varphi \land \neg \psi))
\\ & =  \bigwedge \Gamma \otimes ( \bigvee \Delta \lor \neg (\varphi  \lor \psi) ).
\end{align*}

$(\otimes \rhd)$. Assume $\bigwedge \Gamma \land \varphi \land \psi  \leq_t (\bigwedge \Gamma \land \varphi \land \psi) \otimes \bigvee \Delta$. By the interlacing conditions we have 
$$
\bigwedge \Gamma \land \varphi \land \psi \leq_t (\bigwedge \Gamma \land \varphi \land \psi) \otimes \bigvee \Delta   \leq_t (\bigwedge \Gamma \land (\varphi \otimes \psi) ) \otimes \bigvee \Delta. $$

Hence we also have
\begin{align*}
(\bigwedge \Gamma \land \varphi \land \psi) \otimes (\bigwedge \Gamma \land (\varphi \otimes \psi)) & = \bigwedge \Gamma \land (\varphi \otimes \psi) \\ & \leq_t (\bigwedge \Gamma \land (\varphi \otimes \psi) ) \otimes \bigvee \Delta \otimes (\bigwedge \Gamma \land (\varphi \otimes \psi)) \\ & = (\bigwedge \Gamma \land (\varphi \otimes \psi) ) \otimes \bigvee \Delta.
\end{align*}

$(\rhd \otimes)$. Assume $\bigwedge \Gamma \leq_t \bigwedge \Gamma \otimes (\bigvee \Delta \lor \varphi )$ and $\bigwedge \Gamma \leq_t \bigwedge \Gamma \otimes (\bigvee \Delta \lor \psi )$. By distributivity we have 
\begin{align*}
\bigwedge \Gamma & \leq_t \bigwedge \Gamma \otimes (\bigvee \Delta \lor \varphi )  \otimes (\bigvee \Delta \lor \psi ) \\ & = \bigwedge \Gamma \otimes (\bigvee \Delta \lor (\varphi \otimes \psi)  ).
\end{align*}

$(\neg \otimes \rhd)$. Assume $ \bigwedge \Gamma \land \neg \varphi \land \neg \psi  \leq_t (\bigwedge \Gamma \land \neg \varphi \land \neg \psi) \otimes \bigvee \Delta $. Using $( \otimes \rhd)$ we have $ \bigwedge \Gamma \land (\neg \varphi \otimes \neg \psi)  \leq_t (\bigwedge \Gamma \land (\neg \varphi \otimes \neg \psi)) \otimes \bigvee \Delta $, and now by De Morgan's laws we obtain $ \bigwedge \Gamma \land \neg ( \varphi \otimes \psi)  \leq_t (\bigwedge \Gamma \land \neg ( \varphi \otimes \psi)) \otimes \bigvee \Delta $.    

$(\rhd \neg \otimes )$. Assume $\bigwedge \Gamma \leq_t \bigwedge \Gamma \otimes (\bigvee \Delta \lor \neg \varphi )$ and $\bigwedge \Gamma \leq_t \bigwedge \Gamma \otimes (\bigvee \Delta \lor \neg \psi )$. Using $( \rhd \otimes )$ we obtain $\bigwedge \Gamma \leq_t \bigwedge \Gamma \otimes (\bigvee \Delta \lor (\neg \varphi \otimes \neg \psi)  )$, and by De Morgan's laws $\bigwedge \Gamma \leq_t \bigwedge \Gamma \otimes (\bigvee \Delta \lor \neg  (\varphi \otimes \psi) )$.

$(\oplus \rhd)$. Assume $ \bigwedge \Gamma  \land \varphi \leq_t (\bigwedge \Gamma  \land \varphi) \otimes \bigvee \Delta$ and $ \bigwedge \Gamma  \land \varphi \leq_t (\bigwedge \Gamma  \land \psi) \otimes \bigvee \Delta$. Using distributivity we have
\begin{align*}
(\bigwedge \Gamma  \land \varphi) \oplus (\bigwedge \Gamma  \land \psi) & =  \bigwedge \Gamma  \land (\varphi \oplus \psi) \\ & \leq_t ((\bigwedge \Gamma  \land \varphi) \otimes \bigvee \Delta) \oplus((\bigwedge \Gamma  \land \psi) \otimes \bigvee \Delta) \\ & =   ((\bigwedge \Gamma  \land \varphi) \oplus (\bigwedge \Gamma  \land \psi)) \otimes \bigvee \Delta \\ & = (\bigwedge \Gamma  \land  (\varphi \oplus \psi) ) \otimes \bigvee \Delta.
\end{align*}

$(\rhd \oplus )$. Assume $\bigwedge \Gamma \leq_t \bigwedge \Gamma \otimes (\bigvee \Delta \lor \varphi \lor \psi)$. By the interlacing conditions we have 
$$\bigwedge \Gamma \leq_t \bigwedge \Gamma \otimes (\bigvee \Delta \lor \varphi \lor \psi) \leq_k \bigwedge \Gamma \otimes (\bigvee \Delta \lor (\varphi \oplus \psi))
$$ 
Therefore 
$$ 
\bigwedge \Gamma \land (\bigwedge \Gamma \otimes (\bigvee \Delta \lor \varphi \lor \psi)) = \bigwedge \Gamma \leq_k \bigwedge \Gamma \land (\bigwedge \Gamma \otimes (\bigvee \Delta \lor (\varphi \oplus \psi))).
$$ 
As we have seen, in interlaced bilattices the previous condition is equivalent to 
$$\bigwedge \Gamma \leq_t \bigwedge \Gamma \otimes (\bigwedge \Gamma \otimes (\bigvee \Delta \lor (\varphi \oplus \psi))) = \bigwedge \Gamma \otimes (\bigvee \Delta \lor (\varphi \oplus \psi)).
$$ 

$(\neg \oplus \rhd)$. Assume $\bigwedge \Gamma \land \neg \varphi \leq_t (\bigwedge \Gamma \land \neg \varphi) \otimes \bigvee \Delta $ and $\bigwedge \Gamma \land \neg \varphi \leq_t (\bigwedge \Gamma \land \neg \psi) \otimes \bigvee \Delta $. Then, as shown in the proof of $(\oplus \rhd)$, we have $\bigwedge \Gamma \land (\neg \varphi \oplus \neg \psi) \leq_t (\bigwedge \Gamma \land (\neg \varphi \oplus \neg \psi)) \otimes \bigvee \Delta $. Now using De Morgan's laws we immediately obtain the result. 

$(\rhd \neg \oplus )$. As shown in the proof of $(\rhd \oplus )$, we have that $\bigwedge \Gamma \leq_t \bigwedge \Gamma \otimes (\bigvee \Delta \lor \neg \varphi \lor \neg \psi)$ implies $\bigwedge \Gamma \leq_t \bigwedge \Gamma \otimes (\bigvee \Delta \lor (\neg \varphi \oplus \neg \psi))$. Now again, using De Morgan's laws, we immediately obtain the result.

(iv). We have to show that $\theta_{T} \in \Con_{\DBL}(\Al[Fm])$ for all $T \in Th \mathcal{G_{LB}}$, where 
$\theta_{T} = \{ \la \phi, \psi \ra  \in Fm \times Fm : \rho (\la \phi, \psi \ra) \subseteq T \}.$ 

To prove that $\theta_{T}$ is a congruence, it is sufficient to prove that, if $$\{ \varphi \rhd \psi, \: \neg \varphi \rhd \neg \psi,  \: \psi \rhd \varphi, \:  \neg \psi \rhd \neg \varphi \} \subseteq T, $$ then for all $\vartheta \in Fm$ we have:

(a). $\{ \varphi \land \vartheta \rhd \psi \land \vartheta, \: \neg (\varphi \land \vartheta) \rhd \neg (\psi \land \vartheta),  \: \psi \land \vartheta \rhd \varphi \land \vartheta, \:  \neg (\psi \land \vartheta) \rhd \neg (\varphi \land \vartheta) \} \subseteq T $,

(b). $\{ \varphi \lor \vartheta \rhd \psi \lor \vartheta, \: \neg (\varphi \lor \vartheta) \rhd \neg (\psi \lor \vartheta),  \: \psi \lor \vartheta \rhd \varphi \lor \vartheta, \:  \neg (\psi \lor \vartheta) \rhd \neg (\varphi \lor \vartheta) \} \subseteq T $,

(c). $\{ \varphi \otimes \vartheta \rhd \psi \otimes \vartheta, \: \neg (\varphi \otimes \vartheta) \rhd \neg (\psi \otimes \vartheta),  \: \psi \otimes \vartheta \rhd \varphi \otimes \vartheta, \:  \neg (\psi \otimes \vartheta) \rhd \neg (\varphi \otimes \vartheta) \} \subseteq T $,

(d). $\{ \varphi \oplus \vartheta \rhd \psi \oplus \vartheta, \: \neg (\varphi \oplus \vartheta) \rhd \neg (\psi \oplus \vartheta),  \: \psi \oplus \vartheta \rhd \varphi \oplus \vartheta, \:  \neg (\psi \oplus \vartheta) \rhd \neg (\varphi \oplus \vartheta) \} \subseteq T $,

(e). $\{ \neg \varphi \rhd \neg  \psi, \: \neg \neg \varphi \rhd \neg \neg  \psi,  \: \neg  \psi \rhd \neg \varphi, \:  \neg  \neg \psi \rhd \neg \neg \varphi \} \subseteq T $. 

We will prove just the first two cases of each item, for the others are symmetric.

(a).
\begin{center}
               \AxiomC{$\varphi \rhd \psi$}
               \LeftLabel{$(W \rhd)$}
               \UnaryInfC{$\varphi, \vartheta \rhd \psi$}
               \AxiomC{$(Ax)$}
               \UnaryInfC{$\varphi, \vartheta \rhd \vartheta$}
               \LeftLabel{$(\rhd \land)$}
               \BinaryInfC{$\varphi, \vartheta \rhd \psi \land \vartheta$}
               \LeftLabel{$(\rhd \land)$}
               \UnaryInfC{$\varphi \land \vartheta \rhd \psi \land \vartheta$}
               \DisplayProof
\end{center}

\begin{center}
               \AxiomC{$\neg \varphi \rhd \neg \psi$}
               \LeftLabel{$( \rhd W)$}
               \UnaryInfC{$\neg \varphi \rhd \neg \psi, \neg \vartheta$}
               \LeftLabel{$(\rhd \neg \land  )$}
               \UnaryInfC{$\neg \varphi \rhd \neg ( \psi \land  \vartheta)$}
               \AxiomC{$(Ax)$}
               \UnaryInfC{$\neg \vartheta \rhd \neg \vartheta$}
               \RightLabel{$( \rhd W)$}
               \UnaryInfC{$\neg \vartheta \rhd \neg \psi, \neg \vartheta$}
               \RightLabel{$(\rhd \neg \land  )$}
               \UnaryInfC{$\neg \varphi \rhd \neg ( \psi \land  \vartheta)$}
               \LeftLabel{$(\neg \land \rhd )$}
               \BinaryInfC{$\neg (\varphi \land \vartheta) \rhd \neg (\psi \land \vartheta)$}
               \DisplayProof
\end{center}

(b). 
\begin{center}
               
  \AxiomC{$\varphi \rhd \psi$}
               \LeftLabel{$( \rhd W)$}
               \UnaryInfC{$\varphi \rhd \psi, \vartheta$}
               \LeftLabel{$(\rhd \lor )$}
               \UnaryInfC{$ \varphi \rhd \psi \lor  \vartheta$}
               \AxiomC{$(Ax)$}
               \UnaryInfC{$\vartheta \rhd  \vartheta$}
               \RightLabel{$( \rhd W)$}
               \UnaryInfC{$ \vartheta \rhd  \psi, \vartheta$}
               \RightLabel{$(\rhd \lor  )$}
               \UnaryInfC{$\vartheta \rhd \psi \lor \vartheta$}
               \LeftLabel{$(\lor \rhd )$}
               \BinaryInfC{$\varphi \lor \vartheta \rhd \psi \lor \vartheta$}
               \DisplayProof             
\end{center}

\begin{center}
               
               \AxiomC{$\neg \varphi \rhd \neg\psi$}
               \LeftLabel{$(W \rhd)$}
               \UnaryInfC{$\neg \varphi, \neg \vartheta \rhd \neg \psi$}
               \LeftLabel{$(\neg \lor \rhd )$}
               \UnaryInfC{$ \neg (\varphi \lor \vartheta) \rhd \neg \psi$}
               \AxiomC{$(Ax)$}
               \UnaryInfC{$\neg \varphi, \neg \vartheta \rhd \neg \vartheta$}
               \RightLabel{$(\neg \lor \rhd)$} 
               \UnaryInfC{$ \neg (\varphi \lor \vartheta) \rhd  \neg \vartheta$}
               \RightLabel{$(\rhd \neg  \lor  )$}
               \BinaryInfC{$ \neg (\varphi \lor \vartheta) \rhd  \neg ( \psi \lor \vartheta)$}
               \DisplayProof             
\end{center}

(c).
\begin{center}
               \AxiomC{$\varphi \rhd \psi$}
               \LeftLabel{$(W \rhd)$}
                \UnaryInfC{$\varphi, \vartheta \rhd \psi$}
                \LeftLabel{$(\otimes \rhd)$}
                \UnaryInfC{$\varphi \otimes \vartheta \rhd \psi$}
                \AxiomC{$(Ax)$}
               \UnaryInfC{$\varphi, \vartheta \rhd \vartheta$}
                \RightLabel{$(\otimes \rhd)$}
                \UnaryInfC{$\varphi \otimes \vartheta \rhd \vartheta$}
                \LeftLabel{$(\rhd \otimes )$}
               \BinaryInfC{$\varphi \otimes \vartheta \rhd \psi \otimes \vartheta$}
               \DisplayProof
\end{center}

\begin{center}
               \AxiomC{$\neg \varphi \rhd \neg \psi$}
               \LeftLabel{$(W \rhd)$}
                \UnaryInfC{$\neg \varphi, \neg \vartheta \rhd \neg \psi$}
                \LeftLabel{$(\neg \otimes \rhd)$}
                \UnaryInfC{$\neg (\varphi \otimes \vartheta) \rhd \neg \psi$}
                \AxiomC{$(Ax)$}
               \UnaryInfC{$\neg \varphi, \neg \vartheta \rhd \neg \vartheta$}
                \RightLabel{$(\neg \otimes \rhd)$}
                \UnaryInfC{$\neg (\varphi \otimes \vartheta) \rhd \neg \vartheta$}
                \LeftLabel{$(\rhd \neg  \otimes )$}
               \BinaryInfC{$\neg  (\varphi \otimes \vartheta) \rhd \neg (\psi \otimes \vartheta)$}
               \DisplayProof
\end{center}

(d). 
\begin{center}
               \AxiomC{$ \varphi \rhd  \psi$}
               \LeftLabel{$(\rhd W)$}
                \UnaryInfC{$\varphi \rhd \psi, \vartheta$}
                \LeftLabel{$(\rhd \oplus)$}
                \UnaryInfC{$\varphi \rhd \psi \oplus \vartheta$}
                \AxiomC{$(Ax)$}
               \UnaryInfC{$\vartheta \rhd \psi, \vartheta$}
                \RightLabel{$(\rhd \oplus)$}
                \UnaryInfC{$\vartheta \rhd \psi \oplus \vartheta$}
                \LeftLabel{$(\oplus \rhd)$}
               \BinaryInfC{$\varphi \oplus \vartheta \rhd \psi  \oplus \vartheta$}
               \DisplayProof
\end{center}

\begin{center}
               \AxiomC{$ \neg \varphi \rhd \neg  \psi$}
               \LeftLabel{$(\rhd W)$}
                \UnaryInfC{$\neg \varphi \rhd \neg  \psi, \neg \vartheta$}
                \LeftLabel{$(\rhd \neg \oplus)$}
                \UnaryInfC{$\neg \varphi \rhd \neg (\psi \oplus \vartheta)$}
                \AxiomC{$(Ax)$}
               \UnaryInfC{$\neg \vartheta \rhd \neg \psi, \neg \vartheta$}
                \RightLabel{$(\rhd \neg \oplus)$}
                \UnaryInfC{$\neg \vartheta \rhd \neg (\psi \oplus \vartheta)$}
                \LeftLabel{$(\neg \oplus \rhd)$}
               \BinaryInfC{$\neg (\varphi \oplus \vartheta) \rhd \neg (\psi  \oplus \vartheta)$}
               \DisplayProof
\end{center}

(e). This last case is trivial.

It remains only to prove that $ \Al[Fm] / \theta_{T} \in \DBL$, i.e.\ that, for any equation $\phi \approx \psi $ valid in the variety $ \DBL$, we have  $\{ \varphi \rhd \psi, \: \neg \varphi \rhd \neg \psi,  \: \psi \rhd \varphi, \:  \neg \psi \rhd \neg \varphi \} \subseteq T. $ This is not difficult, altough quite long. Let us see, as an example, just one of the four cases of the distributivity law: $\phi \land (\psi \lor \tet) \approx (\phi \land \psi) \lor (\phi \land \tet)$. 

The proof is the following: 

{\footnotesize 

\begin{center}
  \AxiomC{$(Ax)$}          
  \UnaryInfC{$\varphi, \phi \lor \tet \rhd \phi, \phi$}
              \UnaryInfC{$\varphi \land (\phi \lor \tet) \rhd \phi, \phi$}
                          \AxiomC{$(Ax)$}    
              \UnaryInfC{$\varphi, \psi \lor \tet \rhd \phi, \psi$}             
              \UnaryInfC{$\varphi \land (\psi \lor \tet) \rhd \phi, \psi$}

                        \BinaryInfC{$ \phi \land (\psi \lor \tet) \rhd (\phi \land \psi), \phi $}
                   
                    \AxiomC{$(Ax)$}    
                                 \UnaryInfC{$\varphi, \psi \lor \tet \rhd \tet, \phi$}
              \UnaryInfC{$\varphi \land (\psi \lor \tet) \rhd \tet, \phi$}
               
               \AxiomC{$(Ax)$}   
              \UnaryInfC{$\varphi, \psi \lor \tet \rhd \tet \lor \psi$}
               \RightLabel{$(\land \rhd  )$}
              \UnaryInfC{$\varphi \land (\psi \lor \tet) \rhd \tet, \psi$}
              \RightLabel{$( \rhd \land )$}
              \BinaryInfC{$ \phi \land (\psi \lor \tet) \rhd (\phi \land \psi), \tet $}
              \RightLabel{$(\rhd \land )$}
              \BinaryInfC{$ \phi \land (\psi \lor \tet) \rhd (\phi \land \psi), (\phi \land  \tet) $}
              \RightLabel{$(\rhd \lor )$}
              \UnaryInfC{$ \phi \land (\psi \lor \tet) \rhd (\phi \land \psi) \lor (\phi \land  \tet) $}

             \DisplayProof             
\end{center}
}

\end{proof}
\end{thm}

%










%



\chapter{Adding implications: the logic $\lbs$}
\label{ch:add}

\section{Semantical and Hilbert-style Presentations} 
\label{sec:add}

As we have seen, the logic $\lb$ lacks an implication connective. This fact may be seen as a deficiency for a logical system; in order to overcome it, Arieli and Avron \cite{ArAv96} introduced an expansion of $\lb$ obtained by adding to it two interdefinable implication connectives that they called \emph{weak} and \emph{strong implication}. In this chapter we study this logic, which we will call $\lbs$. Our main goals will be to prove that $\lbs$ is algebraizable, that its equivalent algebraic semantics is a variety, and to give a presentation of this class of algebras.

In this section we recall some definitions and results concerning $\lbs$ which are due to Arieli and Avron \cite{ArAv96}.

\begin{definition} 

 \label{impl}
{\rm Let $\left\langle \mathfrak{B}, F \right\rangle$ be a logical bilattice,  and let $\mathsf{t}$ denote the maximum element of $B$ w.r.t.\  the truth ordering. Define the operation $\supset$ as follows: for any $a, b \in B$,
\[
 	a \supset b = \left\{ \begin{array}{cl}
	\mathsf{t} & \textrm{if $a \notin F$}\\
	b & \textrm{if $a \in F.$}
	\end{array} \right.
\]
}
\end{definition}

Note that the previous definition requires the existence of the maximum w.r.t.\ the truth ordering. Note also that, in general, the behaviour of the operation $\supset$ in the algebra  $\left\langle B, \land, \lor, \otimes, \oplus, \supset, \neg \right\rangle$ depends on the bifilter that we consider.  However, since $\mathcal{FOUR}$ has only one proper bifiliter, i.e.\ $\Tr = \left\{\mathsf{t}, \top\right\}$, we can unequivocally denote  by $\mathcal{FOUR}_\supset$ the algebra obtained by adding the operation $\supset$ of the logical bilattice $\langle \mathcal{FOUR}, \Tr \rangle$. The behaviour of this new operation is described by the following table:

\begin{center}
\begin{tabular}[c]{c|c|c|c|c}
 $\supset$ & $\mathsf{f}$ & $\bot$ & $\top$ & $\mathsf{t}$ \\
\cline{1-5} 
$\mathsf{f}$             & $\mathsf{t}$ & $\mathsf{t}$  & $\mathsf{t}$  & $\mathsf{t}$  \\
\cline{1-5} 
$\bot$                   & $\mathsf{t}$ & $\mathsf{t}$  & $\mathsf{t}$  & $\mathsf{t}$  \\
\cline{1-5} 
$\top$                   & $\mathsf{f}$ & $\bot$  & $\top$   & $\mathsf{t}$  \\
\cline{1-5} 
$\mathsf{t}$             & $\mathsf{f}$ & $\bot$  & $\top$   & $\mathsf{t}$  \\
 \end{tabular}
\end{center}

The previous definition allows to prove an analogue of the fundamental Lemma \ref{l:four_logical_bilat}: 

\begin{lem} 
\label{l:epimp}
Let $\left\langle  \mathfrak{B}_\supset, F \right\rangle$ be the logical bilattice $\left\langle  \mathfrak{B}, F \right\rangle$ enriched with the operation $\supset$ defined as in Definition \ref{impl}.
Then there is a unique epimorphism $h: \mathfrak{B}_\supset \rightarrow \mathcal{FOUR}_\supset$ such that for all $x \in B$, $h\left(x\right) \in \Tr$ iff $x \in F$.
\end{lem}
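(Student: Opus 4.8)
The plan is to build $h$ directly from the epimorphism furnished by the fundamental Lemma~\ref{l:four_logical_bilat}. That lemma supplies a unique epimorphism $h \colon \mathfrak{B} \rightarrow \mathcal{FOUR}$ between the bilattice \emph{reducts} (that is, for the operations $\land, \lor, \otimes, \oplus, \neg$) satisfying $h(x) \in \Tr$ iff $x \in F$ for every $x \in B$. My claim is that this very map, now regarded as a function $\mathfrak{B}_\supset \rightarrow \mathcal{FOUR}_\supset$, automatically respects the additional operation $\supset$. Granting this, $h$ is an epimorphism for the enriched signature as well, since surjectivity and the bifilter condition are inherited unchanged, and the existence part of the statement follows.

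The heart of the argument is thus the verification that $h(a \supset b) = h(a) \supset h(b)$ for all $a, b \in B$, which I would carry out by splitting into the two cases of Definition~\ref{impl}. If $a \notin F$, then $a \supset b = \mathsf{t}$, while $a \notin F$ gives $h(a) \notin \Tr$; reading off the table for $\mathcal{FOUR}_\supset$, a connective $\supset$ whose first argument lies outside $\Tr = \{\mathsf{t}, \top\}$ always returns $\mathsf{t}$, so $h(a) \supset h(b) = \mathsf{t}$, and it only remains to check $h(\mathsf{t}) = \mathsf{t}$. If instead $a \in F$, then $a \supset b = b$ and $h(a) \in \Tr$, whence the table gives $h(a) \supset h(b) = h(b) = h(a \supset b)$. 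The one point that needs care, and the step I expect to be the main (if modest) obstacle, is the identity $h(\mathsf{t}) = \mathsf{t}$, i.e.\ that $h$ sends the truth-maximum of $\mathfrak{B}$ to the truth-maximum of $\mathcal{FOUR}$. This is not immediate from $h$ being merely a lattice homomorphism, but it follows from surjectivity together with preservation of $\lor$: for every $x \in B$ one has $x \lor \mathsf{t} = \mathsf{t}$, hence $h(x) \lor h(\mathsf{t}) = h(\mathsf{t})$, so $h(\mathsf{t})$ lies above every element of the image in the truth ordering; since $h$ is onto, $h(\mathsf{t})$ dominates all four elements of $\mathcal{FOUR}$ and must therefore equal $\mathsf{t}$.

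Finally, uniqueness is inherited from Lemma~\ref{l:four_logical_bilat}: any epimorphism $\mathfrak{B}_\supset \rightarrow \mathcal{FOUR}_\supset$ satisfying the bifilter condition restricts to an epimorphism of the bilattice reducts satisfying the same condition, and that lemma forces such a map to be $h$. Hence there is exactly one epimorphism with the required property.
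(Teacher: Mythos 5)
Your proof is correct, and it is exactly the intended argument: the paper states this lemma without proof (citing Arieli and Avron) as a direct analogue of Lemma~\ref{l:four_logical_bilat}, obtained by reusing the epimorphism $h$ furnished by that lemma and checking that it also preserves $\supset$. Your case split on $a \in F$ versus $a \notin F$, the identification $h(\mathsf{t}) = \mathsf{t}$ via surjectivity and preservation of $\lor$, and the reduction of uniqueness to the uniqueness clause of Lemma~\ref{l:four_logical_bilat} are all sound and complete the argument.
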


From now on, we shall denote by $Fm$ the set of formulas in the language $\left\{ \land, \lor, \otimes, \oplus, \supset, \neg \right\}$ and by $\mathbf{Fm}$ the corresponding algebra of formulas.
  
\begin{definition} 
{\rm The consequence relation $\vDash_{\scriptstyle \mathcal{LB_{\supset}}}$ on $Fm$ is defined as follows. For any $\Gamma \cup \left\{\varphi\right\} \subseteq Fm$,
$$\text{$\Gamma  \vDash_{\scriptstyle \mathcal{LB_{\supset}}} \phi$ iff for every $\left\langle \mathfrak{B}_\supset, F \right\rangle$ and  every  $v \in Hom \left(\mathfrak{Fm}, \mathfrak{B}\right)$}$$ $$\text{ if $v\left(\psi\right) \in F$ for all $\psi \in \Gamma$, then $v\left(\varphi\right) \in F$.}$$ 
We denote the logic $\langle \mathbf{Fm}, \vDash_{\scriptstyle \mathcal{LB_{\supset}}}\rangle$ by  $\mathcal{LB}_{\supset}$.
}
\end{definition}

As a corollary of Lemma \ref{l:epimp}, we obtain an analogue of   Theorem \ref{thm:complet_logicalbilattice}: 

\begin{thm} 
 \label{cons}
For every  $\Gamma \cup \left\{\varphi\right\} \subseteq Fm$, $$\text{$\Gamma  \vDash_{\scriptstyle \mathcal{LB_{\supset}}} \varphi$ iff  $\ \ \Gamma  \vDash_{\scriptstyle \left\langle \mathcal{FOUR_{\supset}}, \Tr  \right\rangle} \varphi$.}$$
\end{thm}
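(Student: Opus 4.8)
The plan is to establish the two inclusions between the consequence relations separately, exactly mirroring the proof of Theorem~\ref{thm:complet_logicalbilattice}, with Lemma~\ref{l:epimp} now playing the role that Lemma~\ref{l:four_logical_bilat} played there. The left-to-right inclusion is immediate: since $\langle \mathcal{FOUR}_\supset, \Tr \rangle$ is itself one of the enriched logical bilattices $\langle \mathfrak{B}_\supset, F \rangle$ quantified over in the definition of $\vDash_{\scriptstyle \mathcal{LB_{\supset}}}$, any $\Gamma,\varphi$ with $\Gamma \vDash_{\scriptstyle \mathcal{LB_{\supset}}} \varphi$ satisfies the required preservation condition in particular over $\mathcal{FOUR}_\supset$ with bifilter $\Tr$, which is precisely $\Gamma \vDash_{\scriptstyle \left\langle \mathcal{FOUR_{\supset}}, \Tr \right\rangle} \varphi$. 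No computation is needed here.

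For the converse, I would assume $\Gamma \vDash_{\scriptstyle \left\langle \mathcal{FOUR_{\supset}}, \Tr \right\rangle} \varphi$, fix an arbitrary enriched logical bilattice $\langle \mathfrak{B}_\supset, F \rangle$ together with a homomorphism $v \colon \mathfrak{Fm} \to \mathfrak{B}_\supset$ satisfying $v(\psi) \in F$ for every $\psi \in \Gamma$, and aim to show $v(\varphi) \in F$. The key step is to invoke Lemma~\ref{l:epimp} to obtain the epimorphism $h \colon \mathfrak{B}_\supset \to \mathcal{FOUR}_\supset$ with the property that $h(x) \in \Tr$ iff $x \in F$, and to pass to the composite $h \circ v$, which is a homomorphism from $\mathfrak{Fm}$ into $\mathcal{FOUR}_\supset$. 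The forward reading of the biconditional turns $v(\psi) \in F$ into $h(v(\psi)) \in \Tr$ for all $\psi \in \Gamma$; applying the hypothesis to the valuation $h \circ v$ yields $h(v(\varphi)) \in \Tr$; and the backward reading of the biconditional converts this into $v(\varphi) \in F$, as desired. Since $\langle \mathfrak{B}_\supset, F \rangle$ and $v$ were arbitrary, this gives $\Gamma \vDash_{\scriptstyle \mathcal{LB_{\supset}}} \varphi$.

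The only point that genuinely requires care is that the epimorphism $h$ must respect the new connective $\supset$, for — as noted after Definition~\ref{impl} — the interpretation of $\supset$ depends on the chosen bifilter and is not determined by the bilattice reduct alone. This, however, is exactly what Lemma~\ref{l:epimp} already delivers, by asserting that $h$ is an epimorphism of the \emph{enriched} algebras $\mathfrak{B}_\supset$ and $\mathcal{FOUR}_\supset$. Consequently, once that lemma is in hand the entire argument consumes nothing beyond the biconditional characterization of $h$, and no separate verification that $h$ commutes with $\supset$ is required at this stage; the main obstacle has thus been discharged upstream in Lemma~\ref{l:epimp} itself.
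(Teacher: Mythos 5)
Your proof is correct and is exactly the argument the paper has in mind: Theorem~\ref{cons} is stated there as an immediate corollary of Lemma~\ref{l:epimp}, obtained precisely by noting that $\left\langle \mathcal{FOUR}_\supset, \Tr \right\rangle$ is itself one of the models quantified over (easy direction) and by composing an arbitrary valuation with the epimorphism $h$ and using the biconditional $h(x) \in \Tr$ iff $x \in F$ (converse direction). Your closing observation that the burden of compatibility with $\supset$ is discharged inside Lemma~\ref{l:epimp} matches the paper's presentation as well.
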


Arieli and Avron \cite{ArAv96} provided a Hilbert-style axiomatization for $\lbs$, which we repeat here:

\begin{definition} 
\label{d:hilbert}
{\rm Let  $H_{\supset}  = \langle \mathfrak{Fm}, \vdash_{\scriptstyle H_{\supset}}\rangle$  be the sentential logic defined through the  Hilbert style calculus with axioms, 
%

\begin{align*}
&(\supset 1)  & &p \supset (q \supset p) \\
&(\supset 2)  & &  (p \supset (q \supset r)) \supset ((p\supset q) \supset (p\supset r)) \\
&(\supset 3)  & &  ((p \supset q) \supset p) \supset p \\
&(\land \supset )  & &  (p \land q) \supset p \qquad \qquad (p \land q) \supset q \\
&(\supset \land )  & &  p \supset (q \supset (p \land q)) \\
&(\otimes \supset )  & &  (p \otimes q) \supset p \qquad \qquad (p \otimes q) \supset q \\
&(\supset \otimes )  & &  p \supset (q \supset (p \otimes q)) \\
&(\supset \lor)  & &  p  \supset (p \lor q) \qquad \qquad q \supset (p \lor q) \\
&(\lor \supset )  & &  (p \supset r) \supset ((q \supset r) \supset ((p \lor q) \supset r)) \\
&(\supset \oplus)  & &  p  \supset (p \oplus q) \qquad  \qquad q \supset (p \oplus q) \\
&(\oplus \supset )  & &  (p \supset r) \supset ((q \supset r) \supset ((p \oplus q) \supset r)) \\
\end{align*}
\begin{align*}
&(\neg \land )  & &   \neg (p \land q ) \equiv (\neg p \vee \neg q)  \\
&(\neg \lor )  & &  \neg (p \vee q) \equiv (\neg p \land \neg q ) \\
&(\neg \otimes )  & &   \neg (p \otimes q ) \equiv (\neg p \otimes \neg q)  \\
&(\neg \oplus )  & &   \neg (p \oplus q ) \equiv (\neg p \oplus \neg q)  \\
&(\neg \supset )  & &   \neg (p \supset q ) \equiv (p \land \neg q)  \\
&(\neg \neg) & & p \equiv \neg \neg p \\
\end{align*}
where  $\phi \equiv \psi$ abbreviates   $(\phi \supset \psi) \land (\psi \supset \phi)$, and with  modus ponens (MP) as the only inference rule:
$$
\frac{p \quad p \supset q }{q}
$$

The consequence operator associated with $\vdash_{\scriptstyle H_{\supset}}$ will be denoted by $\mathrm{C}_{H_{\supset}}$. }
\end{definition}

A remarkable feature of $H_{\supset}$ is that it enjoys the classical Deduction\--Detachment Theorem; this is proved in \cite{ArAv96} using the completeness theorem, but in general it is known to hold for any calculus that has axioms $(\supset 1)$ and $(\supset 2)$ and MP as the only rule.

\begin{thm}[DDT
]
 \label{t:ddt}
Let $\Gamma \cup \left\{\varphi, \psi \right\} \subseteq Fm$. Then $$\text{$\Gamma, \varphi  \vdash_{\scriptstyle H_{\supset}} \psi$ \ \ iff $\ \ \Gamma \vdash_{\scriptstyle H_{\supset}} \varphi \supset \psi$.}$$
\end{thm}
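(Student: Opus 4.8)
The plan is to prove the Deduction-Detachment Theorem (DDT) purely syntactically, using only the fact that $H_\supset$ has $(\supset 1)$ and $(\supset 2)$ among its axioms and modus ponens as its sole inference rule. The text already signals that this is the intended route (``in general it is known to hold for any calculus that has axioms $(\supset 1)$ and $(\supset 2)$ and MP as the only rule''), so I would give the standard proof rather than invoke completeness. The right-to-left direction is the easy one: if $\Gamma \vdash_{H_\supset} \varphi \supset \psi$, then $\Gamma, \varphi \vdash_{H_\supset} \varphi \supset \psi$ by monotonicity, and since trivially $\Gamma, \varphi \vdash_{H_\supset} \varphi$, one application of MP yields $\Gamma, \varphi \vdash_{H_\supset} \psi$.

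The left-to-right direction is the substantive one, and I would prove it by induction on the length of a derivation of $\psi$ from $\Gamma \cup \{\varphi\}$. First I would record the auxiliary lemma that $\vdash_{H_\supset} \chi \supset \chi$ for every formula $\chi$: this follows from $(\supset 1)$ and $(\supset 2)$ by the familiar two-MP combinator argument (instantiate $(\supset 2)$ with $q := (\chi \supset \chi)$ and $r := \chi$, then discharge the two antecedents using instances of $(\supset 1)$). With this in hand, suppose $\Gamma, \varphi \vdash_{H_\supset} \psi$ via a derivation $\chi_1, \dots, \chi_n = \psi$. I would show $\Gamma \vdash_{H_\supset} \varphi \supset \chi_i$ for each $i$ by considering how $\chi_i$ enters the derivation.

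The induction splits into the standard cases. If $\chi_i$ is an axiom or a member of $\Gamma$, then $\Gamma \vdash_{H_\supset} \chi_i$, and combining this with the instance $\chi_i \supset (\varphi \supset \chi_i)$ of $(\supset 1)$ via MP gives $\Gamma \vdash_{H_\supset} \varphi \supset \chi_i$. If $\chi_i$ is $\varphi$ itself, then $\varphi \supset \chi_i$ is $\varphi \supset \varphi$, which is a theorem by the auxiliary lemma. The only genuinely structural case is when $\chi_i$ is obtained by MP from two earlier formulas $\chi_j$ and $\chi_k = \chi_j \supset \chi_i$; here the induction hypothesis supplies $\Gamma \vdash_{H_\supset} \varphi \supset \chi_j$ and $\Gamma \vdash_{H_\supset} \varphi \supset (\chi_j \supset \chi_i)$, and I would chain these together using the instance
\[
(\varphi \supset (\chi_j \supset \chi_i)) \supset ((\varphi \supset \chi_j) \supset (\varphi \supset \chi_i))
\]
of $(\supset 2)$, applying MP twice to conclude $\Gamma \vdash_{H_\supset} \varphi \supset \chi_i$.

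The main obstacle here is not conceptual but bookkeeping: one must be careful that the extra connectives of $\lbs$ (namely $\land, \lor, \otimes, \oplus, \neg$) and their axioms play no role, so that the classical combinatorial argument goes through verbatim. The key observation making this clean is that MP is the \emph{only} rule, so every step in a derivation is either a leaf (axiom or hypothesis) or a single MP application, and the induction need only treat these cases. Since taking $\chi_n = \psi$ recovers $\Gamma \vdash_{H_\supset} \varphi \supset \psi$, the proof is complete.
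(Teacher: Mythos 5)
Your proof is correct and is precisely the argument the paper has in mind: the paper does not spell out a proof but explicitly notes that the DDT holds for any calculus having $(\supset 1)$, $(\supset 2)$ and MP as the only rule, and your induction on derivation length (with the auxiliary theorem $\vdash_{H_{\supset}} \chi \supset \chi$ and the three standard cases) is exactly that classical argument, correctly observing that the additional axioms for $\land, \lor, \otimes, \oplus, \neg$ only ever enter through the harmless ``axiom'' case.
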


The following result shows that the calculus introduced in  Definition \ref{d:hilbert} is complete w.r.t.\ the semantics of $\lbs$:

\begin{thm}
 \label{t:compl}
Let $\Gamma \cup \left\{\varphi\right\} \subseteq Fm$. The following are equivalent:

\begin{enumerate}[(i)]
\item $\Gamma  \vdash_{\scriptstyle H_{\supset}} \varphi$.

\item$\Gamma  \vDash_{\scriptstyle \mathcal{LB_{\supset}}} \varphi$.

\item $\Gamma  \vDash_{\scriptstyle \left\langle \mathcal{FOUR_{\supset}}, \Tr  \right\rangle} \varphi$. 
\end{enumerate}
\end{thm}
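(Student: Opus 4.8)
The plan is to prove the theorem by establishing the equivalences $(ii) \Leftrightarrow (iii)$ and $(i) \Leftrightarrow (ii)$ (equivalently $(i) \Leftrightarrow (iii)$), separating the semantic equivalence from the harder completeness half. The equivalence $(ii) \Leftrightarrow (iii)$ is already available: it is exactly the content of Theorem \ref{cons}, which reduces validity over the whole class of $\supset$-enriched logical bilattices to validity over the single algebra $\mathcal{FOUR}_\supset$. So the real work is to connect provability in $H_\supset$ with one of the two semantic consequence relations, and I would target $(iii)$ since it is the most concrete, involving only the four-element matrix $\langle \mathcal{FOUR}_\supset, \Tr \rangle$.

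For soundness, $(i) \Rightarrow (iii)$, the plan is the routine matrix-semantics argument. First I would check that every axiom scheme of Definition \ref{d:hilbert} is a tautology, i.e.\ takes a value in $\Tr = \{\mathsf{t}, \top\}$ under every homomorphism $v \in Hom(\mathbf{Fm}, \mathcal{FOUR}_\supset)$; for the implicational schemes this uses the truth table for $\supset$, and for the negation schemes ($\neg\land$, $\neg\lor$, $\neg\otimes$, $\neg\oplus$, $\neg\supset$, $\neg\neg$) I would verify that both sides of each $\equiv$ receive equal values, so that $\phi \equiv \psi$ lands in $\Tr$. Second, I would check that MP preserves designated values: the $\supset$ table shows that if $a \in \Tr$ and $a \supset b \in \Tr$ then $b \in \Tr$, because when $a$ is designated $a \supset b = b$. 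A straightforward induction on the length of derivations then gives soundness.

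The harder direction is completeness, $(iii) \Rightarrow (i)$, and this is where I expect the main obstacle to lie. The natural strategy is a Lindenbaum--Henkin argument built on the Deduction--Detachment Theorem (Theorem \ref{t:ddt}), which is already guaranteed because $H_\supset$ has $(\supset 1)$, $(\supset 2)$ and MP. Assuming $\Gamma \nvdash_{H_\supset} \varphi$, I would extend $\Gamma$ to a theory $T$ that is maximal with respect to not proving $\varphi$; using the DDT and the classical implicational axioms $(\supset 1)$--$(\supset 3)$ one shows such a $T$ is a \emph{prime}, \emph{$\supset$-closed} theory. The key step is then to read off from $T$ a valuation $v$ into $\mathcal{FOUR}_\supset$ that designates exactly the formulas in $T$ and refutes $\varphi$. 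The obstacle is that the target algebra has four values, not two: one must use a pair of theories (or the behaviour of $T$ on a formula and on its negation) to locate each formula among $\mathsf{f}, \bot, \top, \mathsf{t}$, and then verify that this assignment respects all six connectives. The negation axioms $(\neg\land)$ through $(\neg\neg)$ are precisely what forces $\neg$ to behave correctly, while the lattice and $\supset$ schemes pin down the other operations; checking that the resulting map is a genuine homomorphism into $\mathcal{FOUR}_\supset$ is the delicate bookkeeping at the heart of the proof. Once such a $v$ is produced with $v[\Gamma] \subseteq \Tr$ but $v(\varphi) \notin \Tr$, we conclude $\Gamma \nvDash_{\langle \mathcal{FOUR}_\supset, \Tr\rangle} \varphi$, completing the contrapositive.

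Alternatively, since Arieli and Avron's original completeness result is being recalled here, one may simply cite \cite{ArAv96} for $(i) \Leftrightarrow (iii)$ and combine it with Theorem \ref{cons} for $(ii) \Leftrightarrow (iii)$; but the self-contained route above via soundness plus the Lindenbaum construction is the one I would carry out in detail, with the four-valued valuation extraction being the step most in need of care.
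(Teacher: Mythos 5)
Your proposal is correct, but you should know that the paper itself contains no proof of this theorem: Section \ref{sec:add} opens by stating that all results in it are recalled from Arieli and Avron \cite{ArAv96}, so the paper's route is exactly your closing alternative --- cite \cite{ArAv96} for (i) $\Leftrightarrow$ (iii) and combine with Theorem \ref{cons} for (ii) $\Leftrightarrow$ (iii). Your self-contained plan is in fact a reconstruction of Arieli and Avron's own argument: soundness by induction on derivations (using that $a \supset b = b$ when $a \in \Tr$, so MP preserves designation), and completeness via a theory $T \supseteq \Gamma$ maximal with respect to not proving $\varphi$, from which the canonical valuation into $\mathcal{FOUR}_\supset$ is read off the pair of memberships $(\psi \in T,\ \neg\psi \in T)$: value $\mathsf{t}$ if only $\psi \in T$, $\top$ if both, $\mathsf{f}$ if only $\neg\psi \in T$, $\bot$ if neither. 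Two remarks on your execution. First, your use of the DDT is non-circular only because you invoke its purely syntactic proof from $(\supset 1)$, $(\supset 2)$ and MP; this matters, since the paper notes that \cite{ArAv96} derives the DDT \emph{from} completeness, and you handled it correctly. Second, your sketch stops precisely where the labor is: verifying that the canonical map is a homomorphism for all six connectives requires primeness of $T$, the equivalence ``$\psi \supset \chi \in T$ iff $\psi \notin T$ or $\chi \in T$'' (whose nontrivial half needs Peirce's law $(\supset 3)$ via the theorem $(\psi \supset \chi) \lor \psi$ together with primeness), and the negation axioms $(\neg\land)$--$(\neg\neg)$ to track the second coordinate. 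These checks are standard and do go through, so there is no step that would fail; but as written the completeness half remains a plan rather than a proof, whereas the citation route you also identified is complete as it stands and is what the paper actually does.
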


Adopting the notation of \cite{ArAv96}, we will use the following abbreviations:
\begin{align*}
p \rightarrow q & \: : = \:  (p \supset q) \land (\neg q \supset \neg p) \\
p \leftrightarrow q & \:  : = \:  (p \rightarrow q) \land (q \rightarrow p).
\end{align*}

To finish the section, let us cite a useful result that follows immediately from Theorem \ref{t:compl} and \cite[Proposition 3.27]{ArAv96}:

\begin{proposition} \label{congr}
For any $\left\{ \varphi_i : i \in I \right\}  \cup \left\{ \psi_i : i \in I \right\} \cup\left\{\varphi, \psi\right\} \subseteq Fm$, the following conditions are equivalent:
\begin{enumerate}[(i)]
 \item  $ \left\{ \varphi_i \approx \psi_i : i \in I \right\} \vDash_{\mathcal{FOUR}_{\supset}} \varphi \approx \psi$.
 \item $ \left\{ \varphi_i \leftrightarrow \psi_i : i \in I \right\} \vdash_{\scriptstyle H_{\supset}} \varphi \leftrightarrow \psi$.
\end{enumerate}
\end{proposition}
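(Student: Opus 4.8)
The plan is to prove the equivalence of the two conditions by exploiting the completeness theorem (Theorem~\ref{t:compl}) to translate the algebraic consequence in $\mathcal{FOUR}_\supset$ on the left-hand side into a logical consequence in $H_\supset$, and then to bridge the gap between the equational connective $\approx$ and the biconditional formula $\leftrightarrow$. The key observation driving the whole argument is that the formula $\varphi \leftrightarrow \psi$ should serve as a set of \emph{congruence formulas} witnessing that $\lbs$ is algebraizable with $\leftrightarrow$ as its equivalence connective; concretely, I expect that $v(\varphi \leftrightarrow \psi) \in \Tr$ holds in $\langle \mathcal{FOUR}_\supset, \Tr\rangle$ if and only if $v(\varphi) = v(\psi)$, for every homomorphism $v$ into $\mathcal{FOUR}_\supset$. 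This equivalence between ``the biconditional is designated'' and ``the two sides are equal'' is the technical heart of the proposition.

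First I would establish that pointwise equivalence over $\mathcal{FOUR}_\supset$, unwinding the definitions of $\rightarrow$ and $\leftrightarrow$ together with the truth table of $\supset$ and the behaviour of $\neg$ on the four elements. Writing $a \leftrightarrow b := (a \rightarrow b)\land (b \rightarrow a)$ and $a \rightarrow b := (a \supset b)\land(\neg b \supset \neg a)$, I would check on the sixteen pairs $(a,b) \in \mathcal{FOUR}^2$ that $a \leftrightarrow b \in \Tr$ precisely when $a = b$. The role of the second conjunct $\neg b \supset \neg a$ is exactly to rule out the pairs that $\supset$ alone cannot separate (for instance $\bot$ and $\mathsf{t}$, which agree on membership in $\Tr$ but are distinct), so this computation is where one sees why the stronger implication $\rightarrow$ rather than $\supset$ is needed to capture equality.

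With this pointwise fact in hand, the two displayed conditions become reformulations of one another. For the direction (i)$\Rightarrow$(ii), assuming $\{\varphi_i \approx \psi_i : i \in I\} \vDash_{\mathcal{FOUR}_\supset} \varphi \approx \psi$, I would take any homomorphism $v$ into $\mathcal{FOUR}_\supset$ with $v(\varphi_i \leftrightarrow \psi_i) \in \Tr$ for all $i$; by the pointwise fact this says $v(\varphi_i) = v(\psi_i)$ for all $i$, hence by hypothesis $v(\varphi) = v(\psi)$, and so $v(\varphi \leftrightarrow \psi) \in \Tr$ again by the pointwise fact. This shows $\{\varphi_i \leftrightarrow \psi_i : i \in I\} \vDash_{\langle \mathcal{FOUR}_\supset, \Tr\rangle} \varphi \leftrightarrow \psi$, which by Theorem~\ref{t:compl} (equivalence of (i) and (iii) there) yields condition~(ii). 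The reverse direction (ii)$\Rightarrow$(i) runs along the same lines, reading the implications backwards. A mild technical point to handle with care is that the index set $I$ may be infinite, so I would make sure the argument quantifies over \emph{all} homomorphisms and families of hypotheses uniformly, which it does since each step is purely pointwise per valuation.

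The main obstacle I anticipate is not conceptual but the reliance on \cite[Proposition~3.27]{ArAv96} flagged in the statement: the cleanest route is to invoke that proposition directly, which presumably already records the pointwise characterization of $\leftrightarrow$ over $\mathcal{FOUR}_\supset$, and then the present result follows by combining it with the completeness theorem as above. If instead one wants a self-contained proof, the finite case-check of the truth table of $\leftrightarrow$ is the only laborious part, and care is needed to confirm that designatedness of $\varphi \leftrightarrow \psi$ is genuinely equivalent to equality rather than merely to mutual membership in $\Tr$; verifying that $\neg b \supset \neg a$ correctly discriminates the previously indistinguishable pairs is the step most likely to require attention.
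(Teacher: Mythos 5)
Your proposal is correct and follows exactly the route the paper intends: the paper gives no separate proof but cites Theorem~\ref{t:compl} together with \cite[Proposition 3.27]{ArAv96}, the latter being precisely the pointwise fact you identify (that in $\mathcal{FOUR}_\supset$ a valuation designates $\varphi \leftrightarrow \psi$ iff it equates $\varphi$ and $\psi$, since the four truth values are separated by the pair of conditions ``$a \in \Tr$'' and ``$\neg a \in \Tr$''). Your self-contained verification of that fact and the transfer via completeness are both sound.
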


\section{Some properties of the calculus $H_{\s}$}
\label{sec:lbs}

Our next aim is to prove that the logic $\lbs$ is algebraizable, and that its equivalent algebraic semantics is a variety of algebras that we will call \emph{implicative bilattices}. In the next chapter we will study this variety; in particular, we will show that it is generated by $\fours$. In this section we begin by stating some properties of the consequence relation $\h$ that will be needed in the proof of algebraizability.

\begin{rem} \label{r:1}
{\rm Let $\Gamma \: \cup \: \{ \phi \} \subseteq Fm$ be formulas in the language $\{ \land, \lor, \s \}$. Denote by $\vdash_{CPC}$
the derivability relation of the corresponding fragment of classical propositional logic, where the connectives $\{ \land, \lor, \s \}$ are interpreted respectively as classical conjunction, disjunction and implication. Then $\Gamma \vdash_{CPC} \phi$ implies $\Gamma \h \phi$. This follows from the fact that the axioms and rules of $H_{\supset}$ involving only  $\{ \land, \lor, \s \}$ constitute an axiomatization (see, for instance, the one given in \cite{Cu77}) of the   $\{ \land, \lor, \s \}$-fragment of classical logic. The same reasoning shows that the same holds for formulas in the language $\{ \otimes, \oplus, \s \}$ when we interpret these connectives as respectively classical conjunction, disjunction and implication. 
It is also possible to prove the converse implication, i.e.\ that, under the same assumptions,  $\Gamma \h \phi$ implies $\Gamma \vdash_{CPC} \phi$: this follows from the fact that the $\{ \land, \lor, \s \}$-fragment of classical logic is maximally consistent, that is, it has no axiomatic extensions (however, we shall not need this result here).}
\end{rem}

The preceding remark will be used in the following proofs; also, we will often make use of the DDT without notice. Moreover, recall that, by structurality of the derivability relation $\h$, the proof of a derivation implies also that of all its substitution instances (possibly containing connectives that did not appear in the original formulas). 


\begin{proposition} \label{lemmalg}
For all formulas $\varphi, \psi, \vartheta, \varphi_1, \varphi_2, \psi_1, \psi_2 \in Fm$,
\begin{enumerate}[(i)]
\item $ \varphi \vdash_{\scriptstyle H_{\supset}}  \psi \land \vartheta $ if and only if $ \varphi \vdash_{\scriptstyle H_{\supset}}  \psi$ and $ \varphi \vdash_{\scriptstyle H_{\supset}}  \vartheta$ if and only if $ \varphi \vdash_{\scriptstyle H_{\supset}}  \psi \otimes \vartheta $ 

\item $ \varphi \supset \psi, \psi \supset \vartheta \vdash_{\scriptstyle H_{\supset}} \varphi \supset  \vartheta$
\item $ \vdash_{\scriptstyle H_{\supset}} \varphi \supset \varphi$
\item $ \vdash_{\scriptstyle H_{\supset}} \neg (\varphi \supset \varphi) \supset \neg \varphi$
\item $\varphi \vdash_{\scriptstyle H_{\supset}} \varphi \leftrightarrow (\varphi \supset \varphi)$
\item $\varphi \leftrightarrow (\varphi \supset \varphi) \vdash_{\scriptstyle H_{\supset}} \varphi $
\item  $\vdash_{\scriptstyle H_{\supset}} \varphi \leftrightarrow  \varphi$
\item $\varphi \leftrightarrow \psi \vdash_{\scriptstyle H_{\supset}} \psi \leftrightarrow \varphi$
\item $\varphi \leftrightarrow \psi, \psi \leftrightarrow \vartheta \vdash_{\scriptstyle H_{\supset}} \varphi \leftrightarrow \vartheta$
\item $\varphi \leftrightarrow \psi \vdash_{\scriptstyle H_{\supset}} \neg \varphi \leftrightarrow  \neg \psi$

\item $\varphi_1 \rightarrow \psi_1, \varphi_2 \rightarrow \psi_2 \vdash_{\scriptstyle H_{\supset}} (\varphi_1 \land \varphi_2) \rightarrow (\psi_1 \land \psi_2)$

\item $\varphi_1 \rightarrow \psi_1, \varphi_2 \rightarrow \psi_2 \vdash_{\scriptstyle H_{\supset}} (\varphi_1 \lor \varphi_2) \rightarrow (\psi_1 \lor \psi_2)$

\item $\varphi_1 \rightarrow \psi_1, \varphi_2 \rightarrow \psi_2 \vdash_{\scriptstyle H_{\supset}} (\varphi_1 \otimes \varphi_2) \rightarrow (\psi_1 \otimes \psi_2)$

\item $\varphi_1 \rightarrow \psi_1, \varphi_2 \rightarrow \psi_2 \vdash_{\scriptstyle H_{\supset}} (\varphi_1 \oplus \varphi_2) \rightarrow (\psi_1 \oplus \psi_2)$

\item $\psi_1  \rightarrow \varphi_1,  \varphi_2  \rightarrow \psi_2  \vdash_{\scriptstyle H_{\supset}} (\varphi_1 \supset \varphi_2) \rightarrow (\psi_1 \supset \psi_2)$

\item $\phi, \psi \dashv \h  \phi \land \psi $ and $ \phi \land \psi \dashv \h \phi \otimes \psi$

\item if $\phi \dashv \h \psi$, then $\h (\phi \s \chi) \da (\psi \s \chi)$ for all $\chi \in Fm$ 

\item if $\phi_1 \h \psi_1 $ and $\phi_2  \h \psi_2 $, then $\phi_1 \lor \phi_2 \h \psi_1 \lor \psi_2$

\item if $\phi_1 \h \psi_1 $ and $\phi_2  \h \psi_2 $, then $\phi_1 \oplus \phi_2 \h \psi_1 \oplus \psi_2$

\item $\h \phi \ta \psi $ if and only if $\h (\phi \land \psi) \da \psi $

\end{enumerate}
\end{proposition}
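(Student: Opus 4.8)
The plan is to prove both directions by unfolding the abbreviations hidden in $\ta$ and $\da$ and reducing the whole equivalence, via the DDT and item (i), to a short list of derivations between $\s$-formulas, each of which is an absorption fact settled by the axioms. Throughout I would use that, by item (i), a conjunction is a theorem iff both conjuncts are, so that provability of a $\rightarrow$- or $\leftrightarrow$-formula splits into provability of its $\s$-components; I would also use the DDT freely to turn $\h$-statements into $\s$-theorems and back. As a conceptual cross-check one can read the equivalence off semantically from Theorem \ref{t:compl}: in $\fours$ the left-hand side expresses an order relation between $\phi$ and $\psi$ in the truth ordering, the right-hand side expresses that their truth-meet equals $\psi$, and the two coincide by the usual meet-characterization of the order.

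First I would expand the right-hand side. Unfolding $\da$ and applying item (i), $\h (\phi \land \psi)\,\da\,\psi$ holds iff both associated $\rightarrow$-formulas are theorems. The one with $\phi \land \psi$ on the lower side is itself a theorem — it is provable from $(\land \s)$ for its $\s$-part and from $(\supset\lor)$ together with the De Morgan equivalence $(\neg\land)$ for its negated part — so the right-hand side collapses to the provability of the single formula $\psi \rightarrow (\phi \land \psi)$. Unfolding this $\rightarrow$ and using item (i) once more, and then using $(\neg\land)$ to rewrite $\neg(\phi \land \psi)$ as $\neg\phi \lor \neg\psi$ (the substitution underneath $\s$ being licensed by item (xvii), since the two are interderivable by the DDT), the right-hand side becomes the conjunction of two purely implicational assertions: the positive claim $\h \psi \s (\phi \land \psi)$ and the negated claim $\h (\neg\phi \lor \neg\psi) \s \neg\psi$.

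Next I would expand the left-hand side in the same way, so that $\h \phi\,\ta\,\psi$ reduces by item (i) to the positive claim $\h \psi \s \phi$ and the negated claim $\h \neg\phi \s \neg\psi$. The proof is then completed by matching the two lists. For the positive pair, $\h \psi \s (\phi \land \psi)$ gives $\h \psi \s \phi$ by transitivity (item (ii)) with the instance $(\phi \land \psi) \s \phi$ of $(\land \s)$, and conversely $\h \psi \s \phi$ gives $\h \psi \s (\phi \land \psi)$ by the classical reasoning of Remark \ref{r:1} (using $\h \psi \s \psi$ from item (iii) and $(\supset \land)$, since $\psi \s \phi \vdash_{CPC} \psi \s (\phi \land \psi)$). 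For the negated pair, $\h (\neg\phi \lor \neg\psi) \s \neg\psi$ gives $\h \neg\phi \s \neg\psi$ by transitivity with the instance $\neg\phi \s (\neg\phi \lor \neg\psi)$ of $(\supset\lor)$, and conversely $\h \neg\phi \s \neg\psi$ together with $\h \neg\psi \s \neg\psi$ (item (iii)) gives $\h (\neg\phi \lor \neg\psi) \s \neg\psi$ by $(\lor \s)$. Each of these four steps is a one-line exercise with the DDT.

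The main obstacle is the bookkeeping of the negation clause of $\rightarrow$: unlike the $\s$-half, the negated half of each side only becomes a transparent disjunction-absorption statement after one passes through the De Morgan equivalence $(\neg\land)$ to replace $\neg(\phi \land \psi)$ by $\neg\phi \lor \neg\psi$ and justifies that this replacement may be carried out underneath $\s$, which is exactly the role of item (xvii). Once this rewriting is in place, the negated half is handled by $(\supset\lor)$ and $(\lor \s)$ in complete parallel with the positive half, and the two collections of $\s$-claims are seen to be pairwise interderivable, yielding the desired equivalence.
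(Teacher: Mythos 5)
You unfold the defined connective $\ta$ the wrong way round, and this is a genuine error rather than bookkeeping: by definition $\phi \ta \psi := (\phi \s \psi) \land (\neg\psi \s \neg\phi)$, so the left-hand side $\h \phi \ta \psi$ reduces via item (i) to $\h \phi \s \psi$ and $\h \neg\psi \s \neg\phi$ --- not, as you write, to $\h \psi \s \phi$ and $\h \neg\phi \s \neg\psi$, which is the unfolding of $\psi \ta \phi$. Your reduction of the right-hand side is correct ($\h (\phi \land \psi) \da \psi$ does collapse to $\h \psi \s (\phi \land \psi)$ and $\h (\neg\phi \lor \neg\psi) \s \neg\psi$), and all four matching derivations are sound; but what they match the right-hand side against is $\h \psi \ta \phi$. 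With the correct unfolding the matching fails: for instance $\h \psi \s (\phi \land \psi)$ is not equivalent to $\h \phi \s \psi$ (take $\phi := p$, $\psi := p \land q$: the first is a theorem, the second is not). Your semantic cross-check is precisely where the slip shows: in a lattice, $\phi \land \psi = \psi$ characterizes $\psi \leq_t \phi$, not $\phi \leq_t \psi$.

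The twist is that item (xx) as printed is itself a typo, and your derivations in fact prove the true statement. As printed, ``$\h \phi \ta \psi$ iff $\h (\phi \land \psi) \da \psi$'' is false: with $\phi := p \land q$ and $\psi := p$ the left side is a theorem, while the right side requires $\h p \ta ((p \land q) \land p)$, which fails in $\fours$ at $v(p) = \mathsf{t}$, $v(q) = \mathsf{f}$ (Theorem \ref{t:compl}). The paper's own proof establishes the corrected claim $\h \phi \ta \psi$ iff $\h (\phi \land \psi) \da \phi$: it shows $\h (\phi \land \psi) \ta \phi$ unconditionally and derives $\h \phi \ta (\phi \land \psi)$ from the assumption via $\h \phi \da (\phi \land \phi)$, the monotonicity rule (xi), and transitivity, without fully unfolding $\ta$. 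Read with the definition applied correctly, your argument proves exactly this corrected statement up to renaming the variables ($\h \psi \ta \phi$ iff $\h (\phi \land \psi) \da \psi$), and your route --- complete decomposition into four $\s$-theorems settled by $(\land \s)$, $(\s \lor)$, $(\lor \s)$, De Morgan via (xvii), and Remark \ref{r:1} --- is more elementary and direct than the paper's. So the repair is to flag the typo and restate the target as $(\phi \land \psi) \da \phi$, not to reverse the definition of $\ta$ silently. Note finally that your proposal addresses only item (xx) of the twenty-item proposition; leaning on (i)--(iii) and (xvii) is legitimate since they precede (xx), but the remaining items would still need proofs.
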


\begin{proof}
(i). The rightward implication is easily proved using $(\land \supset)$. As to the leftward one, note that by $(\supset \land )$ we have $ \varphi \vdash_{\scriptstyle H_{\supset}} \psi \supset (\vartheta \supset (\psi \land \vartheta)) $, so applying MP twice we obtain $ \varphi \vdash_{\scriptstyle H_{\supset}}  \psi \land \vartheta $. The proof for the case of $\otimes$ is similar, we just need to use $(\otimes \supset)$ and $(\supset \otimes )$ instead of $(\land \supset)$ and $(\supset \land )$.

(ii). By $(\supset 1)$ and MP we have $\psi \supset \vartheta \vdash_{\scriptstyle H_{\supset}} \varphi \supset (\psi \supset \vartheta)$ and by $(\supset 2)$ we have  $\vdash_{\scriptstyle H_{\supset}} (\varphi \supset (\psi \supset \vartheta)) \supset (( \varphi \supset \psi) \supset ( \varphi \supset \vartheta ))$. So, applying MP, we have $\psi \supset \vartheta \vdash_{\scriptstyle H_{\supset}} ( \varphi \supset \psi) \supset ( \varphi \supset \vartheta ) $. Hence, by MP, $\psi \supset \vartheta,  \varphi \supset \psi \vdash_{\scriptstyle H_{\supset}} ( \varphi \supset \vartheta ) $. 

(iii). $(\varphi \supset ((\psi \supset \varphi) \supset \varphi)) \supset ((\varphi \supset ( \psi \supset \varphi )) \supset (\varphi \supset \varphi ))$ is an instance of $(\supset 2)$ and $( \varphi \supset (( \psi \supset \varphi ) \supset \varphi ))$ and $(\varphi \supset ( \psi \supset \varphi ))$ are instances of $(\supset 1)$. So applying MP twice we obtain $ \vdash_{\scriptstyle H_{\supset}} \varphi  \supset \varphi $.

(iv). $ \neg (\varphi \supset \varphi) \supset (\varphi \land \neg \varphi)$ is an instance of $(\neg \supset)$ and $(\varphi \land \neg \varphi)  \supset \neg \varphi$ is an instance of $(\land \supset)$. So, by (ii), we obtain $ \vdash_{\scriptstyle H_{\supset}} \neg (\varphi \supset \varphi) \supset \neg \varphi$. 

(v). Taking into account (i), it is sufficient to prove the following: $\varphi \vdash_{\scriptstyle H_{\supset}} \varphi \supset (\varphi \supset \varphi)$, $\varphi \vdash_{\scriptstyle H_{\supset}} \neg (\varphi \supset \varphi) \supset \neg  \varphi $, $\varphi \vdash_{\scriptstyle H_{\supset}}  (\varphi \supset \varphi) \supset \varphi$ and $\varphi \vdash_{\scriptstyle H_{\supset}} \neg \varphi \supset \neg  (\varphi \supset \varphi) $. The first follows immediately from $(\s 1)$, while the second follows from (iv). The third amounts to  $\varphi, \varphi \supset \varphi \vdash_{\scriptstyle H_{\supset}} \varphi$, which is obvious, and the fourth to $\varphi, \neg \varphi \vdash_{\scriptstyle H_{\supset}} \neg  (\varphi \supset \varphi) $, which easily follows from $(\neg \supset)$.

(vi). It is sufficient to prove that $(\varphi \supset \varphi) \supset \varphi\vdash_{\scriptstyle H_{\supset}} \varphi $, and this follows from (iii) by MP.

(vii). Follows immediately from (iii).

(viii). Immediate. 

(ix). Follows easily, using 
(i) and (ii).

(x). It is sufficient to prove that $\varphi \supset \psi \vdash_{\scriptstyle H_{\supset}} \neg \neg  \varphi \supset  \neg \neg \psi$ and $  \psi \supset \varphi  \vdash_{\scriptstyle H_{\supset}} \neg \neg \psi  \supset  \neg \neg \varphi $, and this follows easily using $(\neg \neg )$ and the transitivity of $\s$.

(xi). We will prove that $\varphi_1 \supset \psi_1, \varphi_2 \supset \psi_2 \vdash_{\scriptstyle H_{\supset}} (\varphi_1 \land \varphi_2) \supset (\psi_1 \land \psi_2)$ and  $\neg \psi_1 \supset \neg \varphi_1 , \neg \psi_2 \supset  \neg \varphi_2 \vdash_{\scriptstyle H_{\supset}} \neg (\psi_1 \land \psi_2) \supset \neg ( \varphi_1 \land \varphi_2) $.
The former is equivalent to $\varphi_1 \supset \psi_1, \varphi_2 \supset \psi_2, \varphi_1 \land \varphi_2 \vdash_{\scriptstyle H_{\supset}} (\psi_1 \land \psi_2)$, which is easily proved. As to the latter, by $(\supset \lor)$ and the transitivity of $\h,$ we have $\neg \psi_1 \supset \neg \varphi_1 , \neg \psi_2 \supset  \neg \varphi_2 \vdash_{\scriptstyle H_{\supset}} \neg \psi_1 \supset (\neg \varphi_1 \lor \neg \varphi_2)$ and $\neg \psi_1 \supset \neg \varphi_1 , \neg \psi_2 \supset  \neg \varphi_2 \vdash_{\scriptstyle H_{\supset}} \neg \psi_2 \supset (\neg \varphi_1 \lor \neg \varphi_2)$. Then, using $(\lor \supset )$, we obtain $\neg \psi_1 \supset \neg \varphi_1 , \neg \psi_2 \supset  \neg \varphi_2 \vdash_{\scriptstyle H_{\supset}} ( \neg \psi_1 \lor \neg \psi_2) \supset (\neg \varphi_1 \lor \neg \varphi_2)$. By $(\neg \land)$ we have $  \vdash_{\scriptstyle H_{\supset}} \neg ( \psi_1 \land \psi_2) \supset ( \neg \psi_1 \lor \neg \psi_2)$ and $\vdash_{\scriptstyle H_{\supset}}   (\neg \varphi_1 \lor \neg \varphi_2) \supset \neg ( \varphi_1 \land \varphi_2)$. So, applying (ii), we obtain the result. 

(xii). We will prove that $\varphi_1 \supset \psi_1, \varphi_2 \supset \psi_2 \vdash_{\scriptstyle H_{\supset}} (\varphi_1 \lor \varphi_2) \supset (\psi_1 \lor \psi_2)$ and  $\neg \psi_1 \supset \neg \varphi_1 , \neg \psi_2 \supset  \neg \varphi_2 \vdash_{\scriptstyle H_{\supset}} \neg (\psi_1 \lor \psi_2) \supset \neg ( \varphi_1 \lor \varphi_2) $. As to the first, we have that $\varphi_1 \supset \psi_1 \vdash_{\scriptstyle H_{\supset}} \varphi_1 \supset (\psi_1 \lor \psi_2)$ and  $\varphi_2 \supset \psi_2 \vdash_{\scriptstyle H_{\supset}} \varphi_2 \supset (\psi_1 \lor \psi_2)$. Now, using $(\lor \supset)$ we obtain $\varphi_1 \supset \psi_1, \varphi_2 \supset \psi_2 \vdash_{\scriptstyle H_{\supset}} (\varphi_1 \lor \varphi_2) \supset (\psi_1 \lor \psi_2)$. As to the second, using (xi) we have that $\neg \psi_1 \supset \neg \varphi_1 , \neg \psi_2 \supset  \neg \varphi_2 \vdash_{\scriptstyle H_{\supset}} (\neg \psi_1 \land \neg \psi_2) \supset (\neg  \varphi_1 \land \neg \varphi_2) $. Now using $(\neg \lor)$ and transitivity we obtain the result. 

(xiii).  We will prove that $\varphi_1 \supset \psi_1, \varphi_2 \supset \psi_2 \vdash_{\scriptstyle H_{\supset}} (\varphi_1 \otimes \varphi_2) \supset (\psi_1 \otimes \psi_2)$ and  $\neg \psi_1 \supset \neg \varphi_1 , \neg \psi_2 \supset  \neg \varphi_2 \vdash_{\scriptstyle H_{\supset}} \neg (\psi_1 \otimes \psi_2) \supset \neg ( \varphi_1 \otimes \varphi_2) $.
A proof of the first one can be obtained from that of (xi), just replacing any occurence of $\land$ with $\otimes$ and using the corresponding axioms for $\otimes$. As to the second, it is easy to prove  $\neg \psi_1 \supset \neg \varphi_1 , \neg \psi_2 \supset  \neg \varphi_2,  \neg \psi_1 \otimes \neg \psi_2 \vdash_{\scriptstyle H_{\supset}} \neg \varphi_1 \otimes \neg \varphi_2 $ and from this, using $(\neg \otimes)$, we obtain the result.

(xiv). We will prove that $\varphi_1 \supset \psi_1, \varphi_2 \supset \psi_2 \vdash_{\scriptstyle H_{\supset}} (\varphi_1 \oplus \varphi_2) \supset (\psi_1 \oplus \psi_2)$ and  $\neg \psi_1 \supset \neg \varphi_1 , \neg \psi_2 \supset  \neg \varphi_2 \vdash_{\scriptstyle H_{\supset}} \neg (\psi_1 \oplus \psi_2) \supset \neg ( \varphi_1 \oplus \varphi_2) $. A proof of the first one can be obtained from that of (xii), just replacing any occurence of $\lor$ with $\oplus$ and using the corresponding axioms for $\oplus$. As to the second, it is easy to prove that $\neg \psi_1 \supset \neg \varphi_1 , \neg \psi_2 \supset  \neg \varphi_2 \vdash_{\scriptstyle H_{\supset}} (\neg \psi_1 \oplus \neg \psi_2) \supset (\neg \varphi_1 \oplus \neg \varphi_2) $ and from this, using $(\neg \oplus)$ and transitivity, we obtain the result.

(xv). We will prove that $ \psi_1 \supset \varphi_1, \varphi_2 \supset \psi_2 \vdash_{\scriptstyle H_{\supset}} (\varphi_1 \supset \varphi_2) \supset (\psi_1 \supset \psi_2)$ and $ \psi_1 \supset \varphi_1, \neg \psi_2 \supset  \neg \varphi_2 \vdash_{\scriptstyle H_{\supset}} \neg (\psi_1 \supset \psi_2) \supset \neg ( \varphi_1 \supset \varphi_2)$. The former is equivalent to $ \psi_1 \supset \varphi_1, \varphi_2 \supset \psi_2, \varphi_1 \supset \varphi_2, \psi_1 \vdash_{\scriptstyle H_{\supset}} \psi_2$, which is easily proved by transitivity. In order to prove the latter, note that $ \psi_1 \supset \varphi_1, \neg \psi_2 \supset  \neg \varphi_2, \psi_1, \neg \psi_2  \vdash_{\scriptstyle H_{\supset}} \varphi_1 \land \neg \varphi_2$, so $ \psi_1 \supset \varphi_1, \neg \psi_2 \supset  \neg \varphi_2 \vdash_{\scriptstyle H_{\supset}} (\psi_1 \land \neg \psi_2)  \supset (\varphi_1 \land \neg \varphi_2)$. Now, by $(\neg \supset)$ we have $\vdash_{\scriptstyle H_{\supset}} \neg (\psi_1 \supset \psi_2) \supset (\psi_1 \land \neg \psi_2)$ and $\vdash_{\scriptstyle H_{\supset}} (\varphi_1 \land \neg \varphi_2) \supset \neg (\varphi_1 \supset  \varphi_2)$. So by transitivity we obtain the result.

(xvi). Easy, using using $(\land \s )$, $(\s \land)$, $(\otimes \s )$ and $(\s \otimes)$. In the following proofs we will sometimes make use of this property without notice.

(xvii). Assume $\phi \dashv \h \psi$. Using (i), we will prove that for all $\chi \in Fm$:
\begin{align*}
& \h (\phi \s \chi) \s (\psi \s \chi) \\
& \h  (\psi \s \chi) \s (\phi \s \chi) \\
& \h \neg (\phi \s \chi) \s \neg (\psi \s \chi) \\
& \h \neg (\psi \s \chi) \s \neg  (\phi \s \chi). 
\end{align*}
The first two are equivalent to $\phi \s \chi, \psi \h \chi $ and  $\psi \s \chi, \phi \h  \chi$, so they are easily proved. As to the second two, they amount to showing that $\neg (\phi \s \chi) \dashv \h \neg (\psi \s \chi)$. Now note that by $(\neg \s)$ we have $\neg (\phi \s \chi) \dashv \h  \phi  \land \neg \chi$ and $\neg (\psi \s \chi) \dashv \h  \psi  \land \neg \chi$. So, it is sufficient to prove that $\phi  \land \neg \chi \dashv \h  \psi  \land \neg \chi$ and, using (i) and (xvi), this is easy.

(xviii). Assume $\phi_1 \h \psi_1 $ and $\phi_2  \h \psi_2 $. Note that 
$$ (\phi_1 \s (\psi_1 \lor \psi_2)) \s ( (\phi_2 \s (\psi_1 \lor \psi_2)) \s ((\phi_1 \lor \phi_2) \s (\psi_1 \lor \psi_2))  ) $$
is an instance of $(\lor \s)$, and that using $( \s \lor)$ one can easily derive $\h \phi_1 \s (\psi_1 \lor \psi_2)$ from the first assumption and $\h \phi_2 \s (\psi_1 \lor \psi_2)$ from the second. Hence, by MP, we have $\h (\phi_1 \lor \phi_2) \s (\psi_1 \lor \psi_2)$, which implies $\phi_1 \lor \phi_2 \h \psi_1 \lor \psi_2$.

(xix). The proof can be easily obtained from that of (xviii), just using the corresponding axioms for $\oplus$ instead of those for $\lor$. 

(xx). Note that $\h (\phi \land \psi) \ta \phi$, because $\h (\phi \land \psi) \s \phi$ is an instance of $(\land \s )$ and $\h \neg \phi \s \neg (\phi \land \psi)$ follows from the fact that, by $(\neg \land)$, we have $\neg \phi \lor \neg \psi \h \neg (\phi \land \psi)$ and, by $(\s \lor)$, we have $\neg \phi \h \neg \phi \lor \neg \psi$. 

So, assuming $\h \phi \ta \psi$, we only need to prove that $\h \phi  \ta (\phi \land \psi)$. It is not difficult to prove that $\h \phi \da (\phi \land \phi)$, and by (vi) we also have $\h \phi \ta \phi$. Now, using (xi) and the assumption, we obtain $\h (\phi \land \phi)  \ta (\phi \land \psi)$. Finally, using (ix), we obtain $\h \phi  \ta (\phi \land \psi)$.

Conversely, assume $\h \phi  \ta (\phi \land \psi)$. Clearly, the same proof of $(\phi \land \psi) \ta \phi$ shows that  $\h (\phi \land \psi) \ta \psi$, so we can apply (i) and the transitivity of $\s$ and the result follows easily. 
\end{proof}

The following proposition will be needed in order to characterize the class $\mathbf{Alg^{*}}\lbs$. Note that, in order to prove  results of the form $\h \phi \da \psi$, we need to show that 
$$\text{ (a) $\h \phi \equiv \psi$ \hspace{2cm} (b) $\h \neg \phi \equiv \neg \psi$.}$$
In the next proposition, part (a) follows from  Remark \ref{r:1}, so we will prove only  part (b).

\begin{proposition} \label{lemlat}

For all formulas $\varphi, \psi, \vartheta \in Fm$,
\begin{enumerate}[(i)]

\item $\h \phi \da (\phi \land \phi)$
\item $\h (\phi \land \psi) \da (\psi \land \phi)$
\item $\h (\phi \land (\psi \land \tet)) \da ((\phi \land \psi) \land \tet)$

\item $\h \phi \da (\phi \lor \phi)$
\item $\h (\phi \lor \psi) \da (\psi \lor \phi)$
\item $\h (\phi \lor (\psi \lor \tet)) \da ((\phi \lor \psi) \lor \tet)$

\item $\h (\phi \land (\phi \lor \psi)) \da \phi $
\item $\h (\phi \lor (\phi \land \psi)) \da \phi $

\item $\h \phi \da (\phi \otimes \phi)$
\item $\h (\phi \otimes \psi) \da (\psi \otimes \phi)$
\item $\h (\phi \otimes (\psi \otimes \tet)) \da ((\phi \otimes \psi) \otimes \tet)$

\item $\h \phi \da (\phi \oplus \phi)$
\item $\h (\phi \oplus \psi) \da (\psi \oplus \phi)$
\item $\h (\phi \oplus (\psi \oplus \tet)) \da ((\phi \oplus \psi) \oplus \tet)$

\item $\h (\phi \otimes (\phi \oplus \psi)) \da \phi $
\item $\h (\phi \oplus (\phi \otimes \psi)) \da \phi $

\item $\h \neg (\phi \land \psi) \da (\neg \phi \lor \neg \psi) $
\item $\h \neg (\phi \lor \psi) \da (\neg \phi \land \neg \psi) $

\item $\h \neg (\phi \otimes \psi) \da (\neg \phi \otimes \neg \psi) $
\item $\h \neg (\phi \oplus \psi) \da (\neg \phi \oplus \neg \psi) $

\item $\h  \phi \da  \neg \neg \phi$

\item $\h (((\phi \s \phi) \oplus \neg (\phi \s \phi)) \s \psi) \da \psi $

\item $\h ((\phi \land \psi) \s \tet) \da (\phi \s (\psi \s \tet))$ 

\item $\h ((\phi \land \psi) \s \tet) \da ((\phi \otimes \psi) \s \tet) $

\item $\h  ((\phi \s \phi) \oplus \neg (\phi \s \phi)) \ta ( ((\psi \s \tet) \s \psi) \s \psi)$

\item $\h ((\phi \lor \psi) \s \tet) \da ((\phi \oplus \psi) \s \tet) $
\item $\h \phi \ta ((\phi \s \psi) \s (\phi \otimes \psi))$
\item $\h (\neg (\phi \s \psi ) \s \tet) \da ((\phi \land \neg \psi ) \s \tet)$

\item $\h ((\phi \lor \psi) \s \tet) \da ((\phi \s \tet) \land (\psi \s \tet))$.

\end{enumerate}
\end{proposition}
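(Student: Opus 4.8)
The plan is to follow the recipe stated just before the Proposition: to establish $\h ((\phi \lor \psi) \s \tet) \da ((\phi \s \tet) \land (\psi \s \tet))$ it suffices to prove separately the two equivalences
\[
\text{(a)} \ \ \h ((\phi \lor \psi) \s \tet) \equiv ((\phi \s \tet) \land (\psi \s \tet)), \qquad \text{(b)} \ \ \h \neg((\phi \lor \psi) \s \tet) \equiv \neg((\phi \s \tet) \land (\psi \s \tet)).
\]
Throughout I will use the observation that, via the DDT (Theorem \ref{t:ddt}) together with Proposition \ref{lemmalg}(i), one has $\h \alpha \equiv \beta$ exactly when $\alpha \dashv \h \beta$; thus each of (a) and (b) reduces to verifying a chain of mutual-derivability steps, and transitivity of $\dashv \h$ (which is immediate) closes the chain.

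Part (a) is handled directly by Remark \ref{r:1}. Reading $\s$ as classical material implication, the scheme $((\phi \lor \psi) \s \tet) \equiv ((\phi \s \tet) \land (\psi \s \tet))$ is a tautology of the $\{\land,\lor,\s\}$-fragment of classical logic, so both $((\phi \lor \psi) \s \tet) \h ((\phi \s \tet) \land (\psi \s \tet))$ and its converse hold. If a direct argument is preferred, the left-to-right direction follows from $(\s \lor)$, transitivity (Proposition \ref{lemmalg}(ii)) and Proposition \ref{lemmalg}(i), while the right-to-left direction follows from the instance of $(\lor \s)$ by two applications of MP.

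The substance is part (b), where I first push negations inward, treating the negation axioms as two-way rewrite rules. By $(\neg \s)$ we have $\neg((\phi \lor \psi) \s \tet) \dashv \h (\phi \lor \psi) \land \neg \tet$, and likewise $\neg(\phi \s \tet) \dashv \h \phi \land \neg \tet$ and $\neg(\psi \s \tet) \dashv \h \psi \land \neg \tet$; by $(\neg \land)$ we have $\neg((\phi \s \tet) \land (\psi \s \tet)) \dashv \h \neg(\phi \s \tet) \lor \neg(\psi \s \tet)$. Combining the last equivalence with the two $(\neg \s)$ rewrites underneath $\lor$—which is legitimate because $\lor$ is a congruence for $\dashv \h$ by Proposition \ref{lemmalg}(xviii)—yields $\neg((\phi \s \tet) \land (\psi \s \tet)) \dashv \h (\phi \land \neg \tet) \lor (\psi \land \neg \tet)$. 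Hence (b) collapses to the single distributivity equivalence $(\phi \lor \psi) \land \neg \tet \dashv \h (\phi \land \neg \tet) \lor (\psi \land \neg \tet)$, which is again supplied by Remark \ref{r:1}: distributivity of $\land$ over $\lor$ is a classical tautology in $\{\land,\lor\}$, and by structurality of $\h$ it persists when the formula $\neg \tet$ is substituted for a variable. Stringing the rewrites together gives $\neg((\phi \lor \psi) \s \tet) \dashv \h \neg((\phi \s \tet) \land (\psi \s \tet))$, i.e.\ part (b).

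The only delicate point is the gluing in part (b): one must be certain that the negation axioms may be applied to proper subformulas and that distributivity remains available with $\neg \tet$ occupying the slot of an atom. Both are covered by the replacement principle—congruence of the connectives for $\dashv \h$, recorded for $\lor$ in Proposition \ref{lemmalg}(xviii)—together with the structurality remark following Remark \ref{r:1}, which ensures that a classically derivable scheme holds under arbitrary substitution, even one introducing connectives (here $\neg$) absent from the original derivation. Beyond this bookkeeping no new difficulty arises.
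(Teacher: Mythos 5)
Your proposal proves only item (xxix) of the Proposition. The statement has twenty-nine parts --- idempotency, commutativity, associativity and absorption for the four lattice connectives (items (i)--(xvi)), the De Morgan and double-negation equivalences (xvii)--(xxi), and the implication-specific items (xxii)--(xxviii) --- and none of these is addressed anywhere in your text. This is a genuine gap, not routine bookkeeping: by the convention stated just before the Proposition, every item of the form $\h \alpha \da \beta$ requires, besides part (a), a proof of the negated equivalence (b) $\h \neg\alpha \equiv \neg\beta$, which Remark \ref{r:1} does \emph{not} supply, and for most items this is where all the work lies. For instance, even the purely lattice-theoretic items (i)--(xvi) each require pushing $\neg$ through with the appropriate negation axiom ($(\neg\land)$, $(\neg\lor)$, $(\neg\otimes)$, $(\neg\oplus)$) together with replacement facts from Proposition \ref{lemmalg} before Remark \ref{r:1} can be invoked; item (xxii) needs $(\s 1)$, $(\s\oplus)$ and $(\neg\s)$ to handle the term $(\phi\s\phi)\oplus\neg(\phi\s\phi)$; item (xxv) needs $(\s 3)$, $(\neg\neg)$ and $(\neg\oplus)$; and item (xxvii) needs $(\neg\s)$, $(\neg\otimes)$ and $(\otimes\s)$. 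None of these arguments can be extracted from what you wrote, so the proposal does not establish the stated Proposition.

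For the one item you do treat, your argument is correct and is essentially the paper's: both reduce part (b) to the chain $\neg((\phi\lor\psi)\s\tet) \dashv\h (\phi\lor\psi)\land\neg\tet \dashv\h (\phi\land\neg\tet)\lor(\psi\land\neg\tet) \dashv\h \neg(\phi\s\tet)\lor\neg(\psi\s\tet) \dashv\h \neg((\phi\s\tet)\land(\psi\s\tet))$, using $(\neg\s)$, $(\neg\land)$ and Proposition \ref{lemmalg} (xviii) as a replacement rule under $\lor$. The only difference is that the paper derives the distributivity step $(\phi\lor\psi)\land\neg\tet \dashv\h (\phi\land\neg\tet)\lor(\psi\land\neg\tet)$ by hand from $(\s\lor)$, $(\lor\s)$ and the DDT, whereas you obtain it from Remark \ref{r:1} plus structurality; both are legitimate. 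So the fix is not to repair (xxix) but to supply proofs of the remaining twenty-eight items.
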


\begin{proof}
%

(i). We have to prove that $\h  \neg (\phi \land \phi) \s \neg \phi $ and $\h \neg \phi \s \neg (\phi \land \phi)$. As to the former, note that by $(\neg \land)$ we have $\neg (\phi \land \phi) \h  \neg \phi \lor \neg \phi$, so it will be enough to prove $\neg \phi \lor \neg \phi \h  \neg \phi $, and this is easily done using  $(\lor \s)$. As to the latter, by  $( \s \lor)$ we have $\h \neg \phi \s \neg \phi \lor \neg \phi$ and from this, using $(\neg \land)$, we easily obtain the result.

(ii). To prove that $\h \neg (\phi \land \psi) \equiv \neg (\psi \land \phi)$, using $(\neg \land)$, it suffices to show that $ \h (\phi \lor \psi) \equiv (\psi \lor \phi)$, and this follows  by  Remark \ref{r:1}.

(iii). To prove that $\h \neg (\phi \land (\psi \land \tet)) \equiv \neg ((\phi \land \psi) \land \tet)$, note that by $(\neg \land)$ we have, on the one hand, $\h \neg (\phi \land (\psi \land \tet)) \equiv \neg \phi \lor  \neg (\psi \land \tet)$ and, using also Proposition \ref{lemmalg} (xviii),  $\h  \neg \phi \lor  \neg (\psi \land \tet) \equiv  \neg \phi \lor  (\neg \psi \lor \neg \tet)$. On the other hand we have $\h  \neg ((\phi \land \psi) \land \tet) \equiv \neg (\phi \land \psi) \lor \neg \tet$ and $\h  \neg (\phi \land \psi) \lor \neg \tet \equiv (\neg \phi \lor \neg \psi) \lor \neg \tet$. Hence, it suffices to prove that  $ \neg \phi \lor  (\neg \psi \lor \neg \tet) \equiv (\neg \phi \lor \neg \psi) \lor \neg \tet$, and this follows from Remark \ref{r:1}.

(iv). To prove that $\h \neg \phi \equiv \neg (\phi \lor \phi)$, we only need to use $(\neg \lor)$ and (i).

(v). To prove that $\h \neg (\phi \lor \psi) \equiv \neg (\psi \lor \phi)$, we only need to use $(\neg \lor)$ and (ii).

(vi). To prove that $\h \neg (\phi \lor (\psi \lor \tet)) \equiv \neg ((\phi \lor \psi) \lor \tet)$ we can use $(\neg \lor)$ and Proposition \ref{lemmalg} (xviii).

(vii). To prove that $\h \neg (\phi \land (\phi \lor \psi)) \equiv \neg \phi $, note that by $(\neg \land)$ we have $\h \neg (\phi \land (\phi \lor \psi)) \equiv \neg  \phi \lor \neg (\phi \lor \psi))$ and, using $(\neg \lor )$ and Proposition \ref{lemmalg} (xviii), we have $\h \neg  \phi \lor \neg (\phi \lor \psi)) \equiv  \neg  \phi \lor ( \neg \phi \land \neg \psi)$. Now we may use Remark \ref{r:1} to obtain the result. 

(viii). To prove that $\h \neg (\phi \lor (\phi \land \psi)) \equiv \neg \phi $ we may reason as in (vii), just using $(\neg \land )$ instead of $(\neg \lor )$.

(ix). To prove that $\h \neg \phi \equiv \neg (\phi \otimes \phi)$, it is sufficient to observe that by $(\neg \otimes)$ we have $\h  \neg (\phi \otimes \phi) \equiv \neg \phi \otimes \neg \phi$, so we may use again Remark \ref{r:1}. 

(x). To prove that $\h \neg (\phi \otimes \psi) \equiv \neg (\psi \otimes \phi)$, it is sufficient to observe that by $(\neg \otimes)$ we have $\h  \neg (\phi \otimes \psi) \equiv \neg \phi \otimes \neg \psi$, so we may use again Remark \ref{r:1}.

(xi). To see that $\h \neg (\phi \otimes (\psi \otimes \tet)) \equiv \neg ((\phi \otimes \psi) \otimes \tet)$, as in the previous cases, we may use $(\neg \otimes)$ and Remark \ref{r:1}.

(xii). We may proceed as in (ix), just using $(\neg \oplus)$ instead of $(\neg \otimes)$.

(xiii). We may proceed as in (x), just using $(\neg \oplus)$ instead of $(\neg \otimes)$.

(xiv). We may proceed as in (xi), using $(\neg \oplus)$ instead of $(\neg \otimes)$, together with Proposition \ref{lemmalg} (xix).

(xv). To prove $\h \neg (\phi \otimes (\phi \oplus \psi)) \equiv \neg \phi $, we use $(\neg \otimes)$ to obtain $\h \neg (\phi \otimes (\phi \oplus \psi)) \equiv \neg \phi \otimes \neg (\phi \oplus \psi)$. Using $(\neg \oplus)$, it is easy to obtain $\h  \neg \phi \otimes \neg (\phi \oplus \psi) \equiv \neg \phi \otimes (\neg \phi \oplus \neg \psi)$. Now we may apply Remark \ref{r:1} again to obtain the result.

(xvi). We may proceed as in (xv), using the property stated in Proposition \ref{lemmalg} (xix).

(xvii). By $(\neg \land)$ we have $\h \neg (\phi \land \psi) \equiv (\neg \phi \lor \neg \psi )$. To prove $\h \neg \neg (\phi \land \psi) \equiv \neg (\neg \phi \lor \neg \psi) $, observe that, by  $(\neg \neg)$, we have $\h \neg \neg (\phi \land \psi) \equiv (\phi \land \psi) $ and, moreover, $\phi, \psi \dashv \h \neg \neg \phi, \neg \neg \psi$. By Proposition \ref{lemmalg} (xvi), this means that $\h (\phi \land \psi) \equiv (\neg \neg \phi \land \neg \neg \psi) $. By $(\neg \lor)$, we have $\h  (\neg \neg \phi \land \neg \neg \psi) \equiv \neg (\neg \phi \lor \neg \psi)$. Now, using Proposition \ref{lemmalg} (ii), we obtain the result.

(xviii). We may procced as in (xvii), just using $(\neg \lor) $ instead of  $(\neg \land)$ and viceversa.

(xix). By $(\neg \otimes)$ we have $\h \neg (\phi \otimes \psi) \equiv (\neg \phi \otimes \neg \psi) $. To prove that  $\h \neg \neg (\phi \otimes \psi) \equiv \neg (\neg \phi \otimes \neg \psi) $, note that by $(\neg \neg)$ we have $\h \neg \neg (\phi \otimes \psi) \equiv (\phi \otimes \psi) $ and by $(\neg \otimes)$ we have $\h \neg (\neg \phi \otimes \neg \psi) \equiv (\neg \neg \phi \otimes \neg \neg \psi)$. Now we apply $(\neg \neg)$ to obtain the result. 

(xx). By $(\neg \oplus)$ we have $\h \neg (\phi \oplus \psi) \equiv (\neg \phi \oplus \neg \psi) $. To prove $\h \neg \neg (\phi \oplus \psi) \equiv \neg (\neg \phi \oplus \neg \psi) $ we may proceed as in (xix), using Proposition \ref{lemmalg} (xix).

(xxi). By $(\neg \neg)$ we have $\h  \phi \equiv  \neg \neg \phi$, and $\h  \neg \phi \equiv  \neg \neg \neg \phi$ is also an instance of $(\neg \neg)$.

(xxii). In Proposition \ref{lemmalg} (iii) we proved that $\h \phi \s \phi$. Hence, by $(\s \oplus )$ and MP, we have that  $\h (\phi \s \phi) \oplus \neg (\phi \s \phi) $. From this it follows immediately that $$ ((\phi \s \phi) \oplus \neg (\phi \s \phi)) \s \psi \h  \psi$$
therefore
$$\h (((\phi \s \phi) \oplus \neg (\phi \s \phi)) \s \psi) \s \psi.$$ 

That $\h \psi \s (((\phi \s \phi) \oplus \neg (\phi \s \phi)) \s \psi)$ holds is also clear, since it is an instance of $(\s 1)$. To prove that  $\h \neg (((\phi \s \phi) \oplus \neg (\phi \s \phi)) \s \psi) \s \neg \psi, $ it is enough to note that, by $(\neg \s)$, we have $$ \neg (((\phi \s \phi) \oplus \neg (\phi \s \phi)) \s \psi) \h  ((\phi \s \phi) \oplus \neg (\phi \s \phi)) \land \neg \psi.$$ 
So, by the transitivity of ${\h}$, the result follows. Finally, to prove that $\h \neg \psi \s \neg (((\phi \s \phi) \oplus \neg (\phi \s \phi)) \s \psi)$, note that by $(\neg \s)$ we have
$$ ((\phi \s \phi) \oplus \neg (\phi \s \phi)) \land \neg \psi \h \neg (((\phi \s \phi) \oplus \neg (\phi \s \phi)) \s \psi),$$ so  the result again follows easily.

(xxiii). Using Proposition \ref{lemmalg} (i), we will prove that
\begin{align*}
& \h ((\phi \land \psi) \s \tet) \s (\phi \s (\psi \s \tet)) \\
& \h (\phi \s (\psi \s \tet)) \s ((\phi \land \psi) \s \tet) \\
& \h \neg ((\phi \land \psi) \s \tet) \s \neg (\phi \s (\psi \s \tet)) \\
& \h \neg (\phi \s (\psi \s \tet)) \s \neg ((\phi \land \psi) \s \tet). 
\end{align*}
The first two are easily proved, for they amount to $(\phi \land \psi) \s \tet, \phi, \psi \h \tet$ and $\phi \s (\psi \s \tet), \phi \land \psi \h \tet$. As to the second two, using $(\neg \s)$, we will prove that $   (\phi \land \psi) \land \neg \tet \dashv \h  \phi \land \neg (\psi \s \tet)$. Applying $(\neg \s)$ again, it is easy to see that this follows from the fact that $(\phi \land \psi) \land \neg \tet \dashv \h \phi \land (\psi \land \neg \tet)$. 

(xxiv). Follows immediately from Proposition \ref{lemmalg} (xvi) and (xvii).

(xxv). Clearly $\h  ((\psi \s \tet) \s \psi) \s \psi$, since it is an instance of $(\s 3)$. From this we easily obtain 
$$\h  ((\phi \s \phi) \oplus \neg (\phi \s \phi)) \s ( ((\psi \s \tet) \s \psi) \s \psi).$$
Similarly, to prove that 
$$\h  \neg ( ((\psi \s \tet) \s \psi) \s \psi) \s \neg ((\phi \s \phi) \oplus \neg (\phi \s \phi)),$$ 
we will show that $\h \neg  ((\phi \s \phi) \oplus \neg (\phi \s \phi))$. To see this, note that by (iii) and $(\neg \neg)$ we have $\h \neg \neg (\phi \s \phi)$. By $(\s \oplus)$ and MP we obtain $\h \neg (\phi \s \phi)  \oplus  \neg \neg (\phi \s \phi) $ and by $(\neg \oplus)$ we have $$ \neg (\phi \s \phi)  \oplus  \neg \neg (\phi \s \phi) \h \neg ((\phi \s \phi) \oplus  \neg (\phi \s \phi)). $$ Now, using the transitivity of $\h$, we obtain the desired result.

(xxvi). Using Proposition \ref{lemmalg} (xvii), it will be enough to prove that $ \phi \lor \psi \dashv \h \phi \oplus \psi $ for all $\phi, \psi \in Fm$.  We have that 
$$  (\phi \supset (\phi \oplus \psi)) \supset ((\psi \supset (\phi \oplus \psi)) \supset ((\phi \lor \psi) \supset (\phi \oplus \psi)))$$
is an instance of $(\lor \supset )$. Now, since $\phi \supset (\phi \oplus \psi)$ and $\psi \supset (\phi \oplus \psi)$ are instances of $( \supset \oplus)$, we may apply MP two times to obtain $\h (\phi \lor \psi) \supset (\phi \oplus \psi)$, hence $\phi \lor \psi \h  \phi \oplus \psi$. The same reasoning, using $(\oplus \supset )$ and $( \supset \lor)$ instead of $(\lor \supset )$ and $( \supset \oplus)$, allows us to conclude that $  \phi \oplus  \psi \h \phi \lor \psi $.

(xxvii). Clearly  $\phi, \phi \s \psi \h  \phi$ and by MP we have $\phi, \phi \s \psi \h  \psi$. Now, using (i), we obtain $\phi, \phi \s \psi \h  \phi \otimes \psi$, i.e. $\h \phi \s ((\phi \s \psi) \s (\phi \otimes \psi))$. To prove that $\h \neg ((\phi \s \psi) \s (\phi \otimes \psi)) \s \neg \phi$, note that by $(\neg \s)$ we have $\neg ((\phi \s \psi) \s (\phi \otimes \psi)) \h  (\phi \s \psi) \land \neg (\phi \otimes \psi)$. By $(\land \s )$ we have $ (\phi \s \psi) \land \neg (\phi \otimes \psi) \h \neg (\phi \otimes \psi) $ and by $(\neg \otimes )$ we obtain  $ \neg (\phi \otimes \psi) \h \neg \phi \otimes \neg \psi$. Now, since $ \neg \phi \otimes \neg \psi \h \neg \phi$ by $(\otimes \s )$, using the transitivity of $\h$ we obtain $ \neg ((\phi \s \psi) \s (\phi \otimes \psi))  \h \neg \phi$.

(xxviii). Follows immediately from $(\neg \s)$ and Proposition \ref{lemmalg} (xvii).

(xxix).  To see that $\h ((\phi \lor \psi) \s \tet) \s ((\phi \s \tet) \land (\psi \s \tet))$, just note that $\phi \s (\phi \lor \psi)$ is an instance of $(\s \lor)$, so by the transitivity of $\s$ we have $ (\phi \lor \psi) \s \tet, \phi \h \tet$ and similarly  $ (\phi \lor \psi) \s \tet, \psi \h \tet$. Hence, using Proposition \ref{lemmalg} (i), we obtain the result. 

To prove that $\h \neg ((\phi \s \tet) \land (\psi \s \tet)) \s \neg ((\phi \lor \psi) \s \tet)$, note that $ ((\phi \land \neg \tet) \s ((\phi \lor \psi) \land \neg \tet)) \s ( ((\psi \land \neg \tet) \s ((\phi \lor \psi)) \land \neg \tet) \s ( (\phi \land \neg \tet) \lor (\psi \land \neg \tet)) \s ((\phi \lor \psi) \land \neg \tet) ) $
is an instance of $(\lor \s)$. It is not difficult to prove that $\h (\phi \land \neg \tet) \s ((\phi \lor \psi) \land \neg \tet)$ and $\h (\psi \land \neg \tet) \s ((\phi \lor \psi) \land \neg \tet)$, so applying MP we obtain
$$(\phi \land \neg \tet) \lor (\psi \land \neg \tet)) \h (\phi \lor \psi) \land \neg \tet.$$ 
Now observe that by $(\neg \land)$ we have $$\neg ((\phi \s \tet) \land (\psi \s \tet)) \h \neg (\phi \s \tet) \lor \neg (\psi \s \tet)$$ and, applying $(\neg \s)$ and (xviii), we obtain $$\neg (\phi \s \tet) \lor \neg (\psi \s \tet)) \h (\phi \land \neg \tet) \lor (\psi \land \neg \tet).$$ Hence, by the transitivity of $\h$, we have $$\neg ((\phi \s \tet) \land (\psi \s \tet)) \h (\phi \lor \psi) \land \neg \tet.$$ Now, by $(\neg \s)$ we have $ (\phi \lor \psi) \land \neg \tet \h \neg ((\phi \lor \psi) \s \tet),$ so the result immediately follows.

To see that $\h ((\phi \s \tet) \land (\psi \s \tet)) \s ((\phi \lor \psi) \s \tet)$, note that using $(\lor \s)$ we obtain $\phi \s \tet, \psi \s \tet \h (\phi \lor \psi) \s \tet$, Hence, by Proposition \ref{lemmalg} (xvi), the result easily follows.

It remains to prove that $$\h \neg ((\phi \lor \psi) \s \tet) \s \neg ((\phi \s \tet) \land (\psi \s \tet)).$$ By $(\neg \s)$ we have $$ \neg ((\phi \lor \psi) \s \tet) \h  (\phi \lor \psi) \land \neg \tet.$$ Using again $(\neg \s)$ and Proposition \ref{lemmalg} (xviii), we have  $$ (\phi \land \neg \tet) \lor (\psi \land \neg \tet) \h \neg (\phi \s \tet) \lor \neg (\psi \s \tet). $$ By $(\neg \land)$ we have $$ \neg (\phi \s \tet) \lor \neg (\psi \s \tet) \h \neg ((\phi \s \tet) \land (\psi \s \tet)). $$ Hence, by the transitivity of $\h$, it will be enough to prove that $$ (\phi \lor \psi) \land \neg  \tet \h (\phi \land \neg \tet) \lor (\psi \land \neg \tet). $$ To see this, note that by $(\s \lor)$ it is easy to show that $ \phi, \neg \tet \h (\phi \land \neg \tet) \lor (\psi \land \neg \tet)$ and $ \psi, \neg \tet \h (\phi \land \neg \tet) \lor (\psi \land \neg \tet).$ Hence we have $$ \phi \h \neg \tet \s ((\phi \land \neg \tet) \lor (\psi \land \neg \tet))$$ and $$ \psi \h \neg \tet \s ((\phi \land \neg \tet) \lor (\psi \land \neg \tet)).$$ Now, using $(\lor \s)$, we obtain $$ \phi \lor \psi \h \neg \tet \s ((\phi \land \neg \tet) \lor (\psi \land \neg \tet)), $$ hence  $$ \phi \lor \psi, \neg \tet \h  (\phi \land \neg \tet) \lor (\psi \land \neg \tet).$$ Now from this the result easily follows.
\end{proof}
 
The previous properties enable us to obtain the following result:

\begin{thm} \label{t:sintalg}
The logic $H_{\s}$ is algebraizable with equivalence formula $\varphi \leftrightarrow \psi$ and defining equation $ \varphi \approx \varphi  \supset \varphi $.
\end{thm}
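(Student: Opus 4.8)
The plan is to appeal to the standard intrinsic (Blok--Pigozzi) characterization of algebraizability: with $\Delta(\varphi,\psi) := \varphi \leftrightarrow \psi$ as the (single) equivalence formula and $\varphi \approx \varphi \supset \varphi$ as the (single) defining equation, it suffices to verify the four congruence conditions together with the algebraization condition, namely that
(G1) $\h \varphi \leftrightarrow \varphi$;
(G2) $\varphi \leftrightarrow \psi \h \psi \leftrightarrow \varphi$;
(G3) $\varphi \leftrightarrow \psi,\ \psi \leftrightarrow \vartheta \h \varphi \leftrightarrow \vartheta$;
(G4) for each connective $\lambda \in \{\neg, \land, \lor, \otimes, \oplus, \supset\}$, the congruence rule $\{\varphi_i \leftrightarrow \psi_i\}_i \h \lambda(\vec\varphi) \leftrightarrow \lambda(\vec\psi)$ holds;
and (A) $\varphi \dashv\h \varphi \leftrightarrow (\varphi \supset \varphi)$.
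Several of these are already in hand: (G1), (G2), (G3) and the case $\lambda = \neg$ of (G4) are exactly Proposition \ref{lemmalg}(vii), (viii), (ix) and (x), while the algebraization condition (A) is nothing but the conjunction of Proposition \ref{lemmalg}(v) and (vi), which supply the two directions $\varphi \h \varphi \leftrightarrow (\varphi \supset \varphi)$ and $\varphi \leftrightarrow (\varphi \supset \varphi) \h \varphi$.

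The substantive part is therefore condition (G4) for the binary connectives. Here I would exploit that, by definition, $\varphi \leftrightarrow \psi$ is the conjunction $(\varphi \rightarrow \psi) \land (\psi \rightarrow \varphi)$, so that from a hypothesis $\varphi_i \leftrightarrow \psi_i$ one recovers \emph{both} one-directional implications $\varphi_i \rightarrow \psi_i$ and $\psi_i \rightarrow \varphi_i$ by instantiating the axiom $(\land \supset)$ and applying MP (equivalently, the $\land$-elimination half of Proposition \ref{lemmalg}(i)). For $\lambda \in \{\land, \lor, \otimes, \oplus\}$ I then apply the corresponding monotonicity lemma — Proposition \ref{lemmalg}(xi), (xii), (xiii), (xiv) respectively — once in the forward direction and once with the roles of the $\varphi$'s and $\psi$'s swapped, obtaining both $\lambda(\vec\varphi) \rightarrow \lambda(\vec\psi)$ and $\lambda(\vec\psi) \rightarrow \lambda(\vec\varphi)$, and finally recombine these into $\lambda(\vec\varphi) \leftrightarrow \lambda(\vec\psi)$ via the conjunction-introduction half of Proposition \ref{lemmalg}(i).

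The one case requiring genuine attention is $\lambda = \supset$, since $\supset$ is antitone in its first argument: the relevant lemma, Proposition \ref{lemmalg}(xv), has the asymmetric form $\psi_1 \rightarrow \varphi_1,\ \varphi_2 \rightarrow \psi_2 \h (\varphi_1 \supset \varphi_2) \rightarrow (\psi_1 \supset \psi_2)$, with the first implication reversed. This is precisely where the symmetry of $\leftrightarrow$ is decisive: because each hypothesis $\varphi_i \leftrightarrow \psi_i$ already delivers both directions, I can feed (xv) the reversed implication it requires in the first coordinate and the forward one in the second to get $(\varphi_1 \supset \varphi_2) \rightarrow (\psi_1 \supset \psi_2)$, and then apply (xv) a second time with the $\varphi$'s and $\psi$'s interchanged to get the converse $(\psi_1 \supset \psi_2) \rightarrow (\varphi_1 \supset \varphi_2)$; Proposition \ref{lemmalg}(i) conjoins them into $(\varphi_1 \supset \varphi_2) \leftrightarrow (\psi_1 \supset \psi_2)$.

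I expect the main (though entirely routine) obstacle to be the bookkeeping in this last step — keeping straight which direction of each $\leftrightarrow$ must be supplied in which coordinate — rather than any conceptual difficulty, since every ingredient is a direct appeal to an already-established item of Proposition \ref{lemmalg}. Once (G1)--(G4) and (A) are checked, the Blok--Pigozzi criterion yields algebraizability, and the equivalence formula $\varphi \leftrightarrow \psi$ and defining equation $\varphi \approx \varphi \supset \varphi$ are read off exactly as in the statement.
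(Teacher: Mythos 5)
Your proposal is correct and follows essentially the same route as the paper: both invoke the Blok--Pigozzi intrinsic characterization of algebraizability and discharge the required conditions (algebraization, reflexivity, symmetry, transitivity, and congruence for each connective) by citing Proposition \ref{lemmalg} (v)--(xv). The only difference is that you spell out the bookkeeping for the binary connectives (splitting each $\leftrightarrow$ hypothesis into its two $\rightarrow$ halves, applying the monotonicity lemmas twice, and handling the antitone first coordinate of $\supset$ via (xv)), which the paper compresses into ``this follows directly.''
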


\begin{proof}
Using the intrinsic characterization given by Blok and Pigozzi (\cite{BP89}, Theorem 4.7), it is sufficient to check that the following conditions hold: for all formulas $\varphi, \psi, \vartheta \in Fm$,

\begin{enumerate}[(i)]

\item $\varphi \dashv \vdash_{\scriptstyle H_{\supset}} \varphi \leftrightarrow (\varphi \supset \varphi)$

\item  $\vdash_{\scriptstyle H_{\supset}} \varphi \leftrightarrow  \varphi$

\item $\varphi \leftrightarrow \psi \vdash_{\scriptstyle H_{\supset}} \psi \leftrightarrow \varphi$

\item $\varphi \leftrightarrow \psi, \psi \leftrightarrow \vartheta \vdash_{\scriptstyle H_{\supset}} \varphi \leftrightarrow \vartheta$

\item $\varphi \leftrightarrow \psi \vdash_{\scriptstyle H_{\supset}} \neg \varphi \leftrightarrow  \neg \psi$

\item $\varphi_1 \leftrightarrow \psi_1, \varphi_2 \leftrightarrow \psi_2 \vdash_{\scriptstyle H_{\supset}} (\varphi_1 \bullet \varphi_2) \leftrightarrow (\psi_1 \bullet \psi_2)$, for all formulas $\varphi_1, \varphi_2, \psi_1, \psi_2 \in Fm$ and for any connective $\bullet \in \left\{\land, \lor, \otimes, \oplus, \supset \right\}.$
\end{enumerate}
And this follows directly from Proposition \ref{lemmalg} (v) to (xv).  
\end{proof}
 
Taking into account Arieli and Avron's completeness result (Theorem \ref{t:compl}), we may conclude that the logic $\lbs$ is algebraizable. In the following section we will determine its associated class of algebras $\mathbf{Alg^{*}}\lbs$.
 
\section{The equivalent algebraic semantics of $\lbs$} 
\label{sec:algstar}

We will now introduce a class of algebras that will later be proved to be the equivalent algebraic semantics of the logic $\lbs$. 

\begin{definition} \label{bilimpl}
{\rm An \emph{implicative bilattice} is an algebra $\Al[B] = \left\langle B, \land, \lor, \otimes, \oplus, \supset, \neg \right\rangle$ such that $\left\langle B, \land, \lor, \otimes, \oplus, \neg \right\rangle$ is a bilattice and the following equations are satisfied:
\begin{enumerate}[ ]
\item (IB1) \hspace{2mm} $(x \supset x) \supset y \approx y$ 
\item (IB2)  \hspace{2mm}  $x \supset (y \supset z) \approx (x \land y) \supset z \approx (x \otimes y) \supset z$
\item(IB3)   \hspace{2mm}  $((x \supset y) \supset x) \supset x  \approx x \s x $ 
\item(IB4)  \hspace{2mm}  $(x \lor y) \supset z \approx (x \supset z) \land  (y \supset z) \approx (x \oplus y) \supset z  $ 
\item (IB5)  \hspace{2mm}  $x \land ((x \supset y) \supset (x \otimes y)) \approx x $
\item (IB6)  \hspace{2mm}  $\neg (x \supset y ) \supset z  \approx (x \land \neg y)  \supset z.$
\end{enumerate}
We denote by $\ib$ the variety of implicative bilattices.}
\end{definition}

In the following propositions we shall prove some facts about implicative bilattices that will be needed to study the relationship between this class of algebras and our logic. In order to simplify the notation, we will abbreviate  the term $(x \s x) \oplus \neg (x \s x)$ as $\top(x)$ and, for any element $a$ of an implicative bilattice, we will write $E(a)$ as a shorthand for  $ a = a \s a$.

\begin{proposition} \label{p:biglem}
Let $\Al[B]$ be an implicative bilattice. Then, for all $a, b, c \in B$: 
\begin{enumerate}[(i)]
\item $a = b \s b$ implies $a \s c = c$ and $E(a)$.  
\item $a \leq_t b \supset a $.
\item $\top(a) = \neg \top(a)$.
\item $\top(a) \s b = b$.
\item $\top(a) \leq_t b$ implies $E(b)$.
\item $E(a \s b)$ implies $a \leq_t a \otimes b$.
\item $E(a \s b)$ and $E(\neg b \s \neg a)$ imply $a \leq_t b$.
\item $E(a \s b)$ and $E(\neg a \s \neg b)$ imply $a \leq_k b$.
\item $\top(a) \leq_t b \s b$.
\item $\top(a) \leq_t b$ if and only if $E(b)$.
\item $\top(a) = \top(b)$.
\item $a \leq_k \top(a)$.
\item If $ a \land \top(a) = b \land \top(a)$, then $a \supset d = b \supset d $ for all $d \in B.$
\item $a \land (a \s b) \land \top(a) \leq b$.
\item $a \supset (b \supset c) = (a \supset b) \supset (a \supset c)$.
\item $a \s a = \neg a \s \neg a $.
\end{enumerate}
\end{proposition}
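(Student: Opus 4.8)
The plan is to prove the sixteen items from the defining equations (IB1)--(IB6) and the bilattice laws (the two lattice orders $\leq_t,\leq_k$, the De Morgan laws for $\neg$, and $\neg\neg x=x$), feeding each into the next. I would first isolate two devices used throughout. The first is a strengthening of (i): since $E(d)$ abbreviates $d=d\supset d$, item (i) with $b:=d$ shows that $E(d)$ implies $d\supset c=c$ for every $c$; thus every ``designated'' element is a left unit for $\supset$. The second is an \emph{antitonicity} lemma for the first argument of $\supset$, read directly off (IB4): from $(p\oplus q)\supset a=(p\supset a)\land(q\supset a)$ one gets $p\leq_k q\Rightarrow q\supset a\leq_t p\supset a$, and from the $\lor$-clause of (IB4) likewise $p\leq_t q\Rightarrow q\supset a\leq_t p\supset a$. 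These translate lattice inequalities into statements about $\supset$ and back, and will be needed repeatedly.

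Several items require no more than this. (i) is a substitution instance of (IB1), with $c:=a$ giving $E(a)$; (iii) is the De Morgan law for $\oplus$ together with $\neg\neg=\mathrm{id}$ and commutativity of $\oplus$; and (vi) is (IB5), read as $a\leq_t(a\supset b)\supset(a\otimes b)$, followed by the unit law, since $E(a\supset b)$ collapses the right-hand side to $a\otimes b$. From (vi) I obtain (vii) and (viii) by negation: applying (vi) to the pair $\neg b,\neg a$ (resp.\ $\neg a,\neg b$) and using $\neg(b\otimes a)=\neg b\otimes\neg a$ and $\neg\neg=\mathrm{id}$ gives $b\otimes a\leq_t b$ (resp.\ $a\otimes b\leq_t a$); combined with $a\leq_t a\otimes b$ this yields $a\leq_t b$ by transitivity for (vii), and $a=a\otimes b$, i.e.\ $a\leq_k b$, by antisymmetry for (viii). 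A useful by-product is a \emph{determination principle}: if two elements have the same behaviour under $\supset$ in first-argument position and so do their negations, then they are equal---for, evaluating the hypotheses at the elements themselves shows that (vii) applies in both directions.

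The one genuine obstacle is (ii), $a\leq_t b\supset a$; every statement about $\top(a)$ rests on it. The key reduction comes from (IB4) and (IB1): $\bigl(b\oplus(a\supset a)\bigr)\supset a=(b\supset a)\land\bigl((a\supset a)\supset a\bigr)=(b\supset a)\land a$, so (ii) is equivalent to $\bigl(b\oplus(a\supset a)\bigr)\supset a=a$, and one of the two inequalities is immediate. The remaining inequality $a\leq_t\bigl(b\oplus(a\supset a)\bigr)\supset a$ I would extract from (IB5), the only axiom asserting a truth-order inequality, instantiating its variable by knowledge-absorption so that the $\otimes$-factor collapses to $a$, and then adjusting the implicative factor against $b\oplus(a\supset a)$ by antitonicity. \textbf{I expect this to be the hard part.} In $\fours$ it is transparent, because $a\supset a$ is either the truth-top or the knowledge-top and therefore dominates $b$ in one of the two orders; but here the argument must be carried out purely equationally, with no case split and without assuming what is only proved in the next chapter, namely that $\fours$ generates the variety.

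Once (ii) is available the remaining items fall out in a cascade. (iv) follows from (IB4), (IB1) and (IB6), the factor $\neg(a\supset a)\supset b=(a\land\neg a)\supset b$ being absorbed by the instance $b\leq_t(a\land\neg a)\supset b$ of (ii). For (v): $\top(a)\leq_t b$ gives $b=\top(a)\lor b$, whence $b\supset b=(\top(a)\supset b)\land(b\supset b)=b\land(b\supset b)$ by (IB4) and (iv), so $b\supset b\leq_t b$, while (ii) gives $b\leq_t b\supset b$; hence $E(b)$. Since $E(\top(b))$ holds by (iv), criterion (vii) applied to $\top(a),\top(b)$ gives $\top(a)\leq_t\top(b)$ and symmetrically, so $\top(a)=\top(b)$, which is (xi); (ix) and (x) are the analogous consequences of (vii), (iv), (v), and (xii) follows from (viii) applied to $a,\top(a)$ together with the fact, from (ii) and (v), that $E(p\supset\top(a))$ for all $p$. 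The implicative identities are then routine: (xiii) is $(a\land\top(a))\supset d=a\supset(\top(a)\supset d)=a\supset d$ by (IB2) and (iv), combined with (xi); (xvi) is the determination principle applied to $a\supset a$ and $\neg a\supset\neg a$, whose $\supset$-behaviours agree (both are units) and whose negations agree by (IB6) and $\neg\neg=\mathrm{id}$; and (xiv) (an algebraic modus ponens) and (xv) (self-distribution of $\supset$) are handled by (IB2), (IB3) and the determination principle, the negative halves being supplied by (IB6).
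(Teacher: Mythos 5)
There is a genuine gap, and you have located it yourself: item (ii). Everything downstream of it in your cascade --- (iv), (v), (ix)--(xv), and hence the whole $\top(a)$ machinery --- rests on (ii), but what you offer for (ii) is a strategy, not a proof. Your reduction of (ii) to $a \leq_t \bigl(b \oplus (a \supset a)\bigr) \supset a$ is correct, but the plan ``instantiate (IB5) by knowledge-absorption and adjust by antitonicity'' requires exhibiting some $y$ with $a \leq_k y$ and $b \oplus (a \supset a) \leq a \supset y$ in one of the two orders, and no such $y$ is produced. The $\leq_k$ version is hopeless: already in $\fours$ with $a = \mathsf{f}$ and $b = \top$ one has $a \supset y = \mathsf{t}$ for every $y$, and $\mathsf{t} \not\geq_k \top = b \oplus (a \supset a)$. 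The $\leq_t$ version (say with $y = a$) amounts to $b \oplus (a \supset a) \leq_t a \supset a$, which is true in the variety but is exactly the kind of ``everything $\geq_t \top$''-statement (compare (ix), (x), or the use of distributivity, which is only proved later in Proposition \ref{impinterla}) that this development derives \emph{from} (ii); so the route is circular. At this stage you also lack any monotonicity of $\supset$ in its second argument (that is Proposition \ref{p:lem2} (x)/(xiv), which depends on the present proposition), so there is no way to bound $a \supset y$ from below.

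The step you are missing is that (ii) needs neither (IB5) nor any inequality reasoning: it is a short equational computation from (IB1), (IB2), (IB4) and lattice absorption,
\begin{align*}
a \land (b \supset a)
& = ((c \supset c) \supset a) \land (b \supset a) \\
& = ((c \supset c) \lor b) \supset a \\
& = (c \supset c) \supset \bigl(((c \supset c) \lor b) \supset a\bigr) \\
& = \bigl((c \supset c) \land ((c \supset c) \lor b)\bigr) \supset a \\
& = (c \supset c) \supset a = a,
\end{align*}
where the key move is the third equality: re-apply (IB1) to prefix the whole term by $(c \supset c) \supset$, then let (IB2) turn that prefix into a meet in the first argument, where absorption finishes. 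With (ii) secured, your cascade for (iii)--(xiii) and (xvi) does go through and is essentially the paper's own proof: (i), (vi)--(viii) are identical, and your ``determination principle'' is precisely the paper's double application of (vii), used there for (xvi). One further caveat: your one-line dispatch of (xiv)--(xv) via ``(IB2), (IB3) and the determination principle'' is off. (IB3) plays no role in either item, and the determination principle cannot get off the ground for (xv), since no axiom reduces a term like $\bigl(a \supset (b \supset c)\bigr) \supset d$ whose first argument is itself an implication; the paper instead proves (xiv) by (vii) together with (IB1), (IB2), (ii) and (x), and then obtains (xv) from (xiii) and (xiv).
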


\begin{proof}

(i). By  (IB1) we have $(b \supset b) \supset c = c$ and $E(b \supset b)$, so the result immediately follows.

(ii). Using (IB1) and (IB4) we have 
\begin{align*} 
a \land (b \supset a) 
& =  ((c \supset c) \supset a) \land (b \supset a) \\
& =  ((c \supset c) \lor b) \supset a \\
& =  (c \supset c) \s (((c \supset c) \lor b) \supset a) \\
& =  ((c \supset c) \land ((c \supset c) \lor b)) \supset a \\
& =  (c \supset c) \supset a \\
& =  a.
\end{align*}

(iii). Immediate, using the properties of the bilattice negation.

(iv). By (IB4) we have $\top(a) \s b = ((a \s a ) \s b) \land (\neg (a \s a ) \s b)$. By (IB1) and (ii) we have $$((a \s a ) \s b) \land (\neg (a \s a ) \s b) = b \land (\neg (a \s a ) \s b)  = b.$$

(v). Assume $\top(a) \leq_t b$.  Then we have
\begin{align*} 
b \s b & = (b \lor \top(a)) \s b  & \text{ from the assumption} \\ 
& = (b \s b) \land (\top(a) \s b)  & \text{ by (IB4)} \\  
& =  (b \s b) \land b & \text{ by (iv)} \\
& = b & \text{ by (ii)}.
\end{align*}

(vi). Assume $E(a \s b)$. Then, by (IB5) and 
(i), we have $$ a  \leq_t (a \s b ) \s (a \otimes b) = a \otimes b. $$

(vii). Assume $E(a \s b)$ and $E(\neg b \s \neg a)$. Then, using (vi), we obtain $ a\leq_t a \otimes b$ and $\neg b \leq_t \neg b \otimes \neg a.$ 
By the properties of the bilattice negation, this implies  $ b = \neg \neg b \geq_t  \neg (\neg b \otimes \neg a) = a \otimes b$. Hence $a \leq_t a \otimes b \leq_t b$, so the result immediately follows.

(viii). Assume $E(a \s b)$ and $E(\neg a \s \neg b)$. Reasoning as in (vii), we obtain  $a \leq_t a \otimes b$ and $a \geq_t a \otimes b$. Hence $a = a \otimes b$, i.e. $a \leq_k b$.

(ix). We shall prove that $E(\top(a) \s (b \s b))$ and $E( \neg (b \s b) \s \neg \top(a)  )$, so that, by (vii), the result will follow. As to the first, by (iv) we have $\top(a) \s (b \s b) = b \s b$. Now, applying 
(i), the result immediately follows. As to the second, note that, by (viii), we have $\neg (b \s b) \s \neg \top(a) = \neg (b \s b) \s \top(a)  $. By (ii), we have $\top(a) \leq_t \neg (b \s b) \s \top(a)$, so applying (v) we obtain the result. 

(x). The rightwards implication has been proved in (v), so we only need to prove that $E(b)$ implies $\top(a) \leq_t b$,
and this  follows immediately from (viiii).

(xi). By symmetry, it is sufficient to show that $\top(a) \leq_t \top(b)$, i.e., using (vii), that $E(\top(a) \s \top(b))$ and $E (\neg \top(b) \s \neg \top(a)) $. By (iii) we have $\neg \top(b) \s \neg \top(a) = \top(b) \s \top(a)$, so, again by symmetry, it will be enough just to check that $E(\top(a) \s \top(b))$. By (i) we have $\top(b) \leq_t \top(a) \s \top(b) $, so, using (x), the result immediately follows.   

(xii). We shall prove that $E(a \s \top(a))$ and $E(\neg a \s \top(a))$, so that (iii) will imply the result. By (xi) we have $\neg a \s \top(a) = \neg a \s \top(\neg a)$, so it will be enough to prove that $E(a \s \top(a))$. By (ii) we have  $\top(a) \leq_t a \s \top(a)$, which, by (x), implies $E(a \s \top(a))$ .

(xiii) Assume $ a \land \top(a) = b \land \top(a)$. Note that by (IB2) and (iv), we have  $$(a \land \top(a)) \s d = \top(a) \s (a \s d ) = a \s d$$ for every $d \in B$, and similarly we have $(b \land \top(a)) \s d = b \s d$. From the assumption then it follows that 
$a \s d = b\s d$ for every $d \in B$. 

(xiv) We will prove that $E((a \land (a \s b) \land \top(a)) \s b)$ and $E(\neg b \s \neg (a \land (a \s b) \land \top(a)))$. The result will then follow from (vii). On the one hand, by (IB2) and (IB1), we have
\begin{align*}
(a \land (a \s b) \land \top(a)) \s b   & =  
\top(a) \s ((a \land (a \s b)) \s b)    \\ & = 
(a \land (a \s b)) \s b    \\ & = 
(a \s b) \s (a \s b).
\end{align*}
Hence, by (i), we obtain $E((a \land (a \s b) \land \top(a)) \s b)$.
On the other hand, using De Morgan's laws and (ii), we have
\begin{align*}
\neg b \s \neg (a \land (a \s b) \land \top(a))  & =  
\neg b \s  (\neg (a \land (a \s b)) \lor \neg \top(a))  \\ & \geq 
\neg (a \land (a \s b)) \lor \top(a) \\ & \geq \top(a).
\end{align*}
Now from (x) we obtain $E(\neg b \s \neg (a \land (a \s b) \land \top(a)))$. 

(xv).  By (ii) we have $b \leq a \s b$, so $ a \land b \land \top(a) \leq a \land (a \s b) \land \top(a)$.  By (xiv) we have  $a \land (a \s b) \land \top(a) \leq b$. Hence $a \land (a \s b) \land \top(a) = a \land b \land \top(a)$. By (xiii), this implies that $(a \land (a \s b)) \s c = (a \land b) \s c$ for every $c \in B$. Using (IB2),  we obtain $ (a \s b) \s (a \s c) = (a \land (a \s b)) \s c  = (a \land b) \s c = a \s (b \s c)$, so we are done.

(xvi). We shall prove that $E ((a \s a) \equiv (\neg a \s \neg a))$ and $E (\neg (a \s a) \equiv \neg (\neg a \s \neg a))$, so that the result will follow by (vii). The first one is obvious. As to the second, applying (IB6), we have
\begin{align*}
\neg (a \s a ) \s \neg (\neg a \s \neg a) 
& = (a \land \neg a) \s \neg (\neg a \s \neg a)  \\
& = (\neg a \land \neg \neg a) \s \neg (\neg a \s \neg a)  \\
& = \neg (\neg a \s \neg a) \s \neg (\neg a \s \neg a)
\end{align*}
and
\begin{align*}
 \neg (\neg a \s \neg a)  \s \neg (a \s a )
& = (\neg a \land \neg \neg a) \s  \neg (a \s a ) \\
& = (a \land \neg a)   \s  \neg (a \s a ) \\
& = \neg (a \s a )  \s  \neg (a \s a ).
\end{align*}
Hence, using (i), the result follows.

\end{proof}

From Proposition \ref{p:biglem} (xi) it follows that $\top(a) = (a \s a) \oplus \neg (a \s a)$ defines an algebraic constant in every  $\Al[B] \in \ib$. Moreover, by (xi), this constant is the maximum element w.r.t.\ the knowledge order. So we can denote it just by $\top$. Using this notation, let us state some more arithmetical properties of implicative bilattices.

\begin{proposition} \label{p:lem2}  
Let $\Al[B]$ be an implicative bilattice. Then, for all $a, b, c \in B$: 
\begin{enumerate}[(i)] 
 \item $\top \leq_t  a\supset (b \supset a). $
\item $a \leq_t b $ or $a \leq_k b $ implies $\top \leq_t a \supset b $. 
\item $ \top \leq_t a $ and $ \top \leq_t a \supset b  $ imply $ \top  \leq_t b$.
\item $\top \leq_t a \s b $ and $\top \leq_t b \s c $ imply $\top \leq_t a \s c $.
\item $a \leq_t a \otimes b$ if and only if  $  \top  \leq_t a\supset b$.
\item $a \leq_t b$ if and only if $ \top  \leq_t a \rightarrow b $.
\item $a \leq_k b$ if and only if $ \top  \leq_t a \supset b $ and $  \top \leq_t \neg a \supset \neg b$.
\item $a \leq_t (a \rightarrow b) \supset b $.
\item $\top \leq_t  (a \supset b) \lor a $.
\item $a \leq_t b$ implies $c \supset a \leq_t c \supset  b $.
	\item $ a \supset (b \land c)  =  (a \supset b) \land (a \supset c)$.
 \item $a \rightarrow (b \rightarrow c)  =   b \rightarrow (a \rightarrow c)$.
  \item $a \otimes (a \supset b) \leq_k b$.
  \item $a \leq_k b$ implies $c \s a \leq_k c \s b$.
  \item $a \s (b \otimes c) = (a \s b) \otimes (a \s c)$.
	\item $a \ta b = (a \s b) \otimes (\neg b \s \neg a)$.
	\item  $ a \land \top = b \land \top$  if and only if $a \supset d = b \supset d$ for every $d \in B$.
	\end{enumerate}
\end{proposition}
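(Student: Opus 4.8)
The plan is to treat Proposition \ref{p:biglem}(x)---which, since $\top(a)=\top$, reads \emph{$\top\leq_t c$ if and only if $E(c)$}---as the master dictionary between the truth order and the predicate $E$, and to combine it with Proposition \ref{p:biglem}(vii) and (viii), which convert pairs of $E$-statements into $\leq_t$- and $\leq_k$-inequalities. A second tool I will use repeatedly is a reduction for establishing $E(a\s b)$: by (IB2) one has $(a\s b)\s(a\s b)=(a\land(a\s b))\s b$, so by Proposition \ref{p:biglem}(xiii) it suffices to check $a\land\top\leq_t a\s b$. With these in hand the natural order of attack is to prove the four order-characterizations (ii), (v), (vi), (vii) first, then the modus-ponens and transitivity facts (i), (iii), (iv), (ix), then the monotonicity lemmas, and finally the equational identities.

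The engine is part (v), $a\leq_t a\otimes b$ iff $\top\leq_t a\s b$. Its backward direction is immediate from Proposition \ref{p:biglem}(vi). For the forward direction I would use the reduction above: it suffices to show $a\land\top\leq_t a\s b$, and since $b\leq_t a\s b$ by Proposition \ref{p:biglem}(ii), it is enough to prove the underlying bilattice fact that $a\leq_t a\otimes b$ implies $a\land\top\leq_t b$. This is where the genuinely implicative content sits, and I expect it to be the main obstacle; it is a statement about the $\land,\otimes,\top$-reduct that I would settle using the interlacing of the bilattice. Once (v) is available, part (ii) follows at once: if $a\leq_k b$ then $a=a\otimes b$, so $a\leq_t a\otimes b$ trivially, while if $a\leq_t b$ then $a=a\otimes a\leq_t a\otimes b$ by $\leq_t$-monotonicity of $\otimes$. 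Part (i) then follows because $a\s(b\s a)=(a\land b)\s a$ by (IB2) and $a\land b\leq_t a$; part (vi) follows from (ii) together with Proposition \ref{p:biglem}(vii), applied to the two conjuncts of $a\ta b$, and the fact that $\neg$ reverses $\leq_t$; and (vii) follows from (ii) and Proposition \ref{p:biglem}(viii), using that $\neg$ preserves $\leq_k$.

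The arithmetical statements come next. Part (iii) I would read as algebraic modus ponens: from $\top\leq_t a$ and $\top\leq_t a\s b$ one gets $\top\leq_t a\land(a\s b)\land\top$ by the hypotheses, and then $a\land(a\s b)\land\top\leq_t b$ by Proposition \ref{p:biglem}(xiv). Transitivity (iv) follows from (iii): from $\top\leq_t b\s c$ and Proposition \ref{p:biglem}(ii) we obtain $\top\leq_t a\s(b\s c)$, which equals $(a\s b)\s(a\s c)$ by Proposition \ref{p:biglem}(xv), and then (iii) applied to the pair $a\s b,\ a\s c$ yields $\top\leq_t a\s c$. For monotonicity (x) I would again use (vi) to split $c\s a\leq_t c\s b$ into two $E$-statements: the first, $E((c\s a)\s(c\s b))$, reduces by Proposition \ref{p:biglem}(xv) to $E(c\s(a\s b))$, which holds since $\top\leq_t a\s b\leq_t c\s(a\s b)$ by (ii) and Proposition \ref{p:biglem}(ii); the second conjunct is handled by rewriting with (IB6). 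The remaining order lemmas (viii), (ix), (xiii), (xiv) are obtained in the same style, reducing each inequality through (vi), or through Proposition \ref{p:biglem}(vii)--(viii), to a handful of $E(\cdot\s\cdot)$ facts that the axioms settle.

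Finally, for the equational identities (xi), (xii), (xv), (xvi), (xvii) I would exploit the fact---standard for the underlying bilattice reduct---that an element is determined by its truth- and falsity-content, so that to prove $u=v$ it suffices to prove $u\land\top=v\land\top$ together with $\neg u\land\top=\neg v\land\top$. By part (xvii), each of these reduces to a \emph{universally quantified} $\s$-identity ($u\s d=v\s d$, respectively $\neg u\s d=\neg v\s d$, for all $d$), and such identities are exactly what (IB2), (IB4) and (IB6), together with the De Morgan laws, are designed to manipulate. Part (xvii) itself I would prove by combining Proposition \ref{p:biglem}(xiii) (one direction) with (vi) (the converse), so it should be established before the other equalities. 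I expect the most delicate bookkeeping to arise in (xv) and (xvi), where $\s$, $\otimes$ and $\neg$ interact simultaneously and one must keep careful track of which identities hold on the nose and which only hold modulo $\top$.
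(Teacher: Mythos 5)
Your overall architecture has a genuine circularity at its base. The forward direction of (v) and the $\leq_t$ half of (ii) --- on which your (i), (vi), (vii) and everything downstream then rest --- are justified by interlacing-type properties of the bilattice reduct: $\leq_t$-monotonicity of $\otimes$ (to get $a = a \otimes a \leq_t a \otimes b$ from $a \leq_t b$), and the auxiliary claim that $a \leq_t a \otimes b$ implies $a \land \top \leq_t b$, which you propose to ``settle using the interlacing of the bilattice.'' But in this development bilattices are not assumed interlaced: that the reduct of an implicative bilattice is interlaced (indeed distributive) is Proposition \ref{impinterla}, which comes \emph{after} Proposition \ref{p:lem2} and is proved \emph{using} items (ii), (vii) and (xi) of it, so appealing to interlacing here is circular. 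Nor is your auxiliary claim a theorem of bare bilattice theory: in a non-interlaced bilattice, $a \leq_t a \otimes b$ need not give $a \land \top \leq_t b$; the statement you correctly identify as ``the main obstacle'' is precisely the content you are not entitled to import, and the proposal leaves it unproved. The same objection touches your plan for the equational items: ``an element is determined by its truth- and falsity-content'' is again an interlacing/product-representation fact, not a property of general bilattices (though this particular claim, unlike the previous one, could be legitimately recovered from (vii), (xvii) and Proposition \ref{p:biglem}(ix) once those are in place).

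The repair is to invert your order of deduction, which is exactly what the paper does. Prove (i) first, purely equationally: $a \supset (b \supset a) = (a \land b) \supset a = b \supset (a \supset a) \geq_t a \supset a \geq_t \top$, using (IB2) and Proposition \ref{p:biglem}(ii),(ix). Then (ii) follows from (i) and (IB2) with no monotonicity at all: if $a \land b = a$, then $a \supset b = (a \land b) \supset b = b \supset (a \supset b) \geq_t \top$ by an instance of (i), and similarly with $\otimes$ and the other half of (IB2) when $a \otimes b = a$. Your proofs of (iii) and (iv) are fine as written (they use only Proposition \ref{p:biglem}(ii),(xiv),(xv) and lattice properties of $\land$), and then the hard direction of (v) falls out with no bilattice facts whatsoever: $a \leq_t a \otimes b$ gives $\top \leq_t a \supset (a \otimes b)$ by (ii), $a \otimes b \leq_k b$ gives $\top \leq_t (a \otimes b) \supset b$ by (ii) again, and (iv) yields $\top \leq_t a \supset b$. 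With (ii) and (v) grounded this way, the rest of your plan --- (vi), (vii), (x), the $E$-reduction via Proposition \ref{p:biglem}(xiii) (which is correct and a nice observation), and the strategy for the equational identities --- goes through essentially as in the paper's own proof.
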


\begin{proof}
(i). By (IB2) and Proposition \ref{p:biglem} (ii) and (ix), we have 
$$a \supset (b \supset a) = b \supset (a \supset a) \geq_t a \supset a \geq_t \top.$$

(ii). If $a \leq_t b $ or $a \leq_k b $, then $a \land b = a$ or $a \otimes b = a$, so by (i) either $a \supset (b \supset a) = (a \land b) \supset b  = a  \supset b \geq_t \top$ or $ a \supset (b \supset a) = (a \otimes b) \supset b = a  \supset b \geq_t \top$.

(iii). Clearly it is sufficient to prove that $a \geq_t \top $ implies $  b = a \s b $, and this follows immediately from Proposition \ref{p:biglem} (i) and (x). 
%



(iv) Assume $\top \leq a \supset b$ and $\top \leq b \supset c $. Note that by (i) and (IB2)  we have $\top \leq (a \land (b \s c)) \supset (b \s c) = (b \s c) \supset (a \supset (b \s c))$. Now, using (iii) and the second assumption, we obtain $\top \leq a \supset (b \supset c)$. By (IB2) and (vi), we have $\top \leq (a \supset b) \supset (a \supset c)$. Using  the first assumption and again (iii), we obtain $\top \leq (a \supset c)$.

(v). The leftwards implication follows from Proposition \ref{p:biglem} (vi) and (x). Conversely, if $a \leq_t a \otimes b$, then by (ii) we have $a \supset (a \otimes b) \geq_t \top $. Now, since $(a \otimes b) \supset b \geq_t \top $, applying  (iv), we obtain $a \supset b \geq_t \top$.

(vi). The leftwards implication follows from Proposition \ref{p:biglem} (vii). Conversely, assume $a \leq_t b$, which implies $\neg b \leq_t \neg a$. Then, by (ii), we have  $a \supset b \geq_t \top$ and $\neg b \supset \neg a \geq_t \top$, hence $a \rightarrow b \geq_t \top$.

(vii). The leftwards implication follows from Proposition \ref{p:biglem} (viii).	Conversely, assume $a \leq_k b$, which by definition implies  $\neg a \leq_k \neg b$. Then, by (ii), we have $\top \leq_t a \supset b$ and $\top \leq_t \neg a \supset \neg b$. 

(viii).  We will prove that $\top \leq_t a \rightarrow ((a \rightarrow b) \supset b)$, which implies that $a \leq_t (a \rightarrow b) \supset b $. So we need to show that 
$$\top \leq_t (a \supset ((a \rightarrow b) \supset b)) \land (\neg ((a \rightarrow b) \supset b) \supset \neg a).$$ From (IB2) and the definition of $\ta$  it follows that  
\begin{align*}
a \supset ((a \rightarrow b) \supset b)  & =  (a \rightarrow b) \supset (a \supset b)  \\ & 
=  ((a \supset b) \land (\neg b \supset \neg a)) \supset (a \supset b)  \\ & 
=  (a \supset b) \supset ((\neg b \supset \neg a) \supset (a \supset b)).
\end{align*}
Now, by (i), we have  $\top \leq_t (a \supset b) \supset ((\neg b \supset \neg a) \supset (a \supset b))$. Therefore, $\top \leq_t (a \supset ((a \rightarrow b) \supset b))$. On the other hand, again by (IB2) and the definition of $\ta$,  we have 
\begin{align*}
((a \rightarrow b) \land \neg b) \supset \neg a  & =  (a \rightarrow b) \supset (\neg b\supset  \neg a)  \\ & =  ((a \supset b) \land (\neg b \supset  \neg a)) \supset (\neg b \supset  \neg a) \\ & =   (\neg b \supset  \neg a) \supset ((a \supset b)  \supset (\neg b \supset  \neg a)).
\end{align*}
Using again (i), we have $\top \leq_t  (\neg b \supset  \neg a) \supset ((a \supset b)  \supset (\neg b \supset  \neg a))$. So it follows that $\top \leq_t  ((a \rightarrow b) \land \neg b) \supset \neg a$. By (IB6) $((a \rightarrow b) \land \neg b) \supset \neg a  =  \neg ((a \rightarrow b) \supset b)  \supset   \neg a$. Therefore $\top \leq_t  \neg ((a \rightarrow b) \supset b)  \supset   \neg a$. Hence $$\top \leq_t  (a \supset ((a \rightarrow b) \supset b)) \land (\neg ((a \rightarrow b) \supset b) \supset \neg a).$$
 
(ix). Since $(a \supset b) \leq_t ( (a \supset b) \lor a)$, by (ii) we have $\top \leq_t  (a \supset b) \supset ( (a \supset b) \lor a)$.  By Proposition \ref{p:biglem} (ii), it follows that
 $$\top \leq_t ((a \supset b) \supset b) \supset  ((a \supset b) \supset ( (a \supset b) \lor a)).$$ 
By (IB2) and (IB4), we have 
$((a \supset b) \supset b) \supset ((a \supset b) \supset ( (a \supset b) \lor a))$   = $(((a \supset b) \supset b) \land (a \supset b)) \supset ( (a \supset b) \lor a)  =  (((a \supset b) \lor a) \supset b) \supset ((a \supset b) \lor a).$
Hence $\top \leq_t (((a \supset b) \lor a) \supset b) \supset ((a \supset b) \lor a).$
By  (IB3) we have $$(((a \supset b) \lor a) \supset b) \supset ((a \supset b) \lor a)) \supset ((a \supset b) \lor a))  =  ((a \supset b) \lor a) \supset ((a \supset b) \lor a) ).$$
So $\top \leq_t  (((a \supset b) \lor a) \supset b) \supset ((a \supset b) \lor a)) \supset ((a \supset b) \lor a)) $. Now, applying (iii), we obtain  $\top \leq_t (a \supset b) \lor a$.

(x).  Assume $a \leq_t b$. We will prove that $(c \supset a) \rightarrow (c \supset b) \geq_t \top$, i.e. that $(c \supset a) \supset (c \supset b) \geq_t \top$ and $\neg (c \supset b) \supset \neg (c \supset a) \geq_t \top$. 

As to the first, note that by (ii) $a \leq_t b$ implies $\top \leq_t a \supset b$. Moreover, by Proposition \ref{p:biglem} (ix) we have $\top \leq_t (c \supset a) \supset (c \supset a)$. So, applying (IB2), we obtain  $$ (c \supset a) \supset (c \supset a) = ((c \supset a) \land c) \supset a \geq_t \top. $$ By (vii), it follows that $\top \leq_t ((c \supset a) \land c) \supset b$. Therefore, applying again (IB2), we have $\top \leq_t (c \supset a) \supset ( c \supset b)$. 

As to the second, note that $a \leq_t b$ implies $\top \leq_t \neg b \supset \neg a$. Reasoning as before, we have
\begin{align*}
\top & \leq_t  \neg (c \supset a) \supset \neg (c \supset a) & \text{ by Proposition \ref{p:biglem} (ix)} \\
& =  (\neg a \land c) \supset \neg (c \supset a)  & \text{ by (IB6)} \\
& =  \neg a \supset ( c \supset \neg (c \supset a)). & \text{ by (IB2)}
\end{align*}
  Now, using (vii) again,  we obtain $\top \leq_t \neg b \supset ( c \supset \neg (c \supset a))$. Hence, using (IB2) and (IB6), we have $\top \leq_t \neg (c \supset b) \supset \neg (c \supset a)$.

(xi). From (x) it follows that  $ a \supset (b \land c) \leq_t (a \supset b)$ and $ a \supset (b \land c) \leq_t (a \supset c)$, so $ a \supset (b \land c) \leq_t (a \supset b) \land (a \supset c)$. 
In order to prove the other inequality, we will show that $\top \leq_t  ((a \supset b) \land (a \supset c)) \rightarrow (a \supset (b \land c))$, i.e. that $\top \leq_t ((a \supset b) \land (a \supset c)) \supset (a \supset (b \land c))$ and $ \top \leq_t \neg (a \supset (b \land c)) \supset \neg ((a \supset b) \land (a \supset c))$. 

As to the first,  note that by (IB2) we have
$$((a \supset b) \land (a \supset c)) \supset (a \supset (b \land c))  =  (a \supset b) \supset ((a \supset c) \supset (a \supset (b \land c)).$$
By Proposition \ref{p:biglem} (xv)  we have 
$(a \supset b) \supset ((a \supset c) \supset (a \supset (b \land c))  =$  $=((a \supset b) \supset (a \supset c)) \supset ((a \supset b) \supset (a \supset (b \land c))).$
Hence
$((a \supset b) \land (a \supset c)) \supset (a \supset (b \land c)) =$ $ ((a \supset b) \supset (a \supset c)) \supset ((a \supset b) \supset (a \supset (b \land c))).$
Using (IB2), we obtain
$$((a \supset b) \land (a \supset c)) \supset (a \supset (b \land c))  =  (((a \supset b) \land a) \supset c) \supset (((a \supset b) \land a)  \supset  (b \land c)).$$
Applying again Proposition \ref{p:biglem} (xv)   and (IB2), we have 
\begin{align*}
& ((a \supset b) \land (a \supset c)) \supset (a \supset (b \land c)) =  \\
& =  ((a \supset b) \land a) \supset (c \supset (b \land c)) = \\
&  =  (a \supset b) \supset ( a \supset (c \supset (b \land c)))  = \\
&  =  (a \supset b) \supset ( (a \land c) \supset (b \land c))  = \\
&  =  (a \land c) \supset ( (a \supset b) \supset (b \land c))  = \\
&  =  ((a \land c) \supset (a \supset b)) \supset ((a \land c) \supset (b \land c)) =  \\
&  =  (((a \land c) \land a) \supset  b) \supset (((a \land c) \land (a \land c)) \supset (b \land c))  = \\
&  =  ((a \land c) \supset b) \supset ((a \land c) \supset ( (a \land c) \supset (b \land c)))  = \\
&  =  (a \land c) \supset (b \supset ((a \land c) \supset (b \land c)))  =  \\
&  =  (a \land c) \supset ((b \land a \land c ) \supset (b \land c))  =  \\ 
&  =  (b \land a \land c ) \supset (b \land c).
\end{align*}
Now, since $(b \land a \land c ) \leq_t (b \land c)$, we have $\top \leq_t  (b \land a \land c ) \supset (b \land c)$. Therefore we obtain $\top \leq_t ((a \supset b) \land (a \supset c)) \supset (a \supset (b \land c))$.

As to the second, applying (IB2), (IB4), (IB6) and De Morgan's laws, we have 
\begin{align*}
& \neg (a \supset (b \land c)) \supset \neg ((a \supset b) \land (a \supset c))  =  \\
&  =   (a \land \neg (b \land c)) \supset \neg ((a \supset b) \land (a \supset c))  =  \\
&  =   (a \land (\neg b \lor \neg c)) \supset  (\neg (a \supset b) \lor \neg (a \supset c))  =  \\
&  =   (\neg b \lor \neg c) \s (a \supset  (\neg (a \supset b) \lor \neg (a \supset c)))  =  \\
&  =   (\neg b \s (a \supset  (\neg (a \supset b) \lor \neg (a \supset c)))) \land  (\neg c \s (a \supset (\neg (a \supset b) \lor \neg (a \supset c))))  =  \\
&  =   ((\neg b \land a) \supset  (\neg (a \supset b) \lor \neg (a \supset c))) \land  ((\neg c \land a) \supset  (\neg (a \supset b) \lor \neg (a \supset c)))  =  \\
&  =   (\neg ( a \s b) \supset  (\neg (a \supset b) \lor \neg (a \supset c))) \land  (\neg (a \s c) \supset  (\neg (a \supset b) \lor \neg (a \supset c)))  =  \\
&  =   (\neg ( a \s b) \lor \neg (a \s c)) \s (\neg (a \supset b) \lor \neg (a \supset c)).
\end{align*}
Since $\top \leq_t (\neg ( a \s b) \lor \neg (a \s c)) \s (\neg (a \supset b) \lor \neg (a \supset c))$, it follows that $$\top \leq_t \neg (a \supset (b \land c)) \supset \neg ((a \supset b) \land (a \supset c)).$$  

(xii).  Using the definition of $\ta$, (xi), De Morgan's laws, (IB2), (IB4)  and (IB6), we  have
\begin{align*} 
& a \rightarrow (b \rightarrow c)   =  \\
&  =  (a \supset ( (b \supset c ) \land (\neg c \supset \neg b ) )) \land (\neg ( (b \supset c ) \land (\neg c \supset \neg b ) ) \supset \neg a)  =  \\
&  =  (a \supset (b \supset c ) ) \land (a \supset (\neg c \supset \neg b )) \land  ( (\neg (b \supset c ) \lor \neg (\neg c \supset \neg b ))  \supset \neg a)  =  \\ 
&  =   (b \supset (a \supset c ) ) \land ((a \land \neg c) \supset \neg b ) \land  (\neg (b \supset c ) \supset \neg a) \land  (\neg (\neg c \supset \neg b )  \supset \neg a)   =  \\
&  =  (b \supset (a \supset c ) ) \land (\neg (a \supset c) \supset \neg b ) \land  ((b \land \neg c ) \supset \neg a) \land  ((\neg c \land b )  \supset \neg a)  =  \\
&  =  (b \supset (a \supset c ) ) \land (\neg (a \supset c) \supset \neg b ) \land  ((b \land \neg c ) \supset \neg a)   =  \\
&  =  (b \rightarrow(a \supset c ) ) \land  (b \supset (\neg c \supset \neg a)) \land (\neg (\neg c \supset  \neg a ) \supset \neg b)  =  \\ 
&  =   (b \rightarrow(a \supset c ) ) \land (b \rightarrow (\neg c \supset \neg a ) )  =  \\
&  =  b \rightarrow (a \rightarrow c).
\end{align*}

(xiii). Using (vii), we will show that $\top \leq_t (a \otimes (a \supset b)) \supset b $ and $\top \leq_t \neg (a \otimes (a \supset b)) \supset \neg b$.  The former is clear, since by (IB2) we have 
$$\top \leq_t (a \supset b)  \supset  (a \supset b)  =  ((a \supset b) \otimes a) \supset b  =  (a \otimes (a \supset b)) \supset b.$$ As to the latter, applying De Morgan's laws, (IB2) and (IB6), we have 
\begin{align*}
\neg (a \otimes (a \supset b)) \supset \neg b  & =   (\neg a \otimes \neg (a \supset b)) \supset \neg b  
\\ & =    \neg a \supset (\neg (a \supset b) \supset \neg b)  \\ & =    \neg a \supset ((a \land \neg b) \supset \neg b).
\end{align*}
Since $(a \land \neg b) \leq_t \neg b$, we have $\top \leq_t (a \land \neg b) \supset \neg b$. So we may conclude that $\top \leq_t  \neg (a \otimes (a \supset b)) \supset \neg b$.

(xiv). Assume $a \leq_k b$. Using (vi),  we will prove that $(c \s a) \s (c \s b) \geq_t \top$ and $\neg (c \s a) \s \neg  (c \s b) \geq_t \top$. For the first, note that $a \leq_k b$ implies $a \supset b \geq_t \top$, and since $((c \supset a) \otimes c) \supset a \geq_t \top $, by transitivity we obtain $((c \supset a) \otimes c) \supset b = (c \supset a) \supset(c \supset b) \geq_t \top$. As to the second, by assumption we have $\neg a \leq_k \neg b$, which implies $\neg a \supset \neg b \geq_t \top$. By (IB6) we have $\neg b \supset ( c \supset \neg (c \supset b)) = (\neg b \land c) \supset \neg (c \supset b)) = \neg (c \supset b) \supset \neg (c \supset b)) \geq_t \top$. Now by (iv) we obtain $\neg a \supset ( c \supset \neg (c \supset b)) = (\neg a \land c) \supset \neg (c \supset b)) = \neg (c \supset a) \supset \neg (c \supset b)) \geq_t \top$. 

(xv). By (xiv) we have  $a \s (b \otimes c) \leq_k (a \s b) \otimes (a \s c)$, so it remains to prove that $a \s (b \otimes c) \geq_k (a \s b) \otimes (a \s c)$, i.e. that $((a \s b) \otimes (a \s c)) \s (a \s (b \otimes c)) \geq_t \top$ and $(\neg  (a \s b) \otimes \neg  (a \s c)) \s \neg  (a \s (b \otimes c)) \geq_t \top$.

As to the first, applying repeatedly (IB2) and Proposition \ref{p:biglem} (xv), we have
\begin{align*}
& ((a \supset b) \otimes (a \supset c)) \supset (a \supset (b \otimes c)) = \\
& = ((a \supset b) \supset (a \supset c)) \supset ((a \supset b) \supset (a \supset (b \otimes c))) =  \\
& =  (((a \supset b) \otimes a) \supset c) \supset (((a \supset b) \otimes a) \supset (b \otimes c))  = \\
& = ((a \supset b) \otimes a) \supset (c \supset (b \otimes c))  = \\
& = (a \supset b) \supset ( a \supset (c \supset (b \otimes c)))  = \\
& = (a \supset b) \supset ( (a \otimes c) \supset (b \otimes c))  = \\
& = (a \otimes c) \supset ( (a \supset b) \supset (b \otimes c))  = \\
& = ((a \otimes c) \supset (a \supset b)) \supset ((a \otimes c) \supset (b \otimes c))  = \\
& = ((a \otimes c) \supset b) \supset ((a \otimes c) \supset ( (a \otimes c) \supset (b \otimes c)))  = \\
& = (a \otimes c) \supset (b \supset ((a \otimes c) \supset (b \otimes c)))  = \\
& = (a \otimes c) \supset ((b \otimes a \otimes c ) \supset (b \otimes c)) \geq_t \top.
\end{align*}

As to the second, applying repeatedly (IB2) and (IB6), we have
\begin{align*}
& (\neg  (a \s b) \otimes \neg  (a \s c)) \s \neg  (a \s (b \otimes c)) = \\
 & =
\neg  (a \s b) \s (\neg  (a \s c) \s \neg  (a \s (b \otimes c)))  =  \\ & =
(a \land \neg b) \s ((a \land \neg c) \s \neg  (a \s (b \otimes c)))  =  \\ & =
(a \land \neg b \land \neg c) \s \neg (a \s (b \otimes c))  = \\ & =
a \s ((\neg b \land \neg c) \s \neg (a \s (b \otimes c)))  = \\ & =
a \s ((\neg b \otimes \neg c) \s \neg (a \s (b \otimes c))) =  \\ & =
a \s (\neg (b \otimes c) \s \neg (a \s (b \otimes c)))  = \\ & =
(a \land \neg (b \otimes c)) \s \neg (a \s (b \otimes c)))  = \\ & =
\neg (a \s (b \otimes c)) \s \neg (a \s (b \otimes c)))   \geq_t \top.
\end{align*}

(xvi). We will prove that $(a \ta b) \ta ((a \s b) \otimes (\neg b \s \neg a)) \geq_t \top$ and $ ((a \s b) \otimes (\neg b \s \neg a)) \ta (a \ta b)  \geq_t \top$.

By (IB2), it is obvious that the following two inequalities hold: 
\begin{align*}
((a \s b) \land (\neg b \s \neg a)) \s ((a \s b) \otimes (\neg b \s \neg a)) & \geq_t \top \\
((a \s b) \otimes (\neg b \s \neg a)) \s ((a \s b) \land (\neg b \s \neg a)) & \geq_t \top.
\end{align*}

Therefore, it remains only to prove that:
\begin{align*}
(\neg (a \s b) \lor \neg (\neg b \s \neg a)) \s (\neg (a \s b) \otimes \neg (\neg b \s \neg a)) & \geq_t \top \\
(\neg (a \s b) \otimes \neg (\neg b \s \neg a)) \s (\neg (a \s b) \lor \neg (\neg b \s \neg a)) & \geq_t \top.
\end{align*}
The second one is easy. As to the first, note that by (IB6) we have
\begin{align*}
\neg (a \s b) \s \neg (\neg b \s \neg a) & =
(a \land \neg b) \s \neg (\neg b \s \neg a) \\ & =
(\neg b \land \neg \neg a) \s \neg (\neg b \s \neg a) \\ & =
\neg (\neg b \s \neg a) \s \neg (\neg b \s \neg a) \\ & \geq_t \top.
\end{align*}
And similarly
\begin{align*}
\neg (\neg b \s \neg a) \s \neg (a \s b) & =
(\neg b \land a) \s \neg (a \s b) \\ & =
\neg (a \s b) \s \neg (a \s b) \\ & \geq_t \top.
\end{align*}
Now, using (ii), we have
\begin{align*}
& (\neg (a \s b) \s \neg (a \s b)) \otimes (\neg (a \s b) \s \neg (\neg b \s \neg a))  = \\
& \neg (a \s b) \s ( \neg (a \s b) \otimes \neg (\neg b \s \neg a)) \geq_t \top
\end{align*}
and
\begin{align*}
& (\neg (\neg b \s \neg a) \s \neg (a \s b)) \otimes (\neg (\neg b \s \neg a) \s \neg (\neg b \s \neg a))  = \\
& \neg (\neg b \s \neg a) \s ( \neg (a \s b) \otimes \neg (\neg b \s \neg a))  \geq_t \top.
\end{align*}
By (IB4), we have
$(\neg (a \s b) \s ( \neg (a \s b) \otimes \neg (\neg b \s \neg a)) )\land (\neg (\neg b \s \neg a) \s ( \neg (a \s b) \otimes \neg (\neg b \s \neg a))) = 
 (\neg (a \s b) \lor \neg (\neg b \s \neg a)) \s ( \neg (a \s b) \otimes \neg (\neg b \s \neg a)).$
Hence, applying De Morgan's laws and the interlacing conditions, we obtain
$$
\neg ((a \s b) \land (\neg b \s \neg a)) \s ( \neg (a \s b) \otimes \neg (\neg b \s \neg a)) \geq_t \top.
$$

(xvii). The rightwards implication has been proven in Proposition \ref{p:biglem} (xiii). As to the other one,
%
assume $a \supset c = b \supset c $ for all $c \in B$. Then, in particular, $a \supset b = b \supset b$ and $b \supset a = a \supset a$. We will show that $(a \land \top) \leftrightarrow (b \land \top) \geq_t \top$, so the result will follow from  (vi).  Notice that, since $\top \leq_t  a \supset a$ and $\top \leq_t b \supset b$, we obtain $\top \leq_t a \supset b$ and $\top \leq_t b \supset a$.   So we have
\begin{align*}
(a \land \top) \rightarrow (b \land \top) 
&  = ((a \land \top) \supset (b \land \top)) \land ( (\neg b \lor \top) \supset (\neg a \lor \top)) \\
&  = (\top \supset (a \supset (b \land \top))) \land (\neg a \lor \top) \\
&  = (a \supset (b \land \top)) \land (\neg a \lor \top) \\
&  = (a \supset b) \land (a \supset \top) \land (\neg a \lor \top)\\
 & \geq_t \top.
\end{align*}
Interchanging $b$ with $a$, the same proof shows that $(b \land \top) \rightarrow (a \land \top) \geq_t \top$.
%
\end{proof}

In the next proposition we state an important property of the bilattice reduct of any implicative bilattice.

\begin{proposition} \label{impinterla}
Let $\Al[B] = \left\langle B, \land, \lor, \otimes, \oplus, \supset, \neg \right\rangle$ be an implicative bilattice. Then the  reduct $\left\langle B, \land, \lor, \otimes, \oplus, \neg \right\rangle$ is a distributive bilattice.
\end{proposition}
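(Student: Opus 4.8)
The plan is to verify the two conditions that together make a bilattice distributive: that the reduct $\langle B,\land,\lor,\otimes,\oplus,\neg\rangle$ is \emph{interlaced}, and that one of its two lattice reducts is a distributive lattice. By the structure theory of bilattices developed earlier, an interlaced bilattice is already a distributive bilattice once one of $\langle B,\land,\lor\rangle$, $\langle B,\otimes,\oplus\rangle$ is distributive: interlacing is what propagates distributivity from a single reduct to all the cross-distributive laws. So I would split the argument into (a) interlacing and (b) distributivity of the knowledge lattice $\langle B,\otimes,\oplus\rangle$, and then invoke that reduction.

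For (a), the idea is to read off the ``missing'' monotonicities — that $\land,\lor$ are monotone for $\leq_k$ and that $\otimes,\oplus$ are monotone for $\leq_t$ — from the implication, via the order characterizations of Proposition \ref{p:lem2}. To see, for example, that $\land$ is $\leq_k$-monotone, I would assume $a\leq_k b$ and use Proposition \ref{p:lem2}(vii): it suffices to check $\top\leq_t(a\land c)\supset(b\land c)$ and $\top\leq_t\neg(a\land c)\supset\neg(b\land c)$. The first reduces, via (IB2), Proposition \ref{p:lem2}(xi), the monotonicity of $\supset$ in its second argument (Proposition \ref{p:lem2}(x), with $b\leq_t c\supset b$ from Proposition \ref{p:biglem}(ii)), to the hypothesis $\top\leq_t a\supset b$ (Proposition \ref{p:lem2}(ii)) together with $\top\leq_t(a\land c)\supset c$; the second is handled identically after rewriting $\neg(a\land c)=\neg a\lor\neg c$ by De Morgan. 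The remaining cases follow by the same recipe, with the $\lor$ case reducing to the $\land$ case through the negation laws. This part is routine but lengthy.

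For (b), since lattice distributivity is self-dual it is enough to prove the single inequality $a\otimes(b\oplus c)\leq_k(a\otimes b)\oplus(a\otimes c)$, the reverse holding in any lattice. Writing $x=a\otimes(b\oplus c)$ and $y=(a\otimes b)\oplus(a\otimes c)$ and invoking Proposition \ref{p:lem2}(vii), I must show $\top\leq_t x\supset y$ and $\top\leq_t\neg x\supset\neg y$. For the first, (IB2), the $\oplus$-clause of (IB4) and Proposition \ref{p:lem2}(xi) give
\[
x\supset y \;=\; a\supset\big((b\supset y)\land(c\supset y)\big)\;=\;\big((a\otimes b)\supset y\big)\land\big((a\otimes c)\supset y\big),
\]
and since $a\otimes b\leq_k y$ and $a\otimes c\leq_k y$, Proposition \ref{p:lem2}(ii) yields $\top\leq_t$ for both conjuncts, hence $\top\leq_t x\supset y$. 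The crucial observation is that, because $\neg$ is a $\leq_k$-automorphism and the negation laws give $\neg x=\neg a\otimes(\neg b\oplus\neg c)$ and $\neg y=(\neg a\otimes\neg b)\oplus(\neg a\otimes\neg c)$, the second requirement is literally the first with $a,b,c$ replaced by $\neg a,\neg b,\neg c$, so it needs no separate computation.

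The main difficulty is conceptual rather than computational, and it is exactly this last point: working with the knowledge lattice (rather than the truth lattice, where $\neg$ is order-\emph{reversing}) is what makes the two halves of the $\leq_k$-characterization collapse into one and keeps the argument symmetric and short. The other delicate ingredient is the correct appeal to the earlier reduction of bilattice-distributivity to distributivity of a single reduct; this is where the interlacing established in step (a) is indispensable, since without it distributivity of the reducts would not suffice.
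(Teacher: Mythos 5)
Your proof is correct, and it shares the paper's skeleton: establish interlacing first, by converting the four nontrivial monotonicity conditions into $\supset$-statements through the order characterizations of Proposition \ref{p:lem2} (ii), (vi), (vii); then verify a single distributive law and let Proposition \ref{prop:distributive_bl} propagate it to the remaining eleven. The genuine difference is \emph{which} law you check. The paper verifies the truth-lattice law $a\land(b\lor c)\leq_t(a\land b)\lor(a\land c)$ via the $\rightarrow$-characterization of $\leq_t$; because $\neg$ reverses $\leq_t$ and swaps $\land$ with $\lor$, the contrapositive half of that verification does not reduce to the positive half, and the paper pays for it with a long De Morgan computation (the one with the auxiliary terms $d$ and $e$). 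You instead verify the knowledge-lattice law $a\otimes(b\oplus c)\leq_k(a\otimes b)\oplus(a\otimes c)$ via Proposition \ref{p:lem2} (vii); since $\neg$ commutes with $\otimes$ and $\oplus$, the condition $\top\leq_t\neg x\supset\neg y$ is literally the condition $\top\leq_t x\supset y$ with $a,b,c$ replaced by $\neg a,\neg b,\neg c$, so the second half is free, and your computation of $x\supset y$ by (IB2), (IB4) and Proposition \ref{p:lem2} (xi) is correct. This buys a distributivity step roughly half the length of the paper's; the interlacing part, where the bulk of the computation lives, is essentially identical in both proofs (including the fact that only the $\lor$-case reduces to the $\land$-case by negation, while the $\leq_t$-monotonicity of $\otimes$ and of $\oplus$ each need their own computation, since $\neg$ preserves rather than swaps the knowledge operations). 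One small point of hygiene: state the reduction as the paper does --- in an interlaced bilattice any one of the twelve distributive laws implies the others --- rather than as ``distributivity of one lattice reduct suffices''; your law is one of the twelve, so the citation then applies verbatim.
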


\begin{proof}
We will prove first that the  reduct $\left\langle B, \land, \lor, \otimes, \oplus, \neg \right\rangle$ is an interlaced bilattice. Let $a, b \in B $ be such that $a \leq_t b$. To see that $a \otimes c \leq_t b \otimes c$ and $a \oplus c \leq_t b \oplus c$ for all $c \in B$, we  prove that $(a \otimes c) \ta (b \otimes c) \geq_t \top$ and $(a \oplus c) \ta (b \oplus c) \geq_t \top$. 

As to the first, using (IB2) and Proposition  \ref{p:lem2} (ii), we have
\begin{align*}
(a \otimes c) \s (b \otimes c)  & = 
((a \land b) \otimes c) \s (b \otimes c) \\ & = 
(a \otimes b \otimes c) \s (b \otimes c) \\ & \geq_t \top
\end{align*}
and, applying  De Morgan's laws and Proposition  \ref{p:biglem} (ii),
\begin{align*}
\neg (b \otimes c) \s \neg (a \otimes c)  & = 
(\neg b \otimes \neg c) \s  (\neg a \otimes \neg c)  \\ & = 
(\neg (a \lor b) \otimes \neg c) \s  (\neg a \otimes \neg c)  \\ & = 
((\neg a \land \neg b) \otimes \neg c) \s  (\neg a \otimes \neg c)  \\ & = 
(\neg a \otimes \neg b \otimes \neg c) \s  (\neg a \otimes \neg c)  \\ & \geq_t \top.
\end{align*}

As to the second, using (IB4), (IB2) and Proposition  \ref{p:biglem} (ii) and  Proposition  \ref{p:lem2} (ii), 
we have
\begin{align*}
(a \oplus c) \s (b \oplus c) & = 
(a \s (b \oplus c)) \land (c \s (b \oplus c)) \\ & = 
((a \land b) \s (b \oplus c)) \land (c \s (b \oplus c)) \\ & = 
(a \s (b \s (b \oplus c))) \land (c \s (b \oplus c)) \\ 
& = c \s (b \oplus c)\\ & \geq_t \top
\end{align*}
and, applying also De Morgan's laws,
\begin{align*}
\neg (b \oplus c) \s \neg (a \oplus c) & = 
(\neg b \oplus \neg c) \s (\neg a \oplus \neg c) \\ & = 
(\neg b \s (\neg a \oplus \neg c)) \land (\neg c \s (\neg a \oplus \neg c)) \\ & = 
(\neg (a \lor b) \s (\neg a \oplus \neg c)) \land (\neg c \s (\neg a \oplus \neg c)) \\ & = 
((\neg a \land \neg b) \s (\neg a \oplus \neg c)) \land (\neg c \s (\neg a \oplus \neg c)) \\ & = 
(\neg b \s (\neg a \s (\neg a \oplus \neg c))) \land (\neg c \s (\neg a \oplus \neg c)) \\ 
& = (\neg b \otimes \neg a) \s (\neg a \oplus \neg c)) \land (\neg c \s (\neg a \oplus \neg c)) \\
& \geq_t \top,
\end{align*}
because $\neg b \otimes \neg a \leq_k \neg a \oplus \neg c$ and $\neg c \leq_k \neg a \oplus \neg c$, which imply $\top \leq_t (\neg b \otimes \neg a)\s (\neg a \oplus \neg c)$ and $\top \leq_t \neg c \s (\neg a \oplus \neg c).$  

Now assume $a \leq_k b$. To see that $a \land c \leq_k b \land c$, we will prove that $(a \land c) \s (b \land c) \geq_t \top$ and $\neg (a \land c) \s \neg (b \land c) \geq_t \top$. Then  using Proposition \ref{p:lem2} (vii) we will obtain the desired conclusion.

As to the former, using (IB2) and Proposition  \ref{p:lem2} (ii), we have
\begin{align*}
(a \land c) \s (b \land c) & = 
((a \otimes b) \land c) \s (b \land c) \\ & = 
(a \land b \land c) \s (b \land c) \\ & \geq_t \top.
\end{align*}

As to the latter, using De Morgan's laws, (IB4), (IB2) and Proposition  \ref{p:lem2} (ii), we have
\begin{align*}
\neg (a \land c) \s \neg (b \land c) & = 
(\neg a \lor \neg c) \s (\neg b \lor \neg c) \\ & = 
(\neg a \s (\neg b \lor \neg c)) \land (\neg c \s (\neg b \lor \neg c)) \\ & = 
(\neg (a \otimes b) \s (\neg b \lor \neg c)) \land (\neg c \s (\neg b \lor \neg c)) \\ & = 
(( \neg a \otimes \neg b) \s (\neg b \lor \neg c)) \land (\neg c \s (\neg b \lor \neg c)) \\ & = 
(\neg a \s (\neg b \s (\neg b \lor \neg c))) \land (\neg c \s (\neg b \lor \neg c)) 
\\ 
& = (\neg a \land \neg b) \s (\neg b \lor \neg c))) \land (\neg c \s (\neg b \lor \neg c))\\& \geq_t \top.
\end{align*}
because $\neg c \leq_t (\neg b \lor \neg c)$ and $ (\neg a \land \neg b) \leq_t (\neg b \lor \neg c)$, and so $\top \leq_t (\neg a \s (\neg b \s (\neg b \lor \neg c))), (\neg c \s (\neg b \lor \neg c))$.

 To see that $a \lor c \leq_k b \lor c$, note that $a \leq_k b$ if and only if $\neg a \leq_k \neg b$. Applying what we have just proved, we have  $\neg a \land \neg c \leq_k \neg b \land \neg c$ and, therefore, $ \neg (\neg a \land \neg c) \leq_k \neg (\neg b \land \neg c)$. Now, using De Morgan's laws, we have $a \lor c = \neg (\neg a \land \neg c)  \leq_k   \neg (\neg b \land \neg c) = b \lor c$.
 
Therefore   $\left\langle B, \land, \lor, \otimes, \oplus, \neg \right\rangle$ is an interlaced bilattice. Hence, by Proposition \ref{prop:distributive_bl}, 
any of the twelve distributive laws implies the others. Let us check that $a \land (b \lor c) \leq_t (a \land b) \lor (a \land c)$ for all $a, b, c \in B$. As before, it is enough to  prove that  $(a \land (b \lor c)) \s ((a \land b) \lor (a \land c)) \geq_t \top$ and $\neg ((a \land b) \lor (a \land c))  \s \neg (a \land (b \lor c)) \geq_t \top$.

As to the former, using (IB2), (IB4) and Proposition \ref{p:lem2} (xi), we have
\begin{align*}
& (a \land (b \lor c)) \s ((a \land b) \lor (a \land c)) = \\ & = 
a \s ((b \lor c) \s ((a \land b) \lor (a \land c))) = \\ & = 
a \s ((b \s ((a \land b) \lor (a \land c))) \land (c \s ((a \land b) \lor (a \land c)))) = \\ & = 
(a \s (b \s ((a \land b) \lor (a \land c)))) \land (a \s (c \s ((a \land b) \lor (a \land c)))) = \\ & = 
((a \land b) \s ((a \land b) \lor (a \land c))) \land ((a \land c) \s ((a \land b) \lor (a \land c))) 
\geq_t \top.
\end{align*}

As to the latter, we will us use the following abbreviations:
\begin{align*}
d & = \neg a \s (\neg a \lor (\neg b \land \neg c)) \\
e & = \neg c \s (\neg a \lor (\neg b \land \neg c)).
\end{align*}
It is easy to see that $d \geq_t \top$ and $(\neg a \s  e) \geq_t \top $ and $(\neg b \s  e) \geq_t \top$. Now,  using De Morgan's laws, (IB2), (IB4) and Proposition  \ref{p:lem2} (xi), we have
\begin{align*}
& \neg ((a \land b) \lor (a \land c))  \s \neg (a \land (b \lor c)) = \\
& = (\neg (a \land b) \land \neg (a \land c))  \s  (\neg a \lor \neg (b \lor c)) = \\ 
& = ((\neg a \lor \neg b) \land  (\neg a \lor \neg c))  \s  (\neg a \lor (\neg b \land \neg c))  = \\ 
& = (\neg a \lor \neg b) \s  (((\neg a \lor \neg c))  \s  (\neg a \lor (\neg b \land \neg c))) = \\ 
& = (\neg a \lor \neg b) \s  (d \land e) = \\ 
& = ((\neg a \lor \neg b) \s  d) \land ((\neg a \lor \neg b) \s  e) = \\ 
& = (\neg a  \s  d) \land (\neg b  \s  d) \land (\neg a \s  e) \land (\neg b \s  e)  \geq_t \top. 
\end{align*}
\end{proof}

Proposition \ref{impinterla} allows us to establish some equivalences that give more insight into the structure of implicative bilattices. Recall that the relation $\sim_{1}$ is the one introduced in Definition~\ref{d:cong}, that $\FF$ denotes the operator of bifilter generation and that  $E(a)$ is an abbreviation for $a = a \s a$.

\begin{proposition} \label{p:some_equiv}
Let $\Al[B] = \left\langle B, \land, \lor, \otimes, \oplus, \supset, \neg \right\rangle$ be an implicative bilattice and $a, b \in B$. Then the following statements are equivalent:
\begin{enumerate}[(i)]
  \item \quad $a \sim_{1} b$
    \item  \quad $a \lor b = a \otimes b$
     \item  \quad $a \oplus b = a \land b$
     \item  \quad $\reg(a)  = \reg(b)$
     \item  \quad$\FF(a) = \FF(b)$
 \item  \quad $  \top  \leq_t a\supset b$ and $  \top  \leq_t b \supset a$
 \item  \quad $  E(a \s b)$ and $  E(b \s a)$
    \item \quad  $a \s c = b \s c$ for all $c \in B$. 
  \item  \quad $a \land \top  = b \land \top$
   \end{enumerate}
\end{proposition}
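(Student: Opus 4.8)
The plan is to prove the nine statements equivalent by organising them into two clusters and bridging the clusters with a single pivot lemma. The ``implicational'' conditions (vi)--(viii) sit naturally together with (ix), while the ``bilattice-reduct'' conditions (ii)--(v) sit with (i); the whole difficulty is to link these two families. Two of the equivalences come essentially for free. Since $\top(a)=\top$ and, by Proposition~\ref{p:biglem}(x), $\top\leq_t c$ holds exactly when $E(c)$, condition (vi) is literally a restatement of (vii). Likewise, Proposition~\ref{p:lem2}(xvii) is precisely the biconditional (viii)$\iff$(ix). So it suffices to weave these into the remaining conditions.

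For the bridge I would use Proposition~\ref{p:lem2}(v), namely that $a\leq_t a\otimes b$ iff $\top\leq_t a\supset b$. Assuming (vi) and applying this to $a\supset b$ and to $b\supset a$ (with commutativity of $\otimes$) gives $a\leq_t a\otimes b$ and $b\leq_t a\otimes b$, hence $a\lor b\leq_t a\otimes b$; since the reduct is interlaced by Proposition~\ref{impinterla}, the reverse inequality $a\otimes b\leq_t a\lor b$ always holds, yielding (ii). Conversely, from (ii) one reads off $a\leq_t a\lor b=a\otimes b$ and $b\leq_t a\otimes b=b\otimes a$, so Proposition~\ref{p:lem2}(v) returns (vi). This closes (ii), (vi) and (vii) into one equivalence class.

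To fold in (ix)/(viii) and (iii) I would exploit the distributive bilattice structure of the reduct (Proposition~\ref{impinterla}). Meeting $a\lor b=a\otimes b$ with the knowledge-top $\top$ and using distributivity together with the fact that $\top$ is the $\leq_k$-maximum (Proposition~\ref{p:biglem}(xii)) collapses $a\land\top$ and $b\land\top$ to a common value, giving (ii)$\Rightarrow$(ix); the converse reuses Proposition~\ref{p:lem2}(xvii). The equivalence (ii)$\iff$(iii) is a purely distributive-bilattice fact: on the $\top$-fibre the two orders agree, so $a\lor b=a\otimes b$ and $a\land b=a\oplus b$ each express that $a$ and $b$ share the same truth component. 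Finally, the ``definitional'' conditions (i), (iv), (v) are matched to (ix) (equivalently (viii)) by unfolding the definition of $\sim_{1}$ (Definition~\ref{d:cong}) and the behaviour of the bifilter-generation operator $\FF$ and of $\reg$; each is governed by the same truth component, so all three join the class as soon as any one of (ii), (ix) is in hand.

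The main obstacle is exactly this bridge between the two clusters: the conditions divide cleanly into those mentioning $\supset$ and those living in the bilattice reduct, and the only genuine link is Proposition~\ref{p:lem2}(v), supported by Proposition~\ref{p:biglem}(x). The delicate point is the interplay of the two orderings---passing between $\lor,\otimes$ on one side and $\land,\oplus$ on the other---which is legitimate only because Proposition~\ref{impinterla} supplies interlacing and distributivity; without that, the truth-order inequalities extracted from (vi) would not consolidate into the equalities (ii), (iii) and (ix). The real care lies in identifying precisely which distributive identities are needed, rather than mechanically invoking all twelve distributive laws.
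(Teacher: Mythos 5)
Your plan reproduces the paper's proof for most of the links --- (vi)$\Leftrightarrow$(vii) via Proposition~\ref{p:biglem}(x), (viii)$\Leftrightarrow$(ix) via Proposition~\ref{p:lem2}(xvii), (ii)$\Leftrightarrow$(vi) via Proposition~\ref{p:lem2}(v) plus interlacing, and the identification of (i), (iii), (iv), (v) with (ii) through the earlier bilattice results --- so the genuinely new element is your bridge between the implicational cluster and the pair $\{$(viii),(ix)$\}$, and that is where the problems lie. First, a structural gap: as written, your implication graph never returns from $\{$(viii),(ix)$\}$ to $\{$(ii),(vi),(vii)$\}$. You give (ii)$\Rightarrow$(ix), and ``the converse reuses Proposition~\ref{p:lem2}(xvii)'' only takes you from (ix) to (viii); nothing in the proposal takes (viii) back to (vii) or (vi), so the nine conditions are not yet proved equivalent. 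The fix is one line --- and it is exactly what the paper does: from $a \supset c = b \supset c$ for all $c$, take $c = b$ to get $a \supset b = b \supset b$, which satisfies $E$ by Proposition~\ref{p:biglem}(i); symmetrically $E(b \supset a)$, which is (vii).

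Second, the step (ii)$\Rightarrow$(ix) is under-justified. Distributivity applied to $a \lor b = a \otimes b$ does give $(a \land \top) \lor (b \land \top) = (a \land \top) \otimes (b \land \top)$, but concluding $a \land \top = b \land \top$ from this is not a consequence of ``distributivity plus $\top$ being the $\leq_k$-maximum'' in any direct way: for general elements, $x \lor y = x \otimes y$ does not force $x = y$ (in $\mathcal{FOUR}$, $\mathsf{t} \lor \top = \mathsf{t} = \mathsf{t} \otimes \top$). What is really needed is that on elements $x, y \leq_t \top$ the operations $\lor$ and $\oplus$ (equivalently $\otimes$ and $\land$) coincide, which one can extract, e.g., from the identity $x \oplus y \approx (x \land \top) \lor (y \land \top) \lor (x \land y)$ that the paper invokes in the proof of Proposition~\ref{filters2}: for such $x,y$ it yields $x \oplus y = x \lor y = x \otimes y$, and then $x = x \otimes y = y$ follows because $x \otimes y \leq_k x \leq_k x \oplus y$ and likewise for $y$. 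So your claim is true, but it rests on a nontrivial distributive-bilattice identity (or on the product representation of the interlaced reduct), not on the ingredients you list. The paper sidesteps all of this: it never passes through the bilattice reduct to reach (viii)/(ix), but proves (vii)$\Rightarrow$(viii) by a purely implicational computation,
$$a \supset c = (a \supset b) \supset (a \supset c) = a \supset (b \supset c) = b \supset (a \supset c) = (b \supset a) \supset (b \supset c) = b \supset c,$$
using Proposition~\ref{p:biglem}(i), Proposition~\ref{p:biglem}(xv) and (IB2) --- a route worth adopting, since it makes both of the issues above disappear.
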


\begin{proof} 
The equivalence among (i), (ii) and (iii) has been proved in Proposition~\ref{prop:symetrization} (i). Moreover, (i) is also equivalent to (iv) by Proposition~\ref{prop:regular} (iv). Corollary~\ref{cor:bifilters} (ii) implies the equivalence of (i) and (v). Using Proposition~\ref{p:lem2} (v) and the interlacing conditions, it is obvious that (ii) and (vi) are equivalent; the equivalence between (vi) and (vii) follows from Proposition~\ref{p:biglem} (x). 

It is also easy to prove that (vii) and (viii) are equivalent. In fact, assuming (vii), we have, for all $c \in B$, 
\begin{align*}
a \s c 
& = \   (a \s b) \s (a \s c) & \textrm{by Proposition~\ref{p:biglem} (i)}  \\ 
& = \ a \s (b \s c) & \textrm{by Proposition~\ref{p:biglem} (xv)} \\ 
& = \ b \s (a \s c) & \textrm{by (IB2)}  \\ 
& = \ (b \s a) \s (b \s c) & \textrm{by Proposition~\ref{p:biglem} (xv)}  \\ 
& =  \ b \s c & \textrm{by Proposition~\ref{p:biglem} (i).}
\end{align*}
Conversely, assuming (viii) and using Proposition~\ref{p:biglem} (i) again, we have
$$
a \s b =  
b \s b = (a \s b) \s (a \s b).
$$
By symmetry, we also have $b \s a = (b \s a) \s (b \s a) $. Finally, the equivalence between (viii) and (ix) has been proved in Proposition~\ref{p:lem2} (xvii). 
\end{proof}
An interesting consequence of the previous proposition is the following:
\begin{cor} \label{c:cong1s}
In any implicative bilattice $\Al[B] = \left\langle B, \land, \lor, \otimes, \oplus, \supset, \neg \right\rangle$, the relation $\sim_{1}$ is a congruence of the reduct $ \left\langle B, \land, \lor, \otimes, \oplus, \supset \right\rangle$.
\end{cor}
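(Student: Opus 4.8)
The plan is to exploit the list of equivalent descriptions of $\sim_{1}$ collected in Proposition~\ref{p:some_equiv}, most of all the two characterizations (viii) $a \sim_{1} b$ iff $a \supset c = b \supset c$ for all $c \in B$, and (ix) $a \sim_{1} b$ iff $a \land \top = b \land \top$. Since $\sim_{1}$ is already an equivalence relation, to show it is a congruence of $\langle B, \land, \lor, \otimes, \oplus, \supset \rangle$ it suffices to prove compatibility with each basic operation in each of its arguments separately, the general case then following by transitivity. Throughout I would fix $a \sim_{1} a'$ and $b \sim_{1} b'$, which by (ix) I read as $a \land \top = a' \land \top$ and $b \land \top = b' \land \top$.

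For the four bilattice operations $\bullet \in \{\land, \lor, \otimes, \oplus\}$ I would argue uniformly. By Proposition~\ref{impinterla} the bilattice reduct is distributive, so each of these operations distributes with $\land$; combined with the idempotency $\top \land \top = \top$ this gives the identity $(x \bullet y) \land \top = (x \land \top) \bullet (y \land \top)$. Applying it to both $a \bullet b$ and $a' \bullet b'$ and substituting $a \land \top = a' \land \top$, $b \land \top = b' \land \top$ yields $(a \bullet b) \land \top = (a' \bullet b') \land \top$, i.e. $a \bullet b \sim_{1} a' \bullet b'$ by (ix). (Alternatively one may simply cite that $\sim_{1}$ is a congruence of the bilattice reduct, established in the previous chapter; the point of this corollary is the extra operation $\supset$.)

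The genuinely new part is compatibility with $\supset$, which I would split according to its two arguments. In the first argument it is immediate from (viii): $a \sim_{1} a'$ gives $a \supset b = a' \supset b$ outright, so certainly $a \supset b \sim_{1} a' \supset b$. For the second argument I would prove $a' \supset b \sim_{1} a' \supset b'$ using (ix). By Proposition~\ref{p:lem2}~(xi) one has $a' \supset (z \land \top) = (a' \supset z) \land (a' \supset \top)$; instantiating at $z = b$ and $z = b'$ and using $b \land \top = b' \land \top$ gives $(a' \supset b) \land (a' \supset \top) = (a' \supset b') \land (a' \supset \top)$. Now I would meet both sides with $\top$ and absorb the factor $a' \supset \top$: since $a' \leq_k \top$, Proposition~\ref{p:lem2}~(ii) gives $\top \leq_t a' \supset \top$, hence $(a' \supset \top) \land \top = \top$, and the equation collapses to $(a' \supset b) \land \top = (a' \supset b') \land \top$, that is $a' \supset b \sim_{1} a' \supset b'$. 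Chaining the two arguments, $a \supset b = a' \supset b \sim_{1} a' \supset b'$, as required.

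The main obstacle is precisely this second-argument case for $\supset$: unlike the first argument, where (viii) produces a literal equality, here one only obtains an equation with the spurious extra factor $a' \supset \top$, and the crux is to recognise that this factor lies truth-above $\top$ so that it is annihilated upon meeting with $\top$. Everything else reduces to routine bookkeeping with the characterizations in Proposition~\ref{p:some_equiv} and the distributive laws granted by Proposition~\ref{impinterla}.
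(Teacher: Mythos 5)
Your proof is correct, and it shares the paper's overall strategy (reduce everything to the characterizations in Proposition~\ref{p:some_equiv}, handle the two arguments of $\supset$ separately), but the key step is carried out by a genuinely different route. For the bilattice operations the paper simply cites Proposition~\ref{prop:interlaced_pbl}~(i); your distributivity computation of $(x \bullet y) \land \top$ is a valid self-contained substitute, and you note the citation alternative anyway. For the first argument of $\supset$ you and the paper do the same thing: characterization (viii) turns $a \sim_{1} a'$ into the literal equality $a \supset b = a' \supset b$. The divergence is in the second argument. The paper works with characterization (vi) (both implications lie truth-above $\top$) and the self-distribution law of Proposition~\ref{p:biglem}~(xv), computing
$(a \supset b) \supset (a' \supset b') = (a' \supset b) \supset (a' \supset b') = a' \supset (b \supset b') \geq_t \top$
by Proposition~\ref{p:biglem}~(ii), then invoking symmetry for the reverse implication. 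You instead work with characterization (ix) ($x \land \top = y \land \top$), use the distribution of $\supset$ over $\land$ in its second argument (Proposition~\ref{p:lem2}~(xi)), and then annihilate the spurious factor $a' \supset \top$ via $a' \leq_k \top$ and Proposition~\ref{p:lem2}~(ii) --- a step the paper's argument never needs. Both routes are sound and of comparable length: the paper's is slightly slicker, a single two-line computation plus symmetry, while yours stays entirely within equalities of ``meet with $\top$'' normal forms and so needs no symmetry argument at the end.
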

\begin{proof}
We already know, by Proposition~\ref{prop:interlaced_pbl} (i), that $\sim_{1}$ is a congruence of $ \left\langle B, \land, \lor, \otimes, \oplus \right\rangle$. To prove that it is compatible with $\s$, assume $a_{1} \sim_{1} b_{1} $ and $a_{2} \sim_{1} b_{2}$ for some $a_{1}, a_{2}, b_{1}, b_{2} \in B$. This implies, by Proposition~\ref{p:some_equiv}, that $a_{1} \s  c= b_{1} \s c $  for all $c \in B$ and also  that $a_{2} \s b_{2} \geq_{t} \top $ and $b_{2} \s a_{2} \geq_{t} \top $. Then, using Proposition~\ref{p:biglem} (xv) and (ii), we have 
\begin{align*}
(a_{1} \s a_{2}) \s (b_{1} \s b_{2}) 
& = (a_{1} \s a_{2}) \s (a_{1} \s b_{2}) \\
& = a_{1} \s (a_{2} \s  b_{2})  \geq_{t} \top.
\end{align*}
By symmetry, we obtain $ (b_{1} \s b_{2}) \s (a_{1} \s a_{2})  \geq_{t} \top. $ Hence the desired results follows again by  by Proposition~\ref{p:some_equiv}.
\end{proof}

Let us also note that, if the bilattice reduct of $\Al[B]$ (which is interlaced) is a product bilattice $\Al[L] \odot \Al[L] $, then two elements $\la a_{1}, a_{2} \ra, \la b_{1}, b_{2} \ra \in L \times L$ satisfy any of the conditions of Proposition~\ref{p:some_equiv} if and only if $a_{1} = b_{1}$.

We will now turn to the study of the filters of the logic $\lbs$ on implicative bilattices. By definition, an  $\lbs$-filter on an implicative bilattice $\Al[B]$ is a set $F \subset B$ which contains the interpretation of all theorems of $\lbs$ for any homomorphism $h: \Al[Fm] \rightarrow \Al[B]$ and is closed under MP, i.e.\ such that $b \in F$ whenever $a, a \s b \in F$ for all $a, b \in B$. We shall see that, for the class of implicative bilattices, the $\lbs$-filters coincide with the deductive filters, which we define as follows:

\begin{definition} \label{filters}
{\rm Given an implicative bilattice  $\Al[B]$, a subset $F \subseteq B$ is a \emph{deductive filter} if and only if $\left\{a \in B : a \geq_t \top\right\} \subseteq F$ and, for all $a, b \in B$, if    $ a  \in F$ and $a \supset b \in F$, then $b \in F$. }
\end{definition}

To give a characterization of the $\lbs$-filters in purely algebraic terms, we shall need the following:  

\begin{lem} \label{l:ax}
For every axiom $\varphi$ of $H_{\s}$, the equation $\varphi \land \top \approx \top$ (sometimes abbreviated $\top \leq_t \phi$) is valid in the variety of implicative bilattices.
\end{lem}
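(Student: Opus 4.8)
The plan is to verify, one axiom at a time, that $\top \leq_t \varphi$ holds in every implicative bilattice $\Al[B]$, using freely that $\top$ is the knowledge-maximum constant $(x \supset x) \oplus \neg(x \supset x)$ and the arithmetical laws collected in Propositions~\ref{p:biglem} and~\ref{p:lem2}. Two observations organize almost all of the work. First, since $\land$ is the truth-meet, $\top \leq_t \phi \land \psi$ holds if and only if $\top \leq_t \phi$ and $\top \leq_t \psi$; in particular, for the six axioms written with $\equiv$, establishing $\top \leq_t \phi \equiv \psi$ reduces to proving $\top \leq_t \phi \supset \psi$ and $\top \leq_t \psi \supset \phi$ separately. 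Second, two uniform mechanisms dispatch most implications: Proposition~\ref{p:lem2}(ii), which yields $\top \leq_t a \supset b$ whenever $a \leq_t b$ or $a \leq_k b$; and the rewriting laws (IB2)--(IB4) together with Proposition~\ref{p:biglem}(xv), which collapse nested implications to the shape $t \supset t$, whence $\top \leq_t t \supset t$ by Proposition~\ref{p:biglem}(ix).

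For the implicational axioms I would argue as follows. Axiom $(\supset 1)$ is exactly Proposition~\ref{p:lem2}(i). The axioms $(\land \supset)$, $(\otimes \supset)$, $(\supset \lor)$ and $(\supset \oplus)$ are of the form $a \supset b$ with $a \leq_t b$ or $a \leq_k b$ --- namely $p \land q \leq_t p$ and $p \land q \leq_t q$; $p \otimes q \leq_k p$ and $p \otimes q \leq_k q$; $p \leq_t p \lor q$ and $q \leq_t p \lor q$; $p \leq_k p \oplus q$ and $q \leq_k p \oplus q$ --- so Proposition~\ref{p:lem2}(ii) applies at once. The remaining implicational axioms all collapse to the form $t \supset t$: by Proposition~\ref{p:biglem}(xv), $(\supset 2)$ is literally $t \supset t$ with $t = p \supset (q \supset r)$; by (IB3), $(\supset 3)$ equals $p \supset p$; by (IB2), $(\supset \land)$ and $(\supset \otimes)$ become $(p \land q) \supset (p \land q)$ and $(p \otimes q) \supset (p \otimes q)$; and by (IB4) followed by (IB2), both $(\lor \supset)$ and $(\oplus \supset)$ become $t \supset t$ with $t = (p \supset r) \land (q \supset r)$. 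In each case Proposition~\ref{p:biglem}(ix) gives $\top \leq_t t \supset t$.

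For the negation axioms, the five resting on a single De~Morgan or double-negation law --- $(\neg \land)$, $(\neg \lor)$, $(\neg \otimes)$, $(\neg \oplus)$ and $(\neg \neg)$ --- hold because the two sides of the equivalence denote the \emph{same} element of $\Al[B]$; these are identities of the distributive bilattice reduct guaranteed by Proposition~\ref{impinterla}. Consequently each of the two required implications has the form $t \supset t$ and is again settled by Proposition~\ref{p:biglem}(ix).

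The one genuinely non-trivial axiom, and the step I expect to be the main obstacle, is $(\neg \supset)$: $\neg(p \supset q) \equiv (p \land \neg q)$. Here the two sides need not be equal as elements, so neither implication is witnessed by an order inequality, and the plain mechanisms above do not suffice. Instead I would invoke (IB6), which states $\neg(p \supset q) \supset z = (p \land \neg q) \supset z$ for all $z \in B$; putting $a = \neg(p \supset q)$ and $b = p \land \neg q$, this is precisely condition (viii) of Proposition~\ref{p:some_equiv}. By the equivalence of (viii) with (vi) in that proposition, one obtains both $\top \leq_t \neg(p \supset q) \supset (p \land \neg q)$ and $\top \leq_t (p \land \neg q) \supset \neg(p \supset q)$, which together yield $\top \leq_t (\neg \supset)$. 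This closes the last case and completes the verification.
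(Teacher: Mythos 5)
Your proof is correct and follows essentially the same route as the paper's: a case-by-case verification that reduces each axiom either to an order inequality handled by Proposition~\ref{p:lem2}(ii), or, via (IB2)--(IB4), (IB6) and Proposition~\ref{p:biglem}(xv), to the form $t \supset t$, which Proposition~\ref{p:biglem}(ix) settles. The only cosmetic differences are that for $(\neg \supset)$ the paper simply instantiates (IB6) at $z = p \land \neg q$ and $z = \neg(p \supset q)$ instead of passing through Proposition~\ref{p:some_equiv}, and that for the De~Morgan/double-negation axioms the equality of the two sides needs only the negation identities of the bilattice reduct, so your appeal to distributivity (Proposition~\ref{impinterla}) is unnecessary, though harmless.
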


\begin{proof}
For $(\s 1)$, this has been proved in Proposition \ref{p:lem2} (i). Also, Proposition \ref{p:biglem} (ix) and Proposition \ref{p:biglem}	(xv) prove the case of $(\s 2)$, while from (IB3) it is easily proved $(\s 3)$. 

To prove the case of $(\land \supset )$ and $(\otimes \supset )$, using  (IB2) and Proposition \ref{p:lem2} (i) we have
$$ (x \land y) \supset x \approx (x \otimes y) \supset x  \approx x \supset (y \supset x) \geq_t \top $$
and
$$ (y \land x) \supset y \approx (y \otimes x) \supset y  \approx y \supset (x \supset y) \geq_t \top. $$

$(\supset \land )$ and $(\supset \otimes )$: by (IB2)  and Proposition \ref{p:biglem} (ix) we have $x \supset (y \supset (x \land y)) \approx (x \land y) \supset (x \land y) \geq_t \top $ and $x \supset (y \supset (x \otimes y)) \approx (x \otimes y) \supset (x \otimes y) \geq_t \top $.

$(\supset \lor)$ and $(\supset \oplus)$: it is enough to note that, by Proposition \ref{p:lem2} (ii), if $x \leq_t y $ or $x \leq_k y $, then $x \supset y \geq_t \top$.

$(\lor \supset )$ and $(\oplus \supset )$: by (IB2), (IB4) and Proposition \ref{p:biglem} (ix) we have
\begin{align*}
(x \supset z) \supset ((y \supset z) \supset ((x \lor y) \supset z)) & \approx ((x \supset z) \land (y \supset z)) \supset ((x \lor y) \supset z)  \\
& \approx   ((x \lor y) \supset z) \supset ((x \lor y) \supset z) \\ & \geq_t \top
\end{align*}
and
\begin{align*}
(x \supset z) \supset ((y \supset z) \supset ((x \oplus y) \supset z)) & \approx ((x \supset z) \land (y \supset z)) \supset ((x \oplus y) \supset z) \\
& \approx   ((x \oplus y) \supset z) \supset ((x \oplus y) \supset z) \\ & \geq_t \top.
\end{align*}

$(\neg \land )$, $(\neg \lor )$, $(\neg \otimes )$, $(\neg \oplus )$ and $(\neg \neg)$ are easily proved using the identities that characterize negation within the variety of bilattices.

$(\neg \supset )$: by (IB6) and Proposition \ref{p:biglem} (ix) we have $\neg (x \supset y ) \supset (x \land \neg y) \approx (x \land \neg y) \supset (x \land \neg y) \geq_t \top$ and  $ (x \land \neg y) \supset \neg (x \supset y )  \approx  \neg (x \supset y )  \supset \neg (x \supset y )  \geq_t \top$.
\end{proof}

\begin{proposition} \label{filters2}
Given $\Al[B] \in \ib$ and $F \subseteq B$, the following conditions are equivalent:
\begin{enumerate}[(i)]
\item $F$ is a bifilter, i.e.\ $F$ is non--empty and the following condition holds: for all $a, b \in B$, $a \land b \in F$ iff $a \otimes b \in F$  iff $a \in F$ and $b \in F$.
\item $F$ is a  deductive filter.
\item $F$ is an $\lbs$-filter.
\item $F$ is a lattice filter of the truth ordering and $\top \in F$. 
\item $F$ is a lattice filter of the knowledge ordering and $\left\{a \in B : a \geq_t \top\right\} \subseteq F$.

\end{enumerate}
\end{proposition}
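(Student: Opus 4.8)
The plan is to make condition (ii) the hub and show that each of (i), (iii), (iv), (v) is equivalent to it. The backbone is a single observation, which I would isolate first: \emph{every deductive filter $F$ is an up-set for both $\leq_t$ and $\leq_k$ and is closed under $\land$ and $\otimes$}. Up-closure follows from Proposition~\ref{p:lem2}~(ii): if $a \in F$ and $a \leq_t c$ (or $a \leq_k c$), then $\top \leq_t a \supset c$, so $a \supset c \in F$ and modus ponens gives $c \in F$. Closure under $\land$ comes from Lemma~\ref{l:ax} applied to $(\supset\land)$: since $a \supset (b \supset (a \land b)) \geq_t \top$ lies in $F$, two applications of modus ponens from $a, b \in F$ yield $a \land b \in F$, and likewise for $\otimes$ via $(\supset\otimes)$. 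With this helper in hand, (ii)$\Rightarrow$(i), (ii)$\Rightarrow$(iv) and (ii)$\Rightarrow$(v) are immediate: closure under $\land$ (resp.\ $\otimes$) together with up-closure in $\leq_t$ (resp.\ $\leq_k$) gives the biconditionals defining a bifilter and the lattice-filter conditions, while $\top \in F$ and $\left\{a : a \geq_t \top\right\} \subseteq F$ hold because $\top \geq_t \top$.

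For the converses I would exploit the three ``modus ponens'' inequalities already established. For (i)$\Rightarrow$(ii): a bifilter is up-closed in $\leq_k$ (if $a \in F$ and $a \leq_k c$ then $a \otimes c = a \in F$, so $c \in F$) and in $\leq_t$ (similarly with $\land$); hence $\top \in F$ since $a \leq_k \top$ for any $a \in F$ by Proposition~\ref{p:biglem}~(xii), and $\left\{a : a \geq_t \top\right\} \subseteq F$ follows by up-closure in $\leq_t$, while closure under MP comes from Proposition~\ref{p:lem2}~(xiii), $a \otimes (a \supset b) \leq_k b$, using closure under $\otimes$. The implication (v)$\Rightarrow$(ii) is this argument stripped down: a knowledge lattice-filter is closed under $\otimes$ and up-closed in $\leq_k$, so Proposition~\ref{p:lem2}~(xiii) again yields detachment. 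Dually, (iv)$\Rightarrow$(ii) uses Proposition~\ref{p:biglem}~(xiv), $a \land (a \supset b) \land \top \leq_t b$: from $a, a \supset b, \top \in F$ closure under $\land$ gives $a \land (a \supset b) \land \top \in F$, and up-closure in $\leq_t$ delivers $b \in F$.

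It remains to tie in (iii). For (ii)$\Rightarrow$(iii) I would prove by induction on derivations that every theorem $\varphi$ satisfies $\top \leq_t h(\varphi)$ for every homomorphism $h \colon \Al[Fm] \to \Al[B]$, the base case being Lemma~\ref{l:ax} and the MP step being Proposition~\ref{p:lem2}~(iii); since $\left\{a : a \geq_t \top\right\} \subseteq F$, all interpretations of theorems lie in $F$, and $F$ is closed under MP by definition, so $F$ is an $\lbs$-filter. For (iii)$\Rightarrow$(ii), closure under MP is immediate, and to obtain $\left\{a : a \geq_t \top\right\} \subseteq F$ I would note that $a \geq_t \top$ is equivalent to $E(a)$, i.e.\ $a = a \supset a$, by Proposition~\ref{p:biglem}~(x); choosing $h$ with $h(p) = a$ and using that $p \supset p$ is a theorem (Proposition~\ref{lemmalg}~(iii)) gives $a = a \supset a = h(p \supset p) \in F$.

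I expect the delicate points to be bookkeeping rather than depth: correctly reading off that a lattice filter of $\leq_t$ (resp.\ $\leq_k$) means precisely a nonempty up-set closed under $\land$ (resp.\ $\otimes$), and checking in (iii)$\Rightarrow$(ii) that the freeness of $\Al[Fm]$ legitimately supplies a homomorphism sending $p$ to an arbitrary $a \in B$. The real content is concentrated in the two detachment inequalities of Propositions~\ref{p:biglem}~(xiv) and~\ref{p:lem2}~(xiii), which convert the semilattice closure assumed in (iv) and (v) into modus ponens; these were proved precisely so that the present step becomes routine.
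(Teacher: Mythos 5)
Your proof is correct, and it is genuinely organized differently from the paper's. The paper proves the cycle (i) $\Rightarrow$ (ii) $\Rightarrow$ (iii) $\Rightarrow$ (i) and then attaches (iv) to (i) and (v) to (ii); you instead make (ii) a hub, first isolating the helper fact that a deductive filter is an up-set for both orders and closed under $\land$ and $\otimes$, and then proving each of the other four conditions equivalent to (ii). The substantive divergence is at (iv): the paper establishes (iv) $\Rightarrow$ (i) by showing that $F$ is also a lattice filter of the knowledge order, which requires Proposition~\ref{impinterla} (distributivity of the bilattice reduct) together with the identity $x \oplus y \approx (x \land \top) \lor (y \land \top) \lor (x \land y)$; your (iv) $\Rightarrow$ (ii) runs instead through Proposition~\ref{p:biglem}~(xiv), $a \land (a \supset b) \land \top \leq_t b$, and so avoids distributivity altogether---a more elementary and self-contained argument at that point. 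Two smaller differences: for detachment from a bifilter you invoke Proposition~\ref{p:lem2}~(xiii), $a \otimes (a \supset b) \leq_k b$, where the paper deduces $((a \supset b) \land a) \otimes b \in F$ from $\top \leq_t ((a \supset b) \land a) \supset b$ and then applies the bifilter property; and you close the loop at (iii) via (iii) $\Rightarrow$ (ii), using Proposition~\ref{p:biglem}~(x) and the theorem $p \supset p$ to show $\{ a : \top \leq_t a \} \subseteq F$, where the paper goes (iii) $\Rightarrow$ (i) directly from the four conjunction axioms and MP. Both arguments rest on the same two pillars, Lemma~\ref{l:ax} and the detachment inequalities of Propositions~\ref{p:biglem} and~\ref{p:lem2}; yours buys uniformity and independence from the distributivity result, while the paper's cycle slightly reduces the number of implications to be checked and exhibits the link between bifilters and the lattice structure of both orders.
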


\begin{proof}
(i) $\Rightarrow$ (ii). Assume $F$ is a bifilter. Since $F$ is non--empty, $\top \in F$, which implies that $\left\{a \in B : a \geq_t \top\right\} \subseteq F$. To see that $F$ is closed under MP, assume $a \in F$ and $a \supset b \in F$, so that $(a \supset b) \land a \in F$. We have that $((a \supset b) \land a) \supset b \approx (a \supset b) \supset (a \supset b) \geq_t \top $. By Proposition \ref{p:lem2} (vi), it follows that  $(a \supset b) \land a \leq_t ((a \supset b) \land a) \otimes b$. So, $((a \supset b) \land a) \otimes b \in F$; hence $b \in F$. 

(ii) $\Rightarrow$ (iii). Assume $F$ is a deductive filter. Since $F$ is closed under MP by definition, we only have to check that, for every axiom $\varphi$ of $\h$ and every homomorphism $h: \Al[Fm] \rightarrow \Al[B]$, it holds that $h(\phi) \in F$. By Proposition \ref{l:ax}, for every axiom $\phi$, the equation $\varphi \land \top \approx \top$ is valid in $\Al[B]$, so
$h(\phi \wedge \top) = h(\top)$. Hence $\top = h(\top) \leq_t h(\phi)$ and, since   $\top \in F$, we conclude that $h(\phi) \in F$.

(iii) $\Rightarrow$ (i). Assume that $F$ is an $\lbs$-filter and $a, b \in F$. Since any interpretation of the axiom $(\supset \land )$ belongs to $F$, we have $a \supset (b \supset (a \land b)) \in F$, so by MP we obtain $a \land b \in F$. Similarly, using $(\supset \otimes )$, we obtain $a \otimes b \in F$. For the converse implication, assuming $a \land b \in F$ or $a \otimes b \in F$, we may use $(\land \supset  )$ and $(\otimes \supset  )$ and  MP to obtain the result. 

(i) $\Leftrightarrow$ (iv). It is clear that (i) implies (iv). To prove the converse, we have to show that if $F$ satisfies (iv), then $F$ is a lattice filter of the knowledge order. So let $a, b \in F$. By the interlacing conditions we have $a \land b \leq_t a \otimes b$, so $a \otimes b \in F$. Now let $c \in B$ such that $a \leq_k c $, so that $c = a \oplus c$. Since the bilattice reduct of $\Al[B]$ is distributive, we know (see for instance \cite{ArAv98}) that it satisfies the equation $x \oplus y \approx (x \land \top) \lor (y \land \top) \lor (x \land y)$. By hypothesis we have $ a \land \top \in F$, hence $ (a \land \top) \lor (c \land \top) \lor (a \land c) = a \oplus c \in F$.

(ii) $\Leftrightarrow$ (v). It is easy to show that (ii) $\Rightarrow$ (v), because (ii) implies (i). To prove the converse, assume that $F$ satisfies (v). We need only to check that $b \in F$ whenever $a, a \s b \in F$. Applying the hypothesis, we have $a \otimes (a \s b) \in F$, and now we may use Proposition \ref{p:lem2} (xiii) to obtain the result. 
\end{proof}

We are now able to determine the equivalent algebraic semantics of $\lbs$: 

\begin{thm} \label{t:alg3}
 $\lbs$ is algebraizable with respect to  the variety $\ib$ of implicative bilattices, with equivalence formula $\varphi \leftrightarrow \psi$ and defining equation $ \varphi \approx \varphi  \supset \varphi $.
\end{thm}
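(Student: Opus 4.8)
The plan is to capitalize on the fact that the hard part of algebraizability is already behind us. By Theorem \ref{t:sintalg} the calculus $H_{\s}$ is algebraizable with equivalence formula $\varphi \leftrightarrow \psi$ and defining equation $\varphi \approx \varphi \supset \varphi$, and by the completeness Theorem \ref{t:compl} the same holds for $\lbs$. Hence $\mathbf{Alg^{*}}\lbs$ is a well-defined quasivariety, uniquely determined by the logic, and the whole content of Theorem \ref{t:alg3} is the \emph{identification} of this quasivariety with the specific variety $\ib$ (whence, as a by-product, that $\mathbf{Alg^{*}}\lbs$ is a variety at all). I would prove the set-equality $\mathbf{Alg^{*}}\lbs = \ib$ by two inclusions, using as the main tool the Blok--Pigozzi presentation (\cite{BP89}) of $\mathbf{Alg^{*}}\lbs$ as the quasivariety axiomatized by: (a) the equation $\varphi \approx \varphi \supset \varphi$, i.e.\ $E(\varphi)$, for each axiom $\varphi$ of $H_{\s}$; (b) the quasi-equation translating modus ponens, $E(x) \;\&\; E(x \supset y) \Rightarrow E(y)$; and (c) the pair expressing $x \approx y \;\dashv\vDash\; E(x \leftrightarrow y)$. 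Throughout I would freely use Proposition \ref{p:biglem} (x), which says $E(a)$ holds iff $\top \leq_t a$, so that (a)--(c) acquire a transparent order-theoretic meaning.

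For the inclusion $\ib \subseteq \mathbf{Alg^{*}}\lbs$ I would verify that every implicative bilattice satisfies the presentation (a)--(c). Item (a) is precisely Lemma \ref{l:ax}, which states $\top \leq_t \varphi$ for every axiom. Item (b) is Proposition \ref{p:lem2} (iii): from $\top \leq_t a$ and $\top \leq_t a \supset b$ one obtains $\top \leq_t b$. Item (c) reduces, via $E(c) \Leftrightarrow \top \leq_t c$, to the statement that $a = b$ if and only if $\top \leq_t a \leftrightarrow b$; the forward direction follows since $a \leftrightarrow a \geq_t \top$, and the converse from Proposition \ref{p:lem2} (vi), which gives $\top \leq_t a \rightarrow b \Leftrightarrow a \leq_t b$, applied to both conjuncts of $a \leftrightarrow b$. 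Thus every algebra in $\ib$ validates the defining (quasi-)equations of $\mathbf{Alg^{*}}\lbs$.

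For the reverse inclusion $\mathbf{Alg^{*}}\lbs \subseteq \ib$ I would check that every defining identity of $\ib$ — the bilattice equations together with (IB1)--(IB6) — is valid in $\mathbf{Alg^{*}}\lbs$. Here I exploit the fundamental translation of algebraizability: an identity $s \approx t$ holds in $\mathbf{Alg^{*}}\lbs$ if and only if $\h s \leftrightarrow t$. It therefore suffices to produce each defining identity of $\ib$ as a $\leftrightarrow$-theorem of $H_{\s}$. The bilattice-reduct equations (idempotence, commutativity, associativity and absorption for all four lattice connectives, the De Morgan laws and involutivity of $\neg$) are exactly Proposition \ref{lemlat} (i)--(xxi); the implicative identities are supplied by Proposition \ref{lemlat} (xxii)--(xxix), the matching being immediate for (IB2), (IB4) and (IB6). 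Combining the two inclusions yields $\mathbf{Alg^{*}}\lbs = \ib$, and in particular shows $\mathbf{Alg^{*}}\lbs$ is a variety.

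The step I expect to be the main obstacle is the last one, namely matching the derivations of Proposition \ref{lemlat} to the equational axioms of $\ib$ \emph{exactly}. The discrepancy is that (IB1), (IB3) and (IB5) are phrased in terms of $x \supset x$ or of $\rightarrow$, whereas the corresponding items (xxii), (xxv) and (xxvii) are phrased with the term $\top(\varphi)$ and with $\rightarrow$, so they translate a priori into the order inequalities $\top \leq_t (\cdots)$ rather than into the element-equalities the defining identities demand. The delicate work is to upgrade these inequalities to the genuine $\leftrightarrow$-theorems $\h \big(((\varphi \supset \psi) \supset \varphi) \supset \varphi\big) \leftrightarrow (\varphi \supset \varphi)$ for (IB3) and $\h \big(\varphi \land ((\varphi \supset \psi) \supset (\varphi \otimes \psi))\big) \leftrightarrow \varphi$ for (IB5). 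Both are reachable from the material at hand: (IB3) from the Peirce axiom $(\supset 3)$ together with $(\supset 1)$, $(\neg \supset)$ and $\h \varphi \supset \varphi$, and (IB5) by feeding item (xxvii) into Proposition \ref{lemmalg} (xx); the analogous bridging for (IB1) uses that $E(a)$ forces $a \supset y = y$. Once these equalities are secured, the double inclusion closes and the identification of the equivalent algebraic semantics is complete.
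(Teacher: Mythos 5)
Your proposal is correct and takes essentially the same route as the paper: the same Blok--Pigozzi presentation of $\mathbf{Alg^{*}}\lbs$ by equations and quasiequations, the inclusion $\ib \subseteq \mathbf{Alg^{*}}\lbs$ via Lemma \ref{l:ax} and Proposition \ref{p:lem2} (iii) and (vi), and the reverse inclusion by translating each defining identity of $\ib$ into a $\leftrightarrow$-theorem supplied by Proposition \ref{lemlat}. The one point where you go beyond the paper is in explicitly noting and bridging the mismatch between items (xxii), (xxv), (xxvii) of Proposition \ref{lemlat} and the axioms (IB1), (IB3), (IB5) --- the paper simply cites Proposition \ref{lemlat} as covering everything --- and your proposed bridges (Peirce plus $(\supset 1)$ and $(\neg\supset)$ for (IB3), feeding (xxvii) into Proposition \ref{lemmalg} (xx) for (IB5), and the theoremhood of $\varphi \supset \varphi$ for (IB1)) are sound.
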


\begin{proof}
We will prove that $ \mathbf{Alg^{*}}\lbs = \ib $. By  \cite[Theorem 2.17]{BP89}, we know that the class $\mathbf{Alg^{*}}\lbs$ is axiomatized by the following equations and quasiequations:
\begin{enumerate}[(a)]
\item $\phi \approx \phi \s \phi $ for all axioms $\phi$ of $H_{\s}$
\item $x \approx x \s x \; \& \; x \s y \approx (x \s y) \s (x \s y) \; \Rightarrow \; y \approx y \s y$
\item $ x \da y \approx (x \da y) \s (x \da y) \; \Rightarrow \; x \approx y$.
\end{enumerate}

In order to prove that  $\ib \subseteq \mathbf{Alg^{*}}\lbs$, it is then sufficient to prove that any implicative bilattice satisfies (a) to (c). Note that by Proposition \ref{p:biglem} (x) we have that in any implicative bilattice $x \approx x \s x $ is equivalent to $\top \leq_t  x $. Hence we see that (a) has been proven in Lemma \ref{l:ax}. As to (b), it follows from Proposition \ref{p:lem2} (iii), while (c) follows from Proposition  \ref{p:lem2} (vi).

In order to prove that $ \mathbf{Alg^{*}}\lbs \subseteq  \ib$, we have to show that any $\Al[A] \in \mathbf{Alg^{*}}\lbs$ satisfies all equations defining the variety of implicative bilattices, i.e. all equations defining the variety of bilattices plus (IB1)-(IB6). To see this, using (a) and (c), it will be enough to prove that, for any equation $\phi \approx \psi$ axiomatizing the variety $\ib$, it holds that $\h \phi \da \psi$. And this has been shown in Proposition \ref{lemlat}. 
\end{proof}

By the previous theorem and Proposition~\ref{filters2}, we now have the following:

\begin{cor} \label{c:red_mod}
A matrix $\la \Al, F \ra$ is a reduced model of $\lbs$ if and only if $\Al \in \ib$ and $F = \{ a \in A: a = a \s a \} = \{ a \in A: \top \leq_{t} a  \} = \FF(\top)$.
\end{cor}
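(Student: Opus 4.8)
The plan is to derive the corollary directly from the algebraizability result of Theorem~\ref{t:alg3} together with the filter characterization of Proposition~\ref{filters2}, invoking the standard description of the reduced models of an algebraizable logic. First I would recall the general fact (see \cite{BP89}) that, since $\lbs$ is algebraizable with $\mathbf{Alg^{*}}\lbs = \ib$, equivalence formula $\varphi \leftrightarrow \psi$ and defining equation $\varphi \approx \varphi \supset \varphi$, its reduced models are precisely the matrices $\la \Al[A], F \ra$ such that $\Al[A] \in \ib$ and $F$ is the solution set of the defining equation in $\Al[A]$, namely $F = \{ a \in A : a = a \supset a \}$. In particular $\{ a \in A : a = a \supset a \}$ is the unique $\lbs$-filter turning $\Al[A]$ into a reduced matrix. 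This already yields the clause $\Al[A] \in \ib$ and the first of the three displayed descriptions of $F$; the remaining work is purely to identify this set with the other two.

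Next I would establish the chain of equalities. The equality $\{ a \in A : a = a \supset a \} = \{ a \in A : \top \leq_{t} a \}$ is immediate from Proposition~\ref{p:biglem}~(x): recalling that $E(a)$ abbreviates $a = a \supset a$, and that after Proposition~\ref{p:biglem} the constant $\top(a)$ was shown to be independent of $a$ and simply denoted $\top$, that item states exactly that $\top \leq_{t} a$ if and only if $E(a)$. For the last equality, $\{ a \in A : \top \leq_{t} a \} = \FF(\top)$, I would observe that $\{ a \in A : \top \leq_{t} a \}$ is a (nonempty) lattice filter of the truth ordering containing $\top$, hence by the equivalence of conditions (i) and (iv) in Proposition~\ref{filters2} it is a bifilter. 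On the other hand, by Definition~\ref{filters} every deductive filter contains $\{ a \in A : a \geq_{t} \top \}$, and by the equivalence of (i) and (ii) in Proposition~\ref{filters2} every bifilter is a deductive filter; thus every bifilter contains $\{ a \in A : \top \leq_{t} a \}$. Being itself a bifilter, $\{ a \in A : \top \leq_{t} a \}$ is therefore the least bifilter, and in particular the least bifilter containing $\top$, which is by definition $\FF(\top)$.

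The only genuinely delicate point is the first paragraph: one must invoke the correct form of the Blok--Pigozzi characterization, namely that for an algebraizable logic the reduced models are exactly those obtained from the algebras in $\mathbf{Alg^{*}}$ by taking as designated set the solution set of the defining equation, this set being automatically an $\lbs$-filter and making the matrix reduced. Once this is correctly cited, everything else is bookkeeping: the two further descriptions of $F$ follow mechanically from Proposition~\ref{p:biglem}~(x) and Proposition~\ref{filters2}, with all the algebraic content already absorbed into those earlier results. I therefore expect no computational obstacle, and the proof should be short.
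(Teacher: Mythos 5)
Your proposal is correct and follows essentially the same route as the paper, which derives the corollary directly from Theorem~\ref{t:alg3} together with Proposition~\ref{filters2}: the Blok--Pigozzi description of reduced models gives $\Al \in \ib$ and $F = \{a \in A : a = a \supset a\}$, Proposition~\ref{p:biglem}~(x) identifies this set with $\{a \in A : \top \leq_t a\}$, and Proposition~\ref{filters2} yields that this up-set is the least bifilter, i.e.\ $\FF(\top)$. You have merely made explicit the bookkeeping the paper leaves implicit, and all the citations are the right ones.
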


We end the chapter by proving that the logic $\lbs$, like its implicationless fragment $\lb$, has no consistent extensions. We need some preliminary results.


\begin{proposition} \label{p:foursub}
Let $\Al[B] \in \ib$ and let  $a \in B$ be such that $a >_{t} \top$. Then:
\begin{enumerate}[(i)]
\item $\neg a <_{t} a$
\item $ a = \neg a \s a $
\item $\neg a \s a =  \neg a \s \top  $
\item $ a = a \s a = \neg a \s \neg a $
\item $ a =  \neg a \s (a \otimes \neg a) $
\item $(a \otimes \neg a) \s b = \neg a \s b $ for all $b \in B$  
\item $(a \otimes \neg a ) \land \top = \neg a$  
\item $(a \otimes \neg a ) \lor \top = a$  
\item hence, the set $\{ a \otimes \neg a, \top, \neg a, a  \}$ is the universe of a subalgebra of $\Al[B]$ which is isomorphic to $\fours.$
\end{enumerate}
\end{proposition}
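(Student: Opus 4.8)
The whole proposition rests on one observation: $a >_t \top$ gives $\top \leq_t a$, so $E(a)$ holds by Proposition~\ref{p:biglem}~(x), i.e.\ $a = a \s a$, and then Proposition~\ref{p:biglem}~(i) yields $a \s c = c$ for every $c \in B$. Thus on the left of $\s$ the element $a$ behaves exactly like $\top$. My plan is to establish (i)--(viii) in turn --- each one fixing another operation-value on the four elements $a \otimes \neg a,\ \neg a,\ \top,\ a$ --- and then to read off (ix) by checking closure and comparing the resulting tables with those of $\fours$.

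Item (i) is order theory: $\neg$ reverses $\leq_t$ and $\neg \top = \top$ (Proposition~\ref{p:biglem}~(iii)), so $\top \leq_t a$ forces $\neg a \leq_t \top \leq_t a$, with strictness because $\neg a = a$ would give $a = \top$; I record $\neg a \leq_t \top$ for repeated later use. Item (iv) is immediate from $E(a)$ and Proposition~\ref{p:biglem}~(xvi), which gives $a \s a = \neg a \s \neg a$, so I would prove it out of order and keep it at hand. The crux of the whole argument is (ii). Here I would use (IB6): $\neg(\neg a \s a) \s z = (\neg a \land \neg a) \s z = \neg a \s z$ for all $z$, so by Proposition~\ref{p:lem2}~(xvii) we get $\neg(\neg a \s a) \land \top = \neg a \land \top = \neg a$; on the other hand $\neg a \s a \geq_t a \geq_t \top$ (Proposition~\ref{p:biglem}~(ii)) gives $\neg(\neg a \s a) \leq_t \top$, whence $\neg(\neg a \s a) \land \top = \neg(\neg a \s a)$. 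Comparing the two yields $\neg(\neg a \s a) = \neg a$, and a final $\neg$ gives $\neg a \s a = a$. The principle at work --- that two elements lying below $\top$ and related by $\sim_1$ must coincide --- is exactly what makes this step go through, and it is the only genuinely delicate point.

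The remaining preliminary items are then short. Item (iii) is a squeeze: Proposition~\ref{p:lem2}~(x) applied to $\neg a \leq_t \top$ gives $a = \neg a \s \neg a \leq_t \neg a \s \top$, and applied to $\top \leq_t a$ gives $\neg a \s \top \leq_t \neg a \s a = a$, so $\neg a \s \top = a = \neg a \s a$. Item (v) is the computation $\neg a \s (a \otimes \neg a) = (\neg a \s a) \otimes (\neg a \s \neg a) = a \otimes a = a$, using distributivity of $\s$ over $\otimes$ (Proposition~\ref{p:lem2}~(xv)) together with (ii) and (iv). For (vi) I use (IB2) and $a \s c = c$: $(a \otimes \neg a) \s b = a \s (\neg a \s b) = \neg a \s b$. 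This is precisely the statement $a \otimes \neg a \sim_1 \neg a$ (Proposition~\ref{p:some_equiv}, (viii)$\Leftrightarrow$(ix)), so $(a \otimes \neg a) \land \top = \neg a \land \top = \neg a$, which is (vii); and (viii) follows by applying $\neg$ to (vii) and using De Morgan together with $\neg \top = \top$.

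Finally (ix) is an assembly. I would first check the four elements are pairwise distinct: for instance $a \otimes \neg a = a$ would give, via (vii), $\neg a = a \land \top = \top$, hence $a = \top$, a contradiction; the other coincidences are ruled out the same way using (vii), (viii) and $a \neq \top$. Then I tabulate the six operations on $\{a \otimes \neg a, \top, \neg a, a\}$. Negation is handled by $\neg \top = \top$ and De Morgan; for $\otimes$ and $\oplus$ one uses that $a \otimes \neg a$ is the $\leq_k$-least and $\top$ the $\leq_k$-greatest of the four, with $a \oplus \neg a = \top$ (as $a \s a = a$); for $\land$ and $\lor$ one uses that $\neg a$ is the $\leq_t$-least and $a$ the $\leq_t$-greatest, the only non-obvious values $(a\otimes\neg a)\land \top = \neg a$ and $(a \otimes \neg a)\lor \top = a$ being supplied by (vii) and (viii); and for $\s$ the four rows are $\top \s x = x$ (Proposition~\ref{p:biglem}~(iv)), $a \s x = x$, $\neg a \s x = a$ for all $x$ in the set by (ii)--(v), and $(a \otimes \neg a) \s x = \neg a \s x = a$ by (vi). Every value lands back in the set, so it is the universe of a subalgebra, and the tables coincide with those of $\fours$ under $a \otimes \neg a \mapsto \bot$, $\neg a \mapsto \mathsf{f}$, $a \mapsto \mathsf{t}$, $\top \mapsto \top$; since the four elements are distinct this map is an isomorphism. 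The difficulty is thus concentrated in (ii); once that is in place, (iii)--(viii) are routine and (ix) is bookkeeping.
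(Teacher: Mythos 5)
Your proof is correct, and although it follows the same item-by-item decomposition as the paper, several of the individual steps run through genuinely different lemmas. For the crux (ii), the paper proves the two inequalities separately: $a \leq_t \neg a \s a$ from Proposition~\ref{p:biglem}~(ii), and the converse by verifying the two conditions $E((\neg a \s a)\s a)$ and $E(\neg a \s \neg(\neg a \s a))$ (the latter via essentially the same (IB6) computation you use) and then invoking Proposition~\ref{p:biglem}~(vii). You instead read the (IB6) identity as saying that $\neg(\neg a \s a)$ and $\neg a$ have identical $\s$-rows, and conclude by Proposition~\ref{p:lem2}~(xvii) plus the observation that both elements lie $\leq_t \top$; this ``equal rows and both below $\top$ implies equal'' principle is the $\sim_{1}$-machinery of Proposition~\ref{p:some_equiv}, and you reuse it to derive (vii) directly from (vi), where the paper instead computes $(a \otimes \neg a)\land \top$ by distributivity. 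Likewise your (iii) is a two-sided squeeze via monotonicity (Proposition~\ref{p:lem2}~(x)), where the paper again goes through $E$-conditions for one direction, and your (viii) follows from (vii) by negation and De Morgan rather than by a second distributivity computation. What your route buys is uniformity---(ii), (vi) and (vii) all fall under one principle---and a more scrupulous (ix) than the paper's ``it is easy to check'': you verify pairwise distinctness and tabulate all six operations, supplying the needed facts $a \oplus \neg a = \top$ and $\neg a \leq_t a \otimes \neg a \leq_t a$. What the paper's route buys is economy of means: it relies only on the $E$-condition tool (Proposition~\ref{p:biglem}~(vii)) that serves as its workhorse throughout the chapter, never needing Proposition~\ref{p:lem2}~(xvii) or Proposition~\ref{p:some_equiv}.
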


\begin{proof}
(i). Almost immediate, for the assumption implies $\neg a \leq_{t} \neg \top = \top <_{t} a$. 

(ii). Note that, by  Proposition~\ref{p:biglem} (ii), we have  $ a \leq_{t} \neg a \s a $. In order to prove the other inequality, we show that $E((\neg a \s a) \s a)$ and $E (\neg a \s \neg ( \neg a \s a ))$, so that the result will follow by Proposition~\ref{p:biglem} (vii). The first one is immediate; as to the second, using (IB6), we have
\begin{align*}
  \neg a \s \neg ( \neg a \s a )  & = (\neg a \land \neg a) \s \neg ( \neg a \s a )  \\
& = \neg (\neg a \s \neg \neg a) \s \neg ( \neg a \s a )  \\
& = \neg (\neg a \s  a) \s \neg ( \neg a \s a ).
\end{align*}
So the result easily follows.

(iii). From the assumptions and Proposition~\ref{p:lem2} (x), it follows that $\neg a \s a \geq_{t}  \neg a \s \top $, so we just need to prove the other inequality.  As in the proof of the previous item, we will show that $E((\neg a \s a) \s  (\neg a \s \top))  $ and $E(\neg (\neg a \s \top) \s  \neg (\neg a \s a))  $. The first one is almost immediate. The second, using (ii), is equivalent to $E(\neg (\neg a \s \top) \s  \neg a)  $. Then, using (IB6), we have 
\begin{align*}
  \neg (\neg a \s \top) \s  \neg a  & = (\neg a \land \neg \top) \s  \neg a  \\
& = (\neg a \land \top) \s  \neg a  \\
& = \top \s (\neg a \s  \neg a).
\end{align*}
So the result easily follows.

(iv). By Proposition~\ref{p:biglem} (xvi).

(v). Using Proposition~\ref{p:lem2} (xv) together with the previous items (ii) and (iv), we have 
$$
\neg a \s (a \otimes \neg a)  = (\neg a \s a) \otimes (\neg a \s \neg a)  = a \otimes a = a.
$$

(vi). The assumptions imply that, for all $b \in B$, 
$$(a \otimes \neg a ) \s b = a \s (\neg a \s b) = \neg a \s b.$$

(vii). Applying distributivity, we have 
$$(a \otimes \neg a ) \land \top = (a \land \top) \otimes (\neg a \land \top) = \top \otimes \neg a = \neg a.$$

(viii).  Applying distributivity, we have
$$(a \otimes \neg a ) \lor \top = (a \lor \top) \otimes (\neg a \lor \top) = a \otimes \top = a. $$  

(ix). Using the previous items, it is easy to check that the isomorphism is given by the map $h: \fours \ta \Al[B]$ defined as follows: $h(\bot) = a \otimes \neg a $, $h(\top) = \top$, $h(\false) = \neg a$ and $h(\true)= a $.
\end{proof}


It is now easy to prove the following:

\begin{lem} \label{l:submat_impl}
Let $\la \Al[B], F \ra$ be a reduced model of $\lbs$. Then the logic defined by $\la \Al[B], F \ra$ is weaker than $\lbs$.
\end{lem}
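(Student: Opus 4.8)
The plan is to show that the consequence relation $\models_{\la \Al[B], F\ra}$ determined by the matrix is contained in $\vdash_{\lbs}$, which is exactly what "weaker than $\lbs$" means. By the completeness theorem (Theorem \ref{t:compl}), $\vdash_{\lbs}$ coincides with the consequence relation $\models_{\la \fours, \Tr\ra}$ of the single matrix $\la \fours, \Tr\ra$, so it suffices to prove $\models_{\la \Al[B], F\ra} \subseteq \models_{\la \fours, \Tr\ra}$. For this I would invoke the elementary fact that a submatrix never defines a weaker logic: if $\la \Al[S], F\cap S\ra$ is a submatrix of $\la \Al[B], F\ra$ (that is, $\Al[S]$ is a subalgebra of $\Al[B]$), then every valuation into $\Al[S]$ is also a valuation into $\Al[B]$, and filter membership is preserved in both directions because $F\cap S$ is the restriction of $F$; hence any failure of an inference in the submatrix is already a failure in $\la \Al[B], F\ra$, giving $\models_{\la \Al[B], F\ra}\subseteq \models_{\la \Al[S], F\cap S\ra}$. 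Thus the lemma reduces to exhibiting inside $\la \Al[B], F\ra$ a submatrix isomorphic to $\la \fours, \Tr\ra$.

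The subalgebra we need is exactly what Proposition \ref{p:foursub} supplies, once we locate an element $a$ with $a >_t \top$. Assume $\Al[B]$ is nontrivial (the relevant case). First I would check that such an $a$ exists: were $\top$ the truth-maximum of $B$, then $\neg\top$ would be the truth-minimum, but $\neg\top = \top$ by Proposition \ref{p:biglem} (iii), which would collapse the truth lattice and force $\Al[B]$ to be trivial. Concretely, for any $b \not\le_t \top$ the element $a := b \lor \top$ satisfies $a >_t \top$.

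With $a$ fixed, Proposition \ref{p:foursub} (ix) tells us that $S = \{a \otimes \neg a,\ \top,\ \neg a,\ a\}$ is the universe of a subalgebra of $\Al[B]$ and that the map $h$ sending $\bot, \top, \false, \true$ to $a\otimes\neg a, \top, \neg a, a$ respectively is an isomorphism $\fours \cong \Al[S]$. It then remains to verify that $h$ is a matrix isomorphism $\la \fours, \Tr\ra \cong \la \Al[S], F\cap S\ra$, i.e.\ that $F\cap S = \{a, \top\} = h[\Tr]$. Using the description $F = \{x \in B : \top \le_t x\}$ from Corollary \ref{c:red_mod}, this amounts to checking $\top, a \in F$ (immediate from $\top \le_t \top$ and $\top <_t a$) together with $\neg a, a\otimes\neg a \notin F$. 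For the latter, $a >_t \top$ gives $\neg a <_t \neg\top = \top$, so $\top \not\le_t \neg a$; and by Proposition \ref{p:foursub} (vii) we have $(a\otimes\neg a)\land\top = \neg a \neq \top$, whence $\top \not\le_t a\otimes\neg a$ as well. This identifies $\la \Al[S], F\cap S\ra$ with $\la \fours, \Tr\ra$ and completes the argument.

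I expect the main obstacle to be precisely this last filter computation — confirming that $F$ restricts to exactly the designated set of $\fours$ under $h$ — together with the preliminary guarantee that $a >_t \top$ exists in the nontrivial case. Everything else is the routine observation that submatrices yield stronger logics, plus direct appeals to Proposition \ref{p:foursub}, Corollary \ref{c:red_mod}, and the completeness theorem.
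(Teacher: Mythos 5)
Your proof is correct and takes essentially the same route as the paper: both reduce the claim, via the completeness theorem (Theorem \ref{t:compl}), to exhibiting $\la \fours, \Tr \ra$ as a submatrix of $\la \Al[B], F \ra$ using the subalgebra $\{ a \otimes \neg a, \top, \neg a, a \}$ of Proposition \ref{p:foursub} (ix), and then transferring countermodels from the submatrix to the matrix. If anything you are more careful than the paper, which picks ``any $a \in F$ with $a \neq \top$'' without justifying existence (your nontriviality argument via $\neg \top = \top$) and asserts $\Tr = h^{-1}[F]$ without the explicit filter computation you carry out.
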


\begin{proof}
Reasoning by contraposition, we will prove that $\Gamma \nvDash_{\lbs} \phi$ implies $\Gamma \nvDash_{\la \Al[B], F \ra} \phi$ for all $\Gamma \cup \{ \phi \} \subseteq Fm$. In order to do this, it will be enough to show that $\la \fours, \Tr \ra$ is a submatrix of any matrix of the form $\la \Al[B], F \ra$. From Corollary~\ref{c:red_mod} it follows that $\Al[B]$ is an implicative bilattice and $F$ is the least bifilter of $\Al[B]$, i.e.\ the bifilter generated by $\top$. Given any element $a \in F$ such that $a \neq \top$, we have that the set $\{ a \otimes \neg a, \top, \neg a, a  \}$ is the universe of a subalgebra of $\Al[B]$ which is isomorphic to $\fours$ through the map $h$ defined as in Proposition~\ref{p:foursub} (ix). Note also that $\Tr = h^{-1}[F]$. So if $g: \Al[Fm] \ta \four$ is a homomorphism such that $g[\Gamma] \subseteq \Tr $ but $g (\phi) \notin \Tr$, then also $g [h[\Gamma]] \subseteq F $ but $g (h(\phi)) \notin F $. Recalling that $\lbs$ is the logic defined by the matrix $\la \four, \Tr \ra$ (Theorem~\ref{t:compl}), we may then conclude that  $\Gamma \nvDash_{\lbs} \phi$ implies $\Gamma \nvDash_{\la \Al[B], F \ra} \phi$.
%
%
\end{proof}

In Section~\ref{sec:tar} we defined a logic  $\mathcal{L} = \la \Al[Fm], \vdash_{\mathcal{L}} \ra$ to be consistent if there are $ \phi, \psi \in Fm$ such that $\phi \nvdash_{\mathcal{L}} \psi$. In this case, since any extension of $\lbs$ will have theorems, it would be sufficient to require a weaker condition, i.e.\ that there be $ \phi \in Fm$ such that $\nvdash_{\mathcal{L}} \phi$. By the previous lemma we may then obtain the following: 

\begin{proposition} \label{p:extens_impl}
If a logic $\mathcal{L} = \la \Al[Fm], \vdash_{\mathcal{L}} \ra$ is a consistent extension of $\lbs$, then ${\vdash_{\mathcal{L}}} = {\vDash_{\lbs}} $. 
\end{proposition}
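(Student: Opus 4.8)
The plan is to exploit the fact that $\lbs$ is algebraizable (Theorem~\ref{t:alg3}), so that every extension is complete with respect to its reduced models, whose shape is fixed by Corollary~\ref{c:red_mod}, and then to feed a suitable such model into Lemma~\ref{l:submat_impl}. Throughout I use $\vDash_{\lbs} = {\vDash_{\la \fours, \Tr \ra}}$ (Theorem~\ref{t:compl}). Since $\mathcal{L}$ is an extension of $\lbs$ we already have ${\vDash_{\lbs}} \subseteq {\vdash_{\mathcal{L}}}$, so the whole task is to establish the reverse inclusion ${\vdash_{\mathcal{L}}} \subseteq {\vDash_{\lbs}}$ under the consistency hypothesis.

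First I would record the routine observation that, since $\mathcal{L}$ extends $\lbs$, every matrix model of $\mathcal{L}$ is a model of $\lbs$: an $\mathcal{L}$-filter is closed under $\vdash_{\mathcal{L}}$, hence a fortiori under $\vDash_{\lbs}$, so it is an $\lbs$-filter. As being reduced depends only on the matrix (triviality of the Leibniz congruence), every reduced model of $\mathcal{L}$ is a reduced model of $\lbs$, and thus, by Corollary~\ref{c:red_mod}, has the form $\la \Al[B], F \ra$ with $\Al[B] \in \ib$ and $F = \FF(\top) = \{ a \in B : \top \leq_t a \}$.

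Next I would use consistency to produce one suitable model. By the weak form of consistency noted before the statement there is $\phi \in Fm$ with $\nvdash_{\mathcal{L}} \phi$; since every logic is complete with respect to the class of its reduced models, there exist a reduced model $\la \Al[B], F \ra$ of $\mathcal{L}$ and a valuation $v$ with $v(\phi) \notin F$, so $F$ is a proper filter. The crucial step, which I expect to be the real obstacle, is to extract from this an element $a \in F$ with $a \neq \top$, since that is exactly what Proposition~\ref{p:foursub} requires in order to embed $\fours$. Setting $c := v(\phi) \notin F$ (so $c \neq \top$), we have $\top \leq_t c \lor \top$, whence $c \lor \top \in F$; if $c \lor \top \neq \top$ take $a := c \lor \top$, and otherwise $c \leq_t \top$ forces $\top = \neg \top \leq_t \neg c$ by Proposition~\ref{p:biglem} (iii), so $a := \neg c \in F$ with $a \neq \top$ (were $\neg c = \top$ we would get $c = \neg \neg c = \neg \top = \top$, a contradiction). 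In particular $F \neq \{ \top \}$.

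Finally I would conclude by the submatrix argument of Lemma~\ref{l:submat_impl}. Since $a \in F$ and $a \neq \top$ give $a >_t \top$, Proposition~\ref{p:foursub} shows that $\{ a \otimes \neg a, \top, \neg a, a \}$ is the universe of a subalgebra isomorphic to $\fours$, and, exactly as in the proof of Lemma~\ref{l:submat_impl}, $\la \fours, \Tr \ra$ is a submatrix of $\la \Al[B], F \ra$; hence ${\vDash_{\la \Al[B], F \ra}} \subseteq {\vDash_{\la \fours, \Tr \ra}} = {\vDash_{\lbs}}$. On the other hand $\la \Al[B], F \ra$ is a model of $\mathcal{L}$, so ${\vdash_{\mathcal{L}}} \subseteq {\vDash_{\la \Al[B], F \ra}}$. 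Chaining these two inclusions yields ${\vdash_{\mathcal{L}}} \subseteq {\vDash_{\lbs}}$, which together with the extension hypothesis gives ${\vdash_{\mathcal{L}}} = {\vDash_{\lbs}}$. Note that consistency is used only to guarantee the existence of a reduced model carrying an element $a \neq \top$: without it the one-element matrix, whose associated logic is inconsistent rather than weaker than $\lbs$, would block the argument.
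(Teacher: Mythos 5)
Your proof is correct and follows essentially the same route as the paper's: characterize the reduced models of $\mathcal{L}$ as matrices $\la \Al[B], \FF(\top) \ra$ with $\Al[B] \in \ib$, use consistency (via completeness with respect to reduced models) to obtain one such matrix with a proper filter, and then run the $\fours$-submatrix argument of Lemma~\ref{l:submat_impl} to get ${\vdash_{\mathcal{L}}} \subseteq {\vDash_{\lbs}}$. The one point where you go beyond the paper is the explicit extraction of an element $a \in F$ with $a \neq \top$ (the dichotomy on $c \lor \top$, falling back to $\neg c$ when $c \leq_t \top$): the paper's proof, like the proof of Lemma~\ref{l:submat_impl} itself, tacitly assumes that a proper filter $\FF(\top)$ contains such an element, so your argument closes a small gap. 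A further minor difference is that you conclude via soundness with respect to the single reduced matrix $\la \Al[B], F \ra$, whereas the paper intersects over the whole class of reduced matrices and invokes completeness again; your version is marginally more economical, and the two steps are interchangeable.
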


\begin{proof}
By \cite[Proposition 2.27]{FJa09}, we know that any reduced matrix for $\mathcal{L}$ is of the form $\la \Al[B], F \ra$, where $\Al[B]$ is an implicative bilattice and $F$ is the least bifilter of \Al[B]. By the assumption of consistency, we may assume that there is at least one reduced matrix for $\mathcal{L}$ such that  $F$ is proper. By Lemma \ref{l:submat_impl}, we know that the logic defined by such a matrix is weaker than $\lbs$; this implies that the class of all reduced matrices for  $\mathcal{L}$ defines a weaker logic than $\lbs$. Since  any logic is complete with respect to the class of its reduced matrices (see \cite{W88}), we may conclude that  $\mathcal{L}$ itself is weaker than $\lbs$, so they must be equal.
\end{proof}

\chapter{Implicative bilattices}
\label{ch:imp}

\section{Representation Theorem and congruences} 
\label{sec:repimp}

In this chapter we will study the variety $\ib$  in more depth. We will obtain a representation theorem for implicative   bilattices analogous to the ones we have for interlaced pre-bilattices and   bilattices; we will turn to the study of the lattices that arise as factors from the decomposition given by our representation, comparing them to the class of lattices that arose from the study of the reduced models of the logic $\lb$. Using these results, we will prove that the variety  $\ib$ is generated by its four-element member.  Finally, we shall consider and characterize some subreducts of implicative   bilattices which seem to have a special logical significance.

We begin by showing that any implicative   bilattice is isomorphic to a special kind of   product  whose factors are upper-bounded relatively complemented distributive lattices. Let us recall that a lattice $\Al[L] = \la L, \sqcap, \sqcup \ra$ with maximum 1 is relatively complemented if any element has a complement in any interval in $L$ or, equivalently, if for any $a, b \in L$ such that $a \leq b$, there is $c \geq a$ such that $b \sqcap c = a $ and $b \sqcup c = 1$. In this case $c$ is said to be the relative complement of $b$ in the interval $[a, 1]$, and it is unique if the lattice is distributive.

The class of relatively complemented distributive lattices with maximum has already been considered in the literature as an algebraic counterpart of the $\{ \land, \lor, \rightarrow \}$-fragment of classical propositional logic. In \cite{Cu77} this fragment is called ``classical positive propositional algebra'', and the corresponding algebras  ``classical implicative lattices''. Here we will use the same terminology to denote this class of lattices. However, other names are available: in the context of universal algebra, relatively complemented distributive lattices with maximum are sometimes called ``generalized Boolean algebras'' (see for instance \cite{AbVaTo04}), while in other studies this name is used for relatively complemented distributive lattices having a minimum element. 

Our next aim is to verify that the class of classical implicative lattices can be axiomatized by means of equations only. We shall need some lemmas.

Given a  classical implicative lattice $\Al[L] = \langle L,  \sqcap, \sqcup, 1  \rangle$ and $a, b \in L$, we will denote by $a  \backslash  b $ the relative complement of $a$ in the interval $ \left[ a \sqcap b, 1 \right]  $, i.e.\ the unique element satisfying both $a \sqcap (a  \backslash  b ) = a \sqcap b $ and $a \sqcup (a  \backslash  b ) = 1 $. We will write $\Al[L] = \langle L,  \sqcap, \sqcup, \ba, 1  \rangle$ to emphasize the fact that we are considering these lattices as algebras in the extended similarity type.

\begin{proposition} \label{relcom} 
Let $\Al[L] = \langle L,  \sqcap, \sqcup, \ba, 1  \rangle$  be a classical implicative lattice. Then, for all $a, b, c \in L$:

\begin{enumerate}[(i)]
\item $a \sqcap b \leq c \;$ 	if and only if $\; a \leq b  \backslash  c $	
\item $1  \backslash  a = a $																					
\item $a  \backslash  (b \sqcap c) = (a  \backslash  b) \sqcap (a  \backslash  c)$						
\item $a \leq (b  \backslash  a)$																			
\item $(a \sqcup b)  \backslash  c = (a  \backslash  c) \sqcap (b  \backslash  c)$						
\item $(a  \backslash  b)  \backslash  a = a$																			
\item $a  \backslash  (b  \backslash  c) = (a  \backslash  b)  \backslash  (a  \backslash  c) = (a \sqcap b)  \backslash c $	
\item $a \sqcup (b \backslash c) = (a \sqcup b)  \backslash  (a \sqcup c)$  			
\item $(a  \backslash b ) \sqcup  (b  \backslash c ) = 1$													
\item $(a  \backslash  b)  \backslash  b = a \sqcup b$.														
\end{enumerate}
\end{proposition}

\begin{proof}

(i). Suppose $a \sqcap b \leq c $. Then $a \sqcap b \leq b \sqcap c $, so $b \sqcap (a \sqcup (b \backslash c)) = (b \sqcap a) \sqcup (b \sqcap (b \backslash c)) =  (b \sqcap a) \sqcup  (b \sqcap c) = b \sqcap c$ and $b \sqcup a \sqcup (b \backslash c) = a \sqcup 1 = 1$. Therefore $a \sqcup (b \backslash c) = (b \backslash c)$. Conversely, suppose $a \leq b  \backslash  c $. Then  $a \sqcap b \leq (b  \backslash  c) \sqcap b = b \sqcap c \leq c$.

From (i) it follows that the algebra $\langle L, \sqcap, \sqcup,  \backslash , 1  \rangle$ is a relatively pseudo-com\-plemented lattice, which implies (see \cite{Ra74}) that conditions (ii) to (vii) are satisfied.

(viii). We have $(a \sqcup b) \sqcap (a \sqcup (b \backslash c)) = a \sqcup (b \sqcap (b \backslash c)) = a \sqcup (b \sqcap c) = (a \sqcup b) \sqcap (a \sqcup c) $ and $a \sqcup b \sqcup a \sqcup (b \backslash c) = a \sqcup 1 = 1$. 

(ix). Using (viii), we have  $(a  \backslash b ) \sqcup  (b  \backslash c ) = (a \sqcup  (b  \backslash c ))  \backslash  (b \sqcup  (b  \backslash c )) = (a \sqcup  (b  \backslash c ))  \backslash  1 = 1$.	

(x). Using (iv), we have $(a  \backslash  b) \sqcap (a \sqcup b) = ((a  \backslash  b) \sqcap a) \sqcup ((a  \backslash  b) \sqcap b) = (a \sqcap b) \sqcup b = b$	and $ (a  \backslash  b) \sqcup a \sqcup b = 1 \sqcup b = 1$.												
\end{proof}


Recall that a lattice  $\Al[L] = \langle L, \sqcap, \sqcup, \backslash \rangle$ with a binary operation $\backslash$ is said to be \emph{relatively pseudo-complemented} (see \cite[p. 52]{Ra74}) when the following residuation condition is satisfied:
\begin{enumerate}[] 
\item (R) $\quad a \sqcap b \leq c$ if and only if $ b \leq a \backslash c$,  for all $a, b, c \in L $.
\end{enumerate} 

We may then characterize classical implicative lattices as follows:

\begin{proposition} \label{relpseu}
Let $\Al[L] = \langle L, \sqcap, \sqcup, \backslash \rangle$ be a relatively pseudo-complemented lattice satisfying the   equation $x \sqcup (x \backslash y) \approx x \backslash x$.
Then $\langle L, \sqcap, \sqcup \rangle$ is a classical implicative lattice. 
\end{proposition} 

\begin{proof}
It is known that condition (R)  implies that $\langle L, \sqcap, \sqcup \rangle$ is distributive, has a top element $1 = a \backslash a$ for all $a \in L$, and that $a \sqcap b = a \sqcap (a \backslash b)$ for all $a, b \in L$. Now, given an interval $[b,1]$, the satisfaction of  the equation $x \sqcup (x \backslash y) \approx x \backslash x$ guarantees that, for all $a \in [b,1]$, the element $a \backslash b$ is the relative complement of $a$ in $[b,1]$. This in turn implies that for an arbitrary interval $[b, c]$, any $a \in [b, c]$ has a complement in $[b, c]$, namely $(a \backslash b) \sqcap c $.
\end{proof}

As we have seen (Proposition \ref{relcom}), the converse implication is also true. That is, given a classical implicative lattice  $\langle L, \sqcap, \sqcup, 1 \rangle$, we can define an operation $\backslash : L^2 \longrightarrow L$ satisfying condition (R) above.

Since relatively pseudo-complemented lattices form a variety, it follows that the class of classical implicative lattices is also a variety, axiomatized by the identities for  relatively pseudo-complemented lattices plus  $x \sqcup (x \backslash y) \approx x \backslash x$.

It follows from the results of \cite[Chapter X, p. 236]{Ra74} that the variety of relatively pseudo-complemented lattices is the equivalent algebraic semantics of positive logic, the $\{ \land, \lor, \rightarrow \}$-fragment of intuitionistic logic. Therefore, the variety of classical implicative lattices is the equivalent algebraic semantics of the axiomatic extension of positive logic obtained by adding the axiom $ p \lor (p \rightarrow q)$ (the ``classical positive propositional algebra'' of \cite{Cu77}).


Let us now turn to the study of the relation between classical implicative lattices and implicative   bilattices. We start with the following result:

\begin{proposition} \label{reprimp} 
For any implicative   bilattice  $\Al[B] = \langle B, \land, \lor, \otimes, \oplus, \supset, \neg  \rangle $, the   bilattice reduct  $\langle B, \land, \lor, \otimes, \oplus, \neg \rangle$ is isomorphic to the     bilattice $\Al[L] \odot \Al[L]$ for some classical implicative lattice $\Al[L]$ (in particular, if $\Al[B]$ is bounded, then $\Al[L]$ is a Boolean lattice).
\end{proposition}

\begin{proof}

It is  known (see Theorem \ref{thm:reprbil}) 
that the bilattice reduct of $\Al[B]$ is isomorphic to the  product bilattice $\Al[B^-] \odot \Al[B^-]$, where 
$$\Al[B^-] = \la \{ a \in B : a \leq_t \top \}, \land, \lor  \ra = \la \{ a \in B : a \leq_t \top \}, \otimes, \oplus  \ra .$$ 
Moreover, we  know that $\Al[B^-]$ is distributive and has a maximum element $\top$.
Hence, to complete the proof it will be sufficient to show that $\Al[B^-]$ is relatively complemented.  Let then $a, b \in B$ be such that $ a \leq_t b \leq_t \top$. We will prove that the relative complement of $b$ in $[a, \top]$ is $ (b \supset a) \land \top$. This follows from the fact that, by Proposition \ref{p:lem2} (xxi), we have $b \land (b \supset a) \land \top = b \land a \land \top = a$, and that, using Proposition  \ref{p:lem2} (xii), we have $b \lor ((b \supset a) \land \top) = (b \lor (b \supset a)) \land (b \lor \top) = (b \lor (b \supset a)) \land \top = \top$.
\end{proof}

It is now possible to prove a result which may be regarded as a kind of converse of Proposition \ref{reprimp}.

Let $\Al[L] = \langle L, \sqcap, \sqcup,  1  \rangle$  a classical implicative lattice and let $\Al[L] \odot \Al[L] = \langle L \times L, \land, \lor, \otimes, \oplus, \neg  \rangle $ be the  product bilattice defined as usual. We define the operation $\supset: L \times L \longrightarrow L \times L$ as follows: for all $a_1, a_2, b_1, b_2 \in L, $
$$ \langle a_1, a_2 \rangle \supset  \langle b_1, b_2 \rangle  = \langle a_1  \backslash   b_1, a_1 \sqcap b_2 \rangle. $$ 
We have the following:

\begin{proposition} \label{defimp}
The structure $\langle \Al[L] \odot \Al[L], \supset \rangle $ is an implicative   bilattice.
\end{proposition}

\begin{proof}
Using the properties stated in Proposition \ref{relcom}, we will show that $\langle \Al[L] \odot \Al[L], \supset \rangle $ satisfies equations (IB1) to (IB6) of Definition \ref{bilimpl}. Let $a_{1}, a_{2}, b_{1}, b_{2}, c_{1}, c_{2} \in L$. Then:

(IB1). $\langle 1, 1 \rangle \supset \langle a_1, a_2 \rangle = \langle 1  \backslash  a_1, 1 \sqcap a_2 \rangle  = \langle a_1, a_2 \rangle $.

(IB2). We have 
\begin{align*}
\langle a_1, a_2 \rangle \supset ( \langle b_1, b_2 \rangle \supset \langle c_1, c_2 \rangle) & = \langle a_1  \backslash  (b_1  \backslash  c_1), a_1 \sqcap b_1 \sqcap c_2 \rangle \\ 
& = \langle (a_1 \sqcap b_1)  \backslash  c_1, a_1 \sqcap b_1 \sqcap c_2 \rangle \\ 
& = (\langle a_1, a_2 \rangle \land \langle b_1, b_2 \rangle) \supset \langle c_1, c_2 \rangle \\ 
& = (\langle a_1, a_2 \rangle \otimes \langle b_1, b_2 \rangle) \supset \langle c_1, c_2 \rangle.
\end{align*} 

(IB3). Recall that $(a  \backslash  b)  \backslash  a = a$  by Proposition \ref{relcom} (vi). Now we have
\begin{align*}
& ((\langle a_1, a_2 \rangle \s \langle b_1, b_2 \rangle) \s \langle a_1, a_2 \rangle) \s \langle a_1, a_2 \rangle = \\ & = 
((\langle a_1 \ba b_1, a_1 \sqcap b_2 \rangle) \s \langle a_1, a_2 \rangle) \s \langle a_1, a_2 \rangle = \\ 
& =
( \langle (a_1 \ba b_1) \ba a_1, (a_1 \ba b_1)  \sqcap a_2 \rangle) \s \langle a_1, a_2 \rangle = \\ 
& =
( \langle a_1, (a_1 \ba b_1)  \sqcap a_2 \rangle) \s \langle a_1, a_2 \rangle = \\ 
& =
 \langle a_1 \ba a_1, a_1 \sqcap a_2 \rangle =  \\ 
& =
\langle a_1 , a_2 \rangle \s  \langle a_1 , a_2 \rangle. 
\end{align*} 


(IB4). We have 
\begin{align*} 
(\langle a_1, a_2 \rangle \lor \langle b_1, b_2 \rangle) \supset \langle c_1, c_2 \rangle &  = \langle (a_1 \sqcup  b_1)  \backslash  c_1,  (a_1 \sqcup  b_1) \sqcap c_2  \rangle \\ 
& = (\langle a_1, a_2 \rangle \oplus \langle b_1, b_2 \rangle) \supset \langle c_1, c_2 \rangle \\ 
& = \langle (a_1  \backslash  c_1) \sqcap (b_1  \backslash  c_1), (a_1 \sqcup  b_1) \sqcap c_2 \rangle \\  
& = (\langle a_1, a_2 \rangle \supset \langle c_1, c_2 \rangle) \land (\langle b_1, b_2 \rangle \supset \langle c_1, c_2 \rangle).
\end{align*} 

(IB5). We have 
\begin{align*}
 & (\langle a_1, a_2 \rangle \supset \langle b_1, b_2 \rangle)  \supset (\langle a_1, a_2 \rangle \otimes \langle b_1, b_2 \rangle) = \\ & = \langle (a_1  \backslash  b_1)  \backslash  (a_1 \sqcap b_1), (a_1  \backslash  b_1) \sqcap a_2 \sqcap b_2 \rangle = \\ 
 & =    \langle ((a_1  \backslash  b_1)  \backslash  a_1) \sqcap ((a_1  \backslash  b_1)  \backslash  b_1), (a_1  \backslash  b_1) \sqcap a_2 \sqcap b_2 \rangle = \\
  & = \langle a_1 \sqcap (a_1 \sqcup b_1), (a_1  \backslash  b_1) \sqcap a_2 \sqcap b_2 \rangle = \\ 
  & = \langle a_1 , (a_1  \backslash  b_1) \sqcap a_2 \sqcap b_2 \rangle \geq_t \langle a_1,  a_2 \rangle.
 \end{align*} 

(IB6). We have that 
\begin{align*}
\neg (\langle a_1, a_2 \rangle \supset \langle b_1, b_2 \rangle) \supset \langle c_1, c_2 \rangle & = \langle a_1 \sqcap b_2 , a_1  \backslash   b_1 \rangle \supset \langle c_1, c_2 \rangle \\ 
& =  \langle (a_1 \sqcap b_2) \backslash c_1,  a_1 \sqcap b_2 \sqcap c_2   \rangle \\ 
& = (\langle a_1, a_2 \rangle \land \neg \langle b_1, b_2 \rangle) \supset \langle c_1, c_2 \rangle.
\end{align*} 
\end{proof}

Note that if $\Al[L] = \langle L, \sqcap, \sqcup  \rangle$ is a Boolean lattice, then the operation $ \backslash $ coincides with the Boolean implication, i.e.\ we have $a  \backslash  b = a' \sqcup b$ for all $a, b \in L$, where $a'$ denotes the complement of $a$.

Combining the results of Proposition  \ref{reprimp}  and Proposition \ref{defimp}, we obtain the following: 

\begin{thm}[Representation, 4] \label{t:reprimp2} 
An algebra $\Al[B] = \langle B, \land, \lor, \otimes, \oplus, \supset, \neg  \rangle$ is an implicative   bilattice if and only if  $\langle B, \land, \lor, \otimes, \oplus, \supset, \neg \rangle$ is isomorphic to the   product   bilattice $\Al[L] \odot \Al[L]$ for some classical implicative lattice $\Al[L] = \la L, \sqcap, \sqcup, \ba, 1 \ra $ endowed with the operation  $\s$  defined by 
$ \langle a_1, a_2 \rangle \supset  \langle b_1, b_2 \rangle  = \langle a_1  \backslash   b_1, a_1 \sqcap b_2 \rangle$  for all $a_1, a_2, b_1, b_2 \in L.$
\end{thm}

\begin{proof} The implication from right to left follows from Proposition \ref{defimp}. To prove the other implication, let $\Al[B] = \langle B, \land, \lor, \otimes, \oplus, \supset, \neg  \rangle$ be an implicative   bilattice. By the proof of Proposition \ref{reprimp}, the   bilattice reduct $\langle B, \land, \lor, \otimes, \oplus,  \neg \rangle$ is isomorphic to the 
  product   bilattice $\Al[L] \odot \Al[L]$ where $\Al[L]$ is the classical implicative lattice $\la B^{-}, \land, \lor, \top \ra$, with $$B^{-} = \{a\in B: a \leq_t \top\}. $$ The isomorphism is the map
$h: \la B, \land, \lor, \otimes, \oplus, \neg \ra \rightarrow \Al[L] \odot \Al[L]$ given by   
 $$h (a) = \la a \wedge \top , \neg a \wedge \top  \ra.$$
Next we show that $h$ is also a homomorphism w.r.t.\  the operation $\supset$ defined in  $\Al[L] \odot \Al[L]$  as in the statement of the proposition.  We have to check that, for all $a, b \in B$, $$h(a \supset b) = h(a) \supset h(b)$$
i.e.\ that
$$\la (a \s b) \wedge \top, \neg (a \s b) \wedge \top \ra = \la a \wedge \top, \neg a \wedge \top \ra  \supset \la b \wedge \top, \neg b \wedge \top\ra.$$
Since $\la a \wedge \top, \neg a \wedge \top \ra  \supset \la b \wedge \top, \neg b \wedge \top\ra = \la (a \wedge \top) \backslash (b \wedge \top), a \wedge \neg b \wedge \top \ra$, we need  to prove that 
 $ (a \s b) \wedge \top = (a \wedge \top) \backslash (b \wedge \top)$ and $ \neg (a \s b) \wedge \top = a \wedge \neg b \wedge \top$. Let us first show that the relative complement of 
 $a \land \top$ in the interval $[a \land b \land \top, \top]$ is $(a \s b) \wedge \top$. Indeed, by Proposition \ref{p:lem2} (xxi), we have  $(a \wedge \top) \land (a \supset b) \land \top = a \land (a \supset b) \land \top = a \land b \land \top$. And by Proposition \ref{p:lem2} (xii), $(a \land \top) \lor ((a \supset b) \land \top) = \top \land (a   \lor (a \supset b)) = \top$. Now, by (IB6) and \ref{p:lem2} (xx) we have $\neg (a \s b) \land \top  = 
a \land  \neg b \land \top  =
(a \land \top) \land (\neg b \land \top).$
\end{proof}

As in the case of bilattices, an alternative proof of the Representation Theorem can be obtained without using any constant by considering the regular elements (i.e.\ the fixed points of the negation operator) of the implicative bilattice. Let us see how.

Given an implicative bilattice $\Al[B] = \langle B, \land, \lor, \otimes, \oplus, \supset, \neg  \rangle$, we consider the algebra $ \langle \Reg(\Al[B]), \otimes, \oplus, \ba \rangle$, where $ \langle \Reg(\Al[B]), \otimes, \oplus \rangle$ is the sublattice of the k-lattice of $\Al[B]$ whose universe is the set of regular elements and the operation $\ba : \Reg(\Al[B]) \times \Reg(\Al[B])  \longrightarrow \Reg(\Al[B]) $ is defined, for all $a, b \in \Reg(\Al[B])$, as $a \ba b = \reg (a \s b)$. 

Note that, by Proposition~\ref{prop:regular} (ii), we have $a \sim_{1} \reg(a)$ for all $a \in B$ and, by Corollary~\ref{c:cong1s}, the relation $\sim_{1}$  is compatible with $\s$. By Proposition~\ref{p:some_equiv}, for all $a, b \in B$, we have that $a \sim_{1} b$ if and only if $\reg(a) = \reg(b)$. It is then clear that 
$$
a \ba b = \reg (a \s b) = \reg (\reg(a) \s \reg(b)) = \reg (\reg(a) \s b) = \reg (a\s \reg(b)).
$$
We shall sometimes use this fact without notice. We have then the following:
 
 \begin{thm} \label{t:reprimp_reg} 
Let $\Al[B] = \langle B, \land, \lor, \otimes, \oplus, \supset, \neg  \rangle$ be an implicative   bilattice. Then:
\begin{enumerate}[(i)]
  \item $ \langle \Reg(\Al[B]), \otimes, \oplus, \ba \rangle$ is a classical implicative lattice,
  \item $\Al[B]$ is isomorphic to the implicative bilattice $$ \langle \Reg(\Al[B]), \otimes, \oplus, \ba \rangle \odot \langle \Reg(\Al[B]), \otimes, \oplus, \ba \rangle.$$
\end{enumerate}
\end{thm}

\begin{proof}
(i). Since $\top$ is the maximum of the lattice $ \langle \Reg(\Al[B]), \otimes, \oplus \rangle$, we have to show that, for any $a, b \in \Reg(\Al[B])$, we have $a \otimes (a \ba b) = a \otimes b $ and $a \oplus (a \ba b) = \top$, i.e.\ that $a \otimes \reg (a \s b) = a \otimes b $ and $a \oplus \reg (a \s b) = \top$.

 As to the first, note that, by Proposition~\ref{prop:regular} (v), we have 
 $$a \otimes \reg (a \s b) = \reg(a) \otimes \reg (a \s b) =  \reg (a \otimes (a \s b)) =  \reg (a \land (a \s b))$$
and 
$$ a \otimes b = \reg(a) \otimes \reg(b) = \reg (a \otimes b) =  \reg (a \land b).$$
By Proposition~\ref{p:biglem} (xiv), we have $a \land (a \s b) \land \top = a \land b \land \top$. And this, by Proposition~\ref{p:some_equiv}, implies the desired result.

As to the second, reasoning as before, we have 
$$a \oplus \reg (a \s b) = \reg (a) \oplus \reg (a \s b) = \reg (a \oplus (a \s b))=  \reg (a \lor(a \s b)).$$
It will then be sufficient to check that $(a \lor(a \s b)) \land \top = \top $, and this has been proved in Proposition~\ref{p:lem2} (ix).  

(ii). Let us denote by $\s^*$ the implication defined in $ \langle \Reg(\Al[B]), \otimes, \oplus, \ba \rangle \odot \langle \Reg(\Al[B]), \otimes, \oplus, \ba \rangle$ as before, that is, for all  all $a_1, a_2, b_1, b_2 \in \Reg(\Al[B]), $
$$ \langle a_1, a_2 \rangle \supset^*  \langle b_1, b_2 \rangle  = \langle \reg  (a_1  \s  b_1), a_1 \otimes b_2 \rangle. $$ 
We shall prove that the isomorphism is given by the  same map we considered for bilattices, i.e.\ $f: B  \longrightarrow \Reg(\Al[B]) \times \Reg(\Al[B])$ defined, for all $a \in B$, as $$f(a) = \la \reg(a), \reg(\neg a) \ra.$$ 
We know that $f$ is a bijection and an isomorphism between the two bilattice reducts, so we just need to check that, for all $a, b \in B$,
\begin{align*}
f(a \s b)
& = \la \reg(a \s b), \reg(\neg (a \s b)) \ra   \\
&  = \la \reg(a), \reg(\neg a) \ra \s^* \la \reg(b), \reg(\neg b) \ra \\
&  = \la  \reg (\reg(a) \s \reg(b)), \reg(a)  \otimes \reg(\neg b) \ra \\
&  = f(a) \s^* f(b).
\end{align*}
This amounts to proving that $$\reg(a \s b) =  \reg (\reg(a) \s \reg(b))$$ 
and  
$$\reg(\neg (a \s b)) = \reg(a)  \otimes \reg (\neg b) = \reg (a \otimes \neg b) = \reg (a \land \neg b).$$ 
The first one is immediate.
As to the second, using Proposition~\ref{p:some_equiv}, we may prove that $\neg (a \s b) \s c = (a \land \neg b) \s c$ for all $c \in B$, which follows immediately from (IB6).

\end{proof}

The following result shows that, as in the case of interlaced   bilattices, there is a correspondence between the congruences of an implicative   bilattice and the congruences  of its associated lattice factor.

\begin{proposition} \label{p:impbl_cong} 
Let  $\Al[B] = \langle B, \land, \lor, \otimes, \oplus, \s, \neg \rangle$ be an implicative bilattice. Then:
\begin{enumerate}[(i)]
 \item for all $\theta \in \Con (\Al[B] )$ and for all $a, b \in B$, it holds that $\la a, b \ra \in \theta $ if and only if $\la \reg(a), \reg(b) \ra \in \theta $ and  $\la \reg(\neg a), \reg(\neg b) \ra \in \theta $, 
 \item   $\la \Con (\Al[B] ), \subseteq \ra \cong \la \Con (\la \Reg(\Al[B]), \otimes, \oplus, \ba \rangle), \subseteq \ra$.
\end{enumerate}
\end{proposition}

\begin{proof}
(i). By Proposition~\ref{p:blconf_cong} (i).

(ii). We shall follow the proof of Proposition~\ref{p:blconf_cong} (ii), showing that the isomorphism is given by the map
%
%
$$h: \Con (\Al[B] )  \longrightarrow \Con (\la \Reg(\Al[B]), \otimes, \oplus, \ba \rangle)$$
defined, for all $\theta \in  \Con (\Al[B])$, as $$h(\theta) = \theta \cap \Reg(\Al[B]) \times \Reg(\Al[B]).$$ 
From the proof of Proposition~\ref{p:blconf_cong} (ii) it follows that $h$ is well-defined and that it is an order embedding. Its inverse is $$h^{-1}: \Con (\la \Reg(\Al[B]), \otimes, \oplus, \ba \rangle)  \longrightarrow \Con (\Al[B] ) $$
defined, for all $\theta \in  \Con (\la \Reg(\Al[B]), \otimes, \oplus, \co \rangle)$, as follows: 
$$ \la a, b \ra \in h^{-1}(\theta) \textrm{ iff }  \la \reg(a), \reg(b) \ra \in \theta \textrm{ and  } \la \reg(\neg a), \reg(\neg b) \ra \in \theta.$$
We proved that $h^{-1}(\theta)$ is an equivalence relation compatible with all the lattice operations of both orders as well as with negation. As to implication, assume $\la a, b \ra, \la c, d \ra \in  h^{-1}(\theta)$, that is, $\la \reg(a), \reg(b) \ra,$ $\la \reg(\neg a), \reg(\neg b) \ra, \la \reg(c), \reg(d) \ra,$ $\la \reg(\neg c), \reg(\neg d) \ra \in \theta$. By the assumptions we have that 
$$\la \reg (\reg(a) \s \reg(c)), \reg (\reg(b) \s \reg(d)) \ra \in \theta. $$ 
From the proof of Theorem  \ref{t:reprimp_reg} (ii)  it follows that $\reg (a \s b) = \reg (\reg(a) \s \reg(b))$ and $\reg(\neg (a \s b))  = \reg (a \otimes \neg b)$ for all $a, b \in B$. From this we easily obtain  $\la \reg (a \s c), \reg(b \s d) \ra, \la \reg (\neg (a \s c)), \reg( \neg  (b \s d))   \ra  \in \theta$, and this completes the proof.
%
\end{proof}

From the previous proposition and Theorem \ref{t:reprimp2}, we immediately obtain the following:

\begin{cor} \label{c:congimp}
Let $ \Al[B] = \la B, \land, \lor, \otimes, \oplus, \supset, \neg, \top \ra $ be an implicative  bilattice. Then $\Con(\Al[B]) \cong \Con(\Al[B^{-}])$, where $\Al[B^{-}] = \la \{a\in B: a \leq_t \top\}, \land, \lor, \ba, \top \ra$  and the operation $\ba $ is defined as $a \ba b = (a \s (a \land b)) \land \top$. 
\end{cor}

The previous results suggest that the study of congruences of classical implicative lattices may give insight into  the congruences of implicative   bilattices. We now turn to this study, that  will  eventually  enable us to characterize the variety $\ib$ as generated by its four-element member.

The key result is the following:

\begin{proposition} \label{compat}
Let $\: \Al[L] = \langle L, \sqcap, \sqcup, \ba, 1 \rangle$ be a classical implicative lattice. Then $\Con(\Al[L]) = \Con(\langle L, \sqcap, \sqcup \rangle) $.
\end{proposition}

\begin{proof}
Obviously $\Con(\Al[L]) \subseteq \Con(\langle L, \sqcap, \sqcup \rangle) $. To prove the other inclusion, let $\theta \in \Con(\langle L, \sqcap, \sqcup \rangle)$ and let  $a_1, a_2, b_1, b_2 \in L$ be such that $ \la a_1, b_1 \ra, \la a_2, b_2 \ra \in \theta$. We have to prove that $\la a_1 \ba a_2, b_1 \ba b_2 \ra \in \theta$. By assumption we have $\la a_1 \sqcup (a_1 \ba a_2), b_1 \sqcup (a_1 \ba a_2)  \ra \in \theta$ and $\la b_1 \sqcup (b_1 \ba b_2), a_1 \sqcup (b_1 \ba b_2) \ra \in \theta$. 
Since $a_1 \sqcup (a_1 \ba a_2) = 1 = b_1 \sqcup (b_1 \ba b_2)$, we have also $\la b_1 \sqcup (a_1 \ba a_2), a_1 \sqcup (b_1 \ba b_2)  \ra \in \theta$. This implies that 
$$\la (a_1 \ba a_2) \sqcap (b_1 \sqcup (a_1 \ba a_2)),   (a_1 \ba a_2) \sqcap  (a_1 \sqcup (b_1 \ba b_2)) \ra \in \theta.$$
Using the absorption laws, we have 
$a_1 \ba a_2 = (a_1 \ba a_2) \sqcap (b_1 \sqcup (a_1 \ba a_2))$, hence
$$\la a_1 \ba a_2,   (a_1 \ba a_2) \sqcap  (a_1 \sqcup (b_1 \ba b_2)) \ra \in \theta.$$ 
Similarly we obtain
$\la b_1 \ba b_2,   (b_1 \ba b_2) \sqcap  (b_1 \sqcup (a_1 \ba a_2)) \ra \in \theta.$
Now, notice that from the assumption we have $\la a_1 \wedge a_2, b_1 \wedge b_2 \ra \in \theta$. Hence
$$\la (a_1 \sqcap a_2) \sqcup ((a_1 \ba a_2) \sqcap (b_1 \ba b_2)), (b_1 \sqcap b_2) \sqcup ((a_1 \ba a_2) \sqcap (b_1 \ba b_2)) \ra \in \theta.$$

Since $$(a_1 \ba a_2) \sqcap (a_1 \sqcup (b_1 \ba b_2)) = (a_1 \sqcap a_2) \sqcup ((a_1 \ba a_2) \sqcap (b_1 \ba b_2))$$
and $$(b_1 \ba b_2) \sqcap (b_1 \sqcup (a_1 \ba a_2)) = (b_1 \sqcap b_2) \sqcup ((a_1 \ba a_2) \sqcap (b_1 \ba b_2))$$ it follows that
$$\la (a_1 \ba a_2) \sqcap (a_1 \sqcup (b_1 \ba b_2)),  (b_1 \ba b_2) \sqcap (b_1 \sqcup (a_1 \ba a_2)) \ra \in \theta.$$
Now, using the transitivity of $\theta$, we obtain  $\la a_1 \ba a_2,  b_1 \ba b_2 \ra \in \theta.$ 
\end{proof}


An important consequence of the previous result is the following:

\begin{proposition}
The variety of classical implicative lattices in the similarity type $\la \sqcap, \sqcup, \ba \ra$ is generated by its two-element member. 
\end{proposition}

\begin{proof}
From Proposition  \ref{compat} it follows immediately that a classical implicative lattice $\Al[L] = \la \sqcap, \sqcup, \ba  \ra $  is subdirectly irreducible if and only if its $\{ \sqcap, \sqcup  \} $-reduct, which is a distributive lattice, is subdirectly irreducible. Hence, the only subdirectly irreducible algebra in this variety is the one whose $\{ \sqcap, \sqcup  \} $-reduct is isomorphic to the two-element Boolean lattice. Therefore this algebra generates the variety.
\end{proof}

Another interesting corollary of Proposition \ref{compat} is that an analogous property holds for implicative   bilattices:

\begin{proposition} \label{compat_reg}
Let $\Al[B] = \left\langle B, \land, \lor, \otimes, \oplus, \supset, \neg \right\rangle$ be an implicative   bilattice. 
Then $\Con(\Al[B]) = \Con(\left\langle B, \land, \lor, \otimes, \oplus,  \neg \right\rangle)$.
\end{proposition}
\begin{proof}
Obviously $\textrm{Con}(\Al[B]) \subseteq \Con(\left\langle B, \land, \lor, \otimes, \oplus,  \neg \right\rangle)$. To prove the other inclusion, assume $\theta \in \Con(\left\langle B, \land, \lor, \otimes, \oplus,  \neg \right\rangle)$ and $\la a, b \ra, \la c, d \ra \in \theta$. We will show that $\la a \s c, b \s d \ra \in h^{-1}(h(\theta))$, where the isomorphisms $h$ and $h^{-1}$ are defined as in the proof of Proposition \ref{p:impbl_cong} (ii). That is, we have to prove that $\la \reg (a \s c), \reg ( b \s d) \ra,$ $\la \reg (\neg (a \s c)), \reg (\neg ( b \s d) ) \ra \in h(\theta).$ 
The latter is easily shown. Using Proposition~\ref{p:blconf_cong} (i), from the assumptions we obtain $\la \reg(a), \reg(b) \ra,$ $\la \reg( \neg a), \reg( \neg b) \ra,$ $\la \reg( c), \reg (d) \ra,$
$\la \reg( \neg c), \reg (\neg d) \ra \in h(\theta)$. This implies
$$
\reg(a) \otimes \reg (\neg c) = \reg (a \otimes \neg c) \ h(\theta) \ \reg (b \otimes \neg d) = \reg(b) \otimes \reg (\neg d).
$$
As noted in the proof of Proposition~\ref{p:impbl_cong} (ii), we have that $\reg(\neg (a \s b))  = \reg (a \otimes \neg b)$ for all $a, b \in B$. Hence $\la \reg (\neg (a \s c)), \reg (\neg ( b \s d) ) \ra \in h(\theta).$ Taking into account Proposition~\ref{compat}, the assumptions also imply
$$
\reg( \reg(a) \s \reg (c)) = \reg (a \s c) \ h(\theta) \ \reg (b \s d) = \reg (\reg(b) \s \reg ( d)) 
$$
and this completes the proof.
\end{proof}

Taking into account the results of Chapter \ref{ch:int} 
(Proposition \ref{prop:interbl_cong}), we may state the following:

\begin{proposition} \label{congequiv2} 
Let $\Al[B] = \left\langle B, \land, \lor, \otimes, \oplus, \supset, \neg \right\rangle$ be an implicative   bilattice and let $\theta \subseteq B \times B$ be an equivalence relation. Then the following are equivalent:
\begin{enumerate}[(i)]
\item $\theta \in \Con(\Al[B])$.
\item $\theta$ is compatible with the operations $\{ \neg, \land \}$.
\item $\theta$ is compatible with $\{ \neg, \lor \}$.
\item $\theta$ is compatible with $\{ \neg, \otimes, \oplus \}$.
\item $\theta$ is compatible with $\{ \neg, \land, \lor, \otimes, \oplus \}$, i.e $\theta$ is a congruence of the   bilattice reduct $\left\langle B, \land, \lor, \otimes, \oplus, \neg \right\rangle$ of $\Al[B] $.
\end{enumerate}
\end{proposition}

\section{The variety of implicative bilattices}
\label{sec:imp}

We are now able to state the second main result of this chapter, i.e.\ that the variety $\ib$ of implicative   bilattices is generated by the algebra $\fours$. To see this, we will prove that $\fours$ is the only subdirectly irreducible algebra in this variety.

\begin{thm} \label{t:vargen}
The variety $\ib$ of implicative   bilattices is generated by the four-element implicative   bilattice $\fours$. As a consequence, we have that $$\mathbf{Alg^{*}}\lbs = V(\fours).$$
\end{thm}

\begin{proof}
By Proposition \ref{congequiv2}, an implicative   bilattice is subdirectly irreducible if and only if its   bilattice reduct is. By Proposition \ref{impinterla}, the   bilattice reduct of an implicative   bilattice is a distributive   bilattice, and we also know that $\four$ is the only subdirectly irreducible distributive   bilattice. Hence, the only subdirectly irreducible implicative   bilattice is the one whose   bilattice reduct is $\four$, i.e.\ $\fours$. Therefore this algebra generates the variety.
\end{proof}

It is not difficult to see that the previous result implies that $\ib$ has no proper sub-quasivarieties (this is also a consequence of Proposition \ref{p:foursub}). We may also note that Theorem \ref{t:vargen} provides also an alternative way to prove Arieli and Avron's completeness theorem for the Hilbert calculus $H_{\supset}$ (our Theorem \ref{t:compl}). In fact, we have that $\h \,  = \; \vDash_{\scriptscriptstyle \mathcal{LB_{\supset}}}$ and that the single algebra $\fours$ constitutes an equivalent algebraic semantics for $\lbs$.

The rest of this section is devoted to stating some purely algebraic results that give further insight into the structure of the variety $\ib$.

\begin{proposition} \label{discr} 
$\ib$ is a discriminator variety.
\end{proposition} 

\begin{proof}
We first prove that $\ib$ is arithmetical. Clearly it is congruence--distributive, since lattices are. To prove that it is congruence--permutable, by \cite[Theorem II.12.2]{BuSa00}, we just need to consider the following term: $$ p(x,y,z) = ((x \rightarrow y) \supset z) \land ((z \rightarrow y) \supset x) \land (x \lor z).$$ In fact, if  $\Al[B]$ is an implicative bilattice and $a, b \in \Al[B]$, then $ p(a,a,b) = b$ and  $ p(a,b,b) = a$. The first holds because $((a \rightarrow a) \supset b) \land ((b \rightarrow a) \supset a) \land (a \lor b) = b \land  (a \lor b) \land ((b \rightarrow a) \supset a) = b \land ((b \rightarrow a) \supset a)$ and, by Proposition   \ref{p:lem2} (xi), $b \land ((b \rightarrow a) \supset a) = b$. As to the second, we have  $((a \rightarrow b) \supset b) \land ((b \rightarrow b) \supset a) \land (a \lor b) = ((a \rightarrow b) \supset b) \land a \land (a \lor b) = ((a \rightarrow b) \supset b) \land a = a$.

To complete the proof, it is sufficient to show that the algebra $\mathcal{FOUR}_{\supset}$ is quasiprimal (see \cite[Definition IV.10.6]{BuSa00}; it would be possible, indeed, to prove a stronger result, i.e.\ that $\mathcal{FOUR}_{\supset}$ is semiprimal: see \cite[Exercise IV.10.6, p. 199]{BuSa00}). To see this, note that the only proper subalgebra of $\mathcal{FOUR}_{\supset}$ is the trivial one with universe $\{ \top \}$. So $\mathcal{FOUR}_{\supset}$ is hereditary simple \cite[Definition IV.10.5]{BuSa00}. Hence, applying \cite[Theorem IV.10.7]{BuSa00},  we conclude that $\mathcal{FOUR}_{\supset}$ is quasiprimal. So $V(\mathcal{FOUR}_{\supset}) = \ib$ is a discriminator variety.
\end{proof}

\begin{proposition} \label{finprod} 
Let $\Al[B] $ be an  implicative   bilattice. Then:
\begin{enumerate}[(i)]
	\item If $\left| B \right| > 4$, then $\Al[B] \cong \Al[B_1] \times \Al[B_2]$ for some nontrivial  $\Al[B_1], \Al[B_2] \in \ib$.
	\item If $\Al[B]$ is finite, then $\Al[B] \in P (\mathcal{FOUR}_{\supset})$.
\end{enumerate}
\end{proposition} 

\begin{proof}
(i) By \cite[Theorem IV.9.4]{BuSa00}, we know that the indecomposable algebras in $\ib$ are simple, and the only simple non-trivial algebra in this variety is $\mathcal{FOUR}_{\supset}$.

(ii) We know that $\Al[B] \in SP (\mathcal{FOUR}_{\supset})$. Therefore, by  \cite[Corollary IV.10.2]{BuSa00}, we have that $\Al[B] \cong \mathcal{FOUR}_{\supset}^{n}$ for some $n < \omega$.
\end{proof}

In the following propositions we will show that in an implicative   bilattice each of the two lattice orderings is definable using the lattice operations of the other order plus $\{ \neg, \s \}$. Definability of the knowledge order follows immediately from Proposition \ref{p:lem2} (recall that we abbreviate $a = a \s a$ as $E(a)$).

\begin{proposition} \label{p:defk}
Let $\Al[B]$ be an implicative   bilattice and $a, b \in B$. Then the following are equivalent: 
\begin{enumerate}[(i)]
  \item $a \leq_k b$.
  \item $a \s b \geq_t \top $ and  $\neg a \s \neg b \geq_t \top$.
	\item $(a \supset b) \land (\neg a \supset \neg b) \geq_t \top$.
	\item $E((a \supset b) \land (\neg a \supset \neg b))$.
\end{enumerate} 
\end{proposition}

\begin{proof}
(i)$\Leftrightarrow$(ii). This equivalence has been stated in Proposition \ref{p:lem2} (vii).

(ii)$\Leftrightarrow$(iii). One implication is obvious, while the other follows from the interlacing conditions.

(iii)$\Leftrightarrow$(iv). This equivalence has been stated in Proposition \ref{p:biglem} (x).
\end{proof}

A symmetric result holds for the truth order:

\begin{proposition} \label{p:deft}
Let $\Al[B]$ be an implicative   bilattice and $a, b \in B$. Then the following are equivalent: 
\begin{enumerate}[(i)]
  \item $a \leq_t b$.
  \item $a \ta b \geq_t \top$.
	\item $(a \supset b) \otimes (\neg b \supset \neg a) \geq_t \top$.
	\item $E((a \supset b) \otimes (\neg b \supset \neg a)) $.
\end{enumerate} 
\end{proposition}

\begin{proof}
(i)$\Leftrightarrow$(ii). This equivalence has been stated in Proposition \ref{p:lem2} (vi).

(ii)$\Leftrightarrow$(iii). This equivalence follows from Proposition \ref{p:lem2} (xvi).

(iii)$\Leftrightarrow$(vi). This equivalence has been stated in Proposition \ref{p:biglem} (x).
\end{proof}

\section{Classical implicative and dual disjunctive lattices}
\label{sec:dua}

In this section we will investigate the relationship between the class of classical implicative lattices and the class of dual disjunctive lattices, which arose from the study of the reduced models of $\lb$ (the implicationless fragment of $\lbs$). 

Recall that a lattice $\langle L, \sqcap, \sqcup, 1 \rangle $ is dual disjunctive if and only if it is distributive, has a top element 1 and satisfies the following property: for all $a, b \in L$, if $a > b$, then there is $c \in L$ such that $a \sqcup c = 1 > b \sqcup c$. 

It is not difficult to prove that the finite members of the two classes coincide (and coincide also with the finite Boolean lattices). Indeed, it is  proved in \cite{CoDa98} that any classical implicative lattice is isomorphic to an ultrafilter of a Boolean algebra. 

One inclusion between the two classes is easily shown:

\begin{proposition} \label{relgood}
Let $\Al[L] = \langle L, \sqcap, \sqcup, 1 \rangle $ be a classical implicative lattice. Then:
\begin{enumerate}[(i)]
\item $\Al[L]$ is dual disjunctive. 
\item Given $a, b \in L$, denote by $a \backslash b$ the relative complement of $a$ in $[a \sqcap b, 1]$. Then for $a \geq b$ we have $ a \backslash b = \min \{ c \in [b, 1] : a \sqcup c = 1 \} $. 
\end{enumerate}
\end{proposition} 

\begin{proof}
(i). Let $a, b \in L$ such that $a > b$ and let $a \backslash b$ be the relative complement of $a$ in $[a \sqcap b, 1] = [b, 1]$. Note that $a \backslash b < 1$, because otherwise we would have $ a \sqcap a \backslash b = a > b $, against the assumption. Moreover $a \sqcup a \backslash b = 1$, but since $a \backslash b \in [a,1]$, we have  $b \sqcup a \backslash b = a \backslash b < 1$. This proves that $\Al[L]$ is dual disjunctive.

(ii). Let $A = \{ c \in [b, 1] : a \sqcup c = 1 \}$. By the definition of relative complement, we have that $ a \backslash b \in  A.$ We will prove that if $ d \in  A$, then $d \geq a \ba b$. Note that if $ d \in  A$, then also $ d \sqcap (a \ba b) \in  A$, because clearly $ d \sqcap (a \ba b) \in  [b, 1]$ and by distributivity we have $$ (d \sqcap (a \ba b)) \sqcup a = (d \sqcup a) \sqcap ((a \ba b) \sqcup a) = 1 \sqcap 1 = 1.$$ But we also have $(d \sqcap (a \ba b)) \sqcap a = d \sqcap b = b$. Hence, by the uniqueness of the relative complement,  we conclude that $d \sqcap (a \ba b) = a \ba b$, i.e.\ $d \geq a \ba b$.
\end{proof}

It is easy to see that the other inclusion is not true, that is, not every dual disjunctive lattice is relatively complemented. Consider the following:

\begin{ex} \label{e:ddl} 
Let $\Al[L] = \langle L, \sqcap, \sqcup, 1 \rangle $ be a classical implicative lattice without bottom element. Define the structure $\Al[L'] = \langle L \cup \{ 0 \}, \sqcap, \sqcup, 1 \rangle $ whose universe is $L$ augmented with a new element $0 \notin L$ and whose order is the one inherited from $\Al[L]$ except that $0 < a$ for all $a \in L$. Clearly $\Al[L']$ is a bounded distributive lattice, so if it were 
relatively complemented it would be a Boolean lattice. But it is not, since for all $a, b \in L$ we have $a \sqcap b \in L$, i.e.\ $a \sqcap b > 0$, therefore no element in $L$ has a complement. On the other hand, it is easy to see that $\Al[L']$ is dual disjunctive. Clearly if $0 < a < b$ the condition is satisfied because $a, b \in L$. If $a = 0$, then let $c \in L$ such that $0 = a < c < b$ (such an element must exist, because by assumption $\Al[L]$ has no minimum). Denoting by $b \backslash c$ the relative complement of $b$ in $[c, 1]$, we have $b \sqcup b \backslash c = 1$ but $ 0 \sqcup b \backslash c = b \backslash c < 1 $. So $\Al[L']$ is a dual disjunctive lattice. 
\end{ex}

In order to characterize the dual disjunctive lattices that are also classical implicative lattices, we shall need the following:

\begin{lem} \label{l:goodlem}
Let $\Al[L] = \langle L, \sqcap, \sqcup, 1 \rangle $ be a dual disjunctive lattice. Then, for all $a, b \in L$:
\begin{enumerate}[(i)]
\item $a = b $ if and only if $ \{ x \in L : a \sqcup x = 1  \} = \{ x \in L : b \sqcup x = 1 \} $. 
\item The interval sublattice $\langle [a, 1], \sqcap, \sqcup, 1 \rangle $ is also a dual disjunctive lattice. 
\item If for all $c \in [a, 1]$ there exists $c^* = \min \{ x \in [a, 1] : c \sqcup x = 1 \}$, then any element of $[a, 1]$ has a relative complement in $[a, 1]$.
\end{enumerate}
\end{lem} 

\begin{proof}
(i). One direction is trivial. For the other, assume $ \{ x \in L : a \sqcup x = 1  \} = \{ x \in L : b \sqcup x = 1 \} $ and $a \neq b$. Since $\Al[L]$ is dual disjunctive, if $a < b$ or $b < a$, then  we are done. So suppose $a$ and $b$ are incomparable. Then $a < a \sqcup b$, so by hipothesis there is $c \in L$ s.t. $a \sqcup c < 1 = (a \sqcup b) \sqcup c $. But then $b \sqcup  (a \sqcup c) = 1$, while $a \sqcup  (a \sqcup c) = a \sqcup c < 1$. Therefore $a \sqcup c \in \{ x \in L : b \sqcup x = 1 \}$ and $a \sqcup c \notin \{ x \in L : a \sqcup x = 1 \}$, which contradicts our assumption.   

(ii). We have to show that, for all $a, b, c \in L$, if $a \leq b < c $, then there is $c' \in [a, 1]$ such that $b \sqcup c' < 1 = c \sqcup  c'$. Since $\Al[L]$ is dual disjunctive, we know that if $ b < c$ then there is $d \in L$ such that $b \sqcup d < 1 = c \sqcup  d$. Clearly $a \sqcup d \in [a, 1]$, so let $c' = a \sqcup d$. Then we have $b \sqcup c' = b \sqcup a \sqcup d = b \sqcup d < 1 = 1 \sqcup a = c \sqcup a \sqcup d = c \sqcup  c'$. Therefore we conclude that $\langle [a, 1], \sqcap, \sqcup, 1 \rangle $ is dual disjunctive.

(iii). Let $a \in L$ and $b \in [a, 1]$. We have to show that $b \sqcap b^* = a$. Note that $ \{ x \in [a, 1] : b \sqcup x = 1 \} = \{ x \in L : b^* \leq x \}$. By (ii) $\langle [a, 1], \sqcap, \sqcup, 1 \rangle $ is dual disjunctive, therefore this implies that for all $b, c \in [a, 1]$ we have  $b^* = c^* $ iff $b = c$. Hence the map $*: L \longrightarrow L$ is injective. Since $b \sqcup b^* = 1 $, we have that $ b \in \{ x \in [a, 1] : b^* \sqcup x = 1 \} $, so $b^{**} = \min \{ x \in [a, 1] : b^*  \sqcup x = 1 \} \leq b$. Moreover, if $b \leq c$, then $\{ x \in [a, 1] : b \sqcup x = 1 \} \subseteq \{ x \in [a, 1] : c \sqcup x = 1 \} $, so $c^* \leq b^* $. It follows that $b^{***} = b^*$ for all $b \in [a, 1]$, so by the injectivity of the map $*: L \longrightarrow L$ we conclude that $b^{**} = b$. Now, in order to prove the statement we only need to show that $(b \sqcap c)^* = b^* \sqcup c^*$, because then we would have $b \sqcap b^* = (b \sqcap b^*)^{**} = (b \sqcup b^*)^* = 1^* = a$. To see this, note that on the one hand $b \sqcap c \leq b$ and $b \sqcap c \leq c$ imply $ b^* \leq (b \sqcap c)^*$ and $ c^* \leq (b \sqcap c)^*$, so $ b^* \sqcup c^* \leq (b \sqcap c)^*$. On the other hand, note that $(b \sqcap c) \sqcup (b^* \sqcup c^*) = (b \sqcup b^* \sqcup c^*) \sqcap (c \sqcup b^* \sqcup c^*) = (1 \sqcup c^*) \sqcap (1\sqcup b^* ) = 1$ and this means that $b^* \sqcup c^* \in  \{ x \in [a, 1] : (b \sqcap c) \sqcup x = 1 \}$. Hence $ b^* \sqcup c^* \geq (b \sqcap c)^*$ and we are done.
\end{proof}

We immediately have the following:

\begin{cor} \label{c:goodrel}
A distributive lattice $\Al[L] = \langle L, \sqcap, \sqcup, 1 \rangle $ is a classical implicative lattice if and only if
it is a dual disjunctive lattice and for all $a, b \in L$ such that $b \in [a, 1]$ there exists $b^* = \min \{ x \in [a, 1] : b \sqcup x = 1 \}$.
\end{cor} 
\begin{proof}
The leftward implication has been proved in Proposition \ref{relgood}, while the rightward one follows immediately from Lemma \ref{l:goodlem}, (iii).
\end{proof}

Corollary  \ref{c:goodrel} implies that, as we have anticipated, any finite dual disjunctive lattice is a Boolean lattice. One may wonder if condition (iii) of Lemma \ref{l:goodlem} implies that the lattice is dual disjunctive. This is false, an easy counterexample being any chain with top element 1. In a chain we have that $\min \{ x \in [a, 1] : b \sqcup x = 1 \} = 1$ for all $a \leq b < 1$, so condition (iii) is always satisfied, but the only dual disjunctive lattice which is a chain is the two-element one.

\section{Residuated De Morgan lattices}
\label{sec:sub}

In this and the next section we will study some subreducts of implicative   bilattices that arise by considering fragments of the implicative   bilattice language $\{ \land, \lor, \otimes, \oplus, \s, \neg \}$ which seem to have  some logical significance. We will first consider the relation between implicative   bilattices and a certain class of De Morgan lattices having a residuated pair. We begin by showing that any implicative   bilattice has a reduct which is a residuated lattice.

\begin{proposition} \label{resid} 
Let $\Al[B] = \langle B, \land, \lor, \otimes, \oplus, \supset, \neg  \rangle  \in \ib$. We define the operation $* : B \times B \longrightarrow B$ as follows: $a * b = \neg (a \rightarrow \neg b)$ for all $a, b \in B$. Then:
\begin{enumerate}[(a)]
\item $ \langle B, *, \top  \rangle$ is a commutative monoid.
\item For every $a, b \in B $, $a \rightarrow b$ is the residuum of $a, b$ relative to $*$ and the lattice ordering $\leq_t$, i.e.\ $ a * b \leq_t c \;$ if and only if $\; a \leq_t b \rightarrow c$. 
\item For every $a, b \in B $, $a = \neg \neg a$ and $a \rightarrow \neg b = b \rightarrow \neg a$.
\end{enumerate}
\end{proposition}

\begin{proof}
(a). Clearly $*$ is commutative, since $\neg (a \rightarrow \neg b) = \neg (b \supset \neg a) \lor \neg (a \supset \neg b) $. To prove associativity, note first that $a \rightarrow b = \neg b \rightarrow \neg a$. Now, using Proposition \ref{p:lem2}  (xv), we have 
\begin{align*}
a * (b * c) & = \neg (a \rightarrow (b \rightarrow \neg c)) \\
& = \neg (a \rightarrow (c \rightarrow \neg b)) \\
& = \neg (c \rightarrow (a \rightarrow \neg b)) \\
& = c * (a * b) \\
& = (a * b) * c. 
\end{align*}
To prove that $\top $ is the identity, note first that $\neg a \leq_t a \supset \top $ because 
\begin{align*}
\neg a  \rightarrow (a \supset \top) & = (\neg a \supset (a \supset \top)) \land (\neg (a \supset \top) \supset \neg \neg a)  \\
& = ((\neg a \land a)  \supset \top) \land ((a \land \top) \supset a) \\ 
& = ((\neg a \land a)  \supset \top) \land (a \supset a) \\
& \geq_t \top.
\end{align*}
Now, using (IB1) and recalling that $\top = \neg \top$, we have 
\begin{align*}
\top * a & = a * \top \\
& = \neg ((a \supset \top) \land (\neg \top \supset \neg a))\\ 
& = \neg ((a \supset \top) \land \neg a) \\ 
& = \neg \neg a \\
& = a. 
\end{align*}
(b). Assume $ a * b \leq_t c$, i.e.\ $\neg (a \rightarrow \neg b) \leq_t c$. This means that $ \neg c \leq_t a \rightarrow \neg b $, so $ \neg c \rightarrow (a \rightarrow \neg b) \geq_t \top $. But  $ \neg c \rightarrow (a \rightarrow \neg b) = a \rightarrow (\neg c \rightarrow \neg b) = a \rightarrow (b \rightarrow c)$, therefore we have $a \rightarrow (b \rightarrow c) \geq_t \top$, i.e.\ $ a \leq_t b \rightarrow c$.

Conversely, if $ a \leq_t b \rightarrow c$, then $\top \leq_t a \rightarrow (b \rightarrow c) = \neg c \rightarrow (a \rightarrow \neg b)$, which implies $ a * b \leq_t c$.

(c). Follows immediately from what we have noted in (a) and from the definition of negation for   bilattices.
\end{proof}

The behaviour of the adjoint pair $\{ *, \rightarrow \}$ in $\mathcal{FOUR}_\supset$ is depicted in the table below:
\begin{center}
\begin{tabular}[c]{c|c|c|c|c c c c|c|c|c|c}
$*$ &                      $\mathsf{f}$ & $\bot$        & $\top$        & $\mathsf{t}$ & & & 
$\rightarrow$ & $\mathsf{f}$ & $\bot$       & $\top$       & $\mathsf{t}$  \\
\cline{1-5} \cline{8-12}
$\mathsf{f}$             & $\mathsf{f}$ & $\mathsf{f}$  & $\mathsf{f}$  & $\mathsf{f}$  & & & 
$\mathsf{f}$  & $\mathsf{t}$ & $\mathsf{t}$ & $\mathsf{t}$ & $\mathsf{t}$  \\
\cline{1-5} \cline{8-12}
$\bot$                   & $\mathsf{f}$ & $\mathsf{f}$  & $\bot$        & $\bot$        & & & 
$\bot$        & $\bot$       & $\mathsf{t}$ & $\bot$       & $\mathsf{t}$  \\
\cline{1-5} \cline{8-12}
$\top$                   & $\mathsf{f}$ & $\bot$        & $\top$        & $\mathsf{t}$  & & & 
$\top$        & $\mathsf{f}$ & $\bot$   & $\top$        & $\mathsf{t}$     \\
\cline{1-5} \cline{8-12}
$\mathsf{t}$             & $\mathsf{f}$ & $\bot$        & $\mathsf{t}$  & $\mathsf{t}$  & & & 
$\mathsf{t}$  & $\mathsf{f}$ & $\bot$ & $\mathsf{f}$    & $\mathsf{t}$     \\
\end{tabular}
\end{center}
\vspace{2.5mm}

Using the terminology of \cite{GaRa04}, we may conclude that the structure $ \langle B, *, \land, \lor,$ ${\rightarrow,} \neg, \top  \rangle $  is an involutive CDRL (commutative distributive residuated lattice). However, it satisfies also some additional properties, for instance it is not difficult to see that $a * a * a = a * a $ for all $a \in B$. So a question arises: which is the class of residuated lattices that correspond to the $\{  \land, \lor, \rightarrow, \neg, \top \}$-subreducts of implicative   bilattices? In order solve this problem, we introduce the following:

\begin{definition} \label{rdml}
{\rm A \emph{residuated De Morgan lattice} is an algebra $\Al = \langle A, \land, \lor, {\supset,}$ $\neg, \top  \rangle$ such that $\langle A, \land, \lor, \neg \rangle$ is a De Morgan lattice and the following equations are satisfied:
\begin{enumerate}[ ]
\item (RD0) $\quad \top  \approx \neg \top $ 
\item (RD1) $\quad \top \supset x \approx x$ 
\item (RD2) $\quad  x \supset (y \supset z) \approx (x \land y) \supset z$
\item (RD3) $\quad \top \land (((x \supset y) \supset x) \supset x) \approx \top  $ 
\item (RD4) $ \quad (x \lor y) \supset z \approx (x \supset z) \land  (y \supset z)$ 
\item (RD5) $\quad  x \land (((x \supset y) \land (\neg y \supset \neg x)) \supset y) \approx x$  
\item (RD6) $\quad \neg (x \supset y ) \supset z  \approx (x \land \neg y)  \supset z$.
\end{enumerate}
We will denote by $\rd$ the variety of residuated De Morgan lattices.}
\end{definition}

Adopting the notation of the previous sections, we will use the following abbreviations: 
\begin{align*}
a \rightarrow b &=_{def} (a \supset b) \land (\neg b \supset \neg a) \\
a \leftrightarrow b &=_{def} (a \rightarrow b) \land (b \rightarrow a) \\
a * b &=_{def} \neg (a \rightarrow \neg b)
\end{align*}

We will show that  (RD0) to (RD6) are necessary and sufficient properties for a De Morgan lattice to be a subreduct of an implicative   bilattice. Necessity follows from the fact that (RD0) to (RD6) hold in any implicative   bilattice; to prove sufficiency, we shall need the following lemma:

\begin{proposition} \label{lemmard}
Let $\Al = \langle A, \land, \lor, \supset, \neg, \top  \rangle \in \rd$. Then, for all $a, b, c \in A$:
\begin{enumerate}[(i)]
\item $\top \leq a$ implies $a \s d = d$ for every $d \in A$
\item $b \leq a \s b$
\item $\top \leq a \s a$
\item $\top \leq  (a \land b) \supset b $
\item $a \leq b$ iff $\top \leq  a \rightarrow b $
\item $\top \leq a$ implies $\top \leq b \s a$ for every $b \in A$
\item $\top \leq a$ and $  \top \leq a \supset b $ imply $\top \leq b$
\item $ a \land \top = b \land \top$ implies $a \supset d = b \supset d $ for all $d \in A$
\item $a \land (a \s b) \land \top \leq b$
\item $(a \land b) \supset c = (a \supset b) \supset (a \supset c)$
\item $\top \leq a \supset b  $ and $\top \leq  b \supset c $ imply $\top \leq a \supset c $
\item $a \leq b$ implies $c \supset a \leq c \supset  b $
\item $ a \supset (b \land c) = (a \supset b) \land (a \supset c)$
\item $ \neg (a \ta b) \s c = \neg (a \supset b) \supset c$
\item $a \rightarrow (b \rightarrow c) =  b \rightarrow (a \rightarrow c)$
\item $\top \leq (a \supset b) \lor a  $
\item if $a \supset d = b \supset d $ for all $d \in A$, then $a \land \top = b \land \top$.
\end{enumerate}
\end{proposition}

\begin{proof}
(i) Suppose that  $\top \leq  a$. Let $d \in A$.  Then using (RD1) and (RD2)  $d = \top \s d = (\top \land a ) \s d = \top \s (a \s d) = a \s d$.

(ii) Let $a, b \in A$. Since $\top \leq \top \vee a$, by (i)  $(\top \lor a ) \s b =
b$. Now using 
 (RD1) and (RD4) we have
$$
b \land (a \supset b) =  (\top \s b) \land (a \supset b) =  (\top \lor a ) \s b  =  b.
$$
%
Hence, $b \leq (a \s b)$.

(iii) By (ii) $\top \leq (a \s \top)$. Then by (i) $(a \s \top) \s a = a$. Now  by (RD3), $\top \leq ((a \s \top) \s a) \s a$. It follows that 
$\top \leq a \s a$. 

(iv) By (RD2) we have $(a \land b) \supset b = a  \s (b \s b)$ and by (iii) and (ii) we have 
$\top \leq b \s b \leq  a  \s (b \s b).$ So, $\top \leq (a \land b) \supset b$.

(v) Assume $a \leq b$. Then, using (iv), we have 
\begin{align*}
a \rightarrow b & =  (a \supset b) \land (\neg b \supset \neg a) \\
                & =  ((a \land b) \supset b) \land (( \neg b \land \neg a) \supset \neg a) 
								 \geq \top. \end{align*}
Conversely, assume $\top \leq a \rightarrow b$. Using  (i) we have $(a \rightarrow b) \supset b  = b$.
So, by (RD5), it follows that $a \leq b$.

(vi)  Assume that $\top \leq a$. Let $b \in A$. Then $\top \land  b \leq a$. So, from (v) follows that $\top \leq (\top \land b) \s a$. Hence, using (RD2) we obtain $\top \leq \top \s (b \s a)$. So by (RD1) we obtain that $\top \leq b \s a$. 

(vii) Assume $\top \leq a$ and $\top \leq a \supset b$. Then by (i) $a \s b = b$. So $ \top \leq  b $.

(viii) Assume $ a \land \top = b \land \top$. Note that by (RD1) and (RD2), we have  $(a \land \top) \s d = \top \s (a \s d ) = a \s d$ for every $d$, and similarly we have $(b \land \top) \s d = b \s d$. From the assumption then follows that 
$a \s d = b \s d$, for every $d$. 

(ix) We will prove that $\top \leq  (a \land (a \s b) \land \top) \ta b$. Then, by (v), we will obtain the desired conclusion. On the one hand, by (RD1), (RD2) and (i), we have
\begin{align*}
(a \land (a \s b) \land \top) \s b   & =  
\top \s ((a \land (a \s b)) \s b)    \\ & = 
(a \land (a \s b)) \s b    \\ & = 
(a \s b) \s (a \s b) \\ & \geq  \top.
\end{align*}
On the other hand, using De Morgan's laws, (RD0) and (ii), we have
\begin{align*}
\neg b \s \neg (a \land (a \s b) \land \top)  & =  
\neg b \s  (\neg (a \land (a \s b)) \lor \top)  \\ & \geq 
\neg (a \land (a \s b)) \lor \top \\ & \geq \top.
\end{align*}

(x) By (ii) we have $b \leq a \s b$, so $ a \land b \land \top \leq a \land (a \s b) \land \top$.  By (ix) we have  $a \land (a \s b) \land \top \leq b$. Hence, $a \land (a \s b) \land \top = a \land b \land \top$. By (viii), this implies that $(a \land (a \s b)) \s c = (a \land b) \s c$, for every $c$. By (RD2),  $(a \land (a \s b)) \s c = (a \s b) \s (a \s c)$, so we are done.

(xi) Assume $\top \leq a \supset b$ and $\top \leq b \supset c $. Note that by (iv) and (RD2)  we have $\top \leq (a \land (b \s c)) \supset (b \s c) = (b \s c) \supset (a \supset (b \s c))$. Now, using (vii) and the second assumption, we obtain $\top \leq a \supset (b \supset c)$. By (RD2) and (x), we have $\top \leq (a \supset b) \supset (a \supset c)$. Using  the first assumption and again (vii), we obtain $\top \leq (a \supset c)$.

(xii) Assume $a \leq b$.  We will prove that $\top \leq (c \supset a) \rightarrow (c \supset b)$, i.e.\ that $\top \leq (c \supset a) \supset (c \supset b)$ and $\top \leq \neg (c \supset b) \supset \neg (c \supset a)$. As to the first, note that $a \leq b$ implies $\top \leq a \supset b $. Moreover, by (iii) and (RD2), we have $\top \leq ((c \supset a) \land c) \supset a$.  Using (xi) and (RD2), it follows that   $\top \leq ((c \supset a) \land c) \supset b = (c \supset a) \supset(c \supset b)$. As to the second, note that from the assumption it follows that $\neg b \leq \neg a$, which  implies $\top \leq \neg b \supset \neg a$. On the other hand, by (iii) we have $\top \leq \neg (c \supset a) \supset \neg (c \supset a)$. Applying (RD6) and (RD2), we obtain $\top \leq (\neg a \land c) \supset \neg (c \supset a) = \neg a \supset ( c \supset \neg (c \supset a))$. Using (xi) as before, we have   $\top \leq \neg b \supset ( c \supset \neg (c \supset a))$. Now, applying (RD2) and (RD6), we obtain $\top \leq   (\neg b \land c) \supset \neg (c \supset a) = \neg (c \supset b) \supset \neg (c \supset a)$. 

(xiii) By (xii) we obtain that $ a \supset (b \land c) \leq  (a \supset b)$ and $ a \supset (b \land c) \leq  (a \supset c)$, so $ a \supset (b \land c) \leq  (a \supset b) \land (a \supset c)$. 
In order to prove the other inequality, we will show that $\top \leq ((a \supset b) \land (a \supset c)) \rightarrow (a \supset (b \land c))$, i.e.\ that $\top \leq ((a \supset b) \land (a \supset c)) \supset (a \supset (b \land c))$ and $\top \leq  \neg (a \supset (b \land c)) \supset \neg ((a \supset b) \land (a \supset c))$. 

For the first, applying repeatedly (RD2) and (x), we have
\begin{align*}
((a \supset b) \land (a \supset c)) \supset (a \supset (b \land c)) = &  \\ (a \supset b) \supset (a \supset c)) \supset ((a \supset b) \supset (a \supset (b \land c))) 
 = & \text{ by (x)}\\ ((a \supset b) \land a) \supset c) \supset (((a \supset b) \land a) \supset (b \land c)) = & \text{ by (RD2)}\\
 ((a \supset b) \land a) \supset (c \supset (b \land c)) = & \text{ by (RD2)}\\
 (a \supset b) \supset ( a \supset (c \supset (b \land c))) = & \text{ by (RD2)} \\
 (a \supset b) \supset ( (a \land c) \supset (b \land c)) = & \text{ by (RD2)} \\
 (a \land c) \supset ( (a \supset b) \supset (b \land c)) = & \text{ by (RD2)} \\
 ((a \land c) \supset (a \supset b)) \supset ((a \land c) \supset (b \land c)) = & \text{ by (RD2)} \\
 ((a \land c) \supset b) \supset ( (a \land c) \supset (b \land c)) =  & \text{ by (RD2)} \\
 (a \land c) \supset (b \supset ((a \land c) \supset (b \land c))) =  & \text{ by (RD2)}\\
 (a \land c) \supset ((b \land a \land c ) \supset (b \land c) =  & \text{ by (RD2)}\\ 
 (a \land b \land c)   \supset (b \land c). &
\end{align*} 
Since, using  (v), it follows that $\top \leq  (a \land b \land c) \supset   (b \land c)$, we obtain that $\top \leq ((a \supset b) \land (a \supset c)) \supset (a \supset (b \land c))$ as desired.

As to the second inequality, applying (RD4) and (RD6) we have 
\begin{align*}
& \neg (a \supset (b \land c)) \supset \neg ((a \supset b) \land (a \supset c)) = \\
& =  (a \land \neg (b \land c)) \supset \neg ((a \supset b) \land (a \supset c)) = \\
& = ((a \land \neg b) \lor (a \land \neg c)) \supset \neg ((a \supset b) \land (a \supset c)) = \\
& = ((a \land \neg b) \supset  (\neg (a \supset b) \lor \neg (a \supset c))) \land ((a \land \neg c) \supset (\neg (a \supset b) \lor \neg (a \supset c))) = \\
& = ((\neg a \supset  b) \supset  (\neg (a \supset b) \lor \neg (a \supset c))) \land ( \neg (a \supset c) \supset (\neg (a \supset b) \lor \neg (a \supset c))) \geq  \top.
\end{align*}

(xiv). We have
\begin{align*}
\neg (a \ta b) \s c & = \neg ((a \s b) \land (\neg b \s \neg a)) \s c    \\
& = (\neg (a \s b) \lor \neg (\neg b \s \neg a)) \s c     & \text{by De Morgan's law} \\
& = (\neg (a \s b) \s c) \land (\neg (\neg b \s \neg a) \s c)   & \text{by (RD4)}  \\
& = (\neg (a \s b) \s c) \land ((\neg b \land a) \s c)  & \text{by (RD6)}  \\
& = (\neg (a \s b) \s c) \land (\neg (a \s b) \s c)    & \text{by (RD6)}  \\
& = \neg (a \s b) \s c.
\end{align*}

(xv). 
We  have
\begin{align*} 
a \rightarrow (b \rightarrow c) 
&  =  (a \supset (b \ta c )) \land (\neg (b \ta c ) \s \neg a)   \\
&  =  (a \supset ( (b \supset c ) \land (\neg c \supset \neg b ) )) \land (\neg (b \s c ) \s \neg a)   & \text{by (xiv)} \\ 
&  =  (a \supset (b \supset c )) \land (a \s (\neg c \supset \neg b ) ) \land ((b \land \neg c ) \s \neg a)    & \text{by (xiii), (RD6)} \\ 
&  =  (b \supset (a \supset c )) \land ((a \land \neg c) \supset \neg b  ) \land (b \s (\neg c  \s \neg a) & \text{by (RD2), (RD6)}  \\ 
&  =  (b \supset ((a \supset c ) \land (\neg c  \s \neg a) ) \land (\neg (a \s c) \supset \neg b  )  & \text{by (xiii), (RD5)} \\
&  =  (b \supset (a \ta c )) \land (\neg (a \ta c) \supset \neg b  )  & \text{by (xiv)} \\
&  =  b \rightarrow (a \rightarrow c).
\end{align*}

(xvi) Since $(a \supset b) \leq ( (a \supset b) \lor a)$, it follows that $\top \leq  (a \supset b) \s ( (a \supset b) \lor a)$. Then using  (vi) we have 
$$\top \leq ((a \s b) \s b) \s ((a \supset b) \s ( (a \supset b) \lor a)).$$
Now by (RD2) and (RD4) we have 
\begin{align*}
& ((a \supset b) \supset b) \supset ((a \supset b) \supset ( (a \supset b) \lor a))  = \\ & =  (((a \supset b) \supset b) \land (a \supset b)) \supset ( (a \supset b) \lor a)  =
\\ & = (((a \supset b) \lor a) \supset b) \supset ((a \supset b) \lor a).
\end{align*}
Thus $\top \leq (((a \supset b) \lor a) \supset b) \supset ((a \supset b) \lor a)$. By (RD3 ) we have
$$\top \leq ((((a \supset b) \lor a) \supset b) \supset ((a \supset b) \lor a)) \s ((a \supset b) \lor a).$$
So, using (vii) it follows that 
 $\top \leq (a \supset b) \lor a$.

(xvii) Note that (ii) and (ix) imply $a \land (a \s b ) \land \top = a \land b \land \top$. By hypothesis we have $a \s b = b \s b$ and similarly $b \s a = a \s a$, so applying (iii) we obtain $a \land b \land \top = a \land (a \s b) \land \top = a \land (b \s b) \land \top = a \land \top$. Similarly we have  $a \land b \land \top = b \land (b \s a) \land \top = b \land \top$, so the result immediately follows.
\end{proof}

Before proceeding, let us check that any residuated De Morgan lattice is indeed an involutive CDRL (the proof is just an adaptation of that of Proposition \ref{resid}):

\begin{proposition} \label{residrd} 
Let $\Al = \langle A, \land, \lor, \supset, \neg, \top  \rangle \in \rd$. Then:
\begin{enumerate}[(i)]
\item $ \langle B, *, \top  \rangle$ is a commutative monoid.
\item For every $a, b \in B $, $a \rightarrow b$ is the residuum of $a, b$ relative to $*$ and the lattice ordering $\leq$, i.e.\ $ a * b \leq c \;$ if and only if $\; a \leq b \rightarrow c$. 
\item For every $a, b \in B $, $a = \neg \neg a$ and $a \rightarrow \neg b = b \rightarrow \neg a$.
\end{enumerate}
\end{proposition}

\begin{proof}
(i) Clearly $*$ is commutative, since $\neg (a \rightarrow \neg b) = \neg (b \supset \neg a) \lor \neg (a \supset \neg b) $. To prove associativity, note first that $a \rightarrow b = \neg b \rightarrow \neg a$. Now, using Proposition \ref{lemmard} (xv), we have 
\begin{align*}
a * (b * c) & = \neg (a \rightarrow (b \rightarrow \neg c)) \\
& = \neg (a \rightarrow (c \rightarrow \neg b)) \\
& = \neg (c \rightarrow (a \rightarrow \neg b)) \\
& = c * (a * b) \\
& = (a * b) * c. 
\end{align*}
To prove that $\top $ is the identity, note first that 
\begin{align*}
\neg a  \rightarrow (a \supset \top) & = (\neg a \supset (a \supset \top)) \land (\neg (a \supset \top) \supset \neg \neg a)  \\
& = ((\neg a \land a)  \supset \top) \land ((a \land \top) \supset a) \\ 
& = ((\neg a \land a)  \supset \top) \land (a \supset a). 
\end{align*}
So, since $\top \leq a \s a$ and $\top \leq (\neg a \land a)  \supset \top$, we obtain that 
$\top \leq \neg a  \rightarrow (a \supset \top)$. 
Therefore $\neg a \leq a \supset \top$.

Now we have 
\begin{align*}
\top * a & = a * \top \\
& = \neg ((a \supset \top) \land (\top \supset \neg a) \\ 
& = \neg ((a \supset \top) \land \neg a) \\ 
& = \neg \neg a \\
& = a. 
\end{align*}

(ii) Assume $ a * b \leq  c$, i.e.\  $\neg (a \rightarrow \neg b) \leq  c$. Therefore,  $\neg c \leq  a \rightarrow \neg b $. So $\top \leq  \neg c \rightarrow (a \rightarrow \neg b)$. But  $ \neg c \rightarrow (a \rightarrow \neg b) = a \rightarrow (\neg c \rightarrow \neg b) = a \rightarrow (b \rightarrow c)$. Therefore we have $\top \leq a \rightarrow (b \rightarrow c)$, and so $ a \leq  b \rightarrow c$.
Conversely, if $ a \leq  b \rightarrow c$, then $\top \leq  a \rightarrow (b \rightarrow c) = \neg c \rightarrow (a \rightarrow \neg b)$, i.e.\ $ a * b \leq  c$.

(iii) It follows immediately from what noted in (i) and from the definition of negation for De Morgan lattices.
\end{proof}

We are now able to prove what we claimed. The following result shows that any residuated De Morgan lattice is embeddable into an implicative   bilattice (by an embedding we mean here an injective map which is a homomorphism w.r.t.\ to the operations $\{\land, \lor, \s, \neg, \top \}$). Moreover, the defined embedding is in some sense a ``minimal'' one (see item (iv)).

\begin{thm} \label{t:embed} 
Let $\Al = \langle A, \land^A, \lor^A, \supset^A, \neg^A, \top^A  \rangle \in \rd$ and let $\Al[A^-] = \langle A^-, \land^A, \lor^A \rangle$ be the sublattice of $\Al$ with universe $A^- = \{ a \in A: a \leq \top^A \}$. Then: 
\begin{enumerate}[(i)]
\item $\Al[A^-]$ is dually isomorphic to $\Al[A^+] = \langle A^+, \land^A, \lor^A \rangle$, the sublattice of $\Al$ with universe $A^+ = \{ a \in A: a \geq \top^A \}$.
\item $\Al[A^-]$ and $\Al[A^+]$ are relatively complemented.
\item there is an embedding $h: A \ta A^- \times A^- $ of $\Al$ into the implicative   bilattice $\Al[B] = \langle \Al[A^-] \odot \Al[A^-], \supset^B \rangle$ (or into $ \langle \Al[A^+] \odot \Al[A^+], \supset \rangle$).
\item If $f: A \rightarrow B_{1}$ is a homomorphism from $\Al$ to an implicative   bilattice $\Al[B_{1}]$, then there is a unique map $f': A^- \times A^- \rightarrow B_{1}$ which is also a homomorphism of $\Al[B] = \langle \Al[A^-] \odot \Al[A^-], \supset^B \rangle$ into $\Al[B_{1}]$ such that $f' \cdot h = f$. Moreover, if $f$ is injective, so is $f'$.
\end{enumerate}
\end{thm}

\begin{proof}
(i). The isomorphism is given by the negation operation. It is easy to verify that it is a bijection. Moreover, we have $a \leq b$ if and only if $\neg b \leq \neg a$, so it reverses the order.

(ii). Consider $\Al[A^-]$ and Let $a, b \in A$  be such that $a \leq b \leq \top^A$. We have to show that there is $c \in A$ such that $a \leq c \leq \top^A$ and $b \land^A c = a$ and $b \lor^A c = \top^{A}$. Take $c = (b \supset^A a) \land^A \top^A$. On the one hand, by Proposition \ref{lemmard} (xii) we have $b \land^A (b \supset^A a) \land^A \top^A = b \land^A  a \land^A \top^A = a$. On the other hand, by Proposition \ref{lemmard} (xvi) we have $b \lor^A ((b \supset^A a) \land^A  \top^A) = (b \lor^A ((b \supset^A a)) \land^A (b \lor^A \top^A) = (b \lor^A ((b \supset^A a)) \land^A \top^A = \top^A$.

(iii). Let $\Al[A^-] = \langle A^-, \land^A, \lor^A, \supset^A, \top^{A} \rangle$. 
Since $\Al[A^-]$ is relatively complemented, by Proposition \ref{defimp} we know that the structure $\Al[B] = \langle \Al[A^-] \odot \Al[A^-], \supset^B \rangle$ is an implicative   bilattice and the implication is defined, for $a_1, a_2, b_1, b_2 \in A^-$, as follows:
$$
\langle a_1, a_2 \rangle \supset^B \langle b_1, b_2 \rangle = \langle (a_1 \supset^A (a_1 \land^A b_1)) \land \top^A,  a_1  \land^A b_2 \rangle.
$$
The embedding $h: A \longrightarrow A^- \times A^-$ is defined as follows: 
$$h(a) = \langle a \land^A \top^A, \neg^{A} a \land^A \top^A \rangle.$$ 
We have to check that $h$ is a homomorphism, i.e.\ that:
\begin{enumerate}[(a)]
	\item \quad $h(a \land^A b) = h(a) \land^B h(b)$.
	\item \quad $h(a \lor^A b) = h(a) \lor^B h(b)$.
	\item \quad $h(\neg^A a) = \neg^B(h(a))$. 
		\item \quad  $h(\top^A) = \top^B$.
	\item \quad $h(a \supset^A b) = h(a) \supset^B h(b)$.
\end{enumerate}

(a). We have 
\begin{align*}
h(a \land^A b) & =  \langle a \land^A b \land^A \top^A, \neg^A (a \land^A b) \land^A \top^A \rangle \\
							 & = \langle a \land^A b \land^A \top^A, (\neg^A a \lor^A \neg^A b) \land^A \top^A \rangle \\
							 & = \langle a \land^A b \land^A \top^A, (\neg^A a \land^A \top^A) \lor^A (\neg^A b \land^A \top^A) \rangle \\
						   & = \langle a \land^A \top^A, \neg^A a \land^A \top^A \rangle \land^B  \langle b \land^A \top^A, \neg^A b \land^A \top^A \rangle \\
& = h(a) \land^B h(b).
\end{align*}

Case (b) is similar to (a). Cases (c) and (d) are easy, so we omit them.

(e). Recall that by Proposition \ref{lemmard} we have $a \supset^A a \geq \top^A$, $\top^A \supset^A a \geq \top^A$  and $\neg^A (a \supset^A b) \land^A \top^A = a \land^A \neg^A b \land^A \top^A$. Now we have 
\begin{align*}
h(a \supset^A b) & = \langle (a \supset^A b) \land^A \top^A, \neg^A  (a \supset^A b) \land^A \top^A  \rangle \\
                 & = \langle (a  \supset^A a) \land^A (a \supset^A b) \land^A (a  \supset^A \top^A) \land^A \top^A, a \land^A \neg^A b \land^A \top^A \rangle \\
                 & = \langle a  \supset^A (a \land^A b \land^A \top^A), a \land^A \neg^A b \land^A \top^A \rangle \\
                 & = \langle (a \land^A \top^A) \supset^A (a \land^A b \land^A \top^A), a \land^A \top^A \land^A \neg^A b \land^A \top^A \rangle \\
                 & =  \langle a \land^A \top^A, \neg^A a \land^A \top^A \rangle \supset^B \langle b \land^A \top^A, \neg^A b \land^A \top^A \rangle \\
                 & =  h(a) \supset^B h(b).
\end{align*}

To prove injectivity, assume $h(a) = h(b)$, i.e.\ $\langle a \land^A \top^A, \neg^A a \land^A \top^A \rangle = \langle b \land^A \top^A, \neg^A b \land^A \top^A \rangle$, so $a \land^A \top^A = b \land^A \top^A$ and $\neg^A a \land^A \top^A  = \neg^A b \land^A \top^A$. By Proposition \ref{lemmard} (v), this implies $ (a \land^A \top^A) \leftrightarrow^A (b \land^A \top^A) \geq \top^A$ and $ (\neg^A a \land^A \top^A) \leftrightarrow^A (\neg^A  b \land^A \top^A) \geq \top^A$. From the first inequality we have
\begin{align*}
(a \land^A \top^A) \supset^A (b \land^A \top^A) & = a \supset^A (b \land^A \top^A) \geq \top^A \\
& = (a \supset^A b) \land^A (a \supset^A \top^A) \\
& \geq \top^A.
\end{align*}
Similarly we obtain $b \supset^A a \geq \top^A$. 

From the second inequality we have
\begin{align*}
(\neg^A a \land^A \top^A) \supset^A (\neg^A  b \land^A \top^A) & = \neg^A a \supset^A (\neg^A  b \land^A \top^A) \\
& = (\neg^A a \supset^A \neg^A  b) \land^A  ( \neg^A a \supset^A \top^A) \\
& \geq \top^A.  
\end{align*}
And similarly we obtain $\neg^A b \supset^A \neg^A  a \geq \top^A$. But this, again by Proposition \ref{lemmard} (v), implies $a=b$.

(iv). Assume $f: A \longrightarrow B_1$ is an embedding of $\Al$ into an implicative   bilattice $\Al[B_1]$. We know that the   bilattice reduct of $\Al[B_1]$ is isomorphic to the   product   bilattice $\Al[B_1^{-}] \odot \Al[B_1^{-}]$, where $\Al[B_1^{-}] = \langle \{a \in B_1: a \leq_t \top^{B_1} \}, \land^{B_1}, \lor^{B_1} \rangle$. 
We will prove that the desired embedding is given by the map $f': A^- \times A^- \longrightarrow B_{1}^{-} \times B_{1}^{-}$ defined as follows: $f'(\langle a , b \rangle) = \langle f(a) , f(b) \rangle$ for all $a,b \in A^-$.

From the definition it follows immediately that $f'$ is one-to-one. It remains to prove that it is indeed a homomorphism. Let us check the case of $\land$. We have 
\begin{align*}
f'( \langle a_1, a_2 \rangle \land^B \langle b_1, b_2 \rangle) 
& = f'(\langle a_1 \land^A b_1,  a_2 \lor^A b_2 \rangle) \\
& = \langle f(a_1 \land^A b_1), f(a_2 \lor^A b_2)  \rangle \\
& = \langle f(a_1) \land^{B_1} f(b_1), f(a_2) \lor^{B_1} f(b_2)  \rangle \\
& = \langle f(a_1), f(a_2)\rangle  \land^{B_1} \langle f(b_1), f(b_2)  \rangle \\
& = f'( \langle a_1, a_2 \rangle) \land^{B_1} f'( \langle b_1, b_2 \rangle).
\end{align*}
The proofs corresponding to the other  lattice connectives are similar. Let us check the case of implication:
\begin{align*}
& f'( \langle a_1, a_2 \rangle \supset^B \langle b_1, b_2 \rangle) = \\
& = f'( \langle (a_1 \supset^A (a_1 \land^A b_1)) \land \top^A,  a_1  \land^A b_2 \rangle) = \\
& = \langle f((a_1 \supset^A (a_1 \land^A b_1)) \land \top^A),  f(a_1  \land^A b_2) \rangle = \\
& = \langle (( f(a_1) \supset^{B_1} (f(a_1) \land^{B_1} f(b_1))) \land \top^{B_1}),  (f(a_1)  \land^{B_1} f(b_2)) \rangle =  \\
& = \langle f(a_1), f(a_2) \rangle \supset^{B_1} \langle f(b_1), f(b_2) \rangle) = \\
& = f'( \langle a_1, a_2 \rangle) \supset^{B_1} f'( \langle b_1, b_2 \rangle).
\end{align*}
This holds because, as we have seen in (iii), for any $a, b \in L $ we have that $(a \supset^{B_1} (a \land^{B_1} b)) \land \top^{B_1}$ is the relative complement of $a$ in the interval $[a, \top^{B_1}]$. Finally, it is easy to see that $f' \cdot h = f$, for we have, for all $a \in A$,
\begin{align*}
f' \cdot h (a) & = f' (\langle a \land^A \top^A, \neg^{A} a \land^A \top^A \rangle) \\
& = \la f(a \land^A \top^A), f(\neg^{A} a \land^A \top^A) \ra \\
& = \la f(a) \land^{B_1} \top^{B_1}, f(\neg^{A} a) \land^{B_1} \top^{B_1}) \ra \\
& = \la f(a) \land^{B_1} \top^{B_1}, \neg^{B_1} f( a) \land^{B_1} \top^{B_1}) \ra \\
& = f(a).
\end{align*}
From this it follows that the map $f'$ is unique, and it is also easy to see that, if $f$ is injective (an embedding), then $f'$ is also injective.
\end{proof}

Theorem \ref{t:embed} enables us to obtain some additional information about the variety $\rd$:

\begin{thm} \label{t:rdgen}
The variety of residuated De Morgan lattices is generated by the four-element algebra whose $\{ \land, \lor, \neg \}$-reduct is the four element De Morgan lattice.
\end{thm} 

\begin{proof}
We will prove that if an equation $\phi \approx \psi$ does not hold in the variety $\rd$, then it does not hold in the four-element residuated De Morgan lattice. By assumption we have that there is some residuated De Morgan lattice $\Al$ such that $\phi \approx \psi$ does not hold in $\Al$. By Theorem \ref{t:embed} (iii), we know that $\Al$ can be embedded into some implicative   bilattice $\Al[B]$. So $\Al[B]$ does not satisfy $\phi \approx \psi$. By Theorem \ref{t:vargen}, this implies that $\phi \approx \psi$ does not hold in $\fours$. Hence $\phi \approx \psi$ does not hold in the residuated De Morgan lattice reduct of $\fours$.
\end{proof}


\section{Other subreducts}
\label{sec:other}

In this section we will see that the construction described in Theorem \ref{t:embed} can be carried out even if we restrict our attention to a smaller fragment of the implicative  bilattice language.

From the point of view of AAL, the core of an algebraizable logic lies in the fragment of the language that is needed in order to define the interpretations between the logic and its associated class of algebras. In the case of the logic $\lbs$ the interpretations used  the connectives $\{ \s, \neg, \land \}$, but it is easy to see that $\land$ is not necessary, since the formula $p \da q$ can be replaced by the set $\{ p \s q, q \s p, \neg p \s \neg q, \neg q \s \neg p \}$. This fact seems to suggest that the  $\{ \s, \neg \}$-fragment of our language is a particularly interesting one. In order to justify this claim, let us introduce the following:

\begin{definition} \label{d:frag} 
{\rm An \emph{I-algebra} is an algebra $\Al = \left\langle A, \s, \neg \right\rangle$ satisfying the following equations:
\begin{enumerate}[ ]
\item (I1) $ \quad (x \supset x) \supset y \approx y$ 
\item (I2) $ \quad  x \supset (y \supset z) \approx (x \s y) \s (x \s z) \approx y \supset (x \supset z)$
\item (I3) $ \quad ((x \supset y) \supset x) \supset x  \approx x \s x $ 
\item (I4) $\quad  x \supset (\neg y \supset z) \approx \neg (x \s y) \s z$
\item (I5) $\quad \neg \neg x \approx x$
\item  (I6) $ \quad  p(x,y,x) \approx p(x,y,y)$
\end{enumerate}
where $p(x,y,z)$ is an abbreviation for $$(x \s y) \s ((y \s x) \s ((\neg x \s \neg y) \s ((\neg y \s \neg x) \s z))).$$}
We shall denote by $\ia$ the variety of $I$-algebras.
\end{definition} 

It follows from Proposition \ref{p:lem2} that the $\{ \s,  \neg \}$-reduct of any implicative   bilattice satisfies axioms (I1) to (I6), hence is an $I$-algebra. Let us now state some properties of these algebras that will be used in the rest of the section.  As we have done in the former chapter, we abbreviate  $ a = a \s a$ as $E(a)$.

\begin{proposition}
\label{p:ilem-0}
Let $\Al = \langle A, \s, \neg \rangle $ be an $I$-algebra. Then, for all $a, b, c, d, e \in A$: 
\begin{enumerate}[(i)]
\item  $E(a \s a)$,
\item  $a \s b = a \s (a \s b) = (a \s a) \s (a \s b)$,
\item If $a \s b = d \s d$ and $b \s c = e \s e$, then $a \s c = (a \s e) \s (a \s e)$,
\item $\neg(a \s \neg b) \s c = a \s (b \s c)$.
\item $a \s a = \neg a \s \neg a$.
\end{enumerate}
\end{proposition}
\begin{proof}
(i) Follows immediately from (I1).

(ii) Let $a, b \in A$. By (I2) and (I1), we have $a \s (a \s b) = (a \s a) \s (a \s b) = a \s b$.

(iii) Assume $a \s b = d \s d$ and $b \s c = e \s e$. By  (I2), we have $a \supset (b \supset c) =  (a \s b) \s (a \s c)$.  Since $a \s b = d \s d$, by (I1) we have $(a \s b) \s (a \s c) = a \s c$. So $a \s c = a \supset (b \supset c)$. Now, by (I2) again, we have $a \supset (e \supset e) = (a  \s e) \s (a  \s e)$. Since $b \s c = e \s e$, it follows that  $a \supset (b \supset c) = (a  \s e) \s (a  \s e)$. Hence, $a \s c =  (a  \s e) \s (a  \s e)$.

(iv) By (I4) $\neg(a \s \neg b) \s c = a \s (\neg\neg b \s c)$. Then by (I5) we obtain $\neg(a \s \neg b) \s c = a \s (b \s c)$. 

(v). We will prove that $E((a \s a) \s (\neg a \s \neg a))$, $E((\neg a \s \neg a)  \s (a \s a))$, $E(\neg (a \s a) \s \neg (\neg a \s \neg a))$ and $E(\neg (\neg a \s \neg a)  \s \neg (a \s a))$. The result will then follow by (I1) and (I6). The first two follow immediately by (I1) and (i). As to the other two, we have
\begin{align*}
\neg (a \s a) \s \neg (\neg a \s \neg a) & = a \s (\neg a \s  \neg (\neg a \s \neg a)) & \textrm{by (I4)}   \\
& = \neg a \s ( a \s  \neg (\neg a \s \neg a)) & \textrm{by (I2)}   \\
& = \neg a \s ( \neg \neg a \s  \neg (\neg a \s \neg a)) & \textrm{by (I5)}   \\
& = \neg (\neg  a \s   \neg a) \s  \neg (\neg a \s \neg a) & \textrm{by (I4)}
\end{align*}
and
\begin{align*}
\neg (\neg a \s \neg a) \s \neg (a \s a) & = \neg a \s (\neg \neg a \s  \neg (a \s a) & \textrm{by (I4)}   \\
& = \neg a \s ( a \s  \neg (a \s a)) & \textrm{by (I5)}   \\
& = a \s ( \neg  a \s  \neg (a \s a)) & \textrm{by (I2)}   \\
& = \neg (  a \s   a) \s  \neg (a \s a) & \textrm{by (I4)}.
\end{align*}
Thus the result easily follows.
\end{proof}

\begin{proposition} \label{p:ilem} 
Let $\Al = \langle A, \s, \neg \rangle $ be an $I$-algebra. Then, for all $a, b, c, d, e \in A$: 
\begin{enumerate}[(i)]
\item $E(a \s b)$ and $E(b \s a)$ if and only if $a \s f = b \s f$ for all $f \in A$.
\item $E (((a \s b) \s ((b \s a) \s b)) \s ((b \s a) \s ((a \s b) \s a))).$
\item If $E(a \s b)$, $E(b \s a)$,  $E(c \s d)$ and $E(d \s c)$, then $E((a \s c) \s (b \s d))$ and \\ $E((b \s d) \s (a \s c)).$
\end{enumerate}
\end{proposition}

\begin{proof}

(i). Assume $a \s b = (a \s b) \s (a \s b)$ and $b \s a = (b \s a) \s (b \s a)$. Let $f \in A$. Then, using (i) and (I2), we have 
\begin{align*}
a \s f & =  
(a \s b) \s (a \s f) \\ & =
a \s (b \s f) \\ & =
b \s (a \s f) \\ & =
(b \s a) \s (b \s f) \\ & =
b \s f.
\end{align*}
Conversely, assume $a \s f = b \s f$ for all $f \in A$. Then, using (i), we have 
\begin{align*}
a \s b & =  
b \s b \\ & =
(b \s b) \s (b \s b) \\ & =
(a \s b) \s (a \s b).
\end{align*}
By symmetry, we obtain $b \s a = (b \s a) \s (b \s a) $.

(ii). Using (i) in Proposition \ref{p:ilem-0}  and (I2), we have
\begin{align*}
((a \s b) \s (b \s a)) \s ((a \s b) \s (b \s a))  & = \\
(a \s b) \s  ((b \s a) \s (b \s a))  & =\\
(a \s b) \s  ( ((b \s a) \s b) \s ((b \s a) \s a) )  & =\\
((a \s b) \s ((b \s a) \s b)) \s ((a \s b) \s ((b \s a) \s a) ) & =\\
((a \s b) \s ((b \s a) \s b)) \s ( (b \s a) \s ( (a \s b)  \s a) ).
\end{align*}
Hence, by (i), the result immediately follows.

(iii). Assume that $E(a \s b)$, $E(b \s a)$,  $E(c \s d)$ and $E(d \s c)$. From (i) we have  $a \s f = b \s f$ and $c \s f = d \s f$,  for all $f \in A$.  Then, using (i) and (I2), we have 
\begin{align*}
(a \s c) \s (b \s d) & =
((a \s b) \s (a \s c)) \s ( (b \s a) \s (b \s d))  \\ & =
(a \s (b \s c)) \s ( b \s (a \s d))  \\ & =
(a \s (b \s c)) \s ( a \s (b \s d))  \\ & =
a \s ( (b \s c) \s (b \s d))  \\ & =
a \s ( b \s  (c \s d))  \\ & =
(a \s  b) \s  (a \s (c \s d))  \\ & =
a \s (c \s d)  \\ & =
a \s ((c \s d) \s (c \s d)) \\ & =
(a \s (c \s d)) \s (a \s (c \s d)).
\end{align*}
From this, it easily follows that $$ (a \s c) \s (b \s d) = ((a \s c) \s (b \s d)) \s ((a \s c) \s (b \s d)). $$ By symmetry, we also have $$ (b \s d) \s (a \s c) = (  (b \s d) \s (a \s c)) \s ((b \s d) \s (a \s c)). $$ Now, applying (i), we obtain the result.
\end{proof}

It may perhaps be interesting to observe that, in any  $I$-algebra $\Al = \la A, \s, \neg \ra$, we can define the following relations:
\begin{align*}
\leq_t \; & = \{ \la a, b \ra : E(a \s b) \textrm{ and } E(\neg b \s \neg a) \} \\
\leq_k \; & = \{ \la a, b \ra : E(a \s b) \textrm{ and } E(\neg a \s \neg b) \}.
\end{align*}
It is easy to check that $\leq_t$ and $\leq_k$ are partial orders and that the negation operator is anti-monotonic w.r.t.\ $\leq_t$ and monotonic w.r.t.\ $\leq_k$. Indeed, as the notation suggests, if the $I$-algebra is the reduct of an implicative bilattice, then these relations coincide with the two bilattice orders. 




In the following propositions we describe how, starting from an $I$-algebra,  it is possible to construct a Tarski algebra. This construction will later be employed to prove that any $I$-algebra can be embedded into an implicative bilattice.

Recall that a Tarski algebra is an algebra  $\la A, \s \ra$ satisfying the following identities: 

\begin{enumerate}[ ]
\item (T1) $ (x \s y) \s x \approx x$ 
\item (T2) $x \supset (y \supset z) \approx  y \supset (x \supset z)$
\item (T3) $(x \supset y) \supset y \approx (y \supset x) \supset x $.
\end{enumerate}
Note that in a Tarski algebra the term $x \s x$ is an algebraic constant, that is $x \s x \approx y \s y$ holds in every Tarski algebra. We denote this constant term by 1. The canonical order $\leq$ of a Tarski algebra is defined by
$$a \leq b \; \;\; \text{ iff } \;\;  a \s b = 1$$
and every pair of elements $a, b$ has a supremum in this order defined by
$$a \vee b = (a \s b) \s b.$$

Let us introduce the following:

\begin{definition} \label{d:req}
Let $\Al = \langle A, \s, \neg \rangle $ be an $I$-algebra. The relation $\cc \; \subseteq A \times A$ is defined  as follows: $$a \cc b \;\; \text{ iff } \;\; E(a \s b) \textrm{ and } E(b \s a).$$

\end{definition}

Note that, by Proposition \ref{p:ilem}  (i), we have $$a \cc b  \;\; \text{ iff } \;\;   a \s c = b \s c, \textrm{ for all } c \in A.$$
Note also that, if $\Al$ is the reduct of an implicative bilattice, then  $\cc$ coincides with the relation $\sim_1$ that was introduced in Chapter \ref{ch:int} 
(Definition \ref{d:cong}) in order  to prove the Representation Theorem for interlaced pre-bilattices. 

From Definition  \ref{d:req} and Propositions \ref{p:ilem-0} (i) and (iii) 
 it follows immediately that $\cc$  is an equivalence relation. Moreover, from Proposition \ref{p:ilem}  (iii) it follows that $\cc$ is compatible with the operation $\s$. So we can define an operation $\s$ in the quotient $A/\!\!\sim$ given, for every $a, b \in A$,  by
$$[a] \s [b] = [a\s b].$$ We will first show that the algebra $\Al/{\cc} = \langle A/{\cc}, \s \rangle$  is a Tarski algebra with the property that its canonical order is a lattice order. Then we  will prove that the algebra $\Al$  can be embedded    into an implicative product  bilattice  constructed from this quotient algebra (by embedding we mean here an injective map which is a homomorphism w.r.t. to the operations $\{ \s, \neg \}$).  Hence we will have  shown that $I$-algebras turn out to be subreducts of implicative   bilattices.

\begin{proposition} \label{p:icong} 
Let $\Al = \langle A, \s, \neg \rangle $ be an $I$-algebra. Then the structure 
$ \Al /{\cc} = \la A / {\cc}, \s  \ra $ is a Tarski algebra.
\end{proposition}
\begin{proof}
 Let us check that the equations (T1) to (T3) of the definition of Tarski algebra hold in $\Al /{\cc}$. We denote by $[a]$  the equivalence class of $a \in A$ modulo $\cc$.

(T1) Let $a, b \in A$. In order to show that (T1) holds in $\Al /{\cc}$ we need to prove that $[(a \s b) \s a] = [a]$. Thus, by definition of $\cc$, we have to prove that $E(((a \s b) \s a) \s a)$ and $E(a \s ((a \s b) \s a)$, that is, that  $$((a \s b) \s a) \s a = (((a \s b) \s a) \s a) \s (((a \s b) \s a) \s a)$$ and $$a \s ((a \s b) \s a)= (a \s ((a \s b) \s a)) \s (a \s ((a \s b) \s a)).$$ 
 To prove the former note that by (I1) we have $(a \s a) \s (a \s a) = a \s a$ and by (I3),   $((a \s b) \s a) \s a  = a \s a$. So $$((a \s b) \s a) \s a = (a \s a) \s (a \s a).$$ Therefore, substituting $((a \s b) \s a) \s a$ for $a \s a$ on the right of the equality symbol we obtain $$((a \s b) \s a) \s a = (((a \s b) \s a) \s a) \s (((a \s b) \s a) \s a).$$  Now,  to prove the later note that by (I2) $a \s ((a \s b) \s a) = (a \s b) \s (a \s a)$. By Proposition  \ref{p:ilem-0} (ii), $a \s a = (a \s a) \s (a \s a)$. Thus we have 
 $$a \s ((a \s b) \s a) = (a \s b) \s  ((a \s a) \s (a \s a)).$$ Therefore, by (I2), 
 $$a \s ((a \s b) \s a) = ((a \s b) \s (a \s a)) \s ((a \s b) \s (a \s a)).$$ Hence, since $a \s ((a \s b) \s a) = (a \s b) \s (a \s a)$, we obtain the desired conclusion. 
 
 (T2) Let $a, b, c \in A$. We have to prove that $[a]  \s ([b] \s [c]) = [b] \s ([a] \s [c])$, that is $[a \s (b \s c)] = [b \s (a \s c)]$. So we have to show that $E((a \s (b \s c)) \s (b \s (a \s c)))$ $E((b \s (a \s c)) \s (a \s (b \s c)))$, that is, 
$$(a \s (b \s c)) \s (b \s (a \s c)) = $$ $$((a \s (b \s c)) \s (b \s (a \s c))) \s ((a \s (b \s c)) \s (b \s (a \s c)))$$
and 
$$(b \s (a \s c)) \s (a \s (b \s c)) = $$  $$((b \s (a \s c)) \s (a \s (b \s c))) \s ((b \s (a \s c)) \s (a \s (b \s c))).$$
Note that by (I2)  $a \s (b \s c) = b \s (a \s c)$ and $b \s (a \s c) = a \s (b \s c)$. Moreover, by  Proposition \ref{p:ilem-0} (ii), for every $c \in A$, $c \s c = (c \s c) \s (c \s c)$. So, the two desired  results follow.

(T3) Let $a, b \in A$. We have to show that $([a] \s [b]) \s [b] = ([b] \s [a]) \s [a]$. 
In order to do it we first show that for every $c, d \in A$, $$([c] \s [d]) \s [d]  = [(c \s d) \s ((d \s c) \s d))].$$  Note that using that  (T1) holds, we have $[(c \s d) \s ((d \s c) \s d)] = [c \s d] \s ([d] \s [c]) \s [d])  = [c \s d] \s [d] = [(c \s d) \s d]$.  Now, using the fact just proved and (I2) 
 we have 
\begin{align*}
([a] \s [b]) \s [b] & = 
[((a \s b) \s ((b \s a) \s b))] \\ & =
[((b \s a) \s ((a \s b) \s a))] \\ & =
([b] \s [a]) \s [a].
\end{align*}
\end{proof}

Let $\Al = \langle A, \s, \neg \rangle $ be an $I$-algebra. Since $ \Al /{\cc} = \la A / {\cc}, \s  \ra $ is a Tarski algebra, any two elements $[a], [b] \in A / {\cc}$ have a supremum in the canonical order, defined by $[a] \vee [b] = ([a] \s [b]) \s [b]$. We will show that they also have  an infimum, defined by  $$[a] \wedge [b] : = [\neg (a \s \neg b)].$$
This definition does not depend on the representatives, because if $a_1 \cc b_1$ and $a_2 \cc b_2$, then $a_1 \s c = b_1 \s c$ and $a_2 \s c = b_2 \s c$ for every $c \in A$, so that 
$b_1 \s ( b_2 \s c) = a_1 \s (a_2 \s c)$. Then, using Proposition  \ref{p:ilem-0} (iv), we obtain that for every $c \in A$, $\neg (a_1 \s \neg a_2) \s c =  \neg (b_1 \s \neg b_2) \s c$. Therefore,  $\neg (a_1 \s \neg a_2) \cc  \neg (b_1 \s \neg b_2)$.

\begin{proposition} \label{p:iclass}
Let $\Al = \langle A, \s, \neg \rangle $ be an $I$-algebra. Then the algebra $\la A / {\cc}, $ $\land, \lor, \s  \ra$ is a classical implicative lattice.  
\end{proposition}
\begin{proof}

Let us check that the operations $\vee$ and $\wedge$ satisfy the  lattice axioms. That $\vee$ satisfies the join semi-lattice axioms is known. Let us show that $\wedge$  satisfies the meet semi-lattice axioms.  To prove idempotency we show that for every $c \in A$, $\neg (a \s \neg a) \s c  = a \s c$. By Proposition \ref{p:ilem-0} (iv) and (ii) we have  $\neg (a \s \neg a) \s c = a \s (a \s c) = a \s c$. Commutativity and associativity also follow easily from Proposition \ref{p:ilem-0} (iv) and (ii). As to the absoption laws, we have to prove that
$$ [a] \land ([a] \lor [b]) = [ \neg (a \s \neg ((a \s b) \s b)) ] = [a] $$
and
$$ [a] \lor ([a] \land [b]) = [(\neg (a \s \neg b) \s a) \s a]  = [a]. $$
The first equality holds because, using Proposition \ref{p:ilem-0} (iv), (I2) and (I1), we have, for all $c \in \Al$,
\begin{align*}
\neg (a \s \neg ((a \s b) \s b)) \s c & = 
a \s (((a \s b) \s b) \s c) \\ & = 
(a \s ((a \s b) \s b)) \s (a \s c ) \\ & = 
((a \s b) \s (a \s b)) \s (a \s c ) \\ & = 
(a \s c ).
\end{align*}
The second one is also proved using  Proposition \ref{p:ilem-0} (iv), (I2) and (I1) because
\begin{align*}
((\neg (a \s \neg b) \s a) \s a) \s c & = 
((a \s ( b \s a)) \s a) \s c \\ & = 
((b \s ( a \s a)) \s a) \s c \\ & = 
(((b \s  a) \s (b \s a)) \s a) \s c \\ & = 
a \s c.
\end{align*}

Now we prove that   $\la A / {\cc}, \land, \lor, \s  \ra$ is a classical implicative lattice. We have to show that    $ [a] \land ([a] \s [b]) = [a] \land [b] $ and $ [a] \lor ([a] \s [b]) = [a \s a] $. As to the first, note that by Proposition \ref{p:ilem-0} (iv) and (I2) we have
\begin{align*}
\neg (a \s \neg (a \s b)) \s c & = 
a \s ((a \s b) \s c) \\ & = 
(a \s b) \s (a \s c) \\ & = 
a \s (b \s c) \\ & = 
\neg (a \s \neg b) \s c
\end{align*}
for revery $c \in A$. Hence  $[a] \land ([a] \s [b]) = [a] \land ([a \s b]) = [\neg (a \s \neg (a \s b)]= [\neg (a \s \neg b)] = [a] \land [b] $.

As to the second, recall that, by (T1), we have $([a] \s [b]) \s [a] = [a]$. Therefore we have $$ [a] \lor ([a] \s [b]) = (([a] \s [b]) \s [a]) \s [a] = [a] \s [a] = [a \s a]. $$
\end{proof}

We are now able to prove what we claimed: that any $I$-algebra can be embedded into an implicative bilattice and, moreover, that the embedding we define is in some sense a minimal one:

\begin{thm}
\label{t:icong-2} 
 Let $\Al$ be an $I$-algebra. Then:
\begin{enumerate}[(i)]   
  \item  there is an embedding $h: A \longrightarrow{A/\!\!\sim} \times {A/\!\!\sim} $ of $\Al$ into the implicative   bilattice $\Al[B] = \la {\Al/\!\!\sim}  \odot  {\Al/\!\!\sim}, \s \ra$ defined, for all $a \in A$, as $h(a) = \la [a], [\neg a]\ra$,
  \item  $\Al[B]$ is generated by the set $h[A]$,
  \item  if $f: A \longrightarrow B'$ is a homomorphism from $\Al$ to an implicative   bilattice $\Al[B]'$, then there is a unique map $f': {A/{\cc} \times A/{\cc}} \longrightarrow B'$ which is also a homomorphism from $\Al[B]$ to $\Al[B]'$ such that $f' \cdot h = f$. Moreover, if $f$ is injective, so is $f'$.
\end{enumerate}  

 
\end{thm} 
\begin{proof}
(i) Let $\Al = \langle A, \s^A, \neg^A  \rangle $ and $\Al[B] = \langle {A/\!\!\sim} \times {A/\!\!\sim}, \land^B, \lor^B, \otimes^B, \oplus^B, \s^B, \neg^B  \rangle $. We first prove 
that $h$ is injective. Assume  $h(a) = h(b)$, so that $[a] = [b]$ and $[\neg a] = [\neg b]$, which means that
\begin{align*}
a \s^A b & = (a \s^A b) \s^A (a \s^A b) \\
b \s^A a & = (b \s^A a) \s^A (b \s^A a) \\
\neg a \s^A \neg b & = (\neg a \s^A \neg b) \s^A (\neg a \s^A \neg b) \\
\neg b \s^A \neg a & = (\neg b \s^A \neg a) \s^A (\neg b \s^A \neg a). 
\end{align*}
Using (I1), it easy to see that these conditions imply $p(a,b,a) = a  $ and $p(a,b,b) = b  $. Hence, by (I6), we have $a = b$.
 
Now we prove that $h$ is a $(\s, \neg)$-homomorphism. It is easy to check that $h (\neg^A a) = \neg^B h(a)$.  To prove that $h (a \s^A b) = h(a) \s^B h(b)$, using (I5), 
we have 
\begin{align*}
h (a \s^A b) & = \la [a \s^A b], [\neg (a \s^A b)]  \ra \\ & =
\la [a \s^A b], [\neg (a \s^A \neg \neg b)]  \ra \\ & =
\la [a] \s [b] , [a] \land [\neg b]]   \ra \\ & =
h(a) \s^B h(b).
\end{align*}

To conclude, note that $ \la [a], [b] \ra \in h[A]$ if and only if $a = \neg b$. 

(ii). We will prove that, for every $u \in {A/\!\!\sim}  \times  {A/\!\!\sim}$, there are $u_1, u_2, u_3, u_4  \in h[A]$ such that
$$u = (u_1 \otimes^B u_2 ) \oplus^B (u_3 \otimes^B u_4).$$
Let $u = \la [a], [b] \ra \in {A/\!\!\sim}  \times  {A/\!\!\sim}$. The desired elements are: $u_1 = \la [a], [\neg a] \ra$, $u_2 = \la [b \s a] , [\neg (b \s a)] \ra$, $u_3 = \la [\neg b], [b] \ra$ and $u_4 = \la [\neg (a \s b)], [a \s b] \ra$. It is clear that $u_1, u_2, u_3, u_4 \in h[A]$. Now notice that from the definition of $\wedge$ follows that $[\neg (b \s a)] = [b] \wedge [\neg a]$. Therefore, 
$$u_1 \otimes^B u_2 = \la [a] \wedge [b \s a], [\neg a] \wedge [\neg (b \s a)]\ra =
\la [a], [b] \wedge [\neg a]\ra.$$
Similarly, 
$$u_3 \otimes^B u_4 = \la [\neg b] \wedge [\neg (a \s b)], [b] \wedge [a \s b]\ra =
\la [a] \wedge [\neg b], [b]\ra.$$
Therefore,
$$(u_1 \otimes^B u_2) \oplus^B (u_3 \otimes^B u_4) = \la [a], [b] \wedge [\neg a]\ra \oplus^B 
\la [a] \wedge [\neg b], [b]\ra =$$ $$ \la [a] \vee ([a] \wedge [\neg b]), ([b] \wedge [\neg a]) \vee [b]\ra = \la [a], [b]\ra = u.$$

(iii). Assume $f: A \longrightarrow B'$ is an embedding of $\Al$ into an implicative   bilattice $\Al[B]'$. By Theorem~\ref{t:reprimp2}, we may identify $\Al[B]'$ with its isomorphic image  $\la {\Al[B']/\!\!\sim} \odot {\Al[B']/\!\!\sim}, \s^{B'} \ra$. 
We will prove that the desired embedding is given by the map $f': {A/\!\!\sim}  \odot  {A/\!\!\sim} \longrightarrow  {B'/\!\!\sim} \odot {B'/\!\!\sim}$ defined as follows: for all $a, b \in A$,
$$f'(\langle [a] , [b] \rangle) = \langle [f(a)] , [f(b)] \rangle.$$
To prove that $f'$ is one-to-one, assume $f'(\langle [a] , [b] \rangle) = f'(\langle [c] , [d] \rangle) $ for some $a,b,c,d \in A$. By definition, this means that $ [f(a)] =  [f(c)]$ and $ [f(b)] =  [f(d)]$. By the definition of $\cc$, we have that 
$$f(a) \s^{B'} f(c)  = (f(a) \s^{B'} f(c)) \s^{B'} (f(a) \s^{B'} f(c)).$$
 Since $f$ is a $\{ \s \}$-homomorphism, from the previous equality we obtain 
 $$f(a \s^{A} c)  = f ((a \s^{A} c) \s^{A} (a \s^{A} c)).$$ By the injectivity of $f$, this implies $a \s^{A} c  = (a \s^{A} c) \s^{A} (a \s^{A} c)$, i.e.\ $[a] = [c]$. In a similar way we obtain $[b]=[d]$, so we conclude that $f'$ is one-to-one. It remains to prove that it is indeed a homomorphism. The case of negation is almost immediate:
\begin{align*}
f'(\neg^B \langle [a] , [b] \rangle) & = f'(\langle [b] , [a] \rangle) \\ 
& = \langle [f(b)] , [f(a)] \rangle \\ 
& = \neg^{B'} \langle [f(a)] , [f(b)] \rangle \\ 
& = \neg^{B'} f'(\langle [a] , [b] \rangle).
\end{align*}
The cases of the remaining bilattice connectives are also easy (recall that the relation $\cc$ is compatible with all the connectives except $\neg$). For instance, in the case of conjunction, we have 
\begin{align*}
& f'( \langle [a_1], [a_2] \rangle \land^B \langle [b_1], [b_2] \rangle) = \\
& = f'( \langle [a_1] \land^{{A/\!\sim}} [b_1], [a_2] \lor^{{A/\!\sim}} [b_2] \rangle) = \\
& = f'( \langle [\neg^{A} (a_1 \s^{A} \neg^{A} b_1], [(a_2 \s^{A} b_2) \s^{A} b_2] \rangle) = \\
& = \langle [f (\neg^{A} (a_1 \s^{A} \neg^{A} b_1)], [f((a_2 \s^{A} b_2) \s^{A} b_2)] \rangle = \\
& = \langle [\neg^{A} (f(a_1) \s^{A} \neg^{A} f(b_1))], [(f(a_2) \s^{A} f(b_2)) \s^{A} f(b_2)] \rangle = \\
& = \langle [f(a_1)] \land^{{A/\!\sim}} [f(b_1)], [f(a_2)] \lor^{{A/\!\sim}} [f(b_2)] \rangle = \\
& = \langle [f(a_1)],  [f(a_2)] \ra \land^{B'} \la [f(b_1)], [f(b_2)]  \rangle = \\
& = f' (\langle [(a_1)],  [(a_2)] \ra) \land^{B'} f'(\la [(b_1)], [(b_2)]  \rangle).
\end{align*}
The proofs corresponding to the other  lattice connectives are similar. Let us check the case of implication:
\begin{align*}
f'( \langle [a_1], [a_2] \rangle \supset^B \langle [b_1], [b_2] \rangle) 
& = f'( \langle [a_1] \s^{{A/\!\sim}}  [b_1], [a_1] \land^{{A/\!\sim}} [b_2] \rangle) \\
& = f'( \langle [a_1 \s^{A}  b_1], [\neg^A (a_1 \s^A \neg^{A} b_2] \rangle) \\
& =  \langle [f (a_1 \s^{A}  b_1)], [f ( \neg^A (a_1 \s^A \neg^{A} b_2)] \rangle \\
& =  \langle [f (a_1) \s^{A}  f(b_1)], [\neg^A (f(a_1) \s^A \neg^{A} f(b_2))] \rangle \\
& =  \langle [f (a_1)] \s^{{A/\!\sim}}  [f(b_1)], [(f(a_1)] \land^{{A/\!\sim}} [f(b_2)] \rangle \\
& =  \langle [f(a_1)], [f(a_2)] \rangle \supset^B \langle [f(b_1)], [f(b_2)] \rangle \\
& = f'( \langle [a_1], [a_2] \rangle) \supset^B f'(\langle [b_1], [b_2] \rangle).
\end{align*}
Finally, it is easy to see that $f' \cdot h = f$, for we have, for all $a \in A$,
$$
f' \cdot h (a) = f' (\la [a], [\neg a] \ra) = \la [f(a)], [f(\neg a)] \ra = \la [f(a)], [\neg f (a)] \ra = f(a).
$$
From this it follows that the map $f'$ is unique, and it is also easy to see that, if $f$ is injective (an embedding), then $f'$ is also injective.
Let us note that the previous result may also be proved without relying on Theorem~\ref{t:reprimp2}. In this case we have to define, for all $a, b \in A$,
$$f'(\langle [a] , [b] \rangle) = (f(a) \otimes^{B} f(b \s^{A} a)) \oplus^{B} (\neg^{B} f(b)  \otimes^{B} f (\neg^{A} (a \s^{A} b )) ). $$
 
\end{proof}

The previous theorem enables us to obtain some additional information about the variety of $I$-algebras:

\begin{thm} \label{t:igen}
The variety $\ia$ is generated by $\Al[A_{4}]$, the four-element $I$-algebra which is the $\{ \s, \neg \}$-reduct of the implicative   bilattice $\fours$.
\end{thm} 

\begin{proof}
We will prove that if an equation $\phi \approx \psi$ does not hold in the variety of $I$-algebras, then it does not hold in the four-element $I$-algebra. By assumption, there is some $I$-algebra $\Al$ such that $\phi \approx \psi$ does not hold in $\Al$. By Theorem \ref{t:icong-2}, we know that $\Al$ can be embedded into some implicative   bilattice $\Al[B]$. So $\Al[B]$ does not satisfy $\phi \approx \psi$. By Theorem \ref{t:vargen}, this implies that $\phi \approx \psi$ does not hold in $\fours$. Hence $\phi \approx \psi$ does not hold in the $\{ \s, \neg \}$-reduct of $\fours$.
\end{proof}


We shall now prove a result on the congruences of $I$-algebras that will enable us to characterize the subvarieties of $\ia$. Recall that $E(a)$ is an abbreviation for $a = a \s a$, and we also use $p(a,b,c)$ to abbreviate 
$$(a \s b) \s ((b \s a) \s ((\neg a \s \neg b) \s ((\neg b \s \neg a) \s c))).$$

\begin{lem} \label{lem:edpc}
Let $\Al$ be an $I$-algebra and $a,b,c,d, c', d' \in A$. Then:
\begin{enumerate}[(i)]
  \item $p(a, b, c) = p(a, b, d)$ implies $p(a, b, \neg c) = p(a, b, \neg d)$
  \item $p(a, b, c) = p(a, b, d)$ and  $p(a, b, c') = p(a, b, d')$ imply  $p(a, b, c \s c') = p(a, b, d \s d').$
\end{enumerate}
\end{lem}

\begin{proof}
(i).
Observe that, applying (I4) several times, we have 
\begin{align*}
p(a, b, c) & = (a \s b) \s ((b \s a) \s ((\neg a \s \neg b) \s ((\neg b \s \neg a) \s c)))      \\
    &  = \neg ((a \s b) \s \neg  (b \s a)) \s     ((\neg a \s \neg b) \s ((\neg b \s \neg a) \s c)) \\
    &  = \neg (\neg ((a \s b) \s \neg  (b \s a)) \s \neg (\neg a \s \neg b) ) \s ((\neg b \s \neg a) \s c) \\
    &  = \neg (\neg (\neg ((a \s b) \s \neg  (b \s a)) \s \neg (\neg a \s \neg b) ) \s \neg (\neg b \s \neg a)) \s c.
\end{align*} 
Then we may abbreviate $$m =  \neg (\neg (\neg ((a \s b) \s \neg  (b \s a)) \s \neg (\neg a \s \neg b) ) \s \neg (\neg b \s \neg a)) $$ and refomulate the assumption as $m \s c = m \s d$. We shall prove that $E((m \s \neg c) \s (m \s \neg d))$,  $E(\neg (m\s \neg c) \s \neg (m \s \neg d))$, $E((m \s \neg d) \s (m \s  \neg c))$,   and $E(\neg (m \s \neg d) \s \neg (m \s  \neg c))$. The result will then follow by (I1) and (I6). Clearly, by symmetry, it is sufficient to prove the first two cases. As to the first, we have
\begin{align*}
(m \s \neg c) \s (m \s \neg d)) & = m \s (\neg c \s \neg d)     & \textrm{by (I2)} \\
& = \neg (m \s  c) \s \neg d    & \textrm{by (I4)} \\
& = \neg (m \s  d) \s \neg d    & \textrm{by assumption} \\
& = m \s  (\neg d \s \neg d)     & \textrm{by (I4)} \\
& = (m \s  \neg d)  \s (m \s \neg d) & \textrm{by (I2).}
\end{align*} 
Then, applying Proposition \ref{p:ilem-0} (i), the result easily follows. As to the second, we have
\begin{align*}
\neg (m \s \neg c) \s \neg (m \s \neg d)) & = m \s (c \s \neg (m \s \neg d))     & \textrm{by (I4)} \\
 & = (m \s c) \s (m \s \neg (m \s \neg d))     & \textrm{by (I2)} \\
 & = (m \s d) \s (m \s \neg (m \s \neg d))       & \textrm{by assumption} \\
& = m \s  ( d \s  \neg (m \s \neg d))     & \textrm{by (I2)} \\
& = \neg (m \s  \neg d) \s  \neg (m \s \neg d)     & \textrm{by (I4).}
\end{align*} 
Applying Proposition \ref{p:ilem-0} (i) again we obtain the desired result.

(ii). Using the abbreviation introduced in (i), the assumptions become $m \s c = m \s d$ and $m \s c' = m \s d'$. We have
\begin{align*}
 m \s (c \s c')  & = (m \s c) \s (m \s c')    & \textrm{by (I2)} \\
 & = (m \s d) \s (m \s d')     & \textrm{by assumption} \\
  & = m \s (d \s d')      & \textrm{by (I2).}
\end{align*} 
\end{proof}

Recall that a variety of algebras is said to have \emph{equationally definable principal congruences} (abbreviated EDPC) if there is a finite set $\Sigma$ of equations of the form $t (x, y, z, u) \approx t'(x, y, z, u) $ such that, for any algebra $\Al$ in the variety and for all elements $a, b, c, d \in A$, it holds that $\la c, d \ra \in \Theta(a,b) $ if and only if $t (a, b, c, d) = t'(a, b, c, d) $ for all equations in $\Sigma$. EDPC is a rather strong property: in particular (see \cite[Theorem 1.2]{BP-EDPC-I}) it implies congruence-distributivity and the congruence extension property.

\begin{thm} \label{thm:edpc}
The variety $\ia$ has EDPC. 
\end{thm}

\begin{proof}
For any $\Al \in \ia$ and $a, b \in A$, let us denote by $ \Theta(a, b) $ the congruence generated by $(a, b)$. We shall prove that, for all $c, d \in A$, $\la c, d \ra \in \Theta(a, b) $ if and only if $p(a, b, c) = p(a, b, d)$. Let us then set $\theta = \{\la c, d \ra \in A \times A :  p(a, b, c) = p(a, b, d)\}$. Clearly $\theta$ is an equivalence relation and, by (I6), we have $\la a, b \ra \in \theta$. By Lemma \ref{lem:edpc} it follows that $\theta $ is a congruence of $\Al$. Hence we have that $ \Theta(a, b) \subseteq \theta$. 

To prove the other inclusion, assume $\la c, d \ra \in \theta$. Recall that, by Proposition \ref{p:ilem-0}, we have  $a \s a = \neg a \s \neg a$ for all $a \in A$. Then it is not difficult to see that the assumptions imply that the following elements belong to the same equivalence class modulo $ \Theta(a, b)$: 
$$\{ a \s a, \ a \s b , \ b \s a ,  \ b \s b ,  \neg b \s \neg b , \ \neg a \s \neg b, \ \neg b \s \neg a  \}.$$

Now, using (I1), we easily obtain 
$$ (a \s a ) \s c = c \ \Theta(a, b) \ (\neg b \s \neg a ) \s c
$$
as well as
$$ (a \s a ) \s c  = c \ \Theta(a, b) \ (\neg a \s \neg b) \s ((\neg b \s \neg a ) \s c)
$$
and so forth, so that we may conclude that $ \la c,  \ p(a, b, c) \ra \in \Theta(a, b)$. By symmetry we have $ \la d,  \ p(a, b, d) \ra \in \Theta(a, b)$, hence the assumption implies that  $\la c, d \ra \in \Theta(a, b)$. 
%
%
\end{proof}

We now immediately have the following:

\begin{cor} \label{cor:subdir_i}
Up to isomorphism, there are five subdirectly irreducible algebras in $\ia$, namely: $\Al[A_4]$, the four-element $\{ \s, \neg \}$-reduct of $\fours$, the two three-element subreducts (let us denote them by $\Al[A^{\top}_3]$ and $\Al[A^{\bot}_3]$) whose universes are, respectively, $\{ \false, \true, \top \}$ and $\{ \false, \true, \bot \}$, the two-element one $\Al[A_2]$ with universe  $\{ \false, \true \}$ and the trivial one with universe $\{ \top \}$. Hence, $\ia$ has exactly four proper non-trivial subvarieties, which are generated, respectively, by  $\{ \Al[A^{\top}_3], \Al[A^{\bot}_3] \}$, by $\Al[A^{\top}_3]$, by  $\Al[A^{\bot}_3]$ and by $\Al[A_2]$.
\end{cor}

\begin{proof}
We know, by Theorem \ref{thm:edpc}, that $\ia$ is congruence-distributive. Then, by J\'{o}nsson's Lemma  \cite[Corollary IV.6.10]{BuSa00}, the subdirectly irreducible members of $\ia$ belong to $HS(\Al[A_4])$, and it is not difficult to check that they coincide with the four algebras mentioned in the statement. Moreover, since $\Al[A_2]$ is a subalgebra of both 
$ \Al[A^{\top}_3]$ and $\Al[A^{\bot}_3]$, one easily sees that the only possible combinations for the proper subvarieties of $\ia$ are $V(\{ \Al[A^{\top}_3], \Al[A^{\bot}_3], \Al[A_2] \}) = V(\{ \Al[A^{\top}_3], \Al[A^{\bot}_3]\})$,  $V(\{ \Al[A^{\top}_3],  \Al[A_2] \}) = V( \Al[A^{\top}_3])$, $V(\{  \Al[A^{\bot}_3], \Al[A_2] \}) = V(  \Al[A^{\bot}_3] ) $ and $V( \Al[A_2])$.
\end{proof}

Another consequence of Theorem \ref{thm:edpc} is that the variety $\ia$ is semisimple, for all the subdirectly irreducible algebras we have considered are indeed simple. 

Let us abbreviate  the term  $$((x \s y ) \s ( (\neg y \s \neg x ) \s z )) \s   (((y \s x) \s (\neg x \s \neg y) \s z) \s z)$$ 
as $q(x,y,z)$. Then we may state the following  result that provides a way to axiomatize the subvarieties of $\ia$:



\begin{thm} \label{t:var_ialg}
The varieties  $V(\{ \Al[A^{\top}_3], \Al[A^{\bot}_3] \})$, $V(\Al[A^{\top}_3])$, $V(\Al[A^{\bot}_3])$ and $V(\Al[A_2])$ may be axiomatized by adding the following equations to (I1)-(I6): 

\begin{tabular}{lllll}
\\
   $V(\{ \Al[A^{\top}_3], \Al[A^{\bot}_3] \})$ & &  & $q(x,y,z) \approx z \s z$  \\
   $V(\Al[A^{\top}_3])$ &  & & $(\neg x \s x) \s x \approx x \s x$    \\
    $V(\Al[A^{\bot}_3])$ &  & & $x \s x \approx y \s y$   \\
    $V(\Al[A_2])$  &  & & $x \s y  \approx \neg y \s \neg x.$
\end{tabular}
%
%
%
\end{thm}

\begin{proof}
As to the first claim, it is sufficient to check that both $\Al[A^{\top}_3]$ and $\Al[A^{\bot}_3]$ satisfy the equation, while $\Al[A_4]$ does not. For the second one we need to check that
$$\Al[A^{\top}_3] \vDash (\neg x \s x) \s x \approx x \s x$$ while  
$$\Al[A^{\bot}_3] \nvDash (\neg x \s x) \s x \approx x \s x.$$
The third claim is proved similarly. As to the fourth, we need to check that $\Al[A_2]$ satisfies $x \s y  \approx \neg y \s \neg x$ while neither  $\Al[A^{\top}_3]$ nor $\Al[A^{\bot}_3]$ does.
\end{proof}
%


It is not difficult to prove that the construction described in Proposition \ref{p:icong} can be straighforwardly extended in order to prove results analogous to those of Proposition \ref{p:icong} and Theorem \ref{t:igen} for the other subreducts of implicative   bilattices obtained by expanding the language with the lattice operation corresponding to the two   bilattice orders. 
We can now see that residuated De Morgan lattices are just a particular example of this, namely the $\{\land, \lor, \s,  \neg, \top \}$-subreducts of implicative   bilattices. In the case of the full implicative   bilattice language, we will have, for all elements $a, b$: 
$$ a \land b \cc a \otimes b \cc \neg (a \s \neg b) \cc a * b $$
$$ a \lor b \cc a \oplus b \cc (a \s b) \s b \cc (b \s a) \s a $$
where $\cc$ is the equivalence relation defined in Proposition \ref{p:icong}, which will be compatible with all the operations except negation, and $*$ is the operation defined in Proposition \ref{resid}.   

\section{Categorical equivalences} 
\label{sec:cat}

In \cite{MoPiSlVo00} the representation theorems for bounded interlaced bilattices are used to establish equivalences among various categories of bilattices and lattices. In this section we shall see that these results can be easily generalized to the unbounded case and will develop an analogous study for implicative bilattices.

Let us first recall the main results obtained in \cite{MoPiSlVo00}. We denote by $\lat$ the category of lattices $\Al[L] = \la L, \sqcap, \sqcup \ra$ with morphisms all lattice homomorphisms. Moreover, $\dl$ is the full subcategory of  $\lat$ whose objects are all distributive lattices and $\cl$ is the category of classical implicative lattices $\Al[L] = \la L, \sqcap, \sqcup, \ba, 1 \ra$  with morphism all $\{ \sqcap, \sqcup, \ba, 1 \}$-homomorphisms. Analogously, we denote respectively by $\IPBL$ and $\DPBL$ the categories of  interlaced and distributive pre-bilattices $\Al[B] = \la B, \land, \lor, \otimes, \oplus \ra$ with morphisms all pre-bilattice homomorphisms. $\IBL$ and $\DBL$ denote the corresponding categories of bilattices $\Al[B] = \la B, \land, \lor, \otimes, \oplus, \neg \ra$, with morphisms all bilattice homomorphisms (i.e.\ pre-bilattice homomorphisms that also preserve negation). 


The main result of \cite{MoPiSlVo00} is that, for the case of bounded lattices and bounded (pre-)bilattices, the following categories are naturally equivalent:
\begin{enumerate}[(i)]
  \item  $\IPBL$ and the product category $\lat \times \lat$
  \item $\DPBL$ and the product category $\dl \times \dl$
  \item $\IBL$ and $\lat $
  \item $\DBL$ and $\dl $.
  \end{enumerate}

  Our next aim  is to prove that these equivalences can be generalized to the unbounded case. Moreover, we shall define categories corresponding to some of the other classes of algebras we have considered so far, proving that  equivalences can also be established between:

\begin{enumerate}[(i)]
  \item commutative interlaced bilattices with conflation ($\IBLC$) and involutive lattices ($\IL$) 
  \item commutative distributive bilattices with conflation ($\DBLC$) and De Morgan lattices ($\DML$) 
    \item Kleene bilattices with conflation  ($\KB$) 
    and Kleene lattices ($\KL $)
        \item classical bilattices with conflation ($\CB$)
         and Boolean lattices ($\BOL $)
\item implicative bilattices ($\ib$) and classical implicative lattices ($\cl $).
\end{enumerate}

Let us first consider the case of (unbounded) interlaced pre-bilattices. Given an interlaced pre-bilattice $\Al[B]$, let $\mathsf{L}^2(\Al[B]) = \la \Al[B]/\!\!\sim_1, \Al[B]/\!\!\sim_2 \ra $ (see Proposition \ref{prop:interlaced_pbl}).  Conversely, if $\Al[L_{1}]$ and \Al[L_{2}] are lattices, let $\mathsf{B}(\la \Al[L_{1}], \Al[L_{2}] \ra)$ denote the interlaced pre-bilattice $ \Al[L_{1}] \odot \Al[L_{2}] $. By Proposition \ref{prop:interlaced_pbl}, there is an isomorphism $f_{\Al[B]} : \Al[B] \cong \mathsf{B}(\mathsf{L}^2(\Al[B])) $ defined, for all $a \in B$,  as
\begin{equation}
\label{eq:f_pre}
f_{\Al[B]}(a) = \la [a]_{1}, [a]_{2} \ra
\end{equation} 
where $ [a]_{1}$ and $ [a]_{2}$ denote the equivalence classes of $a$  modulo $\sim_1$ and $\sim_2$ respectively.  It is also easy to see that, given a pair of lattices  $ \Al[L_{1}]$ and $ \Al[L_{2}]$, in the product category $\lat \times \lat$ there is an isomorphism $\la g_{\Al[L_{1}]}, g_{\Al[L_{2}]} \ra$ between $\la \Al[L_{1}], \Al[L_{2}] \ra $ and $\mathsf{L}^2 (\mathsf{B}(\la \Al[L_{1}], \Al[L_{2}] \ra)) $, where 
$ g_{\Al[L_{1}]} :  \Al[L_{1}] \cong  \Al[B]/\!\!\sim_1$ and $g_{\Al[L_{1}]} :  \Al[L_{2}] \cong  \Al[B]/\!\!\sim_2$
are defined, for all $\la a_{1}, a_{2} \ra \in L_{1} \times L_{2}$, as 
\begin{equation}
\label{eq:g_pre}
g_{\Al[L_{1}] } ( a_{1}) = [\la a_{1}, a_{2} \ra]_{1} \quad  \textrm{and} \quad
g_{\Al[L_{2}] } ( a_{2}) = [\la a_{1}, a_{2} \ra]_{2}.
\end{equation} 
Note that the definition of $g_{\Al[L_{1}] } ( a_{1})$ is independent of the element $a_{2}$, for it holds that $ [\la a_{1}, a_{2} \ra]_{1} = [\la a_{1}, b \ra]_{1} $ for any $b \in L_{2}$, and similarly $ [\la a_{1}, a_{2} \ra]_{2} = [\la b, a_{2} \ra]_{2} $ for any $b \in L_{2}$.

%
In order to establish a categorical equivalence, we define two functors $F: \lat \times \lat \longrightarrow\IPBL$ and $G: \IPBL \longrightarrow\lat \times \lat $ as follows. For all $\la \Al[L_{1}], \Al[L_{2}] \ra,  \in \Obj(\lat \times \lat)$, let
 %
 $$F(\la \Al[L_{1}], \Al[L_{2}] \ra ) = \mathsf{B}(\la \Al[L_{1}], \Al[L_{2}] \ra).$$ 
 For all $\la \Al[L_{1}], \Al[L_{2}] \ra, \la \Al[M_{1}], \Al[M_{2}] \ra \in \Obj(\lat \times \lat)$ and all $\la h_{1}, h_{2}  \ra : \la \Al[L_{1}], \Al[L_{2}] \ra \longrightarrow \la \Al[M_{1}], \Al[M_{2}] \ra \in \Mor(\lat \times \lat)$, let 
 $F(\la h_{1}, h_{2} \ra):   \mathsf{B}(\la \Al[L_{1}], \Al[L_{2}] \ra) \longrightarrow \mathsf{B}(\la \Al[M_{1}], \Al[M_{2}] \ra) $ be given, for all $\la a_{1}, a_{2}\ra \in \mathsf{B}(\la \Al[L_{1}], \Al[L_{2}] \ra)$, by 
$$F(\la h_{1}, h_{2} \ra) (\la a_{1}, a_{2} \ra) = \la h_{1}(a_{1}), h_{2}(a_{2}) \ra.$$ It is not difficult to see that $F$ is indeed a functor. The functor $G$ is defined, for all $\Al[B] \in \Obj(\IPBL)$, as
$$ G(\Al[B]) = \mathsf{L}^2(\Al[B]). $$
For all $\Al[B], \Al[C] \in \Obj(\IPBL)$ and $k: \Al[B] \longrightarrow \Al[C] \in \Mor(\IPBL)$, let $G(k):  \mathsf{L}^2(\Al[B])  \longrightarrow  \mathsf{L}^2(\Al[C]) $ be defined as 
$$
G(k) = \la G(k)_{1}, G(k)_{2} \ra
$$
where  $G(k)_{1}([a]_{1}) = [k(a)]_{1}$ and $G(k)_{2}([b]_{2}) = [k(b)]_{2}$ for all $\la [a]_{1}, [b]_{2} \ra \in \mathsf{L}^2(\Al[B]) $.

Using Proposition \ref{prop:symetrization}, it is easy to check that $a \sim_1 b$ implies $k(a) \sim_1 k(b) $ for any $a, b \in B$ and any homomorphism $k: \Al[B]  \longrightarrow \Al[C]$ (and the same holds for $ \sim_2$). Therefore the previous definition is sound.

If we now denote by $I_{\mathsf{C}}$ the identity functor on a given category $\mathsf{C}$, we may prove the following analogue of \cite[Theorem 10]{MoPiSlVo00}:

\begin{thm} \label{t:catpre} 
The family of morphisms $f: I_{\IBL}  \longrightarrow FG $ and $g: I_{\lat \times \lat}  \longrightarrow GF $ defined in \ref{eq:f_pre} and \ref{eq:g_pre}  are natural isomorphisms, so that the categories  $\lat \times \lat$ and  $\IPBL$ are naturally equivalent.
\end{thm}

\begin{proof}
Let $f, g, F, G$ be defined as above. Assume  $\la h_{1}, h_{2} \ra : \la \Al[L_{1}], \Al[L_{2}] \ra  \longrightarrow  \la \Al[M_{1}], \Al[M_{2}] \ra \in \Mor(\lat \times \lat)$ and $\la a_{1}, a_{2}\ra \in  L_{1} \times L_{2} $. We have to prove that the following diagram commutes:

\vspace{0.5cm}
\xymatrix{ 
& & & \la \Al[L_{1}], \Al[L_{2}] \ra \ar[rrr]^-*{\la g_{\Al[L_{1}]}, g_{\Al[L_{2}]} \ra}  \ar[dd]_-*{\la h_{1}, h_{2} \ra}  & & & G(F(\la \Al[L_{1}], \Al[L_{2}] \ra)) \ar[dd]^-*{G(F(\la h_{1}, h_{2} \ra))} \\ \\
& & &\la \Al[M_{1}], \Al[M_{2}] \ra \ar[rrr]_-*{\la g_{\Al[M_{1}]}, g_{\Al[M_{2}]} \ra} & & & G(F(\la \Al[M_{1}], \Al[M_{2}] \ra)) }
\vspace{0.5cm}

Applying our definitions, we have
\begin{align*}
& G(F(\la h_{1}, h_{2} \ra)) \cdot \la g_{\Al[L_{1}]}, g_{\Al[L_{2}]} \ra (\la a_{1}, a_{2}\ra) = \\
& = G(F(\la h_{1}, h_{2} \ra)) \la [\la a_{1}, a_{2} \ra]_{1}, [\la a_{1}, a_{2} \ra]_{2} \ra   =  \\
& =  \la [F(\la h_{1}, h_{2} \ra) (\la a_{1}, a_{2} \ra)]_{1}, [F(\la h_{1}, h_{2} \ra) (\la a_{1}, a_{2} \ra)]_{2} \ra = \\
& =  \la [\la h_{1}(a_{1}), h_{2}(a_{2}) \ra]_{1}, [\la h_{1}(a_{1}), h_{2}(a_{2}) \ra]_{2} \ra = \\
& =  \la [\la h_{1}(a_{1}), h_{2}(a_{2}) \ra]_{1}, [\la h_{1}(a_{1}), h_{2}(a_{2}) \ra]_{2} \ra = \\
& = \la g_{\Al[M_{1}]}, g_{\Al[M_{2}]} \ra   (\la h_{1}(a_{1}), h_{2}(a_{2}) \ra) = \\
& =\la g_{\Al[M_{1}]}, g_{\Al[M_{2}]} \ra  \cdot \la h_{1}, h_{2} \ra (\la a_{1}, a_{2}\ra).
\end{align*}

Assume now $k: \Al[B]  \longrightarrow \Al[C] \in \Mor(\IPBL)$ and $a \in B$. We have to prove that the following diagram commutes:

\vspace{0.5cm}
\xymatrix{ 
&&& & & \Al[B] \ar[rr]^-*{f_{\Al[B]}}  \ar[dd]_-*{k}  & & F(G(\Al[B])) \ar[dd]^-*{F(G(k))} \\ \\
&&& & &\Al[C] \ar[rr]_-*{f_{\Al[C]}} & & F(G(\Al[C])) }
\vspace{0.5cm}

Applying again the definitions, we obtain
\begin{align*}
F(G(k)) \cdot f_{\Al[B]} (a) 
& = F(G(k)) \la [a]_{1}, [a]_{2} \ra \\
& =  \la [k(a)]_{1}, [k(a)]_{2} \ra \\
& =  f_{\Al[C]} \cdot k(a). %
\end{align*}

We have thus proved that $f$ and $g$ are natural transformations. Since, as we have noted, $f_{\Al[B]}: \Al[B]  \longrightarrow F(G (\Al[B]) )$ and $g_{\Al[L]}: \Al[L]  \longrightarrow G(F (\Al[L]))$ are isomorphisms, we conclude that  $f$ and $g$ are natural isomorphisms.
\end{proof}

From the previous theorem we immediately obtain the following:

\begin{cor} \label{c:catpredist}
The category $\dl \times \dl$ and $\DPBL$ are naturally equivalent.
\end{cor}

Let us now consider the case of interlaced bilattices. As we have seen in Section \ref{sec:repbil}, in the presence of negation we can establish an isomorphism between an interlaced bilattice $\Al[B]$ and the product bilattice $\Reg(\Al[B]) \odot \Reg(\Al[B])$, where $\Reg(\Al[B])$ denotes the sublattice of the k-lattice whose universe  is the set of regular elements of  $\Al[B]$ (i.e.\ the fixed points of the negation operator). Given an interlaced bilattice $\Al[B]$, we may then set $ \mathsf{L}(\Al[B]) =  \Reg(\Al[B])$. Conversely, given a lattice $\Al[L]$, we denote by $\mathsf{B}(\Al[L])$ the interlaced bilattice $ \Al[L] \odot \Al[L] $. The isomorphism $f_{\Al[B]} : \Al[B] \cong \mathsf{B}(\mathsf{L}(\Al[B])) $ is then defined, for all $a \in B$, as 
\begin{equation}
\label{eq:f_bil}
f_{\Al[B]} (a) = \la \reg(a), \reg(\neg a) \ra.
\end{equation}
%
%
Given a lattice $\Al[L] $, we have an isomorphism $g_{\Al[L]}: \Al[L] \cong \mathsf{L} (\mathsf{B} (\Al[L] ) )$ given, for all $a \in L$, by 
%
%
\begin{equation}
\label{eq:g_bil}
g_{\Al[L]} (a) = \la a, a \ra. 
\end{equation}
We now define the functors $F: \lat   \longrightarrow \IBL$ and $G: \IBL  \longrightarrow \lat  $ as follows. For every $\Al[L] \in \Obj (\lat)$, set 
$$F(\Al[L] ) = \mathsf{B}(\Al[L])$$
and for all $h: \Al[L]  \longrightarrow \Al[M] \in \Mor(\lat)$, $F(h): \mathsf{B}(\Al[L])  \longrightarrow \mathsf{B}(\Al[M])$ is given, for all $a, b \in \mathsf{B}(\Al[L]) $,  by
$$ F(h)(\la a, b \ra) = \la h(a), h(b) \ra. $$
Note that $F$ preserves surjections, i.e.\ if $h: L  \longrightarrow M$ is surjective, then so is $F(h): \mathsf{B}(\Al[L])  \longrightarrow \mathsf{B}(\Al[M])$.
For any $\Al[B] \in \Obj (\IBL)$, we set
$$ G(\Al[B]) =   \mathsf{L}(\Al[B])$$
and for every $\Al[B], \Al[C] \in \Obj(\IBL)$ and $k: \Al[B]  \longrightarrow \Al[C] \in \Mor(\IBL)$, the functor $G(k):  \mathsf{L}(\Al[B])  \longrightarrow  \mathsf{L}(\Al[C]) $ is defined as
$$ G(k)(a) = k(a). 
$$

We are now able to state an analogue of \cite[Theorem 13]{MoPiSlVo00}:

\begin{thm} \label{t:catbil} 
The family of morphisms $f: I_{\IBL}  \longrightarrow FG $ and $g: I_{\lat}  \longrightarrow GF $ defined in \ref{eq:f_bil} and \ref{eq:g_bil}  are natural isomorphisms, so that the categories $\lat$ and  $\IBL$ are naturally equivalent.
\end{thm}

\begin{proof}
Let $f, g, F, G$ be defined as above. Assume  $h :  \Al[L]  \longrightarrow   \Al[M] \in \Mor(\lat )$ for some $\Al[L], \Al[M] \in \Obj(\lat) $ and $ a \in  L$. We have to prove that the following diagram commutes:


\vspace{0.2cm}
\xymatrix{
&&& & & \Al[L] \ar[rr]^-*{g_{\Al[L]}}  \ar[dd]_-*{h}  & & G(F(\Al[L])) \ar[dd]^-*{G(F(h))} \\ \\
&&& & &\Al[M] \ar[rr]_-*{g_{\Al[M]}} & & G(F(\Al[M])) }
\vspace{0.5cm}

%
%
%
%
Applying our definitions, we have
\begin{align*}
G(F(h)) \cdot g_{\Al[L]} (a) & = G(F(h))  (\la a, a \ra) \\
& = F(h)  (\la a, a \ra) \\
& = \la h(a), h(a) \ra \\
& = g_{\Al[M]} \cdot  h  (a).
\end{align*}

Let now  $k :  \Al[B]  \longrightarrow   \Al[C] \in \Mor(\lat )$ for some $\Al[B], \Al[C] \in \Obj(\IBL)$ and $ a \in  B$. We have to show that the following diagram commutes:

\vspace{0.5cm}
\xymatrix{ 
&&& & & \Al[B] \ar[rr]^-*{f_{\Al[B]}}  \ar[dd]_-*{k}  & & F(G(\Al[B])) \ar[dd]^-*{F(G(k))} \\ \\
&&& & &\Al[C] \ar[rr]_-*{f_{\Al[C]}} & & F(G(\Al[C])) }
\vspace{0.5cm}
%
In order to see this, recall that $\reg(a) = (a \lor (a \otimes \neg a)) \oplus \neg (a \lor (a \otimes \neg a))$. It is then obvious that $k(\reg(a)) =  \reg(k(a))$ and $k(\reg(\neg a)) = \reg(\neg k(a))$.
We may now apply our definitions to obtain
\begin{align*}
 F(G(k)) \cdot f_{\Al[B]} (a) & = F(G(k))  \la \reg(a), \reg(\neg a) \ra \\
 & = \la k(\reg(a)), k(\reg(\neg a)) \ra \\
 & = \la \reg(k(a)), \reg(\neg k(a)) \ra \\
 & =  f_{\Al[C]} \cdot  k  (a).
\end{align*}

This shows that $f$ and $g$ are natural transformations. Since, as we have observed, $f_{\Al[B]}: \Al[B]  \longrightarrow F(G (\Al[B]) )$ and $g_{\Al[L]}: \Al[L]  \longrightarrow G(F (\Al[L]))$ are isomorphisms, we conclude that  $f$ and $g$ are natural isomorphisms.
\end{proof}

From the previous theorem we immediately obtain the following:

\begin{cor} \label{c:catdist}
The category $\dl$ and $\DBL$ are naturally equivalent.
\end{cor}

It is sufficient to examine the proof of  Theorem \ref{t:catbil} to see that, using the same definitions, we may obtain an analogous result concerning bilattices with conflation. Let us denote by $\IBLC$ the category of commutative interlaced bilattices with conflation with morphisms all bilattice homomorphisms that preserve also the conflation operator. Let $\IL$ denote the category of lattices with involution as defined in Section~ \ref{sec:confl}, with morphisms all lattice homomorphisms that also preserve the involution. Then we may state the following:

\begin{thm} \label{t:catbilconf} 
The categories $\IL$ and  $\IBLC$ are naturally equivalent.
\end{thm}

Let us denote by $\DBLC$ the subcategory of commutative distributive bilattices with conflation and by $\DML$ the category of De Morgan lattices with morphisms all lattice homomorphisms that also preserve the involution. Then from the previous theorem we may obtain the following:

\begin{cor} \label{c:cat_dist_conf} 
The categories $\DML$ and  $\DBLC$ are naturally equivalent.
\end{cor}

Analogous results may be obtained for the categories associated with the other two subvarieties of $\IBLC$ considered in Section~\ref{sec:confl}, namely $\KB$ (Kleene bilattices with conflation) and $\CB$ (classical bilattices with conflation), which correspond to the subvarieties of $\DML$ that we denote by $\KL$ (Kleene lattices) and $\BOL$ (Boolean lattices).

\begin{cor} \label{c:catkle}
The category $\KL$ and $\KB$ are naturally equivalent.
\end{cor}

\begin{cor} \label{c:catkle}
The category $\BOL$ and $\CB$ are naturally equivalent.
\end{cor}

The proof of Theorem \ref{t:catbil}  can be adapted in order to obtain a similar result  about implicative bilattices. 

Let us denote by $\ib$ be the category of implicative bilattices $\Al[B] = \la B, \land, \lor, \otimes,$ $\oplus, \neg, {\s,} \top \ra$ with morphisms all bilattice homomorphisms that also preserve implication, and let $\cl$ be the category of classical implicative lattices  $\Al[L] = \la L, \sqcap, \sqcup, \ba, 1 \ra$ with morphisms all lattice homomorphisms that also preserve the operation $\ba$. For any implicative bilattice $\Al[B]$, let $\mathsf{L}(\Al[B]) = \Al[B^-]$, where $\Al[B^-]$ is defined as in Proposition \ref{reprimp}.
Conversely, to any classical implicative lattice $\Al[L]$ we associate the implicative bilattice  $\mathsf{B}(\Al[L]) = \la \Al[L] \odot \Al[L], \s \ra$ defined as in Section \ref{sec:repimp}. By Theorem \ref{t:reprimp2}, we know that there is an isomorphism   $f_{\Al[B]} : \Al[B] \cong \mathsf{B}(\mathsf{L}(\Al[B])) $ defined, for all $a \in B$, as 
\begin{equation}
\label{eq:f_imp}
f_{\Al[B]} (a) = \la a \land \top, \neg a \land \top \ra.
\end{equation}
Moreover, given a classical implicative lattice $\Al[L] $, we have an isomorphism $g_{\Al[L]}: \Al[L] \cong \mathsf{L} (\mathsf{B} (\Al[L] ) )$ given, for all $a \in L$, by 
\begin{equation}
\label{eq:g_imp}
g_{\Al[L]} (a) = \la a, 1 \ra. 
\end{equation}
The functors $F: \lat   \longrightarrow \IBL$ and $G: \IBL  \longrightarrow \lat  $ are defined as in the case of interlaced bilattices. For every $\Al[L] \in \Obj (\lat)$, we set 
$$F(\Al[L] ) = \mathsf{B}(\Al[L])$$
and, for all $h: \Al[L]  \longrightarrow \Al[M] \in \Mor(\lat)$, $F(h): \mathsf{B}(\Al[L])  \longrightarrow \mathsf{B}(\Al[M])$ is given, for all $a, b \in \mathsf{B}(\Al[L]) $,  by
$$ F(h)(\la a, b \ra) = \la h(a), h(b) \ra. $$
For any $\Al[B] \in \Obj (\IBL)$, set
$$ G(\Al[B]) =   \mathsf{L}(\Al[B])$$
and for every $\Al[B], \Al[C] \in \Obj(\IBL)$ and $k: \Al[B]  \longrightarrow \Al[C] \in \Mor(\IBL)$, the functor $G(k):  \mathsf{L}(\Al[B])  \longrightarrow  \mathsf{L}(\Al[C]) $ is defined as
$$ G(k)(a) = k(a). 
$$

We have then the following:
\begin{thm} \label{t:catimp} 
The family of morphisms $f: I_{\ib}  \longrightarrow FG $ and $g: I_{\cl}  \longrightarrow GF $ defined in \ref{eq:f_imp} and \ref{eq:g_imp}  are natural isomorphisms, so that the categories $\cl$ and  $\ib$ are naturally equivalent.
\end{thm}

\begin{proof}
Similar to the proof of Theorem \ref{t:catbil}. On the one hand, we have
\begin{align*}
G(F(h)) \cdot g_{\Al[L]} (a) & = G(F(h))  (\la a, 1 \ra) \\
& = F(h)  (\la a, 1 \ra) \\
& = \la h(a), 1 \ra \\
& = g_{\Al[M]} \cdot  h  (a).
\end{align*}
On the other hand:
\begin{align*}
 F(G(k)) \cdot f_{\Al[B]} (a) & = F(G(k))  \la a \land \top, \neg a \land \top \ra \\
 & = \la k(a \land \top), k(\neg a \land \top) \ra \\
 & = \la k(a) \land \top, \neg k( a) \land \top \ra \\
 & =  f_{\Al[C]} \cdot  k  (a).
 \end{align*}
Hence $f$ and $g$ are natural transformations and since $f_{\Al[B]}: \Al[B]  \longrightarrow F(G (\Al[B]) )$ and $g_{\Al[L]}: \Al[L]  \longrightarrow G(F (\Al[L]))$ are isomorphisms, we conclude that  $f$ and $g$ are natural isomorphisms.
\end{proof}

To close the section, we will study from a categorical point of view the relationship between implicative bilattices and $I$-algebras, the $\{ \s, \neg \}$-subreducts considered in Section \ref{sec:other}. Let us denote by $ \ia$ the category of $I$-algebras $\Al = \la A, {\s,}  \neg \ra$ with morphisms all $\{ \s, \neg \}$-homomorphisms, and let $\ib$ be the category of implicative bilattices defined as before. For any  $I$-algebra $\Al$, let 
$$\mathsf{B}(\Al) = \la {\Al/\!\! \sim} \odot {\Al/\!\! \sim}, \s \ra $$ 
where $\la {\Al/\!\! \sim} \odot {\Al/\!\! \sim}, \s \ra$ is the implicative bilattice obtained through the construction described in Section \ref{sec:other} (see Theorem \ref{t:icong-2}).
For any $a \in A$, we denote by $[a]$ the equivalence class of $a$ modulo the relation $\sim$ introduced in Definition~\ref{d:req}. We may now define a functor $F: \ia  \longrightarrow \ib $ as follows. For any $\Al \in \Obj(\ia)$, we set 
$$F(\Al) = \mathsf{B}(\Al).$$ 
For any $h: \Al  \longrightarrow \Al' \in \Mor(\ia)$, we define $F(h): \mathsf{B}(\Al)  \longrightarrow \mathsf{B}(\Al') $, for any $a, b \in A$, as $$F(h)(\la [a], [b] \ra)= \la [h(a)], [h(b)] \ra.$$ It is not difficult to see that the previous definition is sound (see Definition \ref{d:req}) and that $F$ is indeed a functor. Note also that $F$ preserves surjections. In fact, if $h:  \Al  \longrightarrow \Al'$ is onto, then for all $\la [a'], [b'] \ra \in \mathsf{B}(\Al') $ it holds that $a' = h(a)$ and $b' = h(b)$ for some $a, b \in \Al$, so that $\la [a'], [b'] \ra = F(h)(\la [a], [b] \ra)$.
%

Conversely, from any implicative bilattice $\Al[B]$ we may obtain an $I$-algebra through a forgetful functor that associates to $\Al[B] = \la B, \land, \lor, \otimes, \oplus, \neg, \s \ra$  the reduct $ \mathsf{A}(\Al[B]) = \la B, \neg, \s \ra$. Let then $G: \ib   \longrightarrow \ia$ be the functor defined as follows. For any $\Al[B] \in  \Obj(\ib)$, we set 
$$G(\Al[B])= \mathsf{A}(\Al[B]).$$ 
For any $k: \Al[B]  \longrightarrow \Al[B'] \in \Mor(\ib)$, we define $G(k): \mathsf{A}(\Al[B])  \longrightarrow \mathsf{A}(\Al[B'])$, for all $a \in B$, as 
$$G(k) (a) = k(a) .$$ 
Again, it is easy to check that $G$ is a functor, that it is faithful and preserves both injections and surjections. To be faithful means that, for all $\Al[B], \Al[B']$ and all $k_{1}, k_{2}: \Al[B]  \longrightarrow \Al[B']$,  if $G(k_{1}) = G(k_{2})$, then $k_{1} = k_{2}$, which in this case is obvious.

The relationship between the two functors defined may be formalized through the following result:

\begin{thm} \label{t:adj} 
The functor $F: \ia  \longrightarrow \ib $ and $G: \ib   \longrightarrow \ia$, 
form an adjoint pair. More precisely,  $F$  is left adjoint to $G$.
\end{thm}

\begin{proof}
For any  $I$-algebra $\Al$, let $f_{\Al} : A  \longrightarrow \mathsf{B}(\Al[A]) $ be defined, for all $a \in A$, as $f_{\Al} (a) = \la [a], [\neg a] \ra$. We have proved that this map is an embedding (Theorem~\ref{t:icong-2}). Let us check that $f: I_{\ia}  \longrightarrow GF $ is a natural transformation. We have to prove that the following diagram commutes:

\vspace{0.5cm}
\xymatrix{
&&& & & \Al[A] \ar[rr]^-*{f_{\Al[A]}}  \ar[dd]_-*{h}  & & G(F(\Al[A])) \ar[dd]^-*{G(F(h))} \\ \\
&&& & &\Al[A'] \ar[rr]_-*{f_{\Al[A']}} & & G(F(\Al[A'])) }
\vspace{0.5cm}

We have 
\begin{align*}
G(F(h)) \cdot f_{\Al[A]} (a) 
& =  G(F(h))   (\la [a], [\neg a] \ra) \\
& = F(h)   (\la [a], [\neg a] \ra) \\
& = \la [h(a)], [h(\neg a)] \ra \\
& = \la [h(a)], [\neg h(a)] \ra \\
& = f_{\Al[A']}  \cdot h(a).
\end{align*}
It remains to prove that, for all objects $\Al \in \Obj(\ia)$, $\Al[B] \in \Obj(\ib)$ and any morphism $h: \Al  \longrightarrow G(\Al[B] )$, there is a unique $g: F(\Al)  \longrightarrow \Al[B] $ that makes the following diagram commute:

\vspace{0.5cm}
\xymatrix{
&&& & & \Al[A] \ar[rr]^-*{f_{\Al[A]}}  \ar[ddrr]_-*{h}  & & G(F(\Al[A])) \ar[dd]^-*{G(g))} \\ \\
&&& & & & & G(\Al[B]) }
\vspace{0.5cm}

Observe that, following the proof of Theorem~\ref{t:icong-2}, we may identify any  $\Al[B] \in \Obj(\ib)$ with its isomorphic image $\la {\Al[B]/\!\!\sim} \odot {\Al[B]/\!\!\sim}, \s^{B} \ra$. In this way we have  $h(a) = \la [h(a)], [\neg h(a)]\ra$ for all $a \in A$, and we may define the morphism $g$ as $g (\la[a], [b] \ra)  = \la [h(a)], [h(b)] \ra $ for any $a, b \in A$. Thus we obtain 
\begin{align*}
G(g) \cdot f_{\Al[A]} (a) 
& = g (\la [a], [\neg a] \ra) \\
& = \la [h(a)], [ h(\neg a)] \ra \\
& = \la [h(a)], [\neg h(a)] \ra \\
& = h(a).
\end{align*}

\end{proof}

\bibliography{logalg}{}
\bibliographystyle{plain}





\resumen

\selectlanguage{spanish}

El objetivo de la presente memoria es desarrollar un estudio desde el punto de vista de la Lógica Algebraica Abstracta de algunos sistemas deductivos, basados en estructuras algebraicas llamadas ``birretículos'', que fueron introducidos en los años noventa por Ofer Arieli y Arnon Avron. El interés de dicho estudio procede principalmente de dos ámbitos.

Por un lado, la teoría de birretículos constituye un formalismo elegante que en las últimas dos décadas ha originado diversas aplicaciones, especialmente en el ámbito de la Informática Teórica y de la Inteligencia Artificial. En este respecto, la presente memoria pretende ser una contribución a una mejor comprensión de la estructura matemática y lógica subyacente a dichas aplicaciones. 

Por otro lado, nuestro interés en las lógicas basadas en birretículos procede de la Lógica Algebraica Abstracta. En términos muy generales, la lógica algebraica  se puede describir como el estudio de las relaciones entre álgebra y lógica. Una de las  razones principales que motivan dicho estudio es la posibilidad de aplicar métodos algebraicos a problemas lógicos y viceversa: esto  se realiza asociando a cada sistema deductivo una clase de modelos algebraicos que puede considerarse la contrapartida algebraica de esa lógica. Empezando con la obra de Tarski y de sus colaboradores, el método de algebraización de las lógicas fue  constantemente desarrollado y generalizado. En las últimas dos décadas, los lógicos algebraicos han ido concentrando su atención sobre el proceso de algebraización en si mismo. Éste tipo de investigaciones forma ahora una rama de la lógica algebraica conocida como Lógica Algebraica Abstracta.

Un tema importante en Lógica Algebraica Abstracta es la posibilidad de aplicar los métodos de la teoría general de la algebraización de las lógicas a una gama cada vez más amplia de sistemas deductivos. En este respecto, algunas de las lógicas basadas en los birretículos resultan especialmente interesantes en cuanto ejemplos naturales de las llamadas \emph{lógicas no protoalgebraicas}, una clase que incluye los sistemas lógicos que resultan más difíciles de tratar con herramientas algebraicas.

Hasta años recientes  relativamente pocas lógicas no protoalgebraicas habían sido estudiadas. Posiblemente también a causa de esa falta de ejemplos, los resultados generales que se conocen sobre esta clase de lógicas no son todavía comparables en número ni en profundidad con los que se obtuvieron acerca de los sistemas lógicos que muestran un buen comportamiento desde el punto de vista algebraico, las llamadas \emph{lógicas protoalgebraicas}. En este respecto, la presente memoria pretende ser una contribución al objetivo  de extender la teoría general de la algebraización de las lógicas más allá de sus fronteras actuales.

Vamos ahora a introducir informalmente las ideas principales subyacentes a los birretículos y algunas de sus aplicaciones.

Los birretículos son estructuras algebraicas propuestas por Matthew Ginsbgerg  \cite{Gi88} como un formalismo uniforme para la deducción en Inteligencia Artificial, en particular en el ámbito del razonamiento por defecto (\textit{default reasoning}) y del razonamiento no monótono. En las últimas dos décadas la teoría de birretículos ha resultado  útil en diversos ámbitos, a veces harto distintos del que los originó;  a continuación mencionaremos tan sólo algunos. 

Observa Ginsberg  \cite{Gi88} que muchos sistemas de deducción usados en la Inteligencia Artificial se pueden unificar bajo la perspectiva de una lógica multivalorada cuyo espacio de valores de verdad es un conjunto dotado de una doble estructura reticular. La idea de que deba haber un orden entre los valores de verdad es muy común, casi estándar, en el ámbito de las lógicas multivaloradas: por ejemplo, en las lógicas borrosas los valores están ordenados según su ``grado de verdad.'' En este respecto, la original idea  de Ginsberg fue que, además del orden asociado al grado de verdad, hay otro orden que es natural considerar. Dicha relación, que Ginsberg llamó ``orden del conocimiento'' (\emph{knowledge ordering}),  pretende reflejar el grado de conocimiento o información asociado a una oración: por ejemplo, en el contexto de la deducción automática, es posible etiquetar una oración como ``desconocida'' cuando el agente epistémico no posee ninguna información acerca de la verdad o de la falsedad de la oración. Dicha idea, nota Ginsberg, se puede encontrar ya en los trabajos de Belnap \cite{Be76}, \cite{Be77}, quien propuso una interpretación análoga para la lógica de cuatro valores de Belnap-Dunn. Desde un punto de vista matemático, el aporte principal de Ginsberg fue el desarrollo de un marco general que permite  manejar  conjuntos doblemente ordenados de valores de verdad de tamaño arbitrario. 

Según la notación introducida por Ginsberg, en el ámbito de los birretículos las dos relaciones de orden se denotan usualmente con
$\leq_t$ ($t$ de ``truth'') y $\leq_k$ ($k$ de ``knowledge''). Observa Fitting~\cite{Fi06b} que el orden $\leq_k$ debería más bien pensarse como asociado al grado de información y, por tanto, debería usarse la notación $\leq_i$. Dicha observación nos parece correcta: sin embargo, el uso de $\leq_k$, que adoptamos también en esta memoria, es ya estándar en la literatura sobre birretículos así como en los trabajos de Fitting mismo (véase \cite{Fi06b}: ``but I have always written $\leq_k$, and now I'm stuck with it.'').

Después de los trabajos iniciales de Ginsberg (\cite{Gi88}, \cite{Gi90a}, \cite{Gi95a}), los birretículos fueron extensamente investigados por Fitting, que considera aplicaciones a la Programación Lógica (\cite{Fi90}, \cite{Fi91}; sobre el tema véase tambíen \cite{Ko01a} y \cite{LoSt04}), a problemas filosóficos como la teoria de la verdad (\cite{Fi89}, \cite{Fi06b}) y además estudia la relación entre los birretículos y una familia de sistemas multivalorados que generalizan la lógica de tres valores de Kleene (\cite{Fi91b}, \cite{Fi94}). 
Otras interesantes aplicaciones incluyen el análisis de la implicación, la implicatura y de la presuposición en el lenguaje natural \cite{Sc96}, la semántica de las preguntas en el lenguaje natural  \cite{NeFr02} y
la lógica epistémica \cite{Si94}.
 
En los años noventa los birretículos fueron también estudiados en profundidad por Arieli y Avron, tanto desde un punto de vista algebraico (\cite{Av95}, \cite{Av96a}) como lógico (\cite{ArAv94}, \cite{ArAv98}). Para tratar la paraconsitencia y la deducción no monótona en la Inteligencia Artificial, Arieli y Avron \cite{ArAv96} desarrollaron los primeros sistemas lógicos en sentido tradicional basados en birretículos. La más sencilla de dichas lógicas, que vamos a llamar $\mathcal{LB}$, está definida semánticamente por una clase de matrices llamadas ``birretículos lógicos'' (\emph{logical bilattices}) y es una expansión de la sobredicha lógica de Belnap-Dunn al lenguaje estándar de los birretículos. En \cite{ArAv96} los autores introducen un sistema Gentzen como contrapartida sintáctica de la lógica $\mathcal{LB}$ y prueban la completitud y la eliminación del corte (\emph{cut elimination}). En el mismo trabajo, Arieli y Avron consideran también una expansión de $\lb$, obtenida añadiéndole dos implicaciones (interdefinibles). Dicha lógica, que vamos a denotar $\lbs$, también está definida semánticamente a través del concepto de birretículo lógico (\emph{logical bilattice}). En \cite{ArAv96} los autores introducen tanto un cálculo estilo Gentzen como un cálculo estilo Hilbert para $\lbs$ y prueban los teoremas de completitud y de eliminación del corte  para el cálculo Gentzen.

El objetivo principal de la presente memoria es el estudio de estos dos sistemas lógicos desde el punto de vista de la Lógica Algebraica Abstracta. Dicha investigación revela  interesantes  aspectos tanto algebraicos como lógicos de los birretículos.

Presentamos a continuación un resumen de los principales resultados contenidos en la presente memoria, organizados según la estructura en capítulos y secciones. 

El capítulo \ref{ch:intro} contiene una introducción a la presente memoria y presenta algunos resultados conocidos sobre los birretículos. 

En la sección \ref{sec:intro} presentamos las ideas que llevaron a la introducción de los birretículos, los principales motivos de interés por el estudio de las lógicas basadas en birretículos, mencionamos algunas aplicaciones y damos un resumen de los contenidos de la presente memoria.

En la sección \ref{sec:aal} 
presentamos algunas definiciones y resultados fundamentales de Lógica Algebraica Abstracta que utilizamos a lo largo de todo nuestro trabajo. 

Introducimos la noción de \emph{matriz lógica} como modelo algebraico de una lógica proposicional y las definiciones relacionadas de \emph{congruencia de Leibniz} de una matriz y de \emph{operador de Leibniz}. Mencionamos algunas de las clases de lógicas que pertenecen a la clasificación llamada \emph{jerarquía de Leibniz}, que se basa en las propiedades del operador de Leibniz, en particular  las  \emph{lógicas protoalgebraicas} y  las \emph{lógicas algebraizables}, dos clases de sistemas deductivos que tienen especial importancia en nuestro estudio de las lógicas basadas en los birretículos.  Gracias a las definiciones anteriores, podemos introducir la noción de \emph{modelo reducido} de una lógica proposicional  $\mathcal{L}$, que permite definir la clase $\algstar \mathcal{L}$ de los reductos algebraicos de los modelos reducidos de $\mathcal{L}$.

Introducimos a continuación la noción de \emph{matriz generalizada} (junto con la, equivalente, de \emph{lógica abstracta}) como modelo de una lógica proposicional, 
un concepto de fundamental importancia  para el estudio de las lógicas no protoalgebraicas (a las que pertenecen algunas de las lógicas basadas en los birretículos). 

Definimos la \emph{relación de Frege} y la \emph{congruencia de Tarski} asociadas a una matriz generalizada, que nos permiten introducir el concepto de \emph{modelo generalizado reducido}. Dada una lógica proposicional $\mathcal{L}$, podemos entonces estudiar la clase $\alg \mathcal{L}$ de los reductos algebraicos de los modelos reducidos de  $\mathcal{L}$. Recordamos también algunas nociones de la teoría de las matrices generalizadas que usaremos en nuestro estudio de las lógicas basadas en los birretículos, entre ellas la de \emph{morfismo bilógico} y de  \emph{modelo pleno}. 

Acabamos la sección mencionando la teoría de la algebraizabilidad de sistemas de Gentzen, que también permite obtener interesantes resultados en el estudio de lógicas no protoalgebraicas, como el que presentamos en la sección \ref{sec:gentz}.

En la sección \ref{sec:bil} 
introducimos las definiciones básicas y algunos resultados conocidos acerca de los birretículos. En particular, presentamos la definición de las clases de álgebras llamadas \emph{pre-birretículos} (\emph{pre-bilattices}) $\PBL$, \emph{pre-birretículos entrelazados} (\textit{interlaced pre-bilattices}) $\IPBL$ y \emph{pre-birretículos distributivos} $\DPBL$. 

Un pre-birretículo es un álgebra $\Al[B] = \la B, \land, \lor, \otimes, \oplus \ra $ tal que los reductos $\la B, \land, \lor \ra $  y $ \la B,  \otimes, \oplus \ra $ son retículos, cuyos órdenes asociados se denotan, respectivamente, $\leq_{t} $ y $\leq_{k}$. 

Un pre-birretículo $\Al[B]$ es \emph{entrelazado} si cada una de las cuatro operaciones reticulares es monótona con respecto a ambos ordenes $\leq_{t} $ y $\leq_{k}$, es decir, si satisface las siguientes propiedades: para todo $a, b, c \in B$,
\begin{equation*}
\begin{split}
 a \leq_t b \: & \Rightarrow \: a \otimes c \leq_t b \otimes c  \qquad
 \qquad a \leq_t b \: \Rightarrow \: a \oplus c \leq_t b \oplus c \\
  a \leq_k b \: & \Rightarrow \: a \land c \leq_k b \land c  
  \qquad
\qquad a \leq_k b \: \Rightarrow \: a \lor c \leq_k b \lor c.  
\end{split}
\end{equation*}

Un pre-birretículo $\Al[B]$ es \emph{distributivo} si satisface  las doce posibles leyes distributivas entre las cuatro operaciones $\{ \land, \lor, \otimes, \oplus \}$, es decir, si, para todo $a, b, c \in B$:
\begin{equation*}
\begin{split}
 a \circ ( b \bullet c )  \approx (a \circ b) \bullet (a
 \circ c) \mbox{\quad para todo } \ \circ, \bullet \in \{ \land, \lor,
 \otimes, \oplus \} \mbox{ con } \circ \neq \bullet.
\end{split}
\end{equation*}
%
%
%
%
\begin{center}
\begin{figure}[t]
\vspace{15pt}

\begin{center}
\begin{tabular}{cccc}

\vspace{5pt}

\begin{minipage}{2cm}
\setlength{\unitlength}{1.2cm}
\begin{center}
\begin{picture}(2,2)(0.15,0)
\put(1,0){\makebox(0,0)[l]{ $\bot$}} 
\put(0,1){\makebox(0,0)[r]{$\false$ }} 
\put(2,1){\makebox(0,0)[l]{ $\true$}} 
\put(1,2){\makebox(0,0)[r]{$\top$ }} 

\put(1,0){\circle*{0.2}} 
\put(0,1){\circle*{0.2}} 
\put(2,1){\circle*{0.2}}
\put(1,2){\circle*{0.2}}

\put(1,0){\line(1,1){1}} 
\put(1,0){\line(-1,1){1}} 
\put(0,1){\line(1,1){1}} 
\put(2,1){\line(-1,1){1}}

\end{picture}
\end{center}
\end{minipage}

&

\begin{minipage}{3cm}
\setlength{\unitlength}{0.8cm}
\begin{center}
\begin{picture}(2,2)(0,-0.25)
\put(1,-1){\makebox(0,0)[l]{ $\bot$}} 
\put(1,0){\makebox(0,0)[l]{ $a$}} 
\put(0,1){\makebox(0,0)[r]{$\false$ }} 
\put(2,1){\makebox(0,0)[l]{ $\true$}} 
\put(1,2){\makebox(0,0)[r]{$\top$ }} 

\put(1,-1){\circle*{0.2}} 
\put(1,0){\circle*{0.2}} 
\put(0,1){\circle*{0.2}} 
\put(2,1){\circle*{0.2}}
\put(1,2){\circle*{0.2}}

\put(1,0){\line(1,1){1}} 
\put(1,0){\line(-1,1){1}} 
\put(0,1){\line(1,1){1}} 
\put(2,1){\line(-1,1){1}} 

\put(1,-1){\line(0,1){1}} 
\put(1,-1){\line(-1,2){1}} 
\put(1,-1){\line(1,2){1}} 

\end{picture}
\end{center}
\end{minipage}

&

\begin{minipage}{3cm}
\setlength{\unitlength}{0.7cm}
\begin{center}
\begin{picture}(3,4)(-0.25,0)
\multiput(1,0)(1,1){3}{\circle*{0.3}}
\multiput(0,1)(1,1){3}{\circle*{0.3}}
\multiput(-1,2)(1,1){3}{\circle*{0.3}}

\put(1,0){\line(1,1){2}} 
\put(0,1){\line(1,1){2}} 
\put(-1,2){\line(1,1){2}} 
\put(1,0){\line(-1,1){2}} 
\put(2,1){\line(-1,1){2}} 
\put(3,2){\line(-1,1){2}} 

\put(1,0){\makebox(0,0)[l]{ $\bot$}} 
\put(-1,2){\makebox(0,0)[r]{$\false$ }} 
\put(3,2){\makebox(0,0)[l]{ $\true$}} 
\put(1,4){\makebox(0,0)[r]{$\top$ }} 

\end{picture}
\end{center}
\end{minipage}

&

\begin{minipage}{3cm}
\setlength{\unitlength}{0.9cm}
\begin{center}
\begin{picture}(2,2)(0,-0.25)
\put(1,-1){\makebox(0,0)[l]{ $\bot$}} 
\put(0.5,-0.5){\makebox(0,0)[r]{$a$ }} 
\put(1.5,-0.5){\makebox(0,0)[l]{ $b$}} 
\put(1,0){\makebox(0,0)[l]{ $c$}} 
\put(0,1){\makebox(0,0)[r]{$\false$ }} 
\put(2,1){\makebox(0,0)[l]{ $\true$}} 
\put(1,2){\makebox(0,0)[r]{$\top$ }} 

\put(1,-1){\circle*{0.2}} 
\put(0.5,-0.5){\circle*{0.2}}
\put(1.5,-0.5){\circle*{0.2}}
\put(1,0){\circle*{0.2}} 
\put(0,1){\circle*{0.2}} 
\put(2,1){\circle*{0.2}}
\put(1,2){\circle*{0.2}}

\put(1,0){\line(1,1){1}} 
\put(1,0){\line(-1,1){1}} 
\put(0,1){\line(1,1){1}} 
\put(2,1){\line(-1,1){1}} 

\put(1,-1){\line(-1,1){0.5}} 
\put(1,-1){\line(1,1){0.5}} 

\put(1,0){\line(1,-1){0.5}} 
\put(1,0){\line(-1,-1){0.5}} 

\put(0,1){\line(1,-3){0.5}} 
\put(2,1){\line(-1,-3){0.5}} 

\end{picture}
\end{center}
\end{minipage}
\\ \\

$\four$ & $\five$ & $\nine$ & $\7$
\end{tabular}

\caption{Algunos ejemplos de (pre-)birretículos} \label{fig:hasse1_bis}
\end{center}
\end{figure}
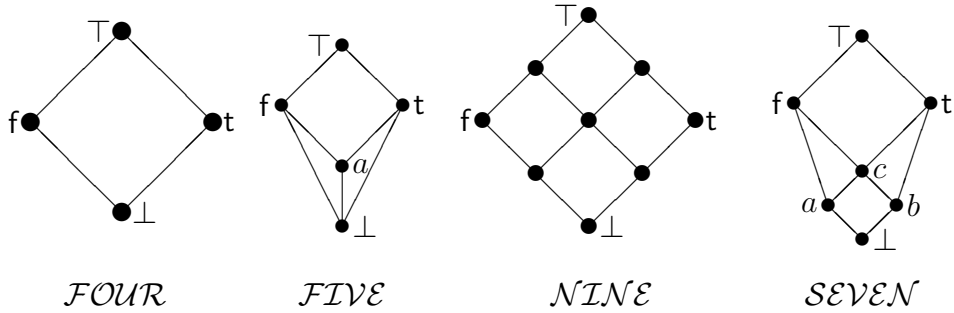
\end{center}

Observamos que las tres clases de pre-birretículos consideradas son ecuacionales y se da la siguiente cadena de inclusiones estrictas: $$\DPBL \varsubsetneq \IPBL \varsubsetneq \PBL.$$ Observamos también que de las definiciones se sigue que hay una dualidad entre los dos órdenes de todo pre-birretículo, análoga a la dualidad que hay entre ínfimo y supremo en los retículos: para simplificar las pruebas utilizamos frecuentemente este hecho, que llamamos Principio de Dualidad.

Presentamos algunas propiedades básicas de los pre-birretículos acotados y el interesante resultado que todo pre-birretículo entrelazado acotado se puede obtener a partir de un retículo acotado que posea dos elementos que satisfacen ciertas propiedades.

 Introducimos a continuación la definición de birretículo, que es un álgebra $\Al[B] = \la B, \land, \lor, \otimes, \oplus, \neg \ra $ tal que el reducto $ \la B, \land, \lor, \otimes, \oplus \ra $ es un pre-birretículo y la operación unaria $\neg: B \longrightarrow B$, llamada \emph{negación}, es involutiva, monótona con respecto al orden $\leq_{k}$ y antimonótona con respecto a $\leq_{t}$, es decir, satisface las siguientes condiciones: para todo $a, b \in B$,
 \begin{enumerate}[]
 \item \quad \textbf{(neg1)} \qquad si $a \leq_{t} b$,  entonces  $\neg b \leq_{t} \neg a$
 \item \quad \textbf{(neg2)} \qquad  si $a \leq_{k} b$,  entonces  $\neg a \leq_{k} \neg b$ 
 \item \quad \textbf{(neg3)} \qquad  $ a = \neg \neg a  $.
\end{enumerate}

 Damos algunos ejemplos de pre-birretículos y birretículos importantes que se pueden representar mediante dobles diagramas de Hasse (Figura \ref{fig:hasse1_bis}), en particular destacamos $\four$, el mínimo birretículo no trivial, que desarrolla un papel fundamental a nivel lógico. 
 
Presentamos a continuación una importante construcción, introducida por Ginsberg y extensamente estudiada por Fitting y Avron, que permite obtener un pre-birretículo entrelazado (que escribimos $\Al[L_{1}] \odot \Al[L_{2}]$) como un producto, análogo a un producto directo, de dos retículos cualesquiera $\Al[L_{1}]$ y $\Al[L_{2}]$; el mismo tipo de producto permite  construir un birretículo entrelazado $\Al[L] \odot \Al[L]$ a partir de dos copias isomorfas de un retículo $\Al[L]$ cualquiera. 

Acabamos la sección con un  un teorema de representación, fundamental, debido a Fitting y generalizado por Avron:  a saber,  que todo pre-birretículo entrelazado acotado $\Al[B]$ es isomorfo a un producto $\Al[L_{1}] \odot \Al[L_{2}]$ de dos retículos acotados $\Al[L_{1}]$ y $\Al[L_{2}]$ obtenido mediante la sobredicha construcción, y análogamente que todo birretículo entrelazado acotado se puede obtener como un producto  $\Al[L] \odot \Al[L]$ a partir de dos copias isomorfas de un retículo acotado $\Al[L]$. Un corolario de este resultado es una caracterización del retículo de las congruencias de todo  \mbox{(pre-)}\-birretículo acotado en términos de los retículos de las congruencias de los dos retículos factores asociados a él mediante la construcción que hemos descrito.

El capítulo 
\ref{ch:int} presenta algunos nuevos resultados algebraicos  sobre los  \mbox{(pre-)}\-birretículos entrelazados que se usan  en los siguientes capítulos para desarrollar nuestro estudio de las lógicas basadas en los birretículos.
 
El resultado principal de la sección \ref{sec:reppre} 
es una generalización del teorema de representación para pre-birretículos entrelazados acotados al caso de pre-birretículos entrelazados cualesquiera. 

La demostración que presentamos difiere esencialmente de las conocidas en la literatura, que se basan en la presencia de las cotas. 

Dado un birretículo entrelazado $\Al[B]$, definimos dos quasi-ordenes $\leq_t \circ \leq_k$ y $\geq_t \circ \leq_k$ dados por la composición de los dos ordenes reticulares y consideramos las relaciones de equivalencia $\sim_{1}$ y $\sim_{2}$ asociadas a dichos quasi-ordenes. Probamos que $\sim_{1}$ y $\sim_{2}$ son congruencias factores de $\Al[B]$ y que por tanto $\Al[B]$ es isomorfo al producto directo $\Al[B]/\!\!\sim_1 \times \: \Al[B]/\!\!\sim_2$. Observamos que, en el caso de pre-birretículos, la construcción producto $\Al[L_{1}] \odot \Al[L_{2}]$ se puede ver como un caso particular de producto directo, y que por tanto el resultado anterior implica que todo pre-birretículo entrelazado $\Al[B]$ es isomorfo a un producto $\Al[L_{1}] \odot \Al[L_{2}]$ de dos retículos $\Al[L_{1}]$ y $\Al[L_{2}]$ (que podemos obtener como cocientes de uno de los dos reductos reticulares de $\Al[B]$). 

Obtenemos, como corolarios, que el retículo de congruencias $\Con(\Al[L_{1}] \odot \Al[L_{2}])$ es isomorfo al producto directo $\Con(\Al[L_{1}]) \times \Con(\Al[L_{2}])$ y que, en todo pre-birretículo entrelazado $\Al[B]$, las congruencias de $\Al[B]$ coinciden con las congruencias de cada uno de sus dos reductos reticulares, es decir que 
$$\Con(\Al[B]) = \Con (\la B, \land, \lor \ra) = \Con (\la B, \otimes, \oplus \ra). $$ 

Otro interesante corolario es un teorema de representación análogo al conocido teorema de representación para retículos distributivos: todo pre-birretículo distributivo se puede representar como un pre-birretículo de conjuntos.

En la sección \ref{sec:repbil} demostramos el teorema de representación para birretículos entrelazados, que se obtiene fácilmente a partir del teorema de representación para pre-birretículos. En este caso vemos que, dado cualquier birretículo entrelazado  $\Al[B]$,   es suficiente considerar la relación $\sim_{1}$. Dicha relación ya no es una congruencia de $\Al[B]$ (porque no es compatible con la negación), pero nos permite obtener  como cociente de uno cualquiera de los reductos reticulares de $\Al[B]$ un retículo $\Al[L]$ tal que $\Al[B]$ resulta ser isomorfo a $\Al[L] \odot \Al[L]$. 

Como corolario, obtenemos una caracterización de las congruencias de todo birretículo entrelazado $\Al[B] \cong \Al[L] \odot \Al[L]$: tenemos que 
$\Con (\Al[B])$ es isomorfo a $\Con (\Al[L]).$ Probamos además  que $$\Con(\Al[B]) = \Con (\la B, \land, \neg \ra) =  \Con (\la B, \lor, \neg \ra) =  \Con (\la B, \otimes, \oplus, \neg \ra). $$ 

Acabamos la sección presentando una prueba alternativa del teorema de representación, que se basa en la consideración de los elementos que son puntos fijos del operador de negación, a los que llamamos elementos \emph{regulares}. Obtenemos así que todo birretículo entrelazado $\Al[B]$ es isomorfo al producto $\la \Reg(\Al[B]), \otimes, \oplus \ra \odot \la \Reg(\Al[B]), \otimes, \oplus \ra$, donde $\la \Reg(\Al[B]), \otimes, \oplus \ra $ es el subretículo del reducto $\la B, \otimes, \oplus \ra$ cuyo universo es el conjunto $\Reg(\Al[B])$ de los elementos regulares de $\Al[B]$. 

En la sección \ref{sec:bif} estudiamos los filtros de retículo en los (pre-)birretículos entrelazados. Puesto que en cada pre-birretículo $\Al[B]$ hay dos órdenes reticulares, es natural considerar cuatro tipos de subconjuntos de $B$, es decir: los subconjuntos que son filtros de retículo en ambos órdenes, los ideales en ambos ordenes, los $\leq_{t}$-filtros y $\leq_{k}$-ideales, y los  $\leq_{t}$-ideales y $\leq_{k}$-filtros. 

 Sin embargo es fácil ver que, por el Principio de Dualidad, es suficiente estudiar uno cualquiera de esos tipos de subconjuntos: nos concentramos, por tanto, en el estudio de los que son filtros en ambos ordenes, ya considerados por Arieli y Avron \cite{ArAv96}, que los llaman \emph{bifiltros}.
 
 Definimos el operador de clausura asociado a la generación de bifiltros y, dado un pre-birretículo entrelazado $\Al[B]$, damos una caracterización del bifiltro generado por cualquier conjunto $X \subseteq B$ análoga a la conocida caracterización del filtro generado por un subconjunto cualquiera de un retículo. 
 
 Observamos que las relaciones $\sim_{1}$ y $\sim_{2}$ introducidas en la sección \ref{sec:reppre} se pueden caracterizar de la manera siguiente. Dado un pre-birretículo entrelazado $\Al[B]$ y elementos $a, b \in B$, tenemos que $a \sim_{1} b$ si y solamente si el bifiltro generado por $a$ coincide con el bifiltro generado por $b$ (análogamente se puede caracterizar $\sim_{2}$ en términos de los operadores de generación de filtros-ideales o de ideales-filtros).

Acabamos la sección con un resultado especialmente importante desde el punto de vista de las lógicas asociadas a birretículos: el retículo de bifiltros de todo pre-birretículo entrelazado $\Al[L_{1}] \odot \Al[L_{2}]$ es isomorfo al retículo de filtros del primer factor $\Al[L_{1}] $. 

En la sección \ref{sec:dis} nos concentramos en las variedades $\DPBL$ y $\DBL$ de pre-birretículos y birretículos distributivos. Gracias a los teoremas de representación y a la caracterización de las congruencias de todo birretículo entrelazado obtenidos en las secciones anteriores, podemos caracterizar la variedad $\DPBL$ como generada por sus dos miembros de dos elementos y la variedad $\DBL$ como generada por su mínimo miembro no trivial (el birretículo de cuatro elementos $\four$). 

Estudiamos, a continuación, la estructura de los bifiltros en los pre-birretículos distributivos. Obtenemos así algunos resultados análogos a conocidos teoremas sobre retículos distributivos.  En particular, probamos un teorema de extensión del bifiltro y un teorema del bifiltro primo (decimos que un bifiltro es primo si es un filtro primo en ambos órdenes). Estos resultados nos permiten dar una demostración directa (y alternativa a la de la sección \ref{sec:reppre}) del teorema de representación de todo (pre-)birretículo distributivo como un (pre-)birretículo de conjuntos. 

Acabamos el capítulo (sección \ref{sec:confl}) considerando una expansión del lenguaje de los birretículos obtenida añadiendo una operación unaria dual de la negación, es decir involutiva, monótona con respecto al orden $\leq_{t}$ y antimonótona con respecto a $\leq_{k}$. Esta operación fue introducida por Fitting \cite{Fi94}, que la llama ``conflación'' (\emph{conflation}). Llamamos por tanto \emph{birretículo con conflación} a un álgebra $\Al[B] = \la B, \land, \lor, \otimes, \oplus, \neg, \co \ra $ tal que el reducto $ \la B, \land, \lor, \otimes, \oplus, \neg \ra $ es un birretículo y la operación $\co: B \longrightarrow B$ satisface, para todo $a, b \in B$, las siguientes condiciones:
\begin{enumerate}[]
 \item \quad \textbf{(con1)} \qquad si $a \leq_{k} b$,  entonces  $- b \leq_{k} - a$
 \item \quad \textbf{(con2)} \qquad  si $a \leq_{t} b$,  entonces  $- a \leq_{t} - b$ 
 \item \quad \textbf{(con3)} \qquad  $ a = - - a  $.
\end{enumerate}

Decimos que un birretículo con conflación es conmutativo si negación y conflación conmutan, es decir si,  para todo $a \in B$,
$$\neg \co  a = \co  \neg \: a.$$
Observamos que evidentemente los birretículos con conflación forman una variedad (y así los birretículos entrelazados con conflación, etc.).

Damos un teorema de representación, análogo al teorema de representación para birretículos, para los birretículos entrelazados conmutativos con conflación. En este caso tenemos que cada álgebra $\Al[B]$ perteneciente a  dicha variedad  es isomorfa a un producto $\Al[L] \odot \Al[L]$ de dos copias de un álgebra $\Al[L] = \la L, \sqcap, \sqcup, ' \ra$, donde $\la L, \sqcap, \sqcup \ra$ es un retículo y $': L \longrightarrow L$  es una operación unaria involutiva y antimonótona con respecto al orden reticular (que llamamos \emph{involución}).  

Demostramos que, análogamente al caso de los birretículos, hay un isomorfismo entre las congruencias de todo birretículo entrelazado conmutativo con conflación $\Al[L] \odot \Al[L]$ y las congruencias de  $\Al[L]$. Dicho resultado nos permite obtener una caracterización de las subvariedades de la variedad de los birretículos distributivos conmutativos con conflación en términos de las correspondientes variedades de retículos distributivos con involución (llamados \emph{retículos de De Morgan}).

En el capítulo \ref{ch:log} estudiamos, desde el punto de vista de la Lógica Algebraica Abstracta, la lógica sin implicación $\lb$, introducida por Arieli y Avron \cite{ArAv96} a partir de una clase de matrices llamadas birretículos lógicos, que consisten en un par $\la \Al[B], F \ra$ donde $\Al[B]$ es un birretículo y $F \subseteq B$ un bifiltro primo.  

En la sección \ref{sec:sem} introducimos semánticamente $\lb$ como la lógica definida por la matriz $\la \four, \Tr \ra$, donde $\Tr = \{ \top, \true \}$. Presentamos a continuación algunos resultados importantes obtenidos por Arieli y Avron: entre ellos, el hecho de que la lógica definida por cualquier birretículo lógico  $\la \Al[B], F \ra$ coincide con la definida por la matriz $\la \four, \Tr \ra$ (y por tanto con $\lb$) y la introducción de un cálculo Gentzen completo para $\lb$ (Cuadro \ref{tab:Gentz}). 

En la sección \ref{sec:hil} introducimos una presentación de $\lb$ mediante un cálculo estilo Hilbert (Cuadro \ref{tab:hilbert}), que usamos en las secciones siguientes para estudiar $\lb$ desde el punto de vista de la Lógica Algebraica Abstracta. 

Demostramos que cada fórmula se puede reducir a una forma normal y, gracias a dicho resultado, obtenemos para nuestro  cálculo un teorema de completitud  con respecto a la semántica de $\lb$ introducida en la sección anterior.

En la sección \ref{sec:tar} caracterizamos la lógica $\lb$ en términos de algunas propiedades metalógicas (a veces llamadas \emph{estilo Tarski}); probamos, además, que   $\lb$ no tiene extensiones consistentes.

\begin{table}[ht!]
\fbox{\parbox{\textwidth}{
\vspace{0,15cm}
\begin{enumerate}[ ]
  \item \quad \quad \quad \textbf{Axioma:} \qquad $(Ax) \quad \Gamma, \varphi \rhd \varphi,
    \Delta$.
    \item
  \item \quad \quad \quad \textbf{Reglas:} \qquad \;  Regla de Corte más las siguientes reglas lógicas:
\end{enumerate}
\begin{center} {
  \begin{align*}
&(\land \rhd) \quad \frac{\Gamma, \varphi, \psi \rhd \Delta}{\Gamma, \varphi \land \psi \rhd \Delta} 
& & (\rhd \land) \quad \frac{\Gamma \rhd \Delta, \varphi \quad \Gamma \rhd \Delta, \psi }{\Gamma \rhd \Delta, \varphi \land \psi} \\ \\
& (\neg \land \rhd) \quad \frac{\Gamma, \neg \varphi \rhd \Delta \quad \Gamma, \neg \psi \rhd \Delta }{\Gamma, \neg (\varphi \land \psi) \rhd \Delta}  
& &  (\rhd \neg \land ) \quad \frac{\Gamma \rhd \Delta, \neg \varphi,\neg \psi}{\Gamma \rhd \Delta, \neg (\varphi \land \psi)} \\ \\
& (\lor \rhd) \quad \frac{\Gamma, \varphi \rhd \Delta \quad \Gamma, \psi \rhd \Delta }{\Gamma, \varphi \lor \psi \rhd \Delta}  & &  (\rhd \lor ) \quad \frac{\Gamma \rhd \Delta, \varphi, \psi}{\Gamma \rhd \Delta, \varphi \lor \psi} \\ \\
&(\neg \lor \rhd) \quad \frac{\Gamma, \neg \varphi, \neg \psi \rhd \Delta}{\Gamma, \neg (\varphi \lor \psi) \rhd \Delta} & & (\rhd \neg \lor) \quad \frac{\Gamma \rhd \Delta, \neg \varphi \quad \Gamma \rhd \Delta, \neg \psi }{\Gamma \rhd \Delta, \neg (\varphi \lor \psi)} \\ \\
&(\otimes \rhd) \quad \frac{\Gamma, \varphi, \psi \rhd \Delta}{\Gamma, \varphi \otimes \psi \rhd \Delta} 
& & (\rhd \otimes) \quad \frac{\Gamma \rhd \Delta, \varphi \quad \Gamma \rhd \Delta, \psi }{\Gamma \rhd \Delta, \varphi \otimes \psi} \\ \\
& (\neg \otimes \rhd) \quad \frac{\Gamma, \neg \varphi, \neg \psi \rhd \Delta}{\Gamma, \neg (\varphi \otimes \psi) \rhd \Delta}  
& & (\rhd \neg \otimes) \quad \frac{\Gamma \rhd \Delta, \neg \varphi \quad \Gamma \rhd \Delta, \neg \psi }{\Gamma \rhd \Delta, \neg (\varphi \otimes \psi)}  \\ \\
& (\oplus \rhd) \quad \frac{\Gamma, \varphi \rhd \Delta \quad \Gamma, \psi \rhd \Delta }{\Gamma, \varphi \oplus \psi \rhd \Delta}  & &  (\rhd \oplus ) \quad \frac{\Gamma \rhd \Delta, \varphi, \psi}{\Gamma \rhd \Delta, \varphi \oplus \psi} \\ \\
&(\neg \oplus \rhd) \quad \frac{\Gamma, \neg \varphi \rhd \Delta \quad
\Gamma, \neg \psi \rhd \Delta }{\Gamma, \neg (\varphi \oplus \psi) \rhd
\Delta} & & (\rhd \neg \oplus ) \quad \frac{\Gamma \rhd \Delta, \neg
\varphi,\neg \psi}{\Gamma \rhd \Delta, \neg (\varphi \oplus \psi)} \\ \\
& (\neg \neg \rhd) \quad \frac{\Gamma, \varphi \rhd \Delta }{\Gamma, \neg \neg \varphi \rhd \Delta}  & &  (\rhd \neg \neg) \quad \frac{\Gamma \rhd \Delta, \varphi}{\Gamma \rhd \Delta, \neg \neg \varphi}
\end{align*}
}
\end{center}
}}

\caption{Un cálculo de secuentes completo para la lógica
$\mathcal{LB}$} \label{tab:Gentz}
\end{table}

\begin{table}[ht!]
\fbox{\parbox{\textwidth}{
\begin{center} {
  \hspace{-0.7cm}
  \begin{tabular}{ccc}
    \AxiomC{$p \land q$}
    \LeftLabel{\,(R1)}
    \UnaryInfC{$p$}
    \DisplayProof &
    \AxiomC{$p \land q$}
    \LeftLabel{\,(R2)}
    \UnaryInfC{$q$}
    \DisplayProof &
    \AxiomC{$p \!$}
    \AxiomC{$\! q$}
    \LeftLabel{\,(R3)}
    \BinaryInfC{$p \land q$}
    \DisplayProof \\ \\ 

    \AxiomC{$p$}
    \LeftLabel{\,(R4)}
    \UnaryInfC{$p \lor q$}
    \DisplayProof &
    \AxiomC{$p \lor q$}
    \LeftLabel{\,(R5)}
    \UnaryInfC{$q \lor p$}
    \DisplayProof &
    \AxiomC{$p \lor p$}
    \LeftLabel{\,(R6)}
    \UnaryInfC{$p$}
    \DisplayProof \\ \\ 
 
    \AxiomC{$p \lor (q \lor r)$}
    \LeftLabel{\,(R7)}
    \UnaryInfC{$(p \lor q) \lor r$}
    \DisplayProof &
    \AxiomC{$p \lor (q \land r)$}
    \LeftLabel{\,(R8)}
    \UnaryInfC{$(p \lor q) \land (p \lor r)$}
    \DisplayProof &
    \AxiomC{$(p \lor q) \land (p \lor r)$}
    \LeftLabel{\,(R9)}
    \UnaryInfC{$p \lor (q \land r)$}
    \DisplayProof \\ \\ 
 
    \AxiomC{$p \lor r$}
    \LeftLabel{\,(R10)}
    \UnaryInfC{$\neg \neg p \lor r$}
    \DisplayProof &
    \AxiomC{$\neg \neg p \lor r$}
    \LeftLabel{\,(R11)}
    \UnaryInfC{$p \lor r$}
    \DisplayProof &
    \AxiomC{$\neg (p \lor q) \lor r$}
    \LeftLabel{\,(R12)}
    \UnaryInfC{$ (\neg p \land \neg q) \lor r$}
    \DisplayProof \\ \\ 
 
    \AxiomC{$(\neg p \land \neg q) \lor r$}
    \LeftLabel{\,(R13)}
    \UnaryInfC{$\neg (p \lor q) \lor r$}
    \DisplayProof &
    \AxiomC{$\neg (p \land q) \lor r$}
    \LeftLabel{\,(R14)}
    \UnaryInfC{$(\neg p \lor \neg q) \lor r$}
    \DisplayProof &
    \AxiomC{$(\neg p \lor \neg q) \lor r$}
    \LeftLabel{\,(R15)}
    \UnaryInfC{$\neg (p \land q) \lor r$}
    \DisplayProof \\ \\ \\ 
 
    \AxiomC{$( p \otimes q) \lor r$}
    \LeftLabel{\,(R16)}
    \UnaryInfC{$(p \land q) \lor r$}
    \DisplayProof &
    \AxiomC{$(p \land q) \lor r$}
    \LeftLabel{\,(R17)}
    \UnaryInfC{$(p \otimes q) \lor r$}
    \DisplayProof &
    \AxiomC{$(p \oplus q) \lor r$}
    \LeftLabel{\,(R18)}
    \UnaryInfC{$(p \lor q) \lor r$}
    \DisplayProof \\ \\ 
 
    \AxiomC{$( p \lor q) \lor r$}
    \LeftLabel{\,(R19)}
    \UnaryInfC{$(p \oplus q) \lor r$}
    \DisplayProof &
    \AxiomC{$(\neg p \otimes \neg q) \lor r$}
    \LeftLabel{\,(R20)}
    \UnaryInfC{$\neg (p \otimes q) \lor r$}
    \DisplayProof &
    \AxiomC{$\neg (p \otimes q) \lor r$}
    \LeftLabel{\,(R21)}
    \UnaryInfC{$(\neg p \otimes \neg q) \lor r$}
    \DisplayProof \\ \\ 
 
    \AxiomC{$(\neg p \oplus \neg q) \lor r$}
    \LeftLabel{\,(R22)}
    \UnaryInfC{$\neg (p \oplus q) \lor r$}
    \DisplayProof &
    \AxiomC{$\neg (p \oplus q) \lor r$}
    \LeftLabel{\,(R23)}
    \UnaryInfC{$(\neg p \oplus \neg q) \lor r$}
    \DisplayProof & \\
  \end{tabular} } 
\end{center} 
}} \vspace{0,4cm}

\caption{Un cálculo estilo Hilbert completo para la lógica $\LB$}
\label{tab:hilbert}
\end{table}

A continuación (sección \ref{sec:lb}) comenzamos el verdadero estudio de $\lb$ desde el punto de vista de la Lógica Algebraica Abstracta. En primer lugar, clasificamos dicha lógica como no protoalgebraica y no autoextensional. Caracterizamos luego la congruencia de Tarski asociada a $\lb$ y, gracias a dicho resultado, demostramos que  la clase $\alg \lb$ de los reductos algebraicos de los modelos generalizados reducidos de $\lb$ es la variedad generada por el birretículo $\four$ (es decir la variedad $\DBL$ de los birretículos distributivos).  

Observamos que, al contrario por ejemplo de las clases de los retículos distributivos y de los retículos de De Morgan, a la clase de los birretículos distributivos se puede asociar una lógica algebraizable  $\mathcal{L}$ (por tanto, distinta de $\lb$) tal que $\algstar \mathcal{L} = \DBL$.

Caracterizamos los modelos plenos de $\lb$ en términos de las propiedades estudiadas en la sección \ref{sec:tar}. Gracias a dicho resultado, podemos también demostrar que el cálculo Gentzen mostrado en el Cuadro \ref{tab:Gentz} es \emph{plenamente adecuado} para la lógica $\lb$. 

Estudiamos a continuación los modelos reducidos de $\lb$ y la clase de sus reductos algebraicos $\algstar \lb$. Probamos que dicha clase no es una cuasivariedad y caracterizamos sus miembros como birretículos distributivos superiormente acotados en el orden $\leq_{k}$ que satisfacen cierta propiedad. En particular, demostramos que $\algstar \lb$ está formada por los birretículos distributivos $\Al[B]$ tales que $\Al[B] \cong \Al[L] \odot \Al[L]$, donde $\Al[L]$ es un ``retículo disyuntivo dual'' (\emph{dual disjunctive lattice}), es decir
un retículo distributivo que satisface cierta propiedad dual de la propiedad disyuntiva considerada en  \cite{Wa38}  y \cite{Ci91}. 

Acabamos el capítulo (sección \ref{sec:gentz}) con la demostración de que el cálculo Gentzen introducido por Arieli y Avron es algebraizable en el sentido de Rebagliato y Verd\'{u} \cite{ReV93b}, y que su semántica algebraica equivalente es la variedad de los birretículos distributivos.

En el capítulo \ref{ch:add} nos ocupamos de una expansión de la lógica $\lb$ también introducida por Arieli y Avron \cite{ArAv96}, que denominamos $\lbs$, obtenida añadiendo al lenguaje $\{ \land, \lor, \otimes, \oplus, \neg \}$ dos conectivas de implicación interdefinibles, una \emph{implicación débil} $\s$ y una \emph{implicación fuerte} $\ta$. Adoptamos la primera como primitiva, y definimos $$p \ta q : = (p \s q) \land (\neg q \s \neg p).$$ Usamos también las siguientes abreviaciones: 
\begin{align*}
  p \leftrightarrow q \   & : = \ (p \ta q) \land (q \ta p)  \\
   p \equiv q   \ & : = \ (p \s q) \land (q \s p).
\end{align*}

En la sección  \ref{sec:add} definimos semánticamente la lógica  $\lbs$ y presentamos el cálculo estilo Hilbert  $H_{\s}$ (Cuadro \ref{tab:hilbs}) introducido por Arieli y Avron. Citamos algunos de los resultados fundamentales obtenidos en \cite{ArAv96}, en particular el teorema de completitud del cálculo $H_{\s}$  con respecto a la semántica de $\lbs$.

\begin{table}[ht!]

\begin{align*}
& \mathbf{Axiomas\!:} \\ \\
&(\supset 1)  & &p \supset (q \supset p) \\
&(\supset 2)  & &  (p \supset (q \supset r)) \supset ((p\supset q) \supset (p\supset r)) \\
&(\supset 3)  & &  ((p \supset q) \supset p) \supset p \\
&(\land \supset )  & &  (p \land q) \supset p \qquad \qquad (p \land q) \supset q \\
&(\supset \land )  & &  p \supset (q \supset (p \land q)) \\
&(\otimes \supset )  & &  (p \otimes q) \supset p \qquad \qquad (p \otimes q) \supset q \\
&(\supset \otimes )  & &  p \supset (q \supset (p \otimes q)) \\
&(\supset \lor)  & &  p  \supset (p \lor q) \qquad \qquad q \supset (p \lor q) \\
&(\lor \supset )  & &  (p \supset r) \supset ((q \supset r) \supset ((p \lor q) \supset r)) \\
&(\supset \oplus)  & &  p  \supset (p \oplus q) \qquad  \qquad q \supset (p \oplus q) \\
&(\oplus \supset )  & &  (p \supset r) \supset ((q \supset r) \supset ((p \oplus q) \supset r)) \\
&(\neg \land )  & &   \neg (p \land q ) \equiv (\neg p \vee \neg q)  \\
&(\neg \lor )  & &  \neg (p \vee q) \equiv (\neg p \land \neg q ) \\
&(\neg \otimes )  & &   \neg (p \otimes q ) \equiv (\neg p \otimes \neg q)  \\
&(\neg \oplus )  & &   \neg (p \oplus q ) \equiv (\neg p \oplus \neg q)  \\
&(\neg \supset )  & &   \neg (p \supset q ) \equiv (p \land \neg q)  \\
&(\neg \neg) & & p \equiv \neg \neg p \\ \\
& \mathbf{Regla\!:}
\end{align*}
$$
\frac{p \quad p \supset q }{q}
$$

\caption{Un cálculo estilo Hilbert completo para la lógica $\lbs$}
\label{tab:hilbs}
\end{table}

En la sección \ref{sec:lbs} demostramos varias propiedades sintácticas del cálculo $H_{\s}$ que nos permiten obtener el resultado siguiente: el cálculo  $H_{\s}$  es algebraizable, con fórmula de equivalencia $\phi \leftrightarrow \psi$ y ecuación definitoria $\phi \approx \phi \s \phi$. Por el teorema de completitud podemos concluir que la lógica $\lbs$ es algebraizable.

A continuación (sección \ref{sec:algstar}) nos ocupamos por tanto de individuar y estudiar la semántica algebraica equivalente de  $\lbs$. Introducimos mediante una presentación ecuacional la variedad $\ib$, cuyos miembros llamamos ``birretículos implicativos'' (\emph{implicative bilattices}), es decir estructuras $\Al[B] = \la B, \land, \lor, \otimes, \oplus, \s \neg \ra$ tales que el reducto $\la B, \land, \lor, \otimes, \oplus, \neg \ra$ es un birretículo y la operación binaria $\s: B \times B \longrightarrow B$ es tal que $\Al[B]$ satisface las siguientes ecuaciones:
\begin{enumerate}[ ]
\item (IB1) \hspace{2mm} $(x \supset x) \supset y \approx y$ 
\item (IB2)  \hspace{2mm}  $x \supset (y \supset z) \approx (x \land y) \supset z \approx (x \otimes y) \supset z$
\item(IB3)   \hspace{2mm}  $((x \supset y) \supset x) \supset x  \approx x \s x $ 
\item(IB4)  \hspace{2mm}  $(x \lor y) \supset z \approx (x \supset z) \land  (y \supset z) \approx (x \oplus y) \supset z  $ 
\item (IB5)  \hspace{2mm}  $x \land ((x \supset y) \supset (x \otimes y)) \approx x $
\item (IB6)  \hspace{2mm}  $\neg (x \supset y ) \supset z  \approx (x \land \neg y)  \supset z.$
\end{enumerate}

 Demostramos a continuación varias propiedades aritméticas de la variedad $\ib$, que nos permiten obtener el resultado que  dicha variedad   es la semántica algebraica equivalente de la lógica $\lbs$. También probamos que todo reducto de un birretículo implicativo es un birretículo distributivo, un hecho que usamos en el capítulo siguiente, y demostramos  que la lógica $\lbs$, así como su fragmento $\lb$, no tiene extensiones consistentes. 
 
En el capítulo  \ref{ch:imp} presentamos un estudio algebraico de los birretículos implicativos y algunas estructuras algebraicas relacionadas con ellos. 

Comenzamos el capítulo (sección \ref{sec:repimp}) demostrando un teorema de representación para los birretículos implicativos análogo al teorema de representación para los  birretículos. Por los resultados anteriores sabemos que, para todo birretículo implicativo $\Al[B] = \la B, \land, \lor, \otimes, \oplus, \s \neg \ra$, el reducto $\la B, \land, \lor, \otimes, \oplus, \neg \ra$ es isomorfo al producto $\Al[L] \odot \Al[L]$, donde $\Al[L]$ es un retículo distributivo superiormente acotado. En el caso de los birretículos implicativos, demostramos que  además  $\Al[L]$ cumple una propiedad adicional, es decir es un retículo \emph{relativamente complementado} (todo elemento tiene un complemento en todo intervalo de $\Al[L]$). 

Dado un retículo relativamente complementado y superiormente acotado $\Al[L] = \la L, \sqcap, \sqcup \ra $ cuyo elemento máximo es 1, consideramos la operación $\ba : L \times L \longrightarrow L$ que a todo par de elementos $a, b \in L$ asocia el complemento relativo de  $a$ en el intervalo $[a \sqcap b, 1]$, que denotamos $a \ba b$. Observamos que esta clase de retículos, considerados como álgebras en el lenguaje $\{ \sqcap, \sqcup, \ba \}$, forma una variedad. Siguiendo la nomenclatura usada en \cite{Cu77}, llamamos a los miembros de dicha variedad ``retículos implicativos clásicos'' (\emph{classical implicative lattices}).

Demostramos entonces que a partir de cualquier retículo implicativo clásico  $\Al[L]$ es posible construir un birretículo implicativo mediante una construcción que, para el reducto reticular, coincide con el producto $\Al[L] \odot \Al[L]$ y además, usando la operación $\ba$, permite definir una implicación $\s$ que satisface las ecuaciones que definen la variedad de los birretículos implicativos. 

Tenemos, por tanto, que todo birretículo implicativo $\Al[B]$ es isomorfo a un producto de este tipo (que podemos denotar también $\Al[L] \odot \Al[L]$) de dos copias de un  retículo implicativo clásico  $\Al[L]$. 

Nos ocupamos a continuación de las congruencias de los birretículos implicativos. Gracias al teorema de representación para birretículos implicativos, demostramos que las congruencias de todo birretículo implicativo $\Al[L] \odot \Al[L]$ son isomorfas a las del retículo implicativo clásico  $\Al[L]$. Puesto que las congruencias de todo retículo implicativo clásico  $\Al[L] = \la L, \sqcap, \sqcup, \ba \ra $ coinciden con las congruencias de su reducto reticular $ \la L, \sqcap, \sqcup \ra $, obtenemos el siguiente resultado: las congruencias de todo birretículo implicativo  $\Al[B] = \la B, \land, \lor, \otimes, \oplus, \s \neg \ra$, coinciden con las de su reducto $\la B, \land, \lor, \otimes, \oplus, \neg \ra$ (que, por los resultados anteriores, también coinciden con las congruencias del reducto  $\la B, \land, \neg \ra$). 

En la siguiente sección (\ref{sec:imp}) desarrollamos un estudio más extenso de la variedad $\ib$ de los birretículos implicativos. Usando los resultados de la sección anterior, probamos que la única álgebra subdirectamente irreducible en $\ib$ es $\fours$, su miembro de cuatro elementos, cuyo reducto birreticular es $\four$. Por tanto, dicha álgebra genera la variedad de birretículos implicativos. Demostramos, además, que $\ib$ es una variedad con término  discriminador y que sus miembros finitos son isomorfos a potencias directas  de $\fours$.

Obtenemos también el interesante resultado de que en un birretículo implicativo cada uno de los órdenes reticulares se puede definir explícitamente usando sólo la implicación y las conectivas que corresponden al otro orden.

En la sección \ref{sec:dua} estudiamos la relación entre los retículos implicativos clásicos y los retículos disyuntivos duales considerados en la sección \ref{sec:lb}. Probamos, en particular, que la clase de los  retículos implicativos clásicos (considerados en el puro lenguaje reticular) está propiamente incluida en la de los retículos disyuntivos duales e individuamos una propiedad necesaria y suficiente para que un  retículo disyuntivo dual pertenezca a la clase de los retículos implicativos clásicos. 

En las dos secciones siguientes nos ocupamos de algunos subreductos de los birretículos implicativos que resultan especialmente interesantes desde un punto de vista lógico. 

Comenzamos, en la sección \ref{sec:sub}, observando que en todo birretículo implicativo  $\Al[B] = \la B, \land, \lor, \otimes, \oplus, \s \neg \ra$ es posible definir explícitamente una operación binaria $* : B \times B \longrightarrow B$ tal que el par $\{ *, \ta \}$ es residuado con respecto al orden $\leq_{t}$. La definición es la siguiente: para todo par de elementos $a, b \in B$,
$$a * b \ :=  \ \neg (a \ta \neg b).$$ 
Demostramos entonces que el álgebra  $\la B, \land, \lor, *, \ta \neg, \top \ra$ es, usando la nomenclatura de \cite{GaRa04}, un  ``retículo residuado conmutativo distributivo con involución'' (\emph{involutive commutative distributive residuated lattice}).

Introducimos a continuación, mediante una presentación ecuacional, una clase de álgebras que llamamos ``retículos residuados de De Morgan'' (\emph{residuated De Morgan lattices}), con el intento de probar que dichas estructuras corresponden a los $\{  \land, \lor,  \s \neg, \top \}$-subreductos de los birretículos implicativos. 

Un retículo residuado de De Morgan es un álgebra $\Al = \langle A, \land, \lor, {\supset,}$ $\neg, \top  \rangle$ tal que el reducto $\langle A, \land, \lor, \neg \rangle$ es un retículo de De Morgan y las siguientes ecuaciones son satisfechas:
\begin{enumerate}[ ]
\item (RD0) $\quad \top  \approx \neg \top $ 
\item (RD1) $\quad \top \supset x \approx x$ 
\item (RD2) $\quad  x \supset (y \supset z) \approx (x \land y) \supset z$
\item (RD3) $\quad \top \land (((x \supset y) \supset x) \supset x) \approx \top  $ 
\item (RD4) $ \quad (x \lor y) \supset z \approx (x \supset z) \land  (y \supset z)$ 
\item (RD5) $\quad  x \land (((x \supset y) \land (\neg y \supset \neg x)) \supset y) \approx x$  
\item (RD6) $\quad \neg (x \supset y ) \supset z  \approx (x \land \neg y)  \supset z$.
\end{enumerate}

Comenzamos demostrando algunas propiedades aritméticas de los retículos  residuados de De Morgan, que nos permiten probar que, para todo retículo de este tipo $\Al = \la A, \land, \lor, \s,  \neg, \top \ra$, la estructura $\la A, \land, \lor, *, \ta \neg, \top \ra$ es efectivamente un retículo residuado conmutativo distributivo con involución.

Demostramos a continuación que todo retículo  residuado de De Morgan $\Al$ contiene como subretículos dos retículos relativamente complementados (que denotamos $\Al[A^-]$ y $\Al[A^+]$) tales que $\Al[A^-] \cong \Al[A^+]$. Sabemos por tanto que es posible construir un birretículo implicativo como un producto $\Al[A^-] \odot \Al[A^-]$ (o $\Al[A^+] \odot \Al[A^+]$). 

Gracias al resultado anterior podemos definir, para todo retículo  residuado de De Morgan $\Al$, una inmersión $h: A \longrightarrow A^- \times A^-$, es decir una función inyectiva que es un homomorfismo en el lenguaje $\{ \land, \lor, \s,  \neg, \top \}$. Demostramos, además, que dicha inmersión es  canónica, en el siguiente sentido: si existe un homomorfismo $f : A \longrightarrow B$ para algún birretículo implicativo $\Al[B]$, entonces existe un único homomorfismo $f': A^- \times A^- \longrightarrow B $ tal que $f' \cdot h = f$.

Obtenemos por tanto que los retículos residuados de De Morgan coinciden con los 
$\{  \land, \lor,  \s, \neg, \top \}$-subreductos de los birretículos implicativos y que la variedad de los retículos residuados de De Morgan es generada por su miembro de cuatro elementos que es el reducto del birretículo implicativo $\fours$.

En la sección \ref{sec:other} generalizamos la construcción introducida en la sección anterior para estudiar una clase más amplia de subreductos de los birretículos implicativos. 

Nos concentramos, en particular, en los $\{ \s, \neg \}$-subreductos. Éstos tienen, a nuestro parecer, un interés particular a nivel lógico, en cuanto se trata del fragmento mínimo del lenguaje de los birretículos implicativos que se necesita para definir las dos traducciones de fórmulas en ecuaciones y viceversa que nos permiten demostrar la algebraizabilidad de la lógica $\lbs$ con respecto a la variedad de los birretículos implicativos.

Introducimos a continuación, mediante una presentación ecuacional, una variedad de álgebras en el lenguaje $\{ \s, \neg \}$, a las que llamamos $I$\emph{-álgebras}. 

Una $I$-álgebra es una estructura $\Al = \la A, \s, \neg \ra$ que satisface las siguientes ecuaciones:
\begin{enumerate}[ ]
\item (I1) $ \quad (x \supset x) \supset y \approx y$ 
\item (I2) $ \quad  x \supset (y \supset z) \approx (x \s y) \s (x \s z) \approx y \supset (x \supset z)$
\item (I3) $ \quad ((x \supset y) \supset x) \supset x  \approx x \s x $ 
\item (I4) $\quad  x \supset (\neg y \supset z) \approx \neg (x \s y) \s z$
\item (I5) $\quad \neg \neg x \approx x$
\item  (I6) $ \quad  p(x,y,x) \approx p(x,y,y)$
\end{enumerate}
donde $p(x,y,z)$ es una abreviación de $$(x \s y) \s ((y \s x) \s ((\neg x \s \neg y) \s ((\neg y \s \neg x) \s z))).$$

Demostramos algunas propiedades aritméticas de dicha variedad, que nos permiten probar que a toda 
$I$-álgebra $\Al = \la A, \s, \neg \ra$ es posible asociar un álgebra de Tarski $\Al[A]/\!\!\sim \ = \la A/\!\!\sim, \s \ra$ obtenida como un cociente del reducto $\la A, \s \ra$ de $\Al$ por una relación de equivalencia $\sim$ que es compatible con la operación $\s$ (no lo es con respecto a $\neg$). 

Demostramos que, en toda álgebra de Tarski $\Al[A]/\!\!\sim \ = \la A/\!\!\sim, \s \ra$ obtenida de la manera descrita, es posible definir operaciones de ínfimo y supremo $\{\sqcap, \sqcup \}$ con respecto al orden natural, de forma que el álgebra  $ \la A/\!\!\sim, \sqcap, \sqcup, \s \ra$ resulta ser un retículo implicativo clásico. 

Podemos por tanto introducir una construcción análoga a la descrita en la sección \ref{sec:sub}, obteniendo el correspondiente resultado que, para toda $I$-álgebra $\Al$, existe una inmersión $h: A \longrightarrow A/\!\!\sim \times \ A/\!\!\sim$ de $\Al$ en el birretículo implicativo $ \la A/\!\!\sim, \sqcap, \sqcup, \s \ra \odot  \la A/\!\!\sim, \sqcap, \sqcup, \s \ra$. 

También en este caso demostramos que la función $h$ que definimos es canónica, en el sentido de que, si existe un homomorfismo $f : A \longrightarrow B$ para algún birretículo implicativo $\Al[B]$, entonces existe un único homomorfismo $f': A/\!\!\sim \times \ A/\!\!\sim \ \longrightarrow B $ tal que $f' \cdot h = f$.

Obtenemos por tanto que las $I$-álgebras coinciden con los $\{  \s, \neg \}$-subreductos de los birretículos implicativos y que la variedad de las $I$-álgebras es generada por su miembro de cuatro elementos que es el reducto del birretículo implicativo $\fours$. También damos una caracterización de las subvariedades de $I$-álgebras como generadas por los $\{  \s, \neg \}$-subreductos de $\fours$ y una presentación ecuacional de dichas subvariedades. 

Observamos que la construcción descrita se puede fácilmente aplicar a todos los otros subreductos que corresponden a fragmentos del lenguaje de los birretículos implicativos  que contienen $\{  \s, \neg \}$, obteniendo análogas caracterizaciones de dichas clases de álgebras.

En la última sección (\ref{sec:cat}) presentamos una formulación de algunos de los resultados obtenidos en la presente memoria en términos categoriales. Definimos por tanto categorías asociadas a las diversas clases de (pre-)birretículos y retículos considerados a lo largo de la presente memoria, cuyos objetos son las álgebras de la variedad correspondiente y cuyos morfismos son los homomorfismos entre álgebras. 

Obtenemos así equivalencias categoriales entre las siguientes  categorías:
\begin{enumerate}[(i)]
  \item  pre-birretículos entrelazados ($\IPBL$) y la categoría producto $\lat \times \lat$ cuyos objetos son pares de retículos,
  \item pre-birretículos distributivos ($\DPBL$) y la categoría producto $\dl \times \dl$ cuyos objetos son pares de retículos distributivos,
  \item birretículos entrelazados  ($\IBL$) y retículos ($\lat$),
  \item birretículos  distributivos  ($\DBL$) y retículos  distributivos ($\dl $),
  \item birretículos entrelazados conmutativos con conflación ($\IBLC$) y retículos con involución ($\IL$),
  \item birretículos distributivos conmutativos con conflación ($\DBLC$) y retículos de De Morgan ($\DML$),
    \item birretículos  de Kleene con conflación  ($\KB$, una subvariedad de $\DBLC$)
    y retículos de Kleene ($\KL $),
        \item birretículos  clásicos con conflación ($\CB$, una subvariedad de $\KB$)
         y retículos de Boole ($\BOL $),
\item birretículos implicativos ($\ib$) y retículos implicativos clásicos ($\cl $).
\end{enumerate}

Acabamos la sección y el capítulo demostrando también que es posible definir funtores $F $ y $G$ entre la categoría correspondiente a las $I$-álgebras y la correspondiente a los birretículos implicativos de manera que $\la F, G \ra$ forma un par adjunto.

\sommario

\selectlanguage{italian}

L'obiettivo del presente lavoro è quello di condurre uno studio dal punto di vista della Logica Algebrica Astratta di alcuni sistemi deduttivi basati su strutture algebriche chiamate ``bireticoli'', che furono introdotti negli anni novanta da Ofer Arieli e Arnon Avron. Le motivazioni dell'interesse per tale studio hanno una duplice radice.

Da un lato, la teoria dei bireticoli costituisce un elegante formalismo che negli ultimi due decenni ha dato origine a diverse interessanti applicazioni, in particolar modo nell'ambito dell'Informatica Teorica e dell'Intelligenza Artificiale. Da questo punto di vista il presente lavoro intende dare un contributo a una migliore comprensione della struttura matematica e logica che sottostà a tali applicazioni. 
 
D'altra canto, il nostro interesse per le logiche basate su bireticoli proviene dalla Logica Algebrica Astratta. In termini estremamente generali, la logica algebrica può essere descritta come lo studio delle relazioni fra algebra e logica. Una delle principali motivazioni di tale studio è la possibilità di applicare metodi algebrici a problemi di natura logica e viceversa: ciò si realizza associando ad ogni sistema deduttivo una classe di modelli algebrici che può essere considerata la controparte algebrica di tale logica. A partire dall'opera di Tarski e dei suoi collaboratori, il metodo di algebrizzazione  delle logiche è stato costantemente sviluppato e generalizzato. Negli ultimi due decenni, i logici algebrici hanno concentrato la loro attenzione sul processo di algebrizzazione in sé stesso. Tale tipo di ricerche forma oggi una branca della logica algebrica conosciuta come Logica Algebrica Astratta.

Uno dei temi centrali della Logica Algebrica Astratta è il tentativo di applicare i metodi della teoria generale dell'algebrizzazione delle logiche a uno spettro sempre più vasto di sistemi deduttivi. A tal proposito, alcune delle logiche basate su bireticoli risultano particolarmente interessanti in quanto esempi naturali delle cosiddette  \emph{logiche non protoalgebriche}, una classe che include i sistemi logici che risultano più difficili da trattare con strumenti di tipo algebrico. 

Fino a pochi anni fa erano state studiate relativamente poche logiche non protoalgebriche. In parte anche a causa di tale mancanza di esempi, i risultati generali che si conoscono su questa classe di logiche non sono tuttora paragonabili in numero e profondità ai risultati ottenuti sui sistemi logici che mostrano un buon comportamento dal punto di vista algebrico, le cosiddette \emph{logiche protoalgebriche}. A tal proposito il presente lavoro intende dunque dare un contributo all'obiettivo a lungo termine di estendere la teoria generale dell'algebrizzazione delle logiche oltre i suoi limiti attuali.

Introdurremo ora informalmente le principali idee che sottostanno al formalismo dei bireticoli e alcune delle applicazioni.

I bireticoli sono strutture algebriche proposte da Matthew Ginsberg
\cite{Gi88} come un formalismo uniforme per la deduzione in Intelligenza Artificiale, in particolare nell'ambito del ragionamento per \emph{default} e del ragionamento non monotono. Negli ultimi due decenni, la teoria dei bireticoli si è dimostrata applicabile a vari ambiti, talvolta assai differenti da quello originale; qui di seguito ci limiteremo a menzionarne  alcuni. 

Osserva Ginsberg \cite{Gi88}  che molti dei sistemi di deduzione usati in Intelligenza Artificiale si possono unificare nella prospettiva di una logica polivalente il cui spazio di valori di verità è un insieme dotato di una doppia struttura reticolare. L'idea che i valori di verità vadano ordinati è assai comune, pressoché standard, in logica polivalente: per esempio, nelle logiche \emph{fuzzy} i valori sono ordinati secondo il loro ``grado di verità''. A tal proposito, l'intuizione originale di Ginsberg è che vi è un altro ordine naturale che possiamo considerare. Tale relazione, che Ginsberg chiama ``ordine della conoscenza'' (\emph{knowledge ordering}), intende rispecchiare il grado di conoscenza o informazione associato a una proposizione: per esempio, nel contesto della deduzione automatica, è possibile etichettare una proposizione come ``sconosciuta'' qualora l'agente epistemico non disponga di alcuna informazione circa la verità o la falsità della proposizione in questione. Quest'idea, osserva ancora Ginsberg, si può ritrovare già nei lavori di Belnap \cite{Be76}, \cite{Be77}, il quale propose un'analoga interpretazione per la logica tetravalente di Belnap-Dunn. Da un punto di vista matematico, l'apporto principale del lavoro di  Ginsberg fu l'introduzione di un formalismo generale che permette di trattare insiemi di valori di verità doppiamente ordinati di dimensione arbitraria. 

Secondo la notazione introdotta da Ginsberg, nel contesto dei bireticoli le due relazioni d'ordine si indicano usualmente con i simboli 
$\leq_t$ ($t$ di ``truth'') e $\leq_k$ ($k$ di ``knowledge''). Osserva Fitting~\cite{Fi06b} che l'ordine $\leq_k$ andrebbe pensato come associato al grado di informazione e, pertanto, occorrerebbe usare piuttosto il simbolo  $\leq_i$. Tale osservazione ci pare corretta: tuttavia l'uso di $\leq_k$, che adottiamo anche nel presente lavoro, è ormai divenuto standard nella letteratura sui bireticoli, così come nei lavori dello stesso Fitting (si veda \cite{Fi06b}: ``but I have always written $\leq_k$, and now I'm stuck with it.'').

Dopo gli iniziali lavori di Ginsberg (\cite{Gi88}, \cite{Gi90a}, \cite{Gi95a}), i bireticoli furono studiati approfonditamente da Fitting, che considera applicazioni alla Programmazione Logica (\cite{Fi90}, \cite{Fi91}; in proposito si vedano anche \cite{Ko01a} y \cite{LoSt04}), a
problemi di natura filosofica come la teoria della verità (\cite{Fi89}, \cite{Fi06b}) e studia inoltre la relazione fra i bireticoli e una famiglia di sistemi polivalenti che generalizzano la logica trivalente  di Kleene (\cite{Fi91b}, \cite{Fi94}). 

Altre interessanti applicazioni includono l'analisi dell'implicazione, l'implicatura e la presupposizione nel linguaggio naturale \cite{Sc96}, la semantica delle domande nel linguaggio naturale  \cite{NeFr02} e la logica epistemica \cite{Si94}.

Negli anni novanta i bireticoli furono altresì studiati approfonditamente da Arieli e Avron, sia dal punto di vista algebrico (\cite{Av95}, \cite{Av96a}) che da quello logico (\cite{ArAv94}, \cite{ArAv98}). Per affrontare i problemi della paraconsistenza e della deduzione non monotona in Intelligenza Artificiale, Arieli e Avron   \cite{ArAv96} hanno sviluppato i primi sistemi logici in senso stretto basati su bireticoli. La più semplice di queste logiche, che chiameremo $\mathcal{LB}$, viene definita semanticamente a partire da una classe di matrici chiamate ``bireticoli logici'' (\emph{logical bilattices}) e consiste in un'espansione della logica di Belnap-Dunn al linguaggio standard dei bireticoli. In \cite{ArAv96} gli autori introducono un sistema di tipo Gentzen come controparte sintattica della logica $\lb$ e dimostrano la completezza e il teorema di eliminazione del taglio (\emph{cut elimination}). Nello stesso lavoro Arieli e Avron considerano anche un'espansione della logica $\lb$, ottenuta introducendo due connettivi di implicazione interdefinibili. Tale logica, che indicheremo con il simbolo $\lbs$, viene pure definita semanticamente mediante il concetto di bireticolo logico (\emph{logical bilattice}). In \cite{ArAv96} gli autori introducono sia un calcolo alla Gentzen che un calcolo alla Hilbert per $\lbs$ e dimostrano i relativi teoremi di completezza e, per il calcolo Gentzen, anche il teorema di eliminazione del taglio.

L'obiettivo principale del presente lavoro è lo studio di questi due sistemi logici dal punto di vista della Logica Algebrica Astratta. Tale studio rivela interessanti aspetti sia algebrici che logici del formalismo dei bireticoli. 

Presentiamo qui di seguito una sintesi dei principali risultati ottenuti in questo  lavoro, organizzati secondo la disposizione in capitoli e sezioni.

Il capitolo \ref{ch:intro} contiene una introduzione generale, insieme con alcuni risultati noti nella letteratura sui bireticoli.

Nella sezione \ref{sec:intro} presentiamo le idee che portarono all'introduzione dei bireticoli, i principali motivi di interesse per lo studio delle logiche basate su bireticoli, menzioniamo alcune applicazioni e diamo un riassunto dei contenuti dei successivi capitoli.

Nella sezione  \ref{sec:aal} presentiamo alcune definizioni e risultati fondamentali di Logica Algebrica Astratta che vengono utilizzati nel corso del presente lavoro.

Introduciamo la nozione di \emph{matrice logica} come modello algebrico di una logica proposizionale e le correlate definizioni di  \emph{congruenza di Leibniz} di una matrice e di  \emph{operatore di Leibniz}. Citiamo alcune delle classi di logiche che fanno parte della classificazione chiamata  \emph{gerarchia di Leibniz}, che si basa su proprietà dell'operatore di Leibniz; in particolare consideriamo le  \emph{logiche protoalgebriche} e le  \emph{logiche algebrizzabili}, due classi di sistemi che rivestono particolare importanza nell'ambito del nostro studio delle logiche basate su bireticoli. Grazie alle precedenti definizioni, possiamo introdurre la nozione di  \emph{modello ridotto} di una logica proposizionale $\mathcal{L}$, che permette a sua volta di definire la classe  $\algstar \mathcal{L}$ dei ridotti algebrici dei modelli ridotti.

Introduciamo di seguito la nozione di \emph{matrice generalizzata} (insieme a quella, equivalente, di \emph{logica astratta}) come modello di una logica proposizionale, un concetto di fondamentale importanza per lo studio delle logiche non protoalgebriche (alle quali appartengono alcune delle logiche basate sui bireticoli).

Definiamo la \emph{relazione di Frege} e la \emph{congruenza di Tarski} associate a una matrice generalizzata, che ci permettono di introdurre il concetto di \emph{modello generalizzato ridotto}. Data una logica proposizionale $\mathcal{L}$, possiamo in tal modo studiare la classe $\alg \mathcal{L}$ dei ridotti algebrici dei modelli generalizzati ridotti di  $\mathcal{L}$. Ricordiamo altresì alcune nozioni della teoria delle matrici generalizzate che usiamo nel nostro studio delle logiche basate su bireticoli, tra cui quella di  \emph{morfismo bilogico} e di  \emph{modello pieno}.

Concludiamo la sezione menzionando la teoria dell'algebrizzabilità dei sistemi Gentzen, che pure permette di ottenere interessanti risultati nello studio delle logiche non protoalgebriche, come ad esempio quello da noi presentato nella sezione \ref{sec:gentz}.

Nella sezione  \ref{sec:bil} introduciamo le definizioni fondamentali e alcuni risultati noti circa i bireticoli. In particolare, diamo la definizione delle classi di algebre chiamate \emph{pre-bireticoli} (\emph{pre-bilattices}) $\PBL$, \emph{pre-bireticoli intrecciati} (\textit{interlaced pre-bilattices}) $\IPBL$ e \emph{pre-bireticoli distributivi} $\DPBL$. 

Un pre-bireticolo è un'algebra $\Al[B] = \la B, \land, \lor, \otimes, \oplus \ra $ tale che i ridotti $\la B, \land, \lor \ra $  y $ \la B,  \otimes, \oplus \ra $ sono entrambi reticoli, le cui relazioni d'ordine indichiamo, rispettivamente, con $\leq_{t} $ e $\leq_{k}$.

Un pre-bireticolo $\Al[B]$ si dice \emph{intrecciato} se ognuna delle quattro operazioni reticolari è monotona rispetto a entrambe le relazioni d'ordine  $\leq_{t} $ y $\leq_{k}$, vale a dire se valgono le seguenti proprietà: per ogni $a, b, c \in B$,
\begin{equation*}
\begin{split}
 a \leq_t b \: & \Rightarrow \: a \otimes c \leq_t b \otimes c  \qquad
 \qquad a \leq_t b \: \Rightarrow \: a \oplus c \leq_t b \oplus c \\
  a \leq_k b \: & \Rightarrow \: a \land c \leq_k b \land c  
  \qquad
\qquad a \leq_k b \: \Rightarrow \: a \lor c \leq_k b \lor c.  
\end{split}
\end{equation*}

Un pre-bireticolo $\Al[B]$ è \emph{distributivo} se soddisfa tutte le dodici possibili identità distributive fra le quattro operazioni reticolari $\{ \land, \lor, \otimes, \oplus \}$, vale a dire se, per ogni $a, b, c \in B$:
\begin{equation*}
\begin{split}
 a \circ ( b \bullet c )  \approx (a \circ b) \bullet (a
 \circ c) \mbox{\quad per  ogni } \ \circ, \bullet \in \{ \land, \lor,
 \otimes, \oplus \} \mbox{ con } \circ \neq \bullet.
\end{split}
\end{equation*}
%
%
%
\begin{center}
\begin{figure}[t]
\vspace{15pt}

\begin{center}
\begin{tabular}{cccc}

\vspace{5pt}

\begin{minipage}{2cm}
\setlength{\unitlength}{1.2cm}
\begin{center}
\begin{picture}(2,2)(0.15,0)
\put(1,0){\makebox(0,0)[l]{ $\bot$}} 
\put(0,1){\makebox(0,0)[r]{$\false$ }} 
\put(2,1){\makebox(0,0)[l]{ $\true$}} 
\put(1,2){\makebox(0,0)[r]{$\top$ }} 

\put(1,0){\circle*{0.2}} 
\put(0,1){\circle*{0.2}} 
\put(2,1){\circle*{0.2}}
\put(1,2){\circle*{0.2}}

\put(1,0){\line(1,1){1}} 
\put(1,0){\line(-1,1){1}} 
\put(0,1){\line(1,1){1}} 
\put(2,1){\line(-1,1){1}}

\end{picture}
\end{center}
\end{minipage}

&

\begin{minipage}{3cm}
\setlength{\unitlength}{0.8cm}
\begin{center}
\begin{picture}(2,2)(0,-0.25)
\put(1,-1){\makebox(0,0)[l]{ $\bot$}} 
\put(1,0){\makebox(0,0)[l]{ $a$}} 
\put(0,1){\makebox(0,0)[r]{$\false$ }} 
\put(2,1){\makebox(0,0)[l]{ $\true$}} 
\put(1,2){\makebox(0,0)[r]{$\top$ }} 

\put(1,-1){\circle*{0.2}} 
\put(1,0){\circle*{0.2}} 
\put(0,1){\circle*{0.2}} 
\put(2,1){\circle*{0.2}}
\put(1,2){\circle*{0.2}}

\put(1,0){\line(1,1){1}} 
\put(1,0){\line(-1,1){1}} 
\put(0,1){\line(1,1){1}} 
\put(2,1){\line(-1,1){1}} 

\put(1,-1){\line(0,1){1}} 
\put(1,-1){\line(-1,2){1}} 
\put(1,-1){\line(1,2){1}} 

\end{picture}
\end{center}
\end{minipage}

&

\begin{minipage}{3cm}
\setlength{\unitlength}{0.7cm}
\begin{center}
\begin{picture}(3,4)(-0.25,0)
\multiput(1,0)(1,1){3}{\circle*{0.3}}
\multiput(0,1)(1,1){3}{\circle*{0.3}}
\multiput(-1,2)(1,1){3}{\circle*{0.3}}

\put(1,0){\line(1,1){2}} 
\put(0,1){\line(1,1){2}} 
\put(-1,2){\line(1,1){2}} 
\put(1,0){\line(-1,1){2}} 
\put(2,1){\line(-1,1){2}} 
\put(3,2){\line(-1,1){2}} 

\put(1,0){\makebox(0,0)[l]{ $\bot$}} 
\put(-1,2){\makebox(0,0)[r]{$\false$ }} 
\put(3,2){\makebox(0,0)[l]{ $\true$}} 
\put(1,4){\makebox(0,0)[r]{$\top$ }} 

\end{picture}
\end{center}
\end{minipage}

&

\begin{minipage}{3cm}
\setlength{\unitlength}{0.9cm}
\begin{center}
\begin{picture}(2,2)(0,-0.25)
\put(1,-1){\makebox(0,0)[l]{ $\bot$}} 
\put(0.5,-0.5){\makebox(0,0)[r]{$a$ }} 
\put(1.5,-0.5){\makebox(0,0)[l]{ $b$}} 
\put(1,0){\makebox(0,0)[l]{ $c$}} 
\put(0,1){\makebox(0,0)[r]{$\false$ }} 
\put(2,1){\makebox(0,0)[l]{ $\true$}} 
\put(1,2){\makebox(0,0)[r]{$\top$ }} 

\put(1,-1){\circle*{0.2}} 
\put(0.5,-0.5){\circle*{0.2}}
\put(1.5,-0.5){\circle*{0.2}}
\put(1,0){\circle*{0.2}} 
\put(0,1){\circle*{0.2}} 
\put(2,1){\circle*{0.2}}
\put(1,2){\circle*{0.2}}

\put(1,0){\line(1,1){1}} 
\put(1,0){\line(-1,1){1}} 
\put(0,1){\line(1,1){1}} 
\put(2,1){\line(-1,1){1}} 

\put(1,-1){\line(-1,1){0.5}} 
\put(1,-1){\line(1,1){0.5}} 

\put(1,0){\line(1,-1){0.5}} 
\put(1,0){\line(-1,-1){0.5}} 

\put(0,1){\line(1,-3){0.5}} 
\put(2,1){\line(-1,-3){0.5}} 

\end{picture}
\end{center}
\end{minipage}
\\ \\

$\four$ & $\five$ & $\nine$ & $\7$
\end{tabular}

\caption{Alcuni esempi di (pre-)bireticoli} \label{fig:hasse1_it}
\end{center}
\end{figure}
\end{center}

Osserviamo che le tre classi di pre-bireticoli considerate sono equazionali e che sussiste la seguente catena di inclusioni proprie:
$$\DBL \varsubsetneq \IPBL \varsubsetneq \PBL.$$ 
Osserviamo altresì che dalle definizioni date segue che vi è una dualità tra i due ordini di ogni pre-bireticolo, analoga alla dualità fra infimo e supremo nei reticoli: per semplificare le dimostrazioni usiamo frequentemente questo fatto, che chiamiamo Principio di Dualità. 

Presentiamo alcune proprietà fondamentali dei pre-bireticoli limitati e l'interessante risultato che ogni pre-bireticolo intrecciato limitato si può ottenere a partire da un reticolo limitato che possieda due elementi che soddisfano certe proprietà.

Introduciamo di seguito la definizione di bireticolo, che è un'algebra $\Al[B] = \la B, \land, \lor, \otimes, \oplus, \neg \ra $ tale che il ridotto $ \la B, \land, \lor, \otimes, \oplus \ra $ è un pre-bireticolo e l'operazione unaria $\neg: B \longrightarrow B$, chiamata \emph{negazione}, risulta involutiva, monotona rispetto a $\leq_{k}$ e antimonotona rispetto a $\leq_{t}$, ovvero soddisfa le seguenti condizioni: per ogni $a, b \in B$,
 \begin{enumerate}[]
 \item \quad \textbf{(neg1)} \qquad se $a \leq_{t} b$,  allora  $\neg b \leq_{t} \neg a$
 \item \quad \textbf{(neg2)} \qquad  se $a \leq_{k} b$,  allora  $\neg a \leq_{k} \neg b$ 
 \item \quad \textbf{(neg3)} \qquad  $ a = \neg \neg a  $.
\end{enumerate}

Diamo alcuni esempi di pre-bireticoli e reticoli notevoli che si possono rappresentare mediante doppi diagrammi di Hasse (Figura \ref{fig:hasse1_it}), in particolare $\four$, il più piccolo bireticolo non degenere, che ha un ruolo fondamentale a livello logico.

Presentiamo di seguito una costruzione, introdotta da Ginsberg e poi studiata ampiamente da Fitting e Avron, che permette di ottenere un pre-bireticolo intrecciato (che indichiamo con  $\Al[L_{1}] \odot \Al[L_{2}]$)  come un prodotto, analogo a un prodotto diretto, di due reticoli qualsiasi  $\Al[L_{1}]$ e $\Al[L_{2}]$; una analoga costruzione permette di ottenere un bireticolo intrecciato $\Al[L] \odot \Al[L]$ a partire da due copie isomorfe di un qualsiasi reticolo $\Al[L]$. 

Concludiamo la sezione con un fondamentale teorema di rappresentazione, dimostrato per primo da Fitting e in seguito generalizzato da Avron, secondo il quale ogni pre-bireticolo intrecciato e limitato $\Al[B]$ è isomorfo a un prodotto $\Al[L_{1}] \odot \Al[L_{2}]$ di due reticoli limitati $\Al[L_{1}]$ e $\Al[L_{2}]$ ottenuto mediante la costruzione che abbiamo menzionato, e analogamente ogni bireticolo intrecciato e limitato si può ottenere come un prodotto  $\Al[L] \odot \Al[L]$ a partire da due copie isomorfe di un reticolo limitato  $\Al[L]$. Un corollario di tale risultato è una caratterizzazione del reticolo delle congruenze di ogni pre-bireticolo limitato nei termini dei reticoli delle congruenze dei due reticoli fattori associati ad esso mediante la costruzione descritta.

Il capitolo \ref{ch:int} contiene alcuni nuovi risultati algebrici sui pre-bireticoli intrecciati che vengono usati nei capitoli seguenti per condurre il nostro studio delle logiche basate sui bireticoli.

Il risultato principale della sezione  \ref{sec:reppre} è una generalizzazione del teorema di rappresentazione dei pre-bireticoli intrecciati e limitati al caso di pre-bireticoli intrecciati arbitrari. 

La dimostrazione che presentiamo differisce essenzialmente da quelle conosciute nella letteratura, che si basano sulla presenza delle costanti (gli elementi massimo e minimo relativi a ciascuno dei due ordini del pre-bireticolo).

Dato un bireticolo intrecciato $\Al[B]$, definiamo due quasiordini $\leq_t \circ \leq_k$ e $\geq_t \circ \leq_k$ determinati dalla composizione dei due ordini reticolari e consideriamo le relazioni di equivalenza  $\sim_{1}$ e $\sim_{2}$ associate a tali quasiordini. Dimostriamo che  $\sim_{1}$ e $\sim_{2}$ sono congruenze-fattore di $\Al[B]$ e che pertanto  $\Al[B]$ risulta isomorfo al prodotto diretto $\Al[B]/\!\!\sim_1 \times \: \Al[B]/\!\!\sim_2$. Osserviamo che, nel caso dei pre-bireticoli, la costruzione del prodotto $\Al[L_{1}] \odot \Al[L_{2}]$ può essere vista come un caso particolare di prodotto diretto, e che pertanto il precedente risultato implica che ogni pre-bireticolo intrecciato $\Al[B]$ è isomorfo a un prodotto $\Al[L_{1}] \odot \Al[L_{2}]$ di due reticoli $\Al[L_{1}]$ e $\Al[L_{2}]$ (che possiamo ottenere come quozienti di uno qualsiasi dei due ridotti reticolari di $\Al[B]$). 

Otteniamo, come corollario, che il reticolo delle congruenze  $\Con(\Al[L_{1}] \odot \Al[L_{2}])$ risulta isomorfo al prodotto diretto $\Con(\Al[L_{1}]) \times \Con(\Al[L_{2}])$ e che, in ogni pre-bireticolo intrecciato $\Al[B]$, le congruenze di $\Al[B]$ coincidono con le congruenze di ciascuno dei suoi due ridotti reticolari, ovvero si hanno le seguenti uguaglianze
$$\Con(\Al[B]) = \Con (\la B, \land, \lor \ra) = \Con (\la B, \otimes, \oplus \ra). $$ 

Un altro interessante corollario è un teorema di rappresentazione analogo al noto teorema di rappresentazione per i reticoli distributivi: ogni pre-bireticolo distributivo si può rappresentare come un pre-bireticolo di insiemi.

Nella sezione  \ref{sec:repbil} dimostriamo il teorema di rappresentazione per i bireticoli intrecciati, che si ottiene facilmente a partire dal teorema di rappresentazione per i pre-bireticoli. In questo caso, dato un qualsiasi bireticolo intrecciato   $\Al[B]$,   è sufficiente considerare la relazione  $\sim_{1}$. Tale relazione non risulta essere una congruenza di $\Al[B]$ (in quanto non è compatibile con la negazione), ma ci permette di ottenere come quoziente di uno qualsiasi dei ridotti reticolari di $\Al[B]$ un reticolo $\Al[L]$ tale che $\Al[B]$ risulta isomorfo a $\Al[L] \odot \Al[L]$. 

Otteniamo come corollario una caratterizzazione delle congruenze di un qualsiasi  bireticolo intrecciato $\Al[B] \cong \Al[L] \odot \Al[L]$: si ha che 
$\Con (\Al[B])$ è isomorfo a $\Con (\Al[L]).$ Dimostriamo inoltre che  valgono le seguenti uguaglianze: $$\Con(\Al[B]) = \Con (\la B, \land, \neg \ra) =  \Con (\la B, \lor, \neg \ra) =  \Con (\la B, \otimes, \oplus, \neg \ra). $$ 

Concludiamo la sezione presentando una dimostrazione alternativa del teorema di rappresentazione, che si basa sulla considerazione degli elementi di un bireticolo che sono punti fissi dell'operatore di negazione, che denominiamo elementi \emph{regolari}. Dimostriamo che ogni bireticolo intrecciato  $\Al[B]$ è isomorfo al prodotto $\la \Reg(\Al[B]), \otimes, \oplus \ra \odot \la \Reg(\Al[B]), \otimes, \oplus \ra$, dove $\la \Reg(\Al[B]), \otimes, \oplus \ra $ è il subreticolo del ridotto $\la B, \otimes, \oplus \ra$ il cui universo è dato dall'insieme $\Reg(\Al[B])$ degli elementi regolari di  $\Al[B]$. 

Nella sezione  \ref{sec:bif} studiamo i filtri di reticolo nei (pre-)bireticoli intrecciati. Dato che in ogni pre-bireticolo vi sono due relazioni d'ordine reticolare, risulta naturale considerare quattro tipi di sottoinsiemi di $B$, vale a dire: i sottoinsiemi che sono filtri di reticolo in entrambi gli ordini, quelli che sono ideali in ambo gli ordini, quelli che sono $\leq_{t}$-filtri e $\leq_{k}$-ideali e infine gli  $\leq_{t}$-ideali e $\leq_{k}$-filtri.

È tuttavia facile rendersi conto che, per il Principio di Dualità, è sufficiente studiare uno qualsiasi di tali tipi di sottoinsiemi: ci concentriamo pertanto sullo studio di quelli che sono filtri in entrambi gli ordini, già considerati da  Arieli e Avron \cite{ArAv96}, che li chiamano \emph{bifiltri}.
 
Definiamo l'operatore di chiusura associato alla generazione dei bifiltri e, dato un pre-bireticolo intrecciato  $\Al[B]$, diamo una caratterizzazione del bifiltro generato da un qualsiasi insieme $X \subseteq B$ analoga alla nota caratterizzazione del filtro generato da un sottoinsieme qualsiasi di un reticolo.

Osserviamo che le relazioni $\sim_{1}$ e $\sim_{2}$ introdotte nella sezione \ref{sec:reppre} si possono caratterizzare nel seguente modo. Dato un pre-bireticolo intrecciato $\Al[B]$ ed elementi $a, b \in B$, si ha che  $a \sim_{1} b$ se e solo se il bifiltro generato da  $a$ coincide con quello generato da $b$ (analogamente si può caratterizzare la relazione  $\sim_{2}$ mediante gli operatori di generazione di filtri-ideali o di ideali-filtri).

Concludiamo la sezione con un risultato particolarmente importante dal punto di vista delle logiche associate ai bireticoli: il reticolo dei bifiltri di ogni pre-bireticolo intrecciato $\Al[L_{1}] \odot \Al[L_{2}]$ risulta essere isomorfo al reticolo dei filtri del primo fattore $\Al[L_{1}] $. 

Nella sezione \ref{sec:dis} concentriamo la nostra attenzione sulle varietà $\DPBL$ e $\DBL$ dei pre-bireticoli e bireticoli distributivi. Grazie ai teoremi di rappresentazione e alla caratterizzazione delle congruenze dei bireticoli intrecciati ottenuti nelle sezioni precedenti, possiamo caratterizzare la varietà  $\DPBL$ come generata dai suoi due membri di due elementi e la varietà $\DBL$ 
come generata dal suo minimo membro non degenere (il bireticolo di quattro elementi $\four$). 

Studiamo di seguito la struttura dei bifiltri nei pre-bireticoli distributivi. Otteniamo in tal modo alcuni risultati analoghi a noti teoremi riguardanti i reticoli distributivi. In particolare, dimostriamo un teorema di estensione del bifiltro e un teorema del bifiltro primo  (si dice che un bifiltro è primo quando risulta essere un filtro primo rispetto a entrambi gli ordini del pre-bireticolo). Tali risultati ci permettono di dare una dimostrazione diretta (e alternativa a quella presentata nella sezione \ref{sec:reppre}) del teorema di rappresentazione dei (pre-)bireticoli distributivi come  (pre-)bireticoli de insiemi.

Concludiamo il capitolo   (sezione \ref{sec:confl}) considerando una espansione del linguaggio dei bireticoli ottenuta mediante l'aggiunta di una operazione unaria duale della negazione, vale a dire involutiva, monotona rispetto a $\leq_{t}$ e antimonotona rispetto a $\leq_{k}$. Tale operatore fu introdotto da  Fitting \cite{Fi94}, che lo denomina  ``conflazione'' (\emph{conflation}). Chiameremo pertanto  \emph{bireticolo con conflazione} un'algebra $\Al[B] = \la B, \land, \lor, \otimes, \oplus, \neg, \co \ra $ tale che il ridotto $ \la B, \land, \lor, \otimes, \oplus, \neg \ra $ è un bireticolo e l'operazione  $\co: B \longrightarrow B$ verifica, per ogni $a, b \in B$, le seguenti condizioni:
\begin{enumerate}[]
 \item \quad \textbf{(con1)} \qquad se $a \leq_{k} b$,  allora  $- b \leq_{k} - a$
 \item \quad \textbf{(con2)} \qquad  se $a \leq_{t} b$,  allora  $- a \leq_{t} - b$ 
 \item \quad \textbf{(con3)} \qquad  $ a = - - a  $.
\end{enumerate}

Diremo che un bireticolo con conflazione è \emph{commutativo} se negazione e conflazione commutano,  ovvero se, per ogni $a \in B$, si ha
$$\neg \co  a = \co  \neg \: a.$$
Osserviamo che evidentemente i bireticoli con conflazione formano una classe equazionale (così come i bireticoli intrecciati con conflazione, ecc.).

Presentiamo un teorema di rappresentazione, analogo al teorema di rappresentazione per i bireticoli, relativo ai bireticoli intrecciati commutativi con conflazione. In questo caso mostriamo che ogni algebra  $\Al[B]$ appartenente alla suddetta varietà risulta isomorfa a un prodotto  $\Al[L] \odot \Al[L]$ di due copie di un'algebra $\Al[L] = \la L, \sqcap, \sqcup, ' \ra$, dove $\la L, \sqcap, \sqcup \ra$ è un reticolo  e $': L \longrightarrow L$  è un'operazione unaria involutiva e antimonotona rispetto all'ordine reticolare (che chiamiamo \emph{involuzione}).

Mostriamo che, analogamente al caso dei bireticoli, vi è un isomorfismo fra le congruenze di ogni bireticolo intrecciato commutativo con conflazione   $\Al[L] \odot \Al[L]$ e le congruenze di $\Al[L]$. Tale risultato ci permette di ottenere una caratterizzazione delle sottovarietà della varietà dei bireticoli distributivi commutativi con conflazione a partire dalle corrispondenti varietà di reticoli distributivi con involuzione (detti \emph{reticoli di De Morgan}).

Nel capitolo \ref{ch:log} studiamo, dal punto di vista della Logica Algebrica Astratta, la logica senza implicazione $\lb$, introdotta da  Arieli e Avron \cite{ArAv96} attraverso una classe di matrici dette bireticoli logici, che consistono in una coppia $\la \Al[B], F \ra$ dove $\Al[B]$ è un bireticolo e  $F \subseteq B$ un bifiltro primo.  

Nella sezione  \ref{sec:sem} introduciamo semanticamente $\lb$ come la logica definita dalla matrice $\la \four, \Tr \ra$, dove $\Tr = \{ \top, \true \}$. Presentiamo di seguito alcuni risultati importanti ottenuti da Arieli e Avron: tra questi il fatto che la logica definita da qualsiasi bireticolo logico   $\la \Al[B], F \ra$ coincide con quella definita dalla matrice $\la \four, \Tr \ra$ (e quindi con $\lb$) e l'introduzione di un calcolo alla Gentzen che risulta completo rispetto a $\lb$ (Tabella \ref{tab:Gentz_it}). 

Nella sezione \ref{sec:hil} introduciamo una presentazione di  $\lb$ mediante un calcolo alla Hilbert (Tabella \ref{tab:hilbert_it}), che utilizziamo nelle seguenti sezioni per studiare $\lb$ dal punto di vista della Logica Algebrica Astratta.

Dimostriamo che ogni formula si può ridurre a una forma normale e, grazie a tale risultato, otteniamo per il nostro calcolo un teorema di completezza rispetto alla semantica di $\lb$ introdotta nella sezione precedente.

Nella sezione  \ref{sec:tar} caratterizziamo la logica $\lb$ mediante alcune proprietà metalogiche (talvolta dette \emph{alla Tarski}); mostriamo inoltre che $\lb$ non ammette estensioni consistenti.

\begin{table}[ht!]
\fbox{\parbox{\textwidth}{
\vspace{0,15cm}
\begin{enumerate}[ ]
  \item \quad \quad \quad \textbf{Assioma:} \qquad $(Ax) \quad \Gamma, \varphi \rhd \varphi,
    \Delta$.
    \item
  \item \quad \quad \quad \textbf{Regole:} \qquad \; Regola del Taglio più le seguenti regole logiche:
\end{enumerate}
\begin{center} {
  \begin{align*}
&(\land \rhd) \quad \frac{\Gamma, \varphi, \psi \rhd \Delta}{\Gamma, \varphi \land \psi \rhd \Delta} 
& & (\rhd \land) \quad \frac{\Gamma \rhd \Delta, \varphi \quad \Gamma \rhd \Delta, \psi }{\Gamma \rhd \Delta, \varphi \land \psi} \\ \\
& (\neg \land \rhd) \quad \frac{\Gamma, \neg \varphi \rhd \Delta \quad \Gamma, \neg \psi \rhd \Delta }{\Gamma, \neg (\varphi \land \psi) \rhd \Delta}  
& &  (\rhd \neg \land ) \quad \frac{\Gamma \rhd \Delta, \neg \varphi,\neg \psi}{\Gamma \rhd \Delta, \neg (\varphi \land \psi)} \\ \\
& (\lor \rhd) \quad \frac{\Gamma, \varphi \rhd \Delta \quad \Gamma, \psi \rhd \Delta }{\Gamma, \varphi \lor \psi \rhd \Delta}  & &  (\rhd \lor ) \quad \frac{\Gamma \rhd \Delta, \varphi, \psi}{\Gamma \rhd \Delta, \varphi \lor \psi} \\ \\
&(\neg \lor \rhd) \quad \frac{\Gamma, \neg \varphi, \neg \psi \rhd \Delta}{\Gamma, \neg (\varphi \lor \psi) \rhd \Delta} & & (\rhd \neg \lor) \quad \frac{\Gamma \rhd \Delta, \neg \varphi \quad \Gamma \rhd \Delta, \neg \psi }{\Gamma \rhd \Delta, \neg (\varphi \lor \psi)} \\ \\
&(\otimes \rhd) \quad \frac{\Gamma, \varphi, \psi \rhd \Delta}{\Gamma, \varphi \otimes \psi \rhd \Delta} 
& & (\rhd \otimes) \quad \frac{\Gamma \rhd \Delta, \varphi \quad \Gamma \rhd \Delta, \psi }{\Gamma \rhd \Delta, \varphi \otimes \psi} \\ \\
& (\neg \otimes \rhd) \quad \frac{\Gamma, \neg \varphi, \neg \psi \rhd \Delta}{\Gamma, \neg (\varphi \otimes \psi) \rhd \Delta}  
& & (\rhd \neg \otimes) \quad \frac{\Gamma \rhd \Delta, \neg \varphi \quad \Gamma \rhd \Delta, \neg \psi }{\Gamma \rhd \Delta, \neg (\varphi \otimes \psi)}  \\ \\
& (\oplus \rhd) \quad \frac{\Gamma, \varphi \rhd \Delta \quad \Gamma, \psi \rhd \Delta }{\Gamma, \varphi \oplus \psi \rhd \Delta}  & &  (\rhd \oplus ) \quad \frac{\Gamma \rhd \Delta, \varphi, \psi}{\Gamma \rhd \Delta, \varphi \oplus \psi} \\ \\
&(\neg \oplus \rhd) \quad \frac{\Gamma, \neg \varphi \rhd \Delta \quad
\Gamma, \neg \psi \rhd \Delta }{\Gamma, \neg (\varphi \oplus \psi) \rhd
\Delta} & & (\rhd \neg \oplus ) \quad \frac{\Gamma \rhd \Delta, \neg
\varphi,\neg \psi}{\Gamma \rhd \Delta, \neg (\varphi \oplus \psi)} \\ \\
& (\neg \neg \rhd) \quad \frac{\Gamma, \varphi \rhd \Delta }{\Gamma, \neg \neg \varphi \rhd \Delta}  & &  (\rhd \neg \neg) \quad \frac{\Gamma \rhd \Delta, \varphi}{\Gamma \rhd \Delta, \neg \neg \varphi}
\end{align*}
}
\end{center}
}}

\caption{Un calcolo di sequenti completo per la logica 
$\mathcal{LB}$} \label{tab:Gentz_it}
\end{table}

\begin{table}[ht!]
\fbox{\parbox{\textwidth}{
\begin{center} {
  \hspace{-0.7cm}
  \begin{tabular}{ccc}
    \AxiomC{$p \land q$}
    \LeftLabel{\,(R1)}
    \UnaryInfC{$p$}
    \DisplayProof &
    \AxiomC{$p \land q$}
    \LeftLabel{\,(R2)}
    \UnaryInfC{$q$}
    \DisplayProof &
    \AxiomC{$p \!$}
    \AxiomC{$\! q$}
    \LeftLabel{\,(R3)}
    \BinaryInfC{$p \land q$}
    \DisplayProof \\ \\ 

    \AxiomC{$p$}
    \LeftLabel{\,(R4)}
    \UnaryInfC{$p \lor q$}
    \DisplayProof &
    \AxiomC{$p \lor q$}
    \LeftLabel{\,(R5)}
    \UnaryInfC{$q \lor p$}
    \DisplayProof &
    \AxiomC{$p \lor p$}
    \LeftLabel{\,(R6)}
    \UnaryInfC{$p$}
    \DisplayProof \\ \\ 
 
    \AxiomC{$p \lor (q \lor r)$}
    \LeftLabel{\,(R7)}
    \UnaryInfC{$(p \lor q) \lor r$}
    \DisplayProof &
    \AxiomC{$p \lor (q \land r)$}
    \LeftLabel{\,(R8)}
    \UnaryInfC{$(p \lor q) \land (p \lor r)$}
    \DisplayProof &
    \AxiomC{$(p \lor q) \land (p \lor r)$}
    \LeftLabel{\,(R9)}
    \UnaryInfC{$p \lor (q \land r)$}
    \DisplayProof \\ \\ 
 
    \AxiomC{$p \lor r$}
    \LeftLabel{\,(R10)}
    \UnaryInfC{$\neg \neg p \lor r$}
    \DisplayProof &
    \AxiomC{$\neg \neg p \lor r$}
    \LeftLabel{\,(R11)}
    \UnaryInfC{$p \lor r$}
    \DisplayProof &
    \AxiomC{$\neg (p \lor q) \lor r$}
    \LeftLabel{\,(R12)}
    \UnaryInfC{$ (\neg p \land \neg q) \lor r$}
    \DisplayProof \\ \\ 
 
    \AxiomC{$(\neg p \land \neg q) \lor r$}
    \LeftLabel{\,(R13)}
    \UnaryInfC{$\neg (p \lor q) \lor r$}
    \DisplayProof &
    \AxiomC{$\neg (p \land q) \lor r$}
    \LeftLabel{\,(R14)}
    \UnaryInfC{$(\neg p \lor \neg q) \lor r$}
    \DisplayProof &
    \AxiomC{$(\neg p \lor \neg q) \lor r$}
    \LeftLabel{\,(R15)}
    \UnaryInfC{$\neg (p \land q) \lor r$}
    \DisplayProof \\ \\ \\ 
 
    \AxiomC{$( p \otimes q) \lor r$}
    \LeftLabel{\,(R16)}
    \UnaryInfC{$(p \land q) \lor r$}
    \DisplayProof &
    \AxiomC{$(p \land q) \lor r$}
    \LeftLabel{\,(R17)}
    \UnaryInfC{$(p \otimes q) \lor r$}
    \DisplayProof &
    \AxiomC{$(p \oplus q) \lor r$}
    \LeftLabel{\,(R18)}
    \UnaryInfC{$(p \lor q) \lor r$}
    \DisplayProof \\ \\ 
 
    \AxiomC{$( p \lor q) \lor r$}
    \LeftLabel{\,(R19)}
    \UnaryInfC{$(p \oplus q) \lor r$}
    \DisplayProof &
    \AxiomC{$(\neg p \otimes \neg q) \lor r$}
    \LeftLabel{\,(R20)}
    \UnaryInfC{$\neg (p \otimes q) \lor r$}
    \DisplayProof &
    \AxiomC{$\neg (p \otimes q) \lor r$}
    \LeftLabel{\,(R21)}
    \UnaryInfC{$(\neg p \otimes \neg q) \lor r$}
    \DisplayProof \\ \\ 
 
    \AxiomC{$(\neg p \oplus \neg q) \lor r$}
    \LeftLabel{\,(R22)}
    \UnaryInfC{$\neg (p \oplus q) \lor r$}
    \DisplayProof &
    \AxiomC{$\neg (p \oplus q) \lor r$}
    \LeftLabel{\,(R23)}
    \UnaryInfC{$(\neg p \oplus \neg q) \lor r$}
    \DisplayProof & \\
  \end{tabular} } 
\end{center} 
}} \vspace{0,4cm}

\caption{Un calcolo alla Hilbert completo rispetto alla logica  $\LB$}
\label{tab:hilbert_it}
\end{table}

Cominciamo quindi (sezione \ref{sec:lb}) il vero e proprio studio di  $\lb$ dal punto di vista della Logica Algebrica Astratta. In primo luogo classifichiamo tale logica come non protoalgebrica e non autoestensionale. Caratterizziamo quindi la congruenza di Tarski associata a $\lb$ e, grazie a tale risultato, dimostriamo che la classe $\alg \lb$ dei ridotti algebrici dei modelli generalizzati ridotti di $\lb$ coincide con la varietà generata dal bireticolo $\four$ (vale a dire la varietà $\DBL$ dei bireticoli distributivi).

Osserviamo che, al contrario ad esempio di quanto accade con i reticoli distributivi o i reticoli di De Morgan, alla classe dei bireticoli distributivi è possibile associare una logica algebrizzabile  $\mathcal{L}$ (pertanto diversa da $\lb$) tale che $\algstar \mathcal{L} = \DBL$.

Caratterizziamo quindi i modelli pieni di  $\lb$ in funzione delle proprietà studiate nella sezione \ref{sec:tar}. Grazie a tale risultato siamo inoltre in grado di dimostrare che il calcolo alla Gentzen mostrato nella Tabella \ref{tab:Gentz_it} è \emph{pienamente adeguato} alla logica $\lb$. 

Consideriamo di seguito i modelli ridotti di  $\lb$ e la classe dei relativi ridotti algebrici $\algstar \lb$. Dimostriamo che tale classe non è una quasivarietà e ne caratterizziamo i membri come bireticoli distributivi superiormente limitati rispetto all'ordinamento  $\leq_{k}$ che soddisfano una certa proprietà addizionale. In particolare, mostriamo che  $\algstar \lb$ è costituita dai bireticoli distributivi $\Al[B]$ tali che $\Al[B] \cong \Al[L] \odot \Al[L]$, dove $\Al[L]$ è un  ``reticolo disgiuntivo duale''(\emph{dual disjunctive lattice}), ovvero un reticolo distributivo che soddisfa una proprietà che risulta essere duale rispetto alla cosiddetta \emph{proprietà disgiuntiva} considerata ad esempio in  \cite{Wa38} e\cite{Ci91}. 

Concludiamo il capitolo (sezione \ref{sec:gentz}) dimostrando che il calcolo alla Gentzen introdotto da Arieli e Avron è algebrizzabile nel senso di Rebagliato e Verd\'{u} \cite{ReV93b}, e che la sua semantica algebrica equivalente è precisamente la varietà dei bireticoli distributivi.

Nel capitolo \ref{ch:add} ci occupiamo di una espansione della logica $\lb$ introdotta degli stessi Arieli e Avron \cite{ArAv96}, che denominiamo $\lbs$, ottenuta aggiungendo al linguaggio proposizionale $\{ \land, \lor, \otimes, \oplus, \neg \}$ due connettivi di implicazione interdefinibili, una  \emph{implicazione debole} $\s$ e una \emph{implicazione forte} $\ta$. Adottiamo la prima come primitiva e definiamo $$p \ta q : = (p \s q) \land (\neg q \s \neg p).$$ Ci serviamo  inoltre delle seguenti abbreviazioni:
\begin{align*}
  p \leftrightarrow q \   & : = \ (p \ta q) \land (q \ta p)  \\
   p \equiv q   \ & : = \ (p \s q) \land (q \s p).
\end{align*}

Nella sezione  \ref{sec:add} definiamo semanticamente la logica $\lbs$ e presentiamo il calcolo alla Hilbert $H_{\s}$ introdotto da Arieli e Avron (Tabella \ref{tab:hilbs_it}). Citiamo alcuni dei principali risultati ottenuti in  \cite{ArAv96}, in particolare il teorema di completezza del calcolo $H_{\s}$ rispetto alla semantica di $\lbs$.

Nella sezione \ref{sec:lbs} dimostriamo diverse proprietà sintattiche del calcolo $H_{\s}$, che ci permettono di giungere  al seguente risultato: il calcolo   $H_{\s}$ è algebrizzabile, con formula di equivalenza $\phi \leftrightarrow \psi$ ed equazione definitoria  $\phi \approx \phi \s \phi$. Grazie  al teorema di completezza, possiamo dunque concludere che la stessa logica $\lbs$ è algebrizzabile.

\begin{table}[ht!]

\begin{align*}
& \mathbf{Assiomi\!:} \\ \\
&(\supset 1)  & &p \supset (q \supset p) \\
&(\supset 2)  & &  (p \supset (q \supset r)) \supset ((p\supset q) \supset (p\supset r)) \\
&(\supset 3)  & &  ((p \supset q) \supset p) \supset p \\
&(\land \supset )  & &  (p \land q) \supset p \qquad \qquad (p \land q) \supset q \\
&(\supset \land )  & &  p \supset (q \supset (p \land q)) \\
&(\otimes \supset )  & &  (p \otimes q) \supset p \qquad \qquad (p \otimes q) \supset q \\
&(\supset \otimes )  & &  p \supset (q \supset (p \otimes q)) \\
&(\supset \lor)  & &  p  \supset (p \lor q) \qquad \qquad q \supset (p \lor q) \\
&(\lor \supset )  & &  (p \supset r) \supset ((q \supset r) \supset ((p \lor q) \supset r)) \\
&(\supset \oplus)  & &  p  \supset (p \oplus q) \qquad  \qquad q \supset (p \oplus q) \\
&(\oplus \supset )  & &  (p \supset r) \supset ((q \supset r) \supset ((p \oplus q) \supset r)) \\
&(\neg \land )  & &   \neg (p \land q ) \equiv (\neg p \vee \neg q)  \\
&(\neg \lor )  & &  \neg (p \vee q) \equiv (\neg p \land \neg q ) \\
&(\neg \otimes )  & &   \neg (p \otimes q ) \equiv (\neg p \otimes \neg q)  \\
&(\neg \oplus )  & &   \neg (p \oplus q ) \equiv (\neg p \oplus \neg q)  \\
&(\neg \supset )  & &   \neg (p \supset q ) \equiv (p \land \neg q)  \\
&(\neg \neg) & & p \equiv \neg \neg p \\ \\
& \mathbf{Regola\!:}
\end{align*}
$$
\frac{p \quad p \supset q }{q}
$$

\caption{Un calcolo alla Hilbert completo rispetto alla logica $\lbs$}
\label{tab:hilbs_it}
\end{table}

Di seguito (sezione \ref{sec:algstar}) ci occupiamo pertanto di individuare e studiare la semantica algebrica equivalente di   $\lbs$. A tal fine introduciamo mediante una presentazione equazionale la varietà $\ib$, i cui membri denominiamo ``bireticoli implicativi'' (\emph{implicative bilattices}), ovvero strutture $\Al[B] = \la B, \land, \lor, \otimes, \oplus, \s \neg \ra$ tali che $\la B, \land, \lor, \otimes, \oplus, \neg \ra$ è un bireticolo e l'operazione binaria $\s: B \times B \longrightarrow B$ è tale che le seguenti equazioni sono verificate:
\begin{enumerate}[ ]
\item (IB1) \hspace{2mm} $(x \supset x) \supset y \approx y$ 
\item (IB2)  \hspace{2mm}  $x \supset (y \supset z) \approx (x \land y) \supset z \approx (x \otimes y) \supset z$
\item(IB3)   \hspace{2mm}  $((x \supset y) \supset x) \supset x  \approx x \s x $ 
\item(IB4)  \hspace{2mm}  $(x \lor y) \supset z \approx (x \supset z) \land  (y \supset z) \approx (x \oplus y) \supset z  $ 
\item (IB5)  \hspace{2mm}  $x \land ((x \supset y) \supset (x \otimes y)) \approx x $
\item (IB6)  \hspace{2mm}  $\neg (x \supset y ) \supset z  \approx (x \land \neg y)  \supset z.$
\end{enumerate}

Dimostriamo quindi diverse proprietà aritmetiche della varietà $\ib$, che ci permettono concludere che tale varietà costituisce la semantica algebrica equivalente della logica $\lbs$. Mostriamo inoltre che il ridotto di ogni bireticolo implicativo è un bireticolo distributivo, un fatto che utilizziamo nel capitolo seguente, e che la logica $\lbs$, così come il suo frammento $\lb$, non ammette estensioni consistenti.

Nel capitolo \ref{ch:imp} presentiamo uno studio algebrico dei bireticoli implicativi e di alcune strutture algebriche relazionate con essi. 

Cominciamo il capitolo (sezione \ref{sec:repimp}) dimostrando un teorema di rappresentazione per i bireticoli implicativi analogo al teorema di rappresentazione per i bireticoli. Per i risultati ottenuti in precedenza sappiamo che, per ogni bireticolo implicativo $\Al[B] = \la B, \land, \lor, \otimes, \oplus, \s \neg \ra$, la struttura $\la B, \land, \lor, \otimes, \oplus, \neg \ra$ risulta isomorfa al prodotto $\Al[L] \odot \Al[L]$, dove $\Al[L]$ è un reticolo distributivo superiormente limitato. Nel caso dei bireticoli implicativi, mostriamo inoltre che $\Al[L]$ soddisfa una proprietà supplementare, ovvero è un reticolo \emph{relativamente complementato} (ogni elemento possiede un complemento in ogni intervallo di $\Al[L]$). 

 Dato un reticolo relativamente complementato e superiormente limitato  $\Al[L] = \la L, \sqcap, \sqcup \ra $, il cui elemento massimo indichiamo con 1, consideriamo l'operazione  $\ba : L \times L \longrightarrow L$ che ad ogni coppia di elementi $a, b \in L$ associa il complemento relativo di $a$ nell'intervallo  $[a \sqcap b, 1]$, che indichiamo con $a \ba b$. Osserviamo che tale classe di reticoli, considerati come algebre nel linguaggio $\{ \sqcap, \sqcup, \ba \}$, forma una varietà. Adottando la terminologia usata in  \cite{Cu77}, denominiamo i membri di tale varietà ``reticoli implicativi classici'' (\emph{classical implicative lattices}).

Dimostriamo dunque che a partire da un qualsiasi reticolo implicativo classico $\Al[L]$ è possibile definire un bireticolo implicativo mediante una costruzione che, per quanto riguarda il ridotto reticolare, coincide con il prodotto $\Al[L] \odot \Al[L]$ e inoltre, usando l'operazione $\ba$, ci permette di definire un'implicazione  $\s$ che soddisfa le equazioni che definiscono la varietà dei bireticoli implicativi. 

Abbiamo, pertanto, che ogni bireticolo implicativo $\Al[B]$ è isomorfo a un prodotto di tale tipo (che possiamo pure  indicare, ove non vi sia pericolo di fraintendimento, con $\Al[L] \odot \Al[L]$) di due copie di un reticolo implicativo classico $\Al[L]$. 

Ci occupiamo di seguito delle congruenze dei bireticoli implicativi. Usando il teorema di rappresentazione per i bireticoli implicativi, dimostriamo che il reticolo delle congruenze di un bireticolo implicativo  $\Al[L] \odot \Al[L]$ è isomorfo a quello delle congruenze del reticolo implicativo classico  $\Al[L]$. Dato che le congruenze di un reticolo implicativo classico   $\Al[L] = \la L, \sqcap, \sqcup, \ba \ra $ coincidono con quelle del suo ridotto reticolare $ \la L, \sqcap, \sqcup \ra $, giungiamo al seguente risultato: le congruenze di ogni bireticolo implicativo $\Al[B] = \la B, \land, \lor, \otimes, \oplus, \s \neg \ra$ coincidono con quelle del suo ridotto $\la B, \land, \lor, \otimes, \oplus, \neg \ra$ (le quali a loro volta, per i risultati ottenuti in precedenza, coincidono con quelle del ridotto $\la B, \land, \neg \ra$). 

Nella seguente  sezione  (\ref{sec:imp}) intraprendiamo uno studio più approfondito della varietà $\ib$ dei bireticoli implicativi. Utilizzando i risultati ottenuti nella sezione precedente, dimostriamo che l'unica algebra sottodirettamente irriducibile in $\ib$ è $\fours$, il suo membro di quattro elementi, il cui ridotto bireticolare è $\four$. Tale algebra genera pertanto la varietà dei bireticoli implicativi. Dimostriamo inoltre che $\ib$ è una varietà con termine discriminatore e che i suoi membri finiti sono isomorfi a potenze di $\fours$.

Otteniamo inoltre l'interessante risultato che, in un bireticolo implicativo, ciascuna delle due relazioni d'ordine reticolare può definirsi esplicitamente (tramite un'equazione) usando solo la implicazione e i connettivi corrispondenti all'altro ordine.

Nella sezione \ref{sec:dua} studiamo le relazioni fra i bireticoli implicativi classici e i reticoli disgiuntivi duali considerati nella sezione
\ref{sec:lb}. Dimostriamo, in particolare, che la classe dei reticoli implicativi classici (considerata nel mero linguaggio reticolare) è propriamente inclusa in quella dei reticoli disgiuntivi duali, e individuiamo una proprietà necessaria e sufficiente affinché un reticolo disgiuntivo duale appartenga alla classe dei reticoli implicativi classici.

Nelle due sezioni seguenti ci occupiamo di alcuni sottoridotti dei bireticoli implicativi che risultano particolarmente interessanti da un punto di vista logico.

Cominciamo, nella sezione  \ref{sec:sub}, osservando che in ogni bireticolo implicativo $\Al[B] = \la B, \land, \lor, \otimes, \oplus, \s \neg \ra$ risulta possibile definire esplicitamente un'operazione binaria $* : B \times B \longrightarrow B$ tale che la coppia $\{ *, \ta \}$ è residuata rispetto all'ordine $\leq_{t}$. La definizione è la seguente: per ogni coppia di elementi $a, b \in B$,
$$a * b \ :=  \ \neg (a \ta \neg b).$$ 
Mostriamo quindi che l'algebra  $\la B, \land, \lor, *, \ta \neg, \top \ra$ è, usando la terminologia di  \cite{GaRa04}, un ``reticolo residuato commutativo distributivo con involuzione'' (\emph{involutive commutative distributive residuated lattice}).

Introduciamo di seguito, mediante una presentazione equazionale, una classe di algebre che denominiamo ``reticoli residuati di De Morgan''  (\emph{residuated De Morgan lattices}), con lo scopo di dimostrare che tali strutture corrispondono ai $\{  \land, \lor,  \s \neg, \top \}$-sottoridotti dei bireticoli implicativi. 

Un reticolo residuato di De Morgan è un'algebra $\Al = \langle A, \land, \lor, {\supset,}$ $\neg, \top  \rangle$ tale che il ridotto $\langle A, \land, \lor, \neg \rangle$ è un reticolo di De Morgan e le seguenti condizioni sono verificate:
\begin{enumerate}[ ]
\item (RD0) $\quad \top  \approx \neg \top $ 
\item (RD1) $\quad \top \supset x \approx x$ 
\item (RD2) $\quad  x \supset (y \supset z) \approx (x \land y) \supset z$
\item (RD3) $\quad \top \land (((x \supset y) \supset x) \supset x) \approx \top  $ 
\item (RD4) $ \quad (x \lor y) \supset z \approx (x \supset z) \land  (y \supset z)$ 
\item (RD5) $\quad  x \land (((x \supset y) \land (\neg y \supset \neg x)) \supset y) \approx x$  
\item (RD6) $\quad \neg (x \supset y ) \supset z  \approx (x \land \neg y)  \supset z$.
\end{enumerate}

Cominciamo dimostrando alcune proprietà aritmetiche dei reticoli residuati di De Morgan, che ci permettono di mostrare che, per ogni reticolo di tale tipo $\Al = \la A, \land, \lor, \s,  \neg, \top \ra$, la struttura $\la A, \land, \lor, *, \ta \neg, \top \ra$ risulta effettivamente essere un reticolo residuato commutativo distributivo con involuzione.

Dimostriamo di seguito che ogni reticolo residuato di De Morgan $\Al$ contiene come sottoreticoli due reticoli relativamente complementati (che indichiamo con $\Al[A^-]$ e $\Al[A^+]$) tali che $\Al[A^-] \cong \Al[A^+]$. Sappiamo pertanto che è possibile costruire un bireticolo implicativo come prodotto $\Al[A^-] \odot \Al[A^-]$ (oppure $\Al[A^+] \odot \Al[A^+]$). 

Grazie a questo risultato possiamo definire, per ogni reticolo residuato di De Morgan $\Al$, una immersione  $h: A \longrightarrow A^- \times A^-$, vale a dire una funzione iniettiva che risulta essere un omomorfismo rispetto al linguaggio  $\{ \land, \lor, \s,  \neg, \top \}$. Mostriamo, inoltre, che tale immersione è canonica nel senso seguente: se esiste un omomorfismo  $f : A \longrightarrow B$ in un bireticolo implicativo   $\Al[B]$, allora esiste un unico omomorfismo $f': A^- \times A^- \longrightarrow B $ tale che $f' \cdot h = f$.

Otteniamo pertanto il risultato che i reticoli residuati di De Morgan coincidono con i
$\{  \land, \lor,  \s, \neg, \top \}$-sottoridotti dei bireticoli implicativi, e che la varietà dei reticoli residuati di De Morgan risulta generata dal suo membro di quattro elementi che è il ridotto del bireticolo implicativo $\fours$.

Nella sezione \ref{sec:other} generalizziamo la costruzione introdotta nella sezione precedente per studiare una classe più ampia di sottoridotti dei bireticoli implicativi. 

Ci soffermiamo, in particolare, sui   $\{ \s, \neg \}$-sottoridotti. Questi rivestono infatti, a nostro parere, un interesse particolare a livello logico, in quanto si tratta del minimo frammento del linguaggio dei bireticoli implicativi che risulta necessario per definire le due traduzioni da formule in equazioni e viceversa che permettono di dimostrare l'algebrizzabilità della logica $\lbs$ rispetto alla varietà dei bireticoli implicativi.

Introduciamo di seguito, mediante una presentazione equazionale, una varietà di algebre nel linguaggio $\{ \s, \neg \}$, che denominiamo $I$\emph{-algebre}.

Una $I$-algebra è una struttura $\Al = \la A, \s, \neg \ra$ che verifica le seguenti equazioni:
\begin{enumerate}[ ]
\item (I1) $ \quad (x \supset x) \supset y \approx y$ 
\item (I2) $ \quad  x \supset (y \supset z) \approx (x \s y) \s (x \s z) \approx y \supset (x \supset z)$
\item (I3) $ \quad ((x \supset y) \supset x) \supset x  \approx x \s x $ 
\item (I4) $\quad  x \supset (\neg y \supset z) \approx \neg (x \s y) \s z$
\item (I5) $\quad \neg \neg x \approx x$
\item  (I6) $ \quad  p(x,y,x) \approx p(x,y,y)$
\end{enumerate}
dove $p(x,y,z)$ è una abbreviazione di $$(x \s y) \s ((y \s x) \s ((\neg x \s \neg y) \s ((\neg y \s \neg x) \s z))).$$

Dimostriamo alcune proprietà aritmetiche di questa varietà, che ci permettono di mostrare che a una qualsiasi $I$-algebra $\Al = \la A, \s, \neg \ra$ è possibile associare un'algebra di Tarski $\Al[A]/\!\!\sim \ = \la A/\!\!\sim, \s \ra$ ottenuta come un quoziente del ridotto $\la A, \s \ra$ di $\Al$ modulo la relazione di equivalenza $\sim$, che risulta compatibile con l'operazione $\s$ (ma non con $\neg$). 

Mostriamo inoltre che, in ogni algebra di Tarski $\Al[A]/\!\!\sim \ = \la A/\!\!\sim, \s \ra$ 
ottenuta nella suddetta maniera, è possibile definire operazioni di infimo e supremo  $\{\sqcap, \sqcup \}$ rispetto all'ordine naturale, in maniera tale che l'algebra  $ \la A/\!\!\sim, \sqcap, \sqcup, \s \ra$
risulta essere un reticolo implicativo classico.

Possiamo pertanto sviluppare una costruzione analoga a quella descritta nella sezione \ref{sec:sub}, ottenendo il corrispondente risultato che, per ogni $I$-algebra $\Al$, esiste una immersione $h: A \longrightarrow A/\!\!\sim \times \ A/\!\!\sim$ di $\Al$ nel bireticolo implicativo $$ \la A/\!\!\sim, \sqcap, \sqcup, \s \ra \odot  \la A/\!\!\sim, \sqcap, \sqcup, \s \ra.$$

Anche in questo caso dimostriamo che la funzione  $h$ da noi definita è canonica nel senso che, se esiste un omomorfismo
$f : A \longrightarrow B$ 
per un qualche bireticolo implicativo $\Al[B]$, 
allora esiste un unico omomorfismo
$f': A/\!\!\sim \times \ A/\!\!\sim \ \longrightarrow B $ tale che $f' \cdot h = f$.

Abbiamo pertanto che le $I$-algebre coincidono con i 
$\{  \s, \neg \}$-sottoridotti dei bireticoli implicativi, e che la varietà delle $I$-algebre risulta generata dal suo membro di quattro elementi che è il ridotto del bireticolo implicativo
$\fours$. Otteniamo altresì una caratterizzazione di tutte le sottovarietà delle $I$-algebre, che risultano generate dai  $\{  \s, \neg \}$-sottoridotti di $\fours$, e diamo una presentazione equazionale di tali sottovarietà.

Facciamo notare che la costruzione descritta si può agevolmente applicare a tutti gli altri sottoridotti che corrispondono a frammenti del linguaggio dei bireticoli implicativi che contengono  $\{  \s, \neg \}$,  ottenendo analoghe caratterizzazioni per tali classi di algebre.

Nell'ultima sezione 
(\ref{sec:cat}) diamo una formulazione di alcuni dei risultati ottenuti in termini di teoria delle categorie. Definiamo pertanto categorie associate alle diverse classi di (pre-)bireticoli e reticoli considerati nel corso del presente lavoro, i cui oggetti sono le algebre della relativa varietà e i cui morfismi sono gli omomorfismi tra algebre.

Otteniamo in tal modo equivalenze categoriali tra le seguenti categorie:
\begin{enumerate}[(i)]
  \item  pre-bireticoli intrecciati ($\IPBL$) e la categoria prodotto $\lat \times \lat$, i cui oggetti sono coppie di reticoli,
  \item pre-bireticoli distributivi ($\DPBL$) e la categoria prodotto $\dl \times \dl$,  i cui oggetti sono coppie di reticoli distributivi,
  \item bireticoli intrecciati  ($\IBL$) e reticoli ($\lat$),
  \item bireticoli  distributivi  ($\DBL$) e reticoli  distributivi ($\dl $),
  \item bireticoli intrecciati commutativi con conflazione ($\IBLC$) e reticoli con involuzione ($\IL$),
  \item bireticoli distributivi commutativi con conflazione ($\DBLC$) e reticoli di De Morgan ($\DML$),
    \item bireticoli  di Kleene con conflazione  ($\KB$, una sottovarietà di $\DBLC$)
    e reticoli di Kleene ($\KL $),
        \item bireticoli  classici con conflazione ($\CB$, una sottovarietà di $\KB$)
         e reticoli booleani ($\BOL $),
\item bireticoli implicativi ($\ib$) e reticoli implicativi classici ($\cl $).
\end{enumerate}

Terminiamo la sezione e il capitolo dimostrando che è inoltre possibile definire funtori 
 $F $ e $G$
tra la categoria corrispondente alle $I$-algebre e quella corrispondente ai bireticoli implicativi in maniera tale che $\la F, G \ra$ forma un'aggiunzione.




\end{document}